\newtheorem{thm}[equation]{Theorem}
\newtheorem{cor}[equation]{Corollary}
\newtheorem{lemma}[equation]{Lemma}
\newtheorem{prop}[equation]{Proposition}
\newtheorem*{conj*}{Conjecture}
\newtheorem{conv}[equation]{Conventions}
\newtheorem{defn}[equation]{Definition}
\newtheorem{remark}[equation]{Remark}
\newtheorem{exam}[equation]{Example}
\numberwithin{equation}{section}
\renewcommand{\ker}{\mathsf{ker}}
\renewcommand{\gcd}{\mathsf{gcd}}
\newcommand{\overbar}[1]{\mkern 1mu\overline{\mkern-1mu#1\mkern-1mu}\mkern 1mu}
\newcommand{\FF}{\mathbb{F}}
\newcommand{\A}{\mathsf{A}} 
\newcommand{\be}{\breve{E}} 
\newcommand{\brf}{\breve{F}} 
\newcommand{\DD}{\mathsf{R}} 
\newcommand{\C}{\mathsf{C}}
\newcommand{\PP}{\mathsf{P}} %Used for projection map
\newcommand{\pr}{\mathsf{u}}
\newcommand{\ze}{\mathsf{z}_h} 
\newcommand{\ms}{\breve{q}}
\newcommand{\dms}{D_{\ms}}
\newcommand{\parp}{\partial_p}
\newcommand\chara{\mathsf {char}}
\newcommand{\pie}{\pi_{(h/\pi_h)}}
\newcommand{\vth}{\varsigma} 
\def\inder{ \mathsf {Inder_\FF}}
\def\der{ \mathsf {Der_\FF}}
\def\hoch{\mathsf{HH^1}}
\def\ad{\mathsf {ad}}
\def\im{\mathsf{im}}
\def\norm{\mathsf{N}_{\A_1}(\A_h)} 
\def\cent1{\mathsf{Z}(\A_1)}
\def\centh{\mathsf{Z}(\A_h)}
\def\modd{\, \mathsf{mod} \,}
\def\modu{\, \mathsf{mod} \,}
\def\degg{\, \mathsf{deg} \,}
\def\dimm{\, \mathsf{dim}\,}
\def\spann{\, \mathsf{span}}
\def\deghpi{\degg\frac{h}{\pi_h}}
\begin{document}  

 \title[Derivations of a Family of Subalgebras of the Weyl Algebra]
{Derivations of a Parametric Family of Subalgebras of the Weyl Algebra} 

\author[Benkart, Lopes, and Ondrus]{Georgia Benkart, Samuel A. Lopes$^*$, and Matthew Ondrus }

\address{ \noindent Department of Mathematics,  
 University of Wisconsin-Madison, 
 Madison, WI 53706-1388, USA, \ \  \emph{E-mail address}: \tt{benkart@math.wisc.edu}}
 
\address{\noindent CMUP, Faculdade de Ci\^encias, 
Universidade do Porto, 
Rua do Campo Alegre 687, 
4169-007 Porto, Portugal, \ \  \emph{E-mail address}: \tt{slopes@fc.up.pt}}
 
 \address{\noindent Mathematics Department,
Weber State University,
Ogden, UT  84408 USA,  \emph{E-mail address}: \tt{mattondrus@weber.edu}}
 
 \thanks{$^*$ The author was partially funded by the European Regional Development Fund through the programme COMPETE and by the Portuguese Government through the FCT -- Funda\c c\~ao para a Ci\^encia e a Tecnologia under the project PEst-C/MAT/UI0144/2013. \newline  \newline
\textbf{MSC Numbers (2010)}: Primary: 16S32, 16W25;  Secondary  16E40, 16S36, 17B40 \hfill \newline
\textbf{Keywords}:  Ore extensions, Weyl algebras, derivations, Hochschild cohomology, Witt algebra} 
\date{}

\vspace{-.25 truein}  
\begin{abstract} An Ore extension over a polynomial algebra $\FF[x]$ is either a quantum plane, a quantum Weyl algebra, or an infinite-dimensional unital associative algebra $\A_h$ generated by elements $x,y$,  which satisfy $yx-xy = h$, where $h\in \FF[x]$.  When $h \neq 0$, the algebra $\A_h$ is subalgebra of the Weyl algebra $\A_1$ and can be viewed as differential operators with polynomial coefficients.  This paper determines the derivations of  $\A_h$ and the Lie structure of the first Hochschild cohomology group $\hoch(\A_h) = \der(\A_h)/\inder(\A_h)$ of outer derivations over an arbitrary field.   In characteristic 0, we show that  $\hoch(\A_h)$ has a unique maximal nilpotent  ideal modulo which it is 0 or a direct sum of simple Lie algebras that are field extensions of  the one-variable Witt algebra.  In  positive characteristic,  we obtain decomposition theorems for $\der(\A_h)$ and $\hoch(\A_h)$ and describe the structure of  $\hoch(\A_h)$ as a module over the center of $\A_h$.  \end{abstract}   
\maketitle 

\begin{section}{Introduction}\end{section}
We consider a family of infinite-dimensional unital 
associative algebras $\,\A_h$ parametrized by a polynomial
$h$ in one variable, whose definition is given as follows: 

\smallskip

\begin{defn}
Let $\FF$ be a field, and let $h \in \FF[x]$.  The algebra  $\A_h$ is  the unital associative algebra over $\FF$ with generators $x$, $y$ and defining relation $yx = xy + h$ (equivalently,  $[y,x] = h$  where $[y,x] = yx-xy$). 
\end{defn}

These algebras arose naturally in considering  Ore extensions over a polynomial algebra $\FF[x]$.
Many algebras can be realized as iterated Ore extensions, and for that reason, Ore 
extensions have become a mainstay in associative theory.   Recall that an Ore extension $\A = \DD[y,\sigma, \delta]$  is built from a unital associative (not necessarily commutative) algebra $\DD$ over a field $\FF$, an $\FF$-algebra endomorphism $\sigma$ of $\DD$, and a $\sigma$-derivation of $\DD$, where by a  $\sigma$-derivation $\delta$  we mean that  $\delta$ is $\FF$-linear  and $\delta (rs) = \delta(r)s + \sigma (r) \delta (s)$ holds  for all $r,s \in \DD$.  Then  $\A = \DD[y, \sigma, \delta]$ is the algebra generated by $y$ over $\DD$ subject to the relation 
$$yr= \sigma(r)y + \delta(r) \qquad \hbox{\rm for all} \ r \in \DD.$$  Under the assumption that  $\DD = \FF[x]$ and $\sigma$ is an automorphism of $\DD$,  the following result holds.  (Compare  \cite{AVV87} and \cite{AD97}, which have a somewhat different division into cases.) 

\begin{lemma}\label{lem:poly}  Assume $\A = \DD[y,\sigma,\delta]$ is an Ore extension with $\DD = \FF[x]$,
a polynomial algebra over a field $\FF$ of arbitrary characteristic and $\sigma$ an automorphism of $\DD$.   Then $\A$ is isomorphic
to one of the following:
\begin{itemize}
\item[{\rm (a)}] a  quantum plane
\item[{\rm (b)}] a quantum Weyl algebra
\item[{\rm (c)}] an algebra $\A_h$  with generators $x,y$ and defining relation
$yx = xy + h$ for some polynomial $h \in \FF[x]$.  
\end{itemize}
\end{lemma}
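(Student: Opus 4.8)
The plan is to reduce the single defining relation to a normal form by successive invertible changes of the generators, exploiting the fact that the two data of the extension are each controlled by one polynomial. First I would classify $\sigma$: any $\FF$-algebra automorphism of $\DD = \FF[x]$ must send $x$ to a degree-one polynomial, so we may write $\sigma(x) = qx + b$ with $q \in \FF^\times$ and $b \in \FF$, and then the $\sigma$-derivation $\delta$ is determined by the arbitrary value $\delta(x) = f \in \FF[x]$. Thus the relation defining $\A$ becomes $yx = (qx+b)y + f(x)$. The three conjugacy types of the affine map $x \mapsto qx+b$ — the identity, a homothety ($q \neq 1$), and a nontrivial translation ($q = 1$, $b \neq 0$) — will organize the argument, the goal in each case being to absorb $b$, and then $f$, into a redefinition of the generators.

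If $\sigma = \mathrm{id}$ (so $q = 1$, $b = 0$), the relation is already $yx = xy + f(x)$, which is exactly $\A_h$ with $h = f$, giving case (c). If $q \neq 1$, I would first replace $x$ by $X = x + b/(q-1)$; a direct computation shows this conjugates the homothety-plus-translation to the pure homothety $X \mapsto qX$ and turns the relation into $yX = qXy + \tilde f(X)$ for some polynomial $\tilde f$. Next I would replace $y$ by $y' = y + g(X)$ and compute $y'X - qXy' = \tilde f(X) + (1-q)Xg(X)$; since $q \neq 1$, choosing $g(X) = (\tilde f(0) - \tilde f(X))/((1-q)X)$ — a genuine polynomial because $\tilde f(X) - \tilde f(0)$ is divisible by $X$ — reduces the relation to $y'X = qXy' + c$ with $c = \tilde f(0) \in \FF$. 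If $c = 0$ this is a quantum plane (case (a)); if $c \neq 0$, rescaling $y'$ by $c^{-1}$ yields $yX - qXy = 1$, a quantum Weyl algebra (case (b)).

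The remaining case $q = 1$, $b \neq 0$ is the one I expect to be the genuine obstacle, since its relation involves $y$ on the right and at first glance looks like a fourth type. Here I would set $y' = y + f(x)/b$ (legitimate because $b \neq 0$) and check that $y'x - (x+b)y' = f(x) - bg(x)$ vanishes, leaving $y'x = (x+b)y' = xy' + by'$, that is, $[y', x] = by'$. This is the enveloping algebra of the two-dimensional nonabelian Lie algebra, and the key observation is that its presentation is symmetric in the two generators: rewriting the relation as $xy' = y'x - by'$ exhibits $\A$ as an Ore extension of $\FF[y']$ by the variable $x$, with trivial automorphism and derivation $y' \mapsto -by'$. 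Swapping the roles of the two variables therefore puts $\A$ in the form $\A_h$ with $h$ the nonzero linear polynomial $-bx$ (after relabeling), landing again in case (c). Throughout, the substitutions used — affine shifts of $x$, triangular shifts $y \mapsto y + g(x)$, and scalings — are invertible polynomial changes of variable and hence algebra isomorphisms, so each normalization is an honest isomorphism of $\A$ onto the claimed model; since the lemma asserts only that $\A$ is isomorphic to one of the three families, I would not need to address whether those families are pairwise nonisomorphic.
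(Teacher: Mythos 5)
Your argument is correct, and it is essentially the standard normalization proof; the paper itself does not prove Lemma \ref{lem:poly} but delegates it to \cite{AVV87} and \cite{AD97}, which carry out the same kind of case analysis on $\sigma(x)=qx+b$ (with a somewhat different division into cases, as the paper notes). Your trichotomy and the substitutions are all sound: $X=x+b/(q-1)$ does conjugate $\sigma$ to the homothety when $q\neq 1$, the shift $y\mapsto y+g(X)$ kills all of $\tilde f$ except its constant term, and in the translation case $y'=y+f(x)/b$ reduces the relation to $[y',x]=by'$, which after exchanging the roles of the generators is $\A_{-bx}\cong \A_x$, exactly the enveloping algebra the paper identifies with $\A_x$ in the introduction.

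The one place a referee would ask for an extra sentence is your closing claim that the substitutions are ``algebra isomorphisms.'' Strictly, a change of generating set only shows that the new pair $(X,y')$ generates $\A$ and satisfies the normalized relation; to conclude that $\A$ is isomorphic to the algebra \emph{presented} by that relation (the quantum plane, the quantum Weyl algebra, or $\A_h$), you should note that the target is itself an Ore extension, hence a free left $\FF[X]$-module on the powers of its second generator, and that the canonical surjection onto $\A$ carries this basis to the basis $\{X^i(y')^j\}$ of $\A$ (which is a basis because $\A=\DD[y;\sigma,\delta]$ is free over $\DD$ on $\{y^j\}$ and $y'-y\in\DD$, $X-x\in\FF$ are lower-order corrections). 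This is routine and does not affect the validity of your proof, but it is the actual content behind the word ``isomorphism'' in each reduction step.
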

The algebras $\A_h$ result  from taking $\DD = \FF[x]$,  $\sigma$ to be the identity automorphism, and $\delta : \DD \to \DD$ to be the derivation given by
\begin{equation}\label{eqn:defDelta}
\delta(f) = f' h,
\end{equation}
where $f'$ is the usual derivative of $f$ with respect to $x$.

Quantum planes and quantum Weyl algebras are examples of generalized Weyl algebras in the sense of
 \cite[1.1]{bavula:gwar93}, and as such,  have been studied extensively. In \cite{BLO1, BLO2},  we determined the center, normal elements, and prime ideals of the algebras $\A_h$, as well as the automorphisms and their invariants,  isomorphisms between two algebras $\A_g$ and $\A_h$, and the irreducible $\A_h$-modules   over any field $\FF$.   
Our aim in this paper is to compute the derivations and first cohomology group  of the algebras $\A_h$ over an arbitrary field.

When $h = 1$,  the algebra $\A_1$ is the Weyl algebra, and Sridharan \cite{Sr61}  showed that  when the characteristic of $\FF$ is 0,  the Hochschild cohomology of $\A_1$ vanishes in positive degrees. In particular,  the derivations of $\A_1$ are all inner when $\chara(\FF) = 0$,
since the first cohomology vanishes (compare \cite{Dix66} and  \cite{Dix96}).   In recent work \cite{GG14}, Gerstenhaber and Giaquinto have used  the fact that the Euler-Poincar\'e characteristic is invariant under deformation to compute the cohomology of the Weyl algebra, the quantum plane, and the quantum Weyl algebra under the assumption $\chara(\FF) = 0$.

Progress towards  determining the derivations of $\A_h$ for arbitrary $h$  has been made in \cite{Now04}, primarily in the characteristic 0 case. Theorem 9.1 of \cite{Now04} shows that when $\chara(\FF) = 0$, every derivation is inner if and only if $h \in \FF^*$ (in the notation used here).  Nowicki also establishes decomposition results (see  \cite[Thms.~10.1 and 11.2]{Now04})  for derivations of $\A_h$. These results can be obtained as special cases of  Theorem \ref{T:hochstruct} below, which gives a direct sum decomposition of $\der (\A_h)$.  In addition, we derive expressions for the Lie bracket in the quotient  $\hoch (\A_h) = \der (\A_h) / \inder (\A_h)$ of $\der(\A_h)$ modulo the ideal $\inder(\A_h)$ of inner derivations when $\chara (\FF) = 0$  and use these formulas to understand the structure of the Lie algebra $\hoch (\A_h)$ (see Theorem \ref{thm:hochdec}).  In Theorem \ref{thm:idealJ} 
and Corollary \ref{cor:HH1nilpotent}, we show that there is a unique maximal nilpotent ideal of $\hoch (\A_h)$ and explicitly describe the structure of the quotient by this ideal in terms of the one-variable Witt algebra (centerless Virasoro algebra).

When $\chara (\FF) = p>0$, not all derivations of $\A_1$ are inner (contrary to the statement in \cite{R73}).  In Section \ref{derA1}, we introduce two non-inner derivations $E_x$ and $E_y$ of $\A_1$ and use them  in Theorem \ref{thm:derA1} to describe $\der (\A_1)$ as well as $\hoch (\A_1)$.    Section \ref{sec:derChar-p} of the paper is devoted to  studying  $\der (\A_h)$ for arbitrary $h \neq 0$  in the  characteristic $p > 0$ case.  The  restriction map $\mathsf{Res}: \der (\A_h) \to \der ( \centh)$ from derivations of $\A_h$ to derivations of the center $\centh$ of $\A_h$  is a morphism of Lie algebras, and in the case $h = 1$,  this map is surjective with kernel $\inder (\A_1)$.  Viewing $\A_h$ as a subalgebra of $\A_1$ for $h \neq 0$ and applying results from Section 3 on  derivations of $\A_1$, we determine the kernel and image of $\mathsf{Res}$  in Proposition \ref{prop:kerres} and Theorem \ref{T:imres} respectively.  This enables us in Theorem \ref{T:decomp2} to exp
 licitly 
 determine all derivations of $\A_h$, for arbitrary $h \neq 0$, when $\chara (\FF) = p>0$.  To illustrate this result, we compute $\der(\A_h)$  for $h = x^m$ for any $m \geq 0$  (Corollary \ref{E:small h}) and for any $h \in \FF[x^p]$ (Example \ref{Ex:h=rho}).  In Proposition \ref{C:inner}, we provide a criterion for a derivation of $\A_h$ to be inner for general $h$,  and in Theorem \ref{T:pfreeHH1}, we  present necessary and sufficient conditions on $h$ for $\hoch(\A_h)$ to be free over $\centh$.   Propositions \ref{prop:bracketp} and  \ref{prop:bracketp2}  give formulas for the Lie brackets in $\der (\A_h)$.

Several well-known algebras have the form $\A_h$ for some $h \in \FF[x]$.  For example, $\A_0$ is the polynomial algebra $\FF[x,y]$; $\A_1$ is the Weyl algebra;  and the algebra $\A_x$ is the universal enveloping algebra of the two-dimensional non-abelian Lie algebra (there is only one such Lie algebra up to isomorphism).   The algebra $\A_{x^2}$ is often referred to as the Jordan plane.  It appears  in noncommutative algebraic geometry (see for example, \cite{SZ94} and \cite{AS95}) and exhibits many interesting features such as being Artin-Schelter regular of dimension 2.   In a series of articles \cite{shirikov05}--\cite{shirikov07-2},  Shirikov has undertaken an extensive study of the automorphisms, derivations, prime ideals, and modules of the algebra $\A_{x^2}$. Aspects  of the theory developed in \cite{shirikov05}--\cite{shirikov07-2} have been extended by Iyudu \cite{I12} to include results on varieties of finite-dimensional modules of $\A_{x^2}$ over 
algebraically closed fields of characteristic 0.  Cibils, Lauve, and Witherspoon \cite{CibLauWit09}  have used quotients of the algebra $\A_{x^2}$ and cyclic subgroups of their automorphism groups to construct new examples of finite-dimensional Hopf algebras in prime characteristic which are Nichols algebras. 

The universal enveloping algebras  $\mathsf{YM}(n)$  of the Yang-Mills algebras form another family of infinite-dimensional associative algebras which have been studied because of their connections with deformation theory.   Theorem 5.11 of \cite{HS} determines the Lie structure of the first Hochschild cohomology group of $\mathsf{YM}(n)$ over an algebraically closed field of characteristic 0.   This turns out to be finite dimensional and can be described in terms of the orthogonal  Lie algebra $\mathfrak{so}(n)$.  By contrast, $\hoch(\A_h)$ generally is infinite dimensional and related to the Witt algebra 
under the assumption $\FF$ has characteristic 0. 
 
There are striking similarities in the behavior of the algebras $\A_h$ as $h$ ranges over the polynomials in $\FF[x]$.  For that reason, we believe that studying them as one family provides much insight into their structure, derivations, automorphisms, and modules.
\smallskip

\noindent  {\bf Acknowledgments:}  \  We thank Andrea Solotar and Mariano Su\'arez-\'Alvarez for discussions about the Hochschild cohomology of the Weyl algebra in positive characteristic and for pointing out the argument in Remark~\ref{rem:dt}.  We are also  grateful to the referee for carefully reading the first version of this paper and providing helpful feedback.
 
\begin{section}{Preliminaries}\end{section}

In this section,  we recall some necessary background  from \cite{BLO1} and prove results required for our description of the derivations of $\A_h$.   We begin with facts about  embeddings. 
\smallskip

\begin{lemma}{\rm \cite[Sec.\ 3]{BLO1}}  \label{L:embed}   
\begin{itemize} \item[\rm {(a)}]  Suppose that $f$ and $g$ are nonzero elements of $\FF[x]$ and  $g = f r$ for some $r\in \FF[x]$.   Regard $\A_f = \langle x,y,1\rangle$ and $\A_g = \langle x,\tilde y,1\rangle$ with the relations
$yx-xy  = f$ and $\tilde y x - x \tilde y = g$ respectively.    Then the map   $\varepsilon:  \A_g \rightarrow \A_f$ with 
$x \mapsto x, \quad  \tilde y \mapsto  y r$  gives  an embedding of $\A_g$ into $\A_f$.   
 \item[\rm {(b)}]  For all $h  \in \FF[x]$,  $h \neq 0$,  there is an embedding of the algebra $\A_h$  into the Weyl algebra  $\A_1$.  If   $x, y$ are the generators of the Weyl algebra so that  $[y, x]=1$, then  $\A_h$ can be identified with the subalgebra 
$\A_h = \langle x, \hat y, 1 \rangle$ of $\A_1$ generated by $x$, $\hat y = yh$, and $1$. 
 \item[\rm {(c)}] Regard $\A_h \subseteq \A_1$ as in {\rm (b)}, and write $\DD = \FF[x]$.  Then 
\begin{equation}\label{eq:Ahexpress} \A_h = \bigoplus_{i \ge 0} \DD h^iy^i = \bigoplus_{i \ge 0} y^i h^i \DD.\end{equation}
\end{itemize}
\end{lemma}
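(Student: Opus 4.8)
The three parts are logically nested: (b) is the special case $f=1$, $g=h$, $r=h$ of (a), and (c) refines the description of the resulting image. I would therefore prove (a) first, deduce (b), and then work out (c). For (a), the first step is to check that $\varepsilon$ is a well-defined algebra homomorphism, i.e.\ that it respects the single defining relation of $\A_g$. Since $r\in\FF[x]$ commutes with $x$ in $\A_f$, one computes $(yr)x - x(yr) = (yx-xy)r = fr = g$, so the assignment $x\mapsto x,\ \tilde y\mapsto yr$ extends to an algebra map by the universal property of the Ore extension $\A_g$. For injectivity I would use the PBW basis $\{x^i\tilde y^{\,j}\}$ of $\A_g$ together with the filtration of $\A_f$ by powers of $y$, whose associated graded algebra is the commutative polynomial ring $\FF[x][\xi]$ with $\xi$ the class of $y$. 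As $yr = ry + r'f$, the element $yr$ has leading term $r\xi$, so $x^i(yr)^j$ has leading term $x^i r^j\xi^j$. A vanishing combination $\sum_{i,j}c_{ij}x^i(yr)^j = 0$ forces, on passing to the top $y$-degree $J$, the relation $\bigl(\sum_i c_{iJ}x^i\bigr)r^J = 0$ in $\FF[x]$; since $\FF[x]$ is a domain and $r\neq 0$, every $c_{iJ}$ vanishes, and descending induction gives $c_{ij}=0$ throughout, so $\varepsilon$ is injective.

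Part (b) is then immediate: applying (a) with $f=1$ (so $\A_f=\A_1$) and $g=h=1\cdot h$, i.e.\ $r=h$, the embedding sends $x\mapsto x$ and $\tilde y\mapsto yh=\hat y$, identifying $\A_h$ with the subalgebra $\langle x,\hat y,1\rangle$ of $\A_1$. For (c), the key is an explicit commutation identity. Writing $w_k := h^k y^k$ and using $y\,p = p\,y + p'$ for $p\in\FF[x]$ inside $\A_1$, one gets
\[
\hat y\, w_k = (yh)(h^k y^k) = y\,h^{k+1}y^k = h^{k+1}y^{k+1} + (h^{k+1})'y^k = w_{k+1} + (k+1)h'\,w_k .
\]
Let $P := \sum_{k\geq 0}\DD w_k$ be the left $\DD$-submodule of $\A_1$ in the statement. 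The identity gives $w_{k+1} = \bigl(\hat y - (k+1)h'\bigr)w_k$, so by induction each $w_k\in\A_h$ and hence $P\subseteq\A_h$. Conversely, combining the identity with $\hat y\,p = p\,\hat y + p'h$ for $p\in\DD$ (valid in $\A_1$ since $\hat y=yh$) shows $\hat y\,(p\,w_k)\in P$, so $P$ is closed under left multiplication by $\hat y$; as $P$ is a left $\DD$-module containing $1=w_0$ and $\A_h$ is generated by $x$ and $\hat y$, an induction on the $\hat y$-degree yields $\hat y^{\,j}\in P$, whence $\A_h=\sum_j\DD\hat y^{\,j}\subseteq P$. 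Thus $\A_h=P=\sum_k\DD h^k y^k$, and the sum is direct because $h^k y^k$ has $y$-degree exactly $k$ with nonzero leading coefficient $h^k$, so the top-degree term of any nontrivial relation cannot cancel.

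Finally, the right-hand equality $\A_h=\bigoplus_i y^i h^i\DD$ I would deduce from the left version by applying the $\FF$-algebra anti-automorphism $\tau$ of $\A_1$ determined by $\tau(x)=x,\ \tau(y)=-y$, which preserves $[y,x]=1$. Since $\tau(h^k y^k)=(-1)^k y^k h^k$ and $\tau(\hat y)=-hy=-(\hat y-h')\in\A_h$, one checks $\tau(\A_h)=\A_h$, and applying $\tau$ to the left decomposition produces the right one. I expect the main obstacle to be precisely the inclusion $\A_h\subseteq P$ in (c): a naive symbol argument only places $\hat y^{\,j}-h^j y^j$ in the full filtration piece $\bigoplus_{k<j}\DD y^k$, which is strictly larger than $\sum_{k<j}\DD h^k y^k$, so the filtration alone does not identify the span. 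It is the explicit recursion $w_{k+1}=(\hat y-(k+1)h')w_k$, showing that the $w_k$ are generated from $1$ by left multiplication by elements of $\A_h$ and conversely that $P$ is stable under $\hat y$, that pins the image of $\A_h$ down to the stated span.
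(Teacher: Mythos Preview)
The paper does not supply its own proof of this lemma: it is quoted from \cite[Sec.~3]{BLO1} and stated without argument, so there is no in-paper proof to compare against. Your proposal stands as a correct, self-contained proof of all three parts.

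A few remarks on the details. For (a), your verification that $(yr)x-x(yr)=fr=g$ and the leading-term argument for injectivity are both sound; note that you are implicitly using that $\A_g$ is free with basis $\{x^i\tilde y^{\,j}\}$, which is standard for Ore extensions. For (c), the recursion $w_{k+1}=(\hat y-(k+1)h')w_k$ is exactly the right device, and your observation that a naive filtration argument would only give $\hat y^{\,j}-h^jy^j\in\bigoplus_{k<j}\DD y^k$ rather than $\bigoplus_{k<j}\DD h^ky^k$ is well taken: the explicit recursion is what closes that gap. Your use of the anti-automorphism $\tau(x)=x$, $\tau(y)=-y$ to pass from the left decomposition to the right one is clean; since $\tau^2=\mathrm{id}$ and $\tau(\hat y)=-(\hat y-h')\in\A_h$, you indeed get $\tau(\A_h)=\A_h$, and applying $\tau$ to $\DD h^ky^k$ yields $y^kh^k\DD$ up to a harmless sign.
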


Because we often use the embedding in Lemma \ref{L:embed}\,(b)  as a tool for proving results, 
and because the structure and derivations of $\A_0 = \FF[x,y]$ are very well understood, for the remainder of this paper
 we adopt the following conventions:
 \begin{conv}  \label{con:gens}  \qquad 
 
 \begin{itemize}
 \item  $\DD = \FF[x]$, and the polynomial $h \in \DD$ is nonzero;
 \item   the generators of the Weyl algebra $\A_1$ are  $x,\,y,\,1$ and $[y,x] = 1$;
 \item  the generators of the algebra $\A_h$ are $x,\,\hat y,\,1$ and $[\hat y,x] = h$;   
 \item  when $\A_h$ is viewed as a subalgebra of $\A_1$, then $\hat y = yh$.  
 \end{itemize}
 \end{conv}

The center of the Weyl algebra $\A_1$ 
is $\FF 1$ when $\chara(\FF) = 0$. When $\chara(\FF) = p > 0$, the center of $\A_1$  has been described  by Revoy in \cite{R73} (see also \cite{MakLim84}).  The next result describes the center of an
arbitrary algebra $\A_h$.  

\begin{thm}\label{L:center}{\rm \cite[Sec.~5]{BLO1}} 
Regard $\A_h  \subseteq \A_1$ as in Conventions \ref{con:gens}, and let $\centh$ denote the center of $\A_h$.  
\begin{enumerate}
\item[{\rm (1)}]  If $\chara(\FF) = 0$, then $\centh  = \FF 1$.
\item[{\rm (2)}]  If $\chara(\FF) = p > 0$, then $\centh$ is the polynomial subalgebra $\FF[x^p,\ze] =\FF[x^p, h^py^p]$ of $\A_1$, where
$$\ze = h^py^p = y^ph^p  = \hat y ( \hat y+h')( \hat y+2h') \cdots ( \hat y + (p-1)h') = \hat y^p - \frac{\delta^p(x)}{h} \hat y,$$
and  $\delta$ is the derivation of $\DD =\FF[x]$ with $\delta(f) = f' h$ for all $f \in \DD$.  Moreover  $\frac{\delta^p(x)}{h}  \in \centh \cap \FF[x] = \FF[x^p]$.
\item[{\rm (3)}]  If $\chara(\FF) = 0$, then $\A_h$ is free over its center  $\centh$  with basis  $\{x^i\hat y^j \mid  i,j \in \mathbb Z_{\geq 0}\}$.  If $\chara(\FF) = p>0$, then $\A_h$ is free over $\centh$ with  
basis $\{x^ih^jy^j \mid 0 \leq i,j < p\}$ or with basis $\{x^i\hat y^j  \mid 0 \leq i,j < p\}$.   \end{enumerate}
\end{thm}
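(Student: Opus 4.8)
The plan is to carry out all computations inside the Weyl algebra $\A_1$, using the freeness $\A_h = \bigoplus_{i \geq 0} \DD h^i y^i$ from Lemma \ref{L:embed}(c) together with the Weyl relations $[y,x]=1$ and $[y,f]=f'$ for $f \in \DD$. A central element $z$ must commute with both generators $x$ and $\hat y$, so I would impose $[z,x]=0$ first and $[z,\hat y]=0$ second. Writing $z = \sum_{i\geq 0} f_i h^i y^i$ with $f_i \in \DD$ and using $[y^i,x]=iy^{i-1}$, one finds $[z,x] = \sum_{i \geq 1} i f_i h \cdot h^{i-1}y^{i-1}$; by uniqueness of the decomposition this forces $i f_i = 0$ for all $i \geq 1$. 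In characteristic $0$ this gives $z=f_0(x)$, and then $[f_0,\hat y] = -f_0' h = 0$ forces $f_0 \in \FF$, proving (1).

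In characteristic $p$ the condition $i f_i = 0$ only forces $f_i = 0$ unless $p \mid i$, so $z = \sum_{k\geq 0} f_{pk} h^{pk} y^{pk}$. Since $x^p$ and $y^p$ are central in $\A_1$ (because $\ad(y)^p = \ad(y^p)$ and $\ad(y)^2(x)=0$), each $y^{pk}$ commutes with $\DD$, and $[z,\hat y]=[z,y]h = -\sum_k (f_{pk}h^{pk})'\, h\, y^{pk}$ must vanish; as $(h^{pk})'=0$, this gives $f_{pk}' = 0$, i.e. $f_{pk} \in \FF[x^p]$. Using $h^{pk}y^{pk} = (h^p y^p)^k = \ze^k$ I then get $z = \sum_k f_{pk}\ze^k \in \FF[x^p,\ze]$, and conversely $x^p$ and $\ze = h^p y^p$ are manifestly central, giving $\centh = \FF[x^p,\ze]$; comparing top $y$-degrees shows $x^p,\ze$ are algebraically independent. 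The equality $\centh \cap \DD = \FF[x^p]$ is immediate, since $f \in \DD$ is central iff $\delta(f)=f'h=0$.

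The main work, and the step I expect to be the real obstacle, is to pin down the explicit closed forms for $\ze$. For the product formula I would prove by induction the identity $\hat y(\hat y + h')\cdots(\hat y + (n-1)h') = y^n h^n$ for all $n$, whose inductive step is a one-line normal-ordering using $h^n y = y h^n - n h^{n-1}h'$, and then specialize to $n=p$, where $y^p h^p = h^p y^p = \ze$. For the second expression I would invoke the associative-algebra identity $\ad(\hat y)^p = \ad(\hat y^p)$, so that $\ad(\hat y^p)$ restricts on $\DD$ to $\delta^p$; since the $p$-th power of a derivation is again a derivation, $\delta^p = \delta^p(x)\,\frac{d}{dx}$ on $\DD$. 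Because $\delta^n(x) = h\,(\delta^{n-1}(x))'$ is divisible by $h$ for every $n \geq 1$, the quotient $\frac{\delta^p(x)}{h}$ lies in $\DD$, and one checks that $w := \hat y^p - \frac{\delta^p(x)}{h}\hat y$ commutes with $\DD$ and, after verifying $\frac{\delta^p(x)}{h}\in\FF[x^p]$ (which follows from $[w,\hat y]=0$ together with $\A_1$ being a domain), also with $\hat y$; comparing $w$ and $\ze$ in the basis, both being central with the same leading term $h^p y^p$, yields $\ze = \hat y^p - \frac{\delta^p(x)}{h}\hat y$.

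Finally, for (3): in characteristic $0$ the claim is just the Ore-extension PBW basis $\{x^i\hat y^j\}$ of $\A_h$ over $\FF = \centh$. In characteristic $p$, freeness over $\centh$ with basis $\{x^i h^j y^j : 0\le i,j<p\}$ follows by re-indexing the $\FF$-basis $\{x^a h^n y^n\}$ via $a = pm+i$, $n = pk+j$ and using $x^{pm}\ze^k \cdot x^i h^j y^j = x^{pm+i}h^{pk+j}y^{pk+j}$. For the alternate basis $\{x^i\hat y^j : 0\le i,j<p\}$ I would use the relation $\hat y^p = \ze + \frac{\delta^p(x)}{h}\hat y$, whose coefficients are central, to reduce arbitrary powers of $\hat y$ and conclude that these $p^2$ elements span $\A_h$ over $\centh$; since $\A_h$ is free of rank $p^2$ over the commutative ring $\centh$, a spanning set of cardinality $p^2$ is automatically a basis, because a surjective endomorphism of a finitely generated module over a commutative ring is injective.
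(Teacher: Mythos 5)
This paper does not actually prove Theorem \ref{L:center}: it is quoted from \cite[Sec.~5]{BLO1}, so there is no in-paper argument to compare against. Judged on its own terms, most of your proposal is sound: part (1), the computation in part (2) showing that a central element must lie in $\FF[x^p, h^py^p]$ (and conversely), the identity $\centh\cap\DD=\FF[x^p]$, the inductive proof of $\hat y(\hat y+h')\cdots(\hat y+(p-1)h')=y^ph^p$, and both freeness claims in part (3) are all correct.

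The gap is in the identity $\ze=\hat y^p-\frac{\delta^p(x)}{h}\hat y$, and it is twofold. First, your argument that $w:=\hat y^p-\frac{\delta^p(x)}{h}\hat y$ is central is circular as written: a direct computation gives $[w,\hat y]=\delta\bigl(\frac{\delta^p(x)}{h}\bigr)\hat y$, so $[w,\hat y]=0$ is \emph{equivalent} to $\frac{\delta^p(x)}{h}\in\FF[x^p]$; you cannot derive the latter from the former while needing the latter to establish the former. This is fixable: since $\delta^p$ is a derivation of $\DD$ it equals $\frac{\delta^p(x)}{h}\delta$, and applying $[\delta,\delta^p]=0$ to $x$ yields $(\delta^p(x))'h=\delta^p(x)h'$, i.e.\ $\bigl(\frac{\delta^p(x)}{h}\bigr)'=0$. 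Second, and more seriously, ``both central with the same leading term $h^py^p$'' only shows that $w-\ze$ is a central element of $y$-degree $<p$, hence (by your own description of the center) an element of $\FF[x^p]$ -- it does not show $w=\ze$. Concretely, writing $\hat y^p=h^py^p+\delta^p(x)y+d_0$ with $d_0\in\DD$ (the coefficient of $y$ is forced by $[\hat y^p,x]=\delta^p(x)$, and the coefficients of $y^2,\dots,y^{p-1}$ vanish for the same reason), the claim $w=\ze$ amounts to the identity $d_0=\frac{\delta^p(x)}{h}h'$ for the degree-zero coefficient of $(yh)^p$, and this still has to be proved; the leading-term comparison says nothing about it. So the closed form for $\ze$ in terms of $\hat y$, which is part of the statement, is not established by your argument.
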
  \medskip

The {\it centralizer}  $\mathsf{C}_{\A_h}(x) 
= \{a \in \A_h \mid [a,x] = 0\}$ of $x$ in $\A_h$ has been calculated in \cite{BLO1},  and we summarize the results next. 

\begin{lemma}\label{L:clizer}{\rm \cite[Lem.\ 6.3]{BLO1}} \ \    $\mathsf{C}_{\A_h}(x) = \centh \DD$.  Hence,
\begin{equation*} 
  \mathsf{C}_{\A_h}(x)  = \begin{cases}  \DD = \FF[x] & \quad \hbox{\rm if } \ \ \chara(\FF) = 0,\\
 \FF[x, h^py^p] & \quad \hbox{\rm if } \ \ \chara(\FF) = p > 0. \end{cases}
 \end{equation*}
 In particular, $\mathsf{C}_{\A_1}(x) = \DD$ when $\chara(\FF) = 0$, 
and  $\mathsf{C}_{\A_1}(x) = \FF[x,y^p]$ when  $\chara(\FF) = p > 0$.   
\end{lemma}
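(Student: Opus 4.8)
The plan is to reduce everything to the Weyl algebra $\A_1$ via the embedding of Lemma~\ref{L:embed}, computing the centralizer of $x$ in $\A_1$ first and then intersecting with $\A_h$. Since $\A_h \subseteq \A_1$ and $x \in \A_h$, an element $a \in \A_h$ commutes with $x$ inside $\A_h$ exactly when it commutes with $x$ inside $\A_1$; hence $\mathsf{C}_{\A_h}(x) = \A_h \cap \mathsf{C}_{\A_1}(x)$. The inclusion $\centh \DD \subseteq \mathsf{C}_{\A_h}(x)$ is immediate in every characteristic: both $\centh$ and $\DD = \FF[x]$ centralize $x$, and because $\centh$ is central the set of finite sums $\centh \DD$ is already a subalgebra. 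So the real content is the reverse inclusion, which I would establish by a direct computation of coefficients against a PBW-type basis.

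For $\A_1$, I would write a general element in the basis $\{x^i y^j\}$ as $a = \sum_j f_j y^j$ with $f_j \in \DD$, and use the identity $[y^j,x] = j\,y^{j-1}$ (an easy induction from $[y,x]=1$, or the fact that $[\,\cdot\,,x]$ is a derivation) together with $[f_j,x]=0$ to obtain $[a,x] = \sum_{j\ge 1} j\, f_j\, y^{j-1}$. Freeness of $\A_1 = \bigoplus_j \DD y^j$ over $\DD$ then forces $j f_j = 0$ for every $j \ge 1$. When $\chara(\FF)=0$ this kills all $f_j$ with $j\ge 1$, giving $\mathsf{C}_{\A_1}(x) = \DD$. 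When $\chara(\FF)=p>0$ it forces $f_j = 0$ unless $p \mid j$; since $[y^p,x] = p\,y^{p-1} = 0$ shows $y^p$ is central, the surviving terms give $\bigoplus_{m\ge 0}\DD\, y^{pm} = \FF[x,y^p]$, so $\mathsf{C}_{\A_1}(x) = \FF[x,y^p]$. This already yields the two ``in particular'' assertions, which are the case $h=1$.

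To pass to general $h$, I would intersect with $\A_h = \bigoplus_{i\ge 0}\DD h^i y^i$ from Lemma~\ref{L:embed}(c), using that an element $\sum_i c_i y^i \in \A_1$ lies in $\A_h$ precisely when $h^i \mid c_i$ for each $i$. In characteristic $0$ we have $\DD \subseteq \A_h$, so $\mathsf{C}_{\A_h}(x) = \DD = \FF 1\cdot\DD = \centh\DD$ by Theorem~\ref{L:center}(1). In characteristic $p$, writing a centralizing element as $\sum_m g_m y^{pm}$ with $g_m \in \DD$, membership in $\A_h$ requires $h^{pm}\mid g_m$; writing $g_m = h^{pm}\tilde g_m$ and using that $h^p$ and $y^p$ commute gives $\sum_m \tilde g_m (h^p y^p)^m = \sum_m \tilde g_m \ze^m$, so the centralizer equals $\FF[x,\ze] = \FF[x,h^p y^p]$. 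Since $\FF[x^p]\subseteq\FF[x]$, we have $\centh\DD = \FF[x^p,\ze]\,\FF[x] = \FF[x,\ze]$ by Theorem~\ref{L:center}(2), matching the claim.

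The one delicate point is the characteristic-$p$ intersection: one must check that the $y^{pm}$-coefficient of an element of $\FF[x,y^p]$ that lies in $\A_h$ is divisible by the full power $h^{pm}$ (not merely by some lower power of $h$), which is exactly what the decomposition in Lemma~\ref{L:embed}(c) provides, and that repackaging $h^{pm}y^{pm}$ as $\ze^m = (h^p y^p)^m$ is legitimate because $y^p$ is central. The characteristic-$0$ portion and the computation of $\mathsf{C}_{\A_1}(x)$ itself are routine.
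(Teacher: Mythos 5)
Your proof is correct. Note that the paper itself gives no proof of this lemma --- it is quoted verbatim from \cite[Lem.\ 6.3]{BLO1} --- so there is nothing internal to compare against; but your self-contained argument is sound and is the natural one: reduce to $\A_1$ via $\mathsf{C}_{\A_h}(x) = \A_h \cap \mathsf{C}_{\A_1}(x)$, compute $[a,x]=\sum_j j f_j y^{j-1}$ against the free $\DD$-basis $\{y^j\}$ to get $\mathsf{C}_{\A_1}(x)=\DD$ (resp.\ $\FF[x,y^p]$), and then intersect with $\A_h=\bigoplus_i \DD h^i y^i$ from Lemma~\ref{L:embed}(c). You correctly isolate the one point needing care in characteristic $p$ --- that membership in $\A_h$ forces the $y^{pm}$-coefficient to be divisible by the full power $h^{pm}$, which is exactly what the direct-sum decomposition supplies --- and the repackaging $h^{pm}y^{pm}=(h^py^p)^m=\ze^m$ is legitimate since $y^p$ is central, giving $\FF[x,\ze]=\centh\DD$ as required.
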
 

The \emph{normalizer}
\begin{equation}\label{eq:norm}  \norm = \{ u \in \A_1 \mid [u, \A_h] \subseteq \A_h \} \end{equation} 
of $\A_h $ in $\A_1$ is closely related to the derivations of $\A_h$, as 
\begin{equation}\label{eq:norm2}  u \in  \norm \iff \ad_u  \ \ \hbox{\rm restricts to a derivation of} \ \ \A_h, \end{equation} 
where $\ad_u$ is  the inner derivation of $\A_1$
given by $\ad_u(v) = [u,v] = uv-vu$. 

We begin with a computational lemma from \cite[Lem.\ 5.2]{BLO1} and then introduce a certain element $\pi_h \in \DD$ that depends upon $h$ and plays an essential role in describing $\norm$.   
\smallskip

\begin{lemma}\label{lem:identity_y^nf} Let $h \in \DD =\FF[x]$, and let $\delta: \DD \to \DD$ be the derivation with $\delta (f) =  f' h$ for all $f \in \DD$.  Then 
\begin{eqnarray}&& [\hat y^n, f] = \sum_{j=1}^n {n \choose j} \delta^j (f) \hat y^{n-j} \qquad \hbox{\rm in} \ \  \A_h \label{eq:Ahcom} \\ 
&&[y^n,f] = \sum_{j=1}^n {n \choose j} f^{(j)} y^{n-j} \ \  \qquad \hbox{\rm in} \ \ \A_1 \label{eq:A1com} \end{eqnarray}
where $f^{(j)} = (\frac{d}{dx})^j(f)$.  
\end{lemma}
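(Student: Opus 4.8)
The plan is to prove \eqref{eq:Ahcom} and \eqref{eq:A1com} together, recognizing both as instances of a single higher-order Leibniz commutator formula. Concretely, suppose $z$ is an element of an associative algebra, $\DD = \FF[x]$ is a commutative subalgebra, and $D$ is a derivation of $\DD$ with $[z,f] = D(f)$ for all $f \in \DD$. I claim
$$[z^n, f] = \sum_{j=1}^n \binom{n}{j} D^j(f)\, z^{n-j}.$$
Taking $(z,D) = \bigl(y, \tfrac{d}{dx}\bigr)$ inside $\A_1$ then yields \eqref{eq:A1com}, and taking $(z,D) = (\hat y, \delta)$ inside $\A_h$ yields \eqref{eq:Ahcom}. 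I would prove this single formula by induction on $n$.

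For the base case $n=1$, observe that $\ad_z = [z,-]$ is a derivation of the ambient algebra, and its restriction to $\DD$ is a derivation of $\DD$. Since any derivation of $\FF[x]$ is determined by its value on $x$, and since $[y,x] = 1 = \tfrac{d}{dx}(x)$ while $[\hat y, x] = h = \delta(x)$, we conclude that $[y,f] = f'$ and $[\hat y, f] = \delta(f)$ for every $f \in \DD$. These are precisely the $n=1$ cases for $D = \tfrac{d}{dx}$ and $D = \delta$, respectively.

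For the inductive step I would use the elementary identity $[z^{n+1}, f] = z\,[z^n, f] + [z, f]\,z^n$, substitute the inductive hypothesis into the first summand, and then push $z$ to the right past each coefficient $D^j(f) \in \DD$ via $z\, D^j(f) = D^j(f)\, z + D^{j+1}(f)$. This last step is legitimate because $\DD = \FF[x]$ is stable under $D$, so $D^j(f)$ again lies in $\DD$ and the relation $[z,-]=D$ applies to it. After reindexing, the coefficient of $D^k(f)\, z^{n+1-k}$ is $\binom{n}{k} + \binom{n}{k-1}$, which collapses to $\binom{n+1}{k}$ by Pascal's rule, completing the induction.

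The argument presents no genuine obstacle; the only real work is the bookkeeping in the reindexing. The essential structural point that keeps the formula self-contained is the closure of $\DD$ under both $\tfrac{d}{dx}$ and $\delta$, which guarantees that all iterated coefficients $D^j(f)$ remain in $\DD$ so that the commutation relation can be applied repeatedly. Handling the Weyl-algebra and $\A_h$ cases in parallel, rather than deriving one from the other through the embedding $\hat y = yh$, is what makes the computation transparent.
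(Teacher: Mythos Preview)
Your proof is correct; the unified induction on $n$ via the identity $[z^{n+1},f]=z[z^n,f]+[z,f]z^n$ together with $z\,D^j(f)=D^j(f)z+D^{j+1}(f)$ and Pascal's rule is exactly the standard argument. The paper does not actually prove this lemma here but cites it from \cite[Lem.~5.2]{BLO1}, so there is nothing further to compare.
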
 \smallskip

\begin{cor}\label{C:yhatprod} For all $r \in \DD$ and all $n \geq 0$,   
\begin{equation}\label{eq:yhatprod} [ry^n, \hat y]  = -(rh)' y^n + r \sum_{j=1}^{n+1} {{n+1}\choose j} h^{(j)} y^{n+1-j}. \end{equation}
\end{cor}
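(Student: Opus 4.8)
The plan is to expand the commutator $[ry^n,\hat y]$ directly in $\A_1$, using $\hat y = yh$ together with the commutation identity \eqref{eq:A1com}, and to collect every term into the standard normal form of a polynomial in $x$ times a power of $y$. Concretely, I would write $[ry^n,\hat y] = ry^n\cdot yh - yh\cdot ry^n$ and treat the two products separately.

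For the first product, $ry^n\cdot yh = ry^{n+1}h$, and moving $h$ to the left past $y^{n+1}$ via \eqref{eq:A1com} gives $y^{n+1}h = \sum_{j=0}^{n+1}\binom{n+1}{j}h^{(j)}y^{n+1-j}$, where the $j=0$ term is simply $hy^{n+1}$. For the second product, since $r$ and $h$ commute in $\DD$, we have $yh\cdot ry^n = y(rh)y^n$, and a single application of the relation $yf = fy + f'$ (the $n=1$ case of \eqref{eq:A1com}) with $f = rh$ yields $y(rh)y^n = rh\,y^{n+1} + (rh)'y^n$.

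Subtracting the second expression from the first, the leading terms $rh\,y^{n+1}$ (the $j=0$ contribution of the first product) cancel exactly, and what remains is $r\sum_{j=1}^{n+1}\binom{n+1}{j}h^{(j)}y^{n+1-j} - (rh)'y^n$, which is precisely the claimed identity \eqref{eq:yhatprod}.

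The computation is entirely mechanical once the two products are put in normal form, so there is no genuine obstacle. The only points that require a little care are the index bookkeeping and the use of commutativity in $\DD$: one must retain the $j=0$ term when rewriting $y^{n+1}h$ so that it is visibly available to cancel against $rh\,y^{n+1}$, and one must invoke that $r\in\DD$ commutes with $h$ so that the second product collapses to the single derivative term $(rh)'y^n$ rather than two separate summands. Both are routine, and the statement follows immediately.
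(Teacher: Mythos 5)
Your proof is correct and is essentially the same direct computation as the paper's: both rest entirely on the commutation identity \eqref{eq:A1com} and routine normal-ordering in $\A_1$. The only (harmless) difference is organizational — the paper first rewrites $\hat y = yh = hy + h'$ and applies \eqref{eq:A1com} twice to $y^n$, combining the binomial coefficients via Pascal's rule, whereas you apply it once to $y^{n+1}$ and obtain the $\binom{n+1}{j}$ coefficients directly.
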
  
\noindent {\it Proof. } \ Using \eqref{eq:A1com}, we have 
\begin{eqnarray*}\  \  [ry^n, \hat y] &=&  [ry^n, yh] = [ry^n, hy] + [ry^n, h'] \\
&=& r\sum_{j=1}^n {n \choose j} h^{(j)} y^{n+1-j} - hr' y^n + r \sum_{j=1}^n {n \choose j} h^{(j+1)} y^{n-j} \\
&=& -(rh)' y^n + r \sum_{j=1}^{n+1} {{n+1}\choose j} h^{(j)} y^{n+1-j}.  \hspace{3.5cm} \square  \end{eqnarray*}  

\begin{lemma}\label{L:defpi} Let $\DD = \FF[x]$.
\begin{itemize} \item[{\rm (i)}] There is a unique monic polynomial $\pi_{h}\in\DD$ such that 
\begin{equation*}
\forall \,  r\in\DD, \qquad h\mid h'r\iff \pi_{h}\mid r.
\end{equation*}
In particular,  $\pi_{h}\mid h$,   and $\pi_{h}=1$ if $h' = 0$. 
\item[{\rm (ii)}]  If $h\notin \FF$, write $h=\lambda \pr_{1}^{\alpha_{1}}\cdots \pr_{t}^{\alpha_{t}}$, where 
$\lambda\in\FF^{*}$, $t\geq 1$, $\alpha_{i}\geq 1$ for all $i$,  and the  $\pr_{i}$  are distinct monic primes
in $\DD$. 
\begin{itemize} 
\item[{\rm (a)}]   If $\chara(\FF) = 0$, then $\pi_h  = \pr_{1}\cdots \pr_t$. 
\item [{\rm (b)}]  If $\chara(\FF) = p > 0$,  then  $\pi_h = \displaystyle{\prod_{i, \, \pr_i^{\alpha_i} \not \in \FF[x^p]} \,  \pr_i}$,  and if $h \in \FF[x^p]$,  then  $\pi_h  = 1$. 
\end{itemize}
Hence, $\pi_{h}= \frac{h}{\gcd (h, h')}$. \end{itemize}
\end{lemma}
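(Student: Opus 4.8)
The plan is to realize $\pi_h$ as the monic generator of a naturally occurring ideal. Consider the set
\[
I = \{\, r \in \DD : h \mid h' r \,\}.
\]
First I would check that $I$ is an ideal of $\DD = \FF[x]$: it is clearly closed under addition, and if $h \mid h'r$ then $h \mid h'(rs)$ for every $s \in \DD$ since $h'(rs) = (h'r)s$. Moreover $I$ is nonzero because $h \in I$ (as $h \mid h'h$). Since $\DD$ is a principal ideal domain, $I = (\pi_h)$ for a unique monic polynomial $\pi_h$, and by construction $h \mid h'r \iff r \in I \iff \pi_h \mid r$, which is exactly the defining property in (i). Uniqueness is the uniqueness of the monic generator of a nonzero ideal in a PID. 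Since $h \in I$ we get $\pi_h \mid h$, and if $h' = 0$ then $I = \DD$, forcing $\pi_h = 1$.

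Next I would identify $\pi_h$ with $h/\gcd(h,h')$ up to the monic normalization. Write $d = \gcd(h,h')$ and $h = d\,u$, $h' = d\,v$ with $\gcd(u,v) = 1$. Then $h \mid h'r$ is equivalent to $u \mid v r$, and since $\gcd(u,v)=1$ a B\'ezout argument gives $u \mid vr \iff u \mid r$. Hence $I = (u) = \bigl(h/\gcd(h,h')\bigr)$, so $\pi_h$, the monic generator, equals $h/\gcd(h,h')$ up to a scalar in $\FF^*$; this is the asserted final formula (read with the monic normalization).

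Finally, for part (ii) I would read off the explicit prime factorization by computing, for each prime $\pr_i$, its multiplicity in $\gcd(h,h') = \prod_i \pr_i^{\min(\nu_i(h),\,\nu_i(h'))}$, where $\nu_i$ denotes the $\pr_i$-adic valuation, so $\nu_i(h) = \alpha_i$. Writing $h = \lambda \pr_1^{\alpha_1}\cdots\pr_t^{\alpha_t}$, the product rule presents $h'$ as a sum of $t$ terms, where the $j$-th term is divisible by $\pr_i^{\alpha_i}$ for every $j \neq i$ (using that the $\pr_i$ are distinct), while the $i$-th term $\lambda\alpha_i \pr_i^{\alpha_i - 1}\pr_i'\prod_{k\neq i}\pr_k^{\alpha_k}$ has $\pr_i$-multiplicity exactly $\alpha_i - 1$ whenever it is nonzero, since $\deg \pr_i' < \deg \pr_i$ forces $\pr_i \nmid \pr_i'$. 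Thus when the $i$-th term is nonzero it uniquely attains the minimal $\pr_i$-valuation $\alpha_i - 1$, so no cancellation can raise it; then $\nu_i(h') = \alpha_i - 1$ and $\pr_i$ appears in $h/\gcd(h,h')$ with multiplicity one. If instead the $i$-th term vanishes, then $\pr_i^{\alpha_i} \mid h'$, so $\nu_i(h') \geq \alpha_i$ and $\pr_i$ does not appear in $h/\gcd(h,h')$.

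The main obstacle is pinning down exactly when the $i$-th term vanishes, which is where the two characteristics diverge. In characteristic $0$ the factor $\pr_i'$ is nonzero (as $\pr_i$ is a nonconstant prime) and $\alpha_i \neq 0$, so the $i$-th term never vanishes and every $\pr_i$ survives, giving $\pi_h = \pr_1\cdots\pr_t$ as in (a). In characteristic $p$ the $i$-th term vanishes precisely when $\alpha_i \pr_i' = 0$, i.e. when $p \mid \alpha_i$ or $\pr_i' = 0$; invoking the standard fact that $g \in \FF[x^p] \iff g' = 0$, this is exactly the condition $(\pr_i^{\alpha_i})' = 0$, that is, $\pr_i^{\alpha_i} \in \FF[x^p]$. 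Hence $\pr_i$ survives in $\pi_h$ iff $\pr_i^{\alpha_i} \notin \FF[x^p]$, proving (b); and when $h \in \FF[x^p]$ we have $h' = 0$, so $\pi_h = 1$ by part (i), consistent with the empty product.
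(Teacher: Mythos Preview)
Your proof is correct and follows essentially the same approach as the paper: both realize $\pi_h$ as the monic generator of the ideal $\{r \in \DD : h \mid h'r\}$ in the PID $\DD$, and both analyze the prime factorization of $h'$ via the product rule to handle part (ii). The only minor structural difference is that you first establish $\pi_h = h/\gcd(h,h')$ via a B\'ezout argument and then read off the explicit factorizations by computing $\nu_i(h')$, whereas the paper derives the explicit formulas in (a) and (b) directly from the divisibility condition $\pr_j \mid r\alpha_j\pr_j'$ and records the $\gcd$ formula only as a consequence.
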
 
\begin{proof}
Let $\mathsf{J} =\{ r\in\DD\mid \, h\ \mbox{divides}\ h'r \}$. Then $\mathsf{J}$ is an ideal of the principal ideal domain $\DD$, so there is a unique monic polynomial $\pi_{h}\in\DD$ that generates $\mathsf{J}$. This proves the existence and uniqueness of $\pi_{h}$.  Furthermore, it is clear that $\pi_h \mid h$ since $h \in \mathsf{J}$,  and that  $\pi_{h}=1$ if $h\in\FF$ or if $h \in \FF[x^p]$,
as $h' = 0$.

Assume $h \not \in \FF$ and  $h=\lambda \pr_{1}^{\alpha_{1}}\cdots \pr_{t}^{\alpha_{t}}$ as above. Set $\mathsf{u} = \pr_{1}\cdots \pr_t$. Then
\begin{equation*}
h'=\frac{h}{\mathsf{u}} \sum_{i=1}^t \alpha_{i}\pr_{1}\cdots \pr'_{i}\cdots \pr_t.
\end{equation*}
Given $r\in\DD$, it is easy to see that \, $h$ \, divides \, $h'r$ \, if and only if \, $\mathsf{u}$ divides $r\sum_{i=1}^t\alpha_{i}\pr_{1}\cdots \pr'_{i}\cdots \pr_t$. \, The latter occurs if and only if \, $\pr_{j}$\, divides \,$r\sum_{i=1}^t\alpha_{i}\pr_{1}\cdots \pr'_{i}\cdots \pr_t$ for every $j$.  This  is equivalent to having \, $\pr_{j}$ \, divide \, $r\alpha_{j}\pr_{1}\cdots \pr'_{j}\cdots \pr_t$ for every $j$. Hence, $h$ divides $h'r$ if and only if $\pr_{j}$ divides $r\alpha_{j}\pr'_{j}$ for every $j$. 

If ${\rm char}(\FF) = 0$, $\alpha_{j}\pr'_{j}\neq 0$ and has degree smaller than $\pr_{j}$, so $\pr_{j}$ divides $r$
for all $j$. Thus, $\pi_{h}=\pr_{1}\cdots \pr_t$.   If $\chara(\FF) = p > 0$, then $\pr_j^{\alpha_j} \in \FF[x^p]$ if and only if $\alpha_{j}\pr'_{j} = 0$, so $h$ divides $h'r$ if and only if $\pr_{j}$ divides $r$ for every $j$ such that $\pr_j^{\alpha_j} \notin \FF[x^p]$. It follows in this case that $\pi_h = \displaystyle{\prod_{i, \, \pr_i^{\alpha_i} \not \in \FF[x^p]} \,  \pr_i}$.   \end{proof} 

 \begin{defn}\label{D:varr}  When $\chara(\FF) = 0$, set $\varrho_h = 1$.   When $\chara(\FF) = p > 0$, let 
$h = \lambda \pr_1^{\alpha_1} \cdots \pr_t^{\alpha_t}$ be the factorization of $h$, where the $\pr_i$ 
are the distinct monic prime factors given in Lemma \ref{L:defpi},  and $\lambda \in \FF^*$.  After possibly renumbering,  assume
 $\pr_i \not \in \FF[x^p]$ for $1\leq i\leq \ell$ and $\pr_j  \in \FF[x^p]$ for $\ell < j \leq t$ (in case $\ell =0$, there are no such $\pr_{i}$,  and in case $\ell=t$,  there are no such $\pr_{j}$).  For each   $1\leq i\leq \ell$, take $k_{i}\geq 0$ and $0\leq \overbar{\alpha_i}<p$ so that $\alpha_{i}=k_{i}p+\overbar{\alpha_i}$.   Let  
 \begin{equation}\label{eq:vrhodef} \varrho_h =  \pr_1^{k_{1}p}\cdots \pr_\ell^{k_{\ell}p} \pr_{\ell+1}^{\alpha_{\ell+1}} \cdots \pr_t^{\alpha_t}.\end{equation} 
 \end{defn} 
 
In the characteristic $p > 0$ case,  $\varrho_h$ is the unique monic polynomial 
 of maximal degree in $\FF[x^p]$ dividing $h$, and
\begin{equation}\label{eq:hdec}  h = \begin{cases}  \lambda \varrho_h  & \qquad \hbox{if} \ \ h \in \FF[x^p] \\
\lambda \pr_1^{\overbar{\alpha_1}} \cdots \pr_\ell^{\overbar{\alpha_\ell}} \varrho_h & \qquad \hbox{if} \ \ h \not \in \FF[x^p]. \\ \end{cases} \end{equation} 
  To avoid separating considerations into cases, often we will write $h= \break
\lambda \pr_1^{\overbar{\alpha_1}} \cdots \pr_\ell^{\overbar{\alpha_\ell}} \varrho_h$ with the understanding that
the product $\pr_1^{\overbar{\alpha_1}}\cdots \pr_\ell^{\overbar{\alpha_\ell}}$ should be interpreted as being  1  if $\ell=0$.   Whenever $h \in \FF^\ast$, then $h$ is as in the first option of \eqref{eq:hdec} with $\varrho_h = 1$.

\begin{thm}\label{T:norm} Regard $\A_h \subseteq \A_1$ as in Conventions \ref{con:gens}. 
Let  $\pi_h \in \DD=\FF[x]$ be as in  Lemma \ref{L:defpi},  and set $a_n = \pi_h h^{n-1} y^n$ for all $n \geq 1$.   
\begin{itemize}
\item[{\rm(a)}] Assume  $a \in\A_{1}$ and write $a=\sum_{i\geq 0}r_{i}y^{i}$ with $r_{i}\in\DD$. Then the  following hold:
\begin{enumerate}  
\item [{\rm (i)}]  If $\chara (\FF)=0$,  then $a \in \norm \iff  \pi_h h^{i-1} \mid r_i$ for all $i\geq 1$. Hence, 
$\norm = \DD \oplus \bigoplus_{n \geq 1} \DD a_n$. 
 \item [{\rm (ii)}]   If $\chara (\FF)=p > 0$, then $a \in \norm \iff$
\begin{enumerate}
\item[{$\bullet$}] for all $i \not \equiv 0 \modd p$, \ $ \pi_h h^{i-1} \mid r_i$ 
\item[{$\bullet$}] for all \, $i  \equiv 0 \modd p$, \ $i > 0$, \  $h^{i-1} \mid r_i'$, \, or equivalently,
$r_{i} \in  c_{i}\varrho_h^{p-1}h^{i-p}+ \FF[x^p]$ for some  $c_i \in \DD$ with  
$c_i' \in \DD\left(\frac{h}{\varrho_h}\right)^{p-1}$.   
\end{enumerate}  
\end{enumerate}
\noindent In particular,  $a = \sum_{i\geq 0} r_i y^i \in \norm$ if and only if  $r_i y^i \in \norm$
for all $i \geq 0$.
\item[{\rm(b)}] For all $\FF$ and $n\geq 1$,   $\DD a_n  \subset \norm$,   and
$h' a_n$ and $\frac{h}{\pi_h} a_n$ are in $\A_h$.   
\end{itemize}
\end{thm}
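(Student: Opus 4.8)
The plan is to reduce membership in $\norm$ to a test on the two algebra generators, and then to decouple the resulting conditions by degree in $y$. Since $\ad_u$ is a derivation of $\A_1$ and $\A_h = \langle x,\hat y,1\rangle$, the Leibniz rule gives $u \in \norm \iff [u,x]\in\A_h$ and $[u,\hat y]\in\A_h$. I would pair this with the coefficientwise membership criterion coming from \eqref{eq:Ahexpress}: an element $\sum_i s_i y^i \in \A_1$ (with $s_i\in\DD$) lies in $\A_h$ if and only if $h^i \mid s_i$ for every $i$. Everything then reduces to divisibility statements about the coefficients $r_i$.

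First I would establish the ``in particular'' decoupling statement by downward induction on the top $y$-degree $N$ of $a=\sum_{i} r_i y^i$. Using \eqref{eq:A1com} one gets $[a,x]=\sum_{i\geq 1} i r_i y^{i-1}$, whose coefficient of $y^{N-1}$ is $N r_N$; and by Corollary \ref{C:yhatprod} the coefficient of $y^N$ in $[a,\hat y]$ is $N r_N h' - r_N' h$. Both top coefficients involve only $r_N$, so since $\A_h$-membership is coefficientwise, $a\in\norm$ forces $h^{N-1}\mid N r_N$ and $h^N \mid N r_N h' - r_N' h$. These are exactly the top-degree constraints for the single term $r_N y^N$; once I check (in the single-term analysis below) that they force the remaining lower-degree constraints, I conclude $r_N y^N\in\norm$, hence $a - r_N y^N\in\norm$, and the induction proceeds. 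Thus it suffices to characterize when $r y^n\in\norm$.

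For a single term I would compute $[r y^n,x]=n r y^{n-1}$ and expand $[r y^n,\hat y]$ via Corollary \ref{C:yhatprod}, recording the conditions: (A) $h^{n-1}\mid n r$; (B) $h^n \mid n r h' - r' h$; and (C) $h^{n+1-j}\mid \binom{n+1}{j} r h^{(j)}$ for $2\leq j\leq n+1$. When $n$ is invertible in $\FF$ (characteristic $0$, or $p\nmid n$), (A) gives $h^{n-1}\mid r$, which makes (C) automatic since each $h^{n+1-j}$ with $j\geq 2$ divides $h^{n-1}$; writing $r=h^{n-1}s$ collapses (B) to $h^{n-1}(h's - h s')$, i.e.\ $h\mid h's$, which by Lemma \ref{L:defpi} is $\pi_h \mid s$, that is $\pi_h h^{n-1}\mid r$. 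This yields (i) and the $p\nmid n$ clause of (ii). When $p\mid n$, condition (A) is vacuous and (B) reads $h^{n-1}\mid r'$; translating this derivative condition through the factorization $h=\lambda\,\pr_1^{\overbar{\alpha_1}}\cdots \pr_\ell^{\overbar{\alpha_\ell}}\varrho_h$ and the identity $\ker(d/dx)=\FF[x^p]$ produces the stated parametrization $r\in c\,\varrho_h^{p-1}h^{n-p}+\FF[x^p]$ with $c'\in\DD\,(h/\varrho_h)^{p-1}$.

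Assembling the per-$i$ conditions gives (i) and (ii), and the characteristic-$0$ decomposition follows because $r_n y^n\in\norm \iff \pi_h h^{n-1}\mid r_n \iff r_n y^n\in\DD a_n$, together with $\DD\subseteq\norm$ (as $[\DD,x]=0$ and $[\DD,\hat y]\subseteq\DD\subseteq\A_h$). For part (b), the element $a_n=\pi_h h^{n-1}y^n$ satisfies (A) and (B) precisely because $h\mid h'\pi_h$ (take $r=\pi_h$ in Lemma \ref{L:defpi}); moreover $\frac{h}{\pi_h}a_n = h^n y^n\in\DD h^n y^n\subseteq\A_h$, and $h' a_n=(h'\pi_h)h^{n-1}y^n\in\A_h$ again because $h\mid h'\pi_h$ forces $h^n\mid h'\pi_h h^{n-1}$. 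The main obstacle I anticipate is the characteristic-$p$, $p\mid n$ case: showing that (B) (equivalently $h^{n-1}\mid r'$) really does imply the lower-degree conditions (C), and that the antiderivative bookkeeping yields exactly the $\varrho_h^{p-1}h^{n-p}$ form. The key leverage will be the vanishing of the binomial coefficients $\binom{n+1}{j}$ modulo $p$ (via Lucas' theorem, since $n+1\equiv 1\pmod p$) for the problematic indices $j$, combined with the divisibility of $r$ by $\varrho_h^{p-1}h^{n-p}$ forced by integrating $h^{n-1}\mid r'$; handling the few surviving $j$ is the delicate step.
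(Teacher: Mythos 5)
Your proposal is correct and follows essentially the same route as the paper: reduce membership in $\norm$ to the two conditions $[a,x]\in\A_h$ and $[a,\hat y]\in\A_h$, read these off coefficientwise via \eqref{eq:Ahexpress} using the commutator formulas of Lemma \ref{lem:identity_y^nf}/Corollary \ref{C:yhatprod}, and invoke Lemma \ref{L:defpi} to convert $h\mid sh'$ into $\pi_h\mid s$, followed by the same antiderivative bookkeeping with $\varrho_h^{p-1}h^{i-p}\in\FF[x^p]$ in the $p\mid i$ case. The only cosmetic differences are that the paper obtains your degree-by-degree decoupling in one stroke (since $\A_h$-membership is coefficientwise, no downward induction is needed), and your anticipated ``delicate'' surviving indices $j\ge 2$ when $p\mid n$ in fact all vanish --- the paper sees this by noting $y^{kp}$ is central in $\A_1$, so $[ry^{kp},hy]=-hr'y^{kp}$ outright.
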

\begin{proof}
For (a), suppose $a = \sum_{i \geq 0} r_i y^i$, where $r_i \in \DD$ for all $i$.     We  will treat the characteristic 0 and $p$ cases together by adopting the convention that $p = 0$ when $\chara(\FF) = 0$.  In that case,  the statement
$i \not \equiv 0 \modd p$ simply means $i \neq 0$, while $i \equiv 0 \modd p$ means $i = 0$.  

Now $a \in \norm$  exactly
when $[a,x]$ and $[a,\hat y]$ are in $\A_h$.    In particular,  
\begin{equation}\label{eq:ax} [a,x] \in \A_h \iff \sum_{i \not \equiv 0 \modd p}  i r_i y^{i-1} \in \A_h  \iff h^{i-1}\mid r_i \ \ \,  \forall \ \ i \not \equiv 0 \modd p \end{equation} 
by \eqref{eq:Ahexpress}.   Hence, we may assume  $a = \sum_{i \not \equiv 0 \modd p} s_i h^{i-1}y^i + \sum_{i \equiv 0 \modd p} r_i y^i$
for some $s_i \in \DD$.     Since $[a,x] \in \A_h$, it follows that $[a,g] \in \A_h$ for all $g \in \DD$.  
Therefore, $[a, \hat y] = [a,yh]  \in \A_h \iff  [a, hy] \in \A_h$.    Now using Lemma \ref{lem:identity_y^nf},  we have
\begin{eqnarray*} [a, hy] &=&  \sum_{i \not \equiv 0 \modd p} [s_i h^{i-1}y^i,hy]  + \sum_{i \equiv 0 \modd p} [r_i y^i,hy]  \\
&=&  \sum_{i \not \equiv 0 \modd p} s_i h^{i-1}\sum_{j=1}^i {i \choose j} h^{(j)} y^{i-j+1}  - \sum_{i \not \equiv 0 \modd p} (s_i h^{i-1})'h y^i \\
&& \   -    \sum_{i \equiv 0 \modd p} r_i' h y^i .
\end{eqnarray*} 
Since by \eqref{eq:Ahexpress} all the terms in the first sum with $j \geq 2$ belong to $\A_h$, 
we have 
\begin{eqnarray}\label{eq:ahy} [a,hy] \in \A_h&\iff&\hspace{-.25cm} \sum_{i \not \equiv 0 \modd p} s_i h^{i-1}h' y^i 
- \sum_{i \not \equiv 0 \modd p} s_i' h^iy^i   -    \sum_{i \equiv 0 \modd p} r_i' h y^i \in \A_h \nonumber \\
&\iff &\hspace{-.25cm}  \sum_{i \not \equiv 0 \modd p} s_i h^{i-1}h' y^i 
-    \sum_{i \equiv 0 \modd p} r_i' h y^i \in \A_h,  
\end{eqnarray}  
as  $s_i'h^i y^i \in \A_h$ for all $i \not \equiv 0$,  again  using  \eqref{eq:Ahexpress}.  

From this we deduce that  $h^i$ must divide $s_i h^{i-1}h'$ for all $i \not \equiv 0 \modd p$; that is, 
$h$ must divide $s_i h'$ for all such $i$.   By Lemma \ref{L:defpi}, this means that $\pi_h$
divides $s_i$ for each $i \not \equiv 0 \modd p$, and in turn this says that $\pi_h h^{i-1}$ divides $r_{i}$   for all $i \not \equiv 0 \modd p$.   In particular,  (i)  and the first assertion  of (ii) hold.

Now from \eqref{eq:ahy}, we also see that $h^{i-1} \mid r_i'$ for all $i \equiv 0 \modd p$, $i > 0$.  
 Note that  $h^{i-1} =  h^{i-p}h^{p-1} = (\frac{h}{\varrho_h})^{p-1}\varrho_h^{p-1}h^{i-p}$.
 Hence, we may write $r_i' = d_i v_i$, where $d_i \in \DD(\frac{h}{\varrho_h})^{p-1}$ and $v_i =
 \varrho_h^{p-1}h^{i-p} \in \FF[x^p]$.     Since $d_i v_i \in \im \frac{d}{dx}=  
\sum_{j=0}^{p-2} \FF[x^p]x^j$ and $v_i \in \FF[x^p]$,  it follows that  $d_i \in  
\sum_{j=0}^{p-2} \FF[x^p]x^j$.    Therefore $d_i = c_i'$ for some $c_i \in \DD$, and
$(c_iv_i)' = c_i' v_i = d_i v_i = r_i'$.      This gives  $r_i \in c_i v_i + \FF[x^p] = 
c_i \varrho_h^{p-1}h^{i-p} + \FF[x^p]$, as in (ii).        That $ r_i y^i \in \norm$ 
for every $r_i$ of this form for $i \equiv 0 \modd p$, $i > 0$,  can be shown by direct computation.    
This proves the remaining parts of (a).

The first part of (b) is an immediate consequence of (a)  except when $n \equiv 0 \, \modd p$ and $\chara(\FF) = p > 0$.   For  $a_{kp} = \pi_h h^{kp-1} y^{kp}$ with $k \geq 1$,   observe that  $[ra_{kp}, f] = 0$ for all $r,f \in \DD$
since $y^{kp} \in \cent1$.    Moreover,
\begin{eqnarray*}  [ra_{kp}, hy] &=& h[r\pi_hh^{kp-1},y] y^{kp} = -h (r\pi_h h^{kp-1})' y^{kp} \\
&=& -(r \pi_h)' h^{kp}y^{kp} + r\pi_h h' h^{kp-1}y^{kp}, \end{eqnarray*}
which is in $\A_h$ by \eqref{eq:Ahexpress} and  the fact that $h$ divides $\pi_h h'$ by Lemma \ref{L:defpi}. Now $h' a_n = h' \pi_h h^{n-1}y^n \in \A_h$ is a consequence of that fact too, and   $\frac{h}{\pi_h}a_n = h^n y^n \in \A_h$ is clear.  \end{proof}

 \begin{remark}\label{R:norm}  It  follows from Theorem \ref{T:norm}  that when $\chara(\FF) = 0$ and $\frac{h}{\pi_h} \in \FF^*$, then $\norm = \A_h$.  
 
 If  $\chara(\FF) = p > 0$, we set 
\begin{align} \label{eq:normsplit} \begin{split}  \norm_{\not \equiv 0} &= \ \norm \cap \Big(\bigoplus_{i \not \equiv 0 \modd p} \DD y^i\Big), \\
 \norm_{\equiv 0} &= \ \norm \cap \mathsf{C}_{\A_1}(x). \end{split} \end{align} 
Then every   $a \in \norm$ has a unique expression $a = b+c$ with
$ b \in \norm_{\not \equiv 0}$ and 
$c \in \norm_{\equiv 0}.$
In particular, when $\frac{h}{\pi_h} \in \FF^*$,  then $b \in \A_h$.  
\end{remark}  \smallskip

\begin{section}{Derivations of $\A_1$}\label{derA1} \end{section}

We will use derivations of $\A_1$ heavily in our investigations of derivations of $\A_h$.
In the next result, we provide a quick proof of the known fact that the derivations
of $\A_1$ are inner in the $\chara(\FF) = 0$ case, in part to establish the notation we will adopt later.

\medskip

\subsection{$\der(\A_1)$ when $\chara(\FF) = 0$} \hfil  \smallskip
  
\begin{prop}\label{P:Weyl0}{\rm (Cf. \cite[Lem. 4.6.8]{Dix96}).}  Assume $\chara(\FF) = 0$.   Then every derivation of the Weyl
algebra $\A_1$ is inner. \end{prop}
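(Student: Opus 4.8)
The plan is to exploit that a derivation $D$ of $\A_1$ is completely determined by the pair $a = D(x)$, $b = D(y)$, subject only to the single compatibility condition obtained by applying $D$ to the defining relation $[y,x]=1$, namely
\begin{equation*}
[y,a] + [b,x] = 0.
\end{equation*}
To prove $D$ is inner I would produce an element $u \in \A_1$ with $D = \ad_u$, constructed in two stages: first subtract an inner derivation to arrange $D(x) = 0$, and then match $D(y)$ by an inner derivation coming from $\FF[x]$.

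For the first stage, the key observation is that the map $\A_1 \to \A_1$, $u \mapsto [u, x]$, is surjective when $\chara(\FF) = 0$. Indeed, working in the basis $\{x^i y^j \mid i,j \geq 0\}$ and using the special case $[y^j, x] = j\,y^{j-1}$ of Lemma~\ref{lem:identity_y^nf}, one finds $[x^i y^j, x] = j\,x^i y^{j-1}$; since every integer $j \geq 1$ is invertible in characteristic $0$, each basis element $x^i y^{j-1}$, and hence every element of $\A_1$, arises this way. Thus there is $u_1 \in \A_1$ with $[u_1, x] = a$, and replacing $D$ by $D - \ad_{u_1}$ I may assume $D(x) = 0$.

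Once $D(x) = 0$, the compatibility condition collapses to $[D(y), x] = 0$, so $D(y)$ lies in the centralizer $\mathsf{C}_{\A_1}(x)$, which equals $\FF[x]$ in characteristic $0$ by Lemma~\ref{L:clizer}. Writing $D(y) = f(x)$, I would recover $D$ as the inner derivation $\ad_{u_2}$ with $u_2 \in \FF[x]$ chosen so that $u_2' = -f$ (a polynomial antiderivative, which exists precisely because $\chara(\FF)=0$): then $\ad_{u_2}(x) = 0$ and, using $[y,g]=g'$ from Lemma~\ref{lem:identity_y^nf}, $\ad_{u_2}(y) = [u_2, y] = -[y, u_2] = -u_2' = f = D(y)$. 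Hence $D = \ad_{u_2}$ agrees with $D$ on the generators and so everywhere, and combining the two stages gives $D = \ad_{u_1 + u_2}$, an inner derivation.

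The argument presents no deep obstacle; rather, the entire proof rests on the two places where one divides by positive integers, namely the surjectivity of $u \mapsto [u,x]$ and the existence of a polynomial antiderivative of $f$. Both of these fail in characteristic $p$, which is exactly why genuinely non-inner derivations (the $E_x$ and $E_y$ introduced later) emerge in that setting and must be handled separately; so the only real care needed here is to keep track of the characteristic-$0$ hypothesis at each step.
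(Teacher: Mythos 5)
Your proposal is correct and follows essentially the same two-stage argument as the paper: the explicit element $u_1$ you obtain from the surjectivity of $u \mapsto [u,x]$ is exactly the paper's $u = \sum_i \frac{d_i}{i+1}y^{i+1}$, and the second stage (landing in $\mathsf{C}_{\A_1}(x) = \FF[x]$ and taking a polynomial antiderivative) is identical. Your closing remark correctly isolates the two divisions by integers as the points where characteristic $0$ is used.
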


\begin{proof}    Suppose $D \in \der(\A_1)$.  Assume that
 $D(x) = \sum_{i \geq 0} d_i y^i$, where  $d_i  \in \DD=\FF[x]$ for all $i$.  
Set 
$$u = \sum_{i \geq 0}   \frac{d_i}{i+1} y^{i+1}.$$
Then $\ad_u(x) =  \sum_{i \geq 0} d_i y^i  = D(x)$, so that 
$E = D-\ad_u \in \der(\A_1)$ has the property that 
$E(x) = 0$.        

Then from $[E(y), x] + [y, E(x)] = E(1) = 0$, we determine that
$[E(y),x] = 0$.     Thus,  
$E(y) \in  \mathsf{C}_{\A_1}(x)= \DD$ by Lemma \ref{L:clizer}.    
Since $E(y) \in \DD$ and $\chara(\FF) = 0$,  there exists a $w \in \DD$ such that 
$w' = -E(y)$.     Then $\ad_w(x) = 0 = E(x)$ and $\ad_w(y) = [w,y] = -w' = E(y)$.   
Therefore $D-\ad_u = E = \ad_w$ and 
$D = \ad_u  +  \ad_w \in \inder(\A_1)$.  Hence,  
$\der(\A_1) = \inder(\A_1)$.   \end{proof} \smallskip

\subsection{$\der(\A_1)$ when $\chara(\FF) = p > 0$}  \hfil
\smallskip
 
\subsubsection{The derivations $E_x$ and $E_y$} \hfil 

\smallskip

Over fields of characteristic $p > 0$, the derivations $ (\ad_x)^p =\ad_{x^p}$ and $(\ad_y)^p =\ad_{y^p}$ are 0 on the Weyl algebra  $\A_1$.   However, $\A_1$ has two special derivations
 $E_x$ and $E_y$,  which are specified by
\begin{equation}\label{eq:ExEy} E_x(x) = y^{p-1},  \ \   E_x(y) = 0, \quad \hbox{\rm and} \quad   E_y(x) = 0, \ \ E_y(y) = x^{p-1}.
\end{equation}
Then  $zE_x$ and $zE_y$ are also derivations of $\A_1$ for every $z \in \cent1 = \FF[x^p,y^p]$.   Let
$\varphi$ be the anti-automorphism of $\A_1$ defined by
\begin{equation}\label{eq:psidef} \varphi(x) = y,  \quad \varphi(y) = x. \end{equation}    Then 
\begin{equation}\label{eq:psiEpsiinv}  \varphi E_x  \varphi = \varphi E_x  \varphi^{-1} = E_y,  \quad \hbox{\rm and} \quad  \varphi E_y  \varphi =  \varphi  E_y   \varphi^{-1} = E_x.\end{equation} 

\begin{lemma}\label{lem:dersum}  Assume $\A_1$ is the Weyl algebra over $\FF$, where $\chara(\FF) = p > 0$. 
Then 
$$\der(\A_1) =\cent1 E_x + \cent1 E_y +  \inder(\A_1).$$ \end{lemma}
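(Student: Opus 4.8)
The plan is to show that an arbitrary derivation $D \in \der(\A_1)$ can be adjusted by a $\cent1$-combination of $E_x$ and $E_y$ and an inner derivation until it becomes the zero map, using the structure of the centralizer $\mathsf{C}_{\A_1}(x) = \FF[x,y^p]$ from Lemma~\ref{L:clizer}. The characteristic~$0$ argument in Proposition~\ref{P:Weyl0} proceeds by integrating $D(x)$ in the variable $y$ to kill $D(x)$ with an inner derivation, then integrating the resulting $E(y) \in \DD$ in $x$; in characteristic $p$ the obstruction is precisely that powers $y^{p-1}$ and $x^{p-1}$ are not in the image of the respective formal integration maps. The derivations $E_x$ and $E_y$ are designed to supply exactly these missing terms.

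\medskip

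First I would write $D(x) = \sum_{i \ge 0} d_i y^i$ with $d_i \in \DD = \FF[x]$. The goal is to find $u \in \A_1$ and $z \in \cent1$ so that $\ad_u(x) + z E_x(x) = D(x)$. Since $\ad_{v}(x) = [v,x]$ and $[y^{i+1},x] = (i+1)y^i$ by \eqref{eq:A1com}, the terms $d_i y^i$ with $i+1 \not\equiv 0 \pmod p$ can be produced by $\ad_u$ with $u = \sum_{i+1 \not\equiv 0} \frac{d_i}{i+1} y^{i+1}$, exactly as in the characteristic~$0$ case. The terms $d_i y^i$ with $i \equiv p-1 \pmod p$ are the obstruction: here I would use $E_x$, whose defining property $E_x(x) = y^{p-1}$ together with $\cent1$-linearity gives $z E_x(x) = z y^{p-1}$ for $z \in \FF[x^p,y^p]$, and since $y^i = y^{p-1} \cdot y^{i-(p-1)}$ with $y^{i-(p-1)} \in \FF[y^p]$ whenever $i \equiv p-1 \pmod p$, one can absorb all such terms (with their $\FF[x^p]$-parts of the coefficients $d_i$) into a suitable $z E_x$. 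After subtracting, I obtain $E = D - \ad_u - z E_x$ with $E(x) = 0$.

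\medskip

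With $E(x) = 0$, the Leibniz relation $[E(y),x] + [y,E(x)] = E([y,x]) = E(1) = 0$ forces $E(y) \in \mathsf{C}_{\A_1}(x) = \FF[x,y^p]$ by Lemma~\ref{L:clizer}. Now I repeat the integration argument in the other variable, using $\ad_w(y) = [w,y] = -w'$ for $w \in \DD$ to handle the part of $E(y)$ lying in $\DD = \FF[x]$, noting that terms $x^j$ with $j+1 \not\equiv 0 \pmod p$ are hit by $\frac{d}{dx}$ while the terms $x^{p-1} \cdot (\text{element of }\FF[x^p])$ are supplied by $E_y$ via $E_y(y) = x^{p-1}$ and $\cent1$-linearity. (By symmetry under the anti-automorphism $\varphi$ of \eqref{eq:psidef}--\eqref{eq:psiEpsiinv}, this second step mirrors the first.) One must also check that adding these further correcting terms does not disturb the vanishing on $x$: the inner derivation $\ad_w$ with $w \in \DD$ satisfies $\ad_w(x) = 0$, and $E_y(x) = 0$ by \eqref{eq:ExEy}, so the adjustment is compatible. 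After these subtractions the residual derivation vanishes on both generators $x$ and $y$, hence is zero, yielding $D \in \cent1 E_x + \cent1 E_y + \inder(\A_1)$.

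\medskip

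The main obstacle I anticipate is the bookkeeping in the first step: one must verify that every monomial $d_i y^i$ with $i \equiv p-1 \pmod p$ can genuinely be matched by a single $z E_x$ with $z \in \cent1 = \FF[x^p,y^p]$, which requires that the relevant coefficient $d_i$ decompose through $\FF[x^p]$ after the $\ad_u$ correction has removed the complementary part; the interaction between the $\FF[x^p]$ and non-$\FF[x^p]$ parts of the coefficients, and confirming that the $y$-powers align into $\FF[y^p]$-multiples of $y^{p-1}$, is where the characteristic-$p$ argument genuinely departs from the characteristic-$0$ template and must be done carefully.
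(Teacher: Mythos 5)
Your overall route is the same as the paper's: kill the part of $D(x)$ reachable by formal integration in $y$ with an inner derivation, use $E_x$ for the residual terms $d_i y^i$ with $i \equiv -1 \modd p$, then repeat in the other variable with $E_y$. But there is a genuine gap at exactly the point you flag as your ``main obstacle'' and then leave unresolved: to absorb $\sum_{i \equiv -1 \modd p} d_i y^i$ into $z E_x(x) = z\,y^{p-1}$ with $z \in \cent1 = \FF[x^p,y^p]$, you need every such $d_i$ to lie in $\FF[x^p]$, and nothing in your argument shows this. Without that fact the first reduction does not go through (a priori $D(x)$ could contain a term like $x\,y^{p-1}$, which is not of the form $z\,y^{p-1}$ with $z$ central), so the assertion ``after subtracting, I obtain $E(x)=0$'' is unjustified. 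Moreover, your ordering of the steps blocks the fix: you plan to invoke the Leibniz relation only \emph{after} $E(x)=0$ has been arranged, whereas that relation is precisely what is needed to arrange it.

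The missing idea is to apply the Leibniz constraint to $[y,x]=1$ at the intermediate stage. Set $E = D - \ad_u$, so that $E(x) = \sum_{i \equiv -1 \modd p} d_i y^i$, and write $E(y) = \sum_{j} e_j y^j$ with $e_j \in \DD$. Then $0 = E([y,x]) = [E(y),x] + [y,E(x)] = \sum_{j \ge 1} j e_j y^{j-1} + \sum_{i \equiv -1 \modd p} d_i' y^i$. The surviving terms of the first sum sit in powers $y^{j-1}$ with $j \not\equiv 0 \modd p$, i.e.\ exponents $\not\equiv -1 \modd p$, while the second sum sits entirely in exponents $\equiv -1 \modd p$; since $\A_1 = \bigoplus_i \DD y^i$, every coefficient vanishes. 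This gives $d_i' = 0$, hence $d_i \in \FF[x^p]$, which legitimizes your correction with $z = \sum_{i \equiv -1 \modd p} d_i y^{\,i-(p-1)} \in \cent1$, and simultaneously gives $e_j = 0$ for $j \not\equiv 0 \modd p$, which is the shape of $E(y)$ you need next. A secondary imprecision in your second step: after these corrections $E(y)$ lies in $\bigoplus_{j \equiv 0 \modd p} \DD y^j$, not merely in $\DD$, so the inner correction there must be $\ad_c$ with $c = \sum_{j \equiv 0 \modd p} r_j y^j$, where one writes $e_j = c_j x^{p-1} - r_j'$ with $c_j \in \FF[x^p]$ and takes $z' = \sum_{j \equiv 0 \modd p} c_j y^j \in \cent1$ for the $E_y$-part; this is harmless because $[r_j y^j, y] = -r_j' y^j$ and $[r_j y^j, x] = 0$ for $j \equiv 0 \modd p$, but your appeal to ``the part of $E(y)$ lying in $\DD$'' only covers $j=0$.
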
 
\begin{proof}   The right side is clearly contained in $\der(\A_1)$.  For the other containment, suppose $D \in \der(\A_1)$, and assume that $D(x) = \sum_{i \geq 0} d_i y^i$, where  $d_i  \in \DD$ for all $i$.  Set 
$$b = \sum_{ {i \not \equiv -1 \modd p}} \frac{d_i}{i+1} y^{i+1}.$$
Then $\ad_b(x) =  \sum_{i \not \equiv -1\modd p} d_i y^i  $, so that 
$E = D-\ad_b \in \der(\A_1)$ has the property that  \ $E(x) = \sum_{{ i \equiv -1 \modd p}}  d_i  y^i.$ 
     
Suppose that
$E(y) = \sum_{j \geq 0} e_j y^j$,  where $e_j \in \DD$ for all $j$.   
Then  $$0 = E(1) = [E(y),x] + [y, E(x)] = \sum_{j \geq 1} j e_j  y^{j-1} + \sum_{{ i \equiv -1 \modd p}} d_i' y^i, $$ from which we determine that 
$d_i' = 0$ for all  $i \equiv -1 \modd p$,  and $e_j = 0$
for all $j \not \equiv 0 \ \modd p$.      The first  implies   
$d_i \in \FF[x^p]$ for all such $i$, so that   $w = \sum_{i \equiv -1 \modd p} d_i y^{i-(p-1)}\in \cent1$ 
and $E(x) = w y^{p-1} = w E_x(x).$     As a result,  $F = E-wE_x$ has the property that $F(x) = 0$
and $F(y) = \sum_{j \equiv 0 \modd p} e_j y^j$.  

Now it is a  direct consequence of the decomposition $\DD = \bigoplus_{j=0}^{p-1} \FF[x^p] x^j$ and the fact that $\im \frac{d}{dx} = \bigoplus_{j=0}^{p-2} \FF[x^p] x^j$ that every $e \in \DD$ can be expressed as $e= cx^{p-1} -r'$ for some $r\in \DD$ and  a unique $c\in \FF[x^p]$.    Applying that result  to each 
$e_j$, we have that there exist  $c_j \in \FF[x^p]$ and $r_j \in \DD$, so that $e_j = c_j x^{p-1} - r_j'$.  
Then $F(y) = \sum_{j \equiv 0 \modd p} e_j y^j = \left (\sum_{j\equiv 0 \modd p} c_j y^j\right)x^{p-1} - \sum_{j \equiv 0 \modd p}r_j' y^j$.   Setting  $z = \sum_{j\equiv 0 \modd p} c_j y^j$ and $ c =  \sum_{j \equiv 0 \modd p}r_j y^j$, we see that $z \in \cent1$ and  $(F - zE_y - \ad_c)(x) = 0 = (F - zE_y - \ad_c)(y)$.      Consequently, 
$D = w E_x + z E_y + \ad_b + \ad_c \in \cent1 E_x + \cent1 E_y +  \inder(\A_1).$  \end{proof}    

\subsubsection{The action of $E_x$ and $E_y$ on $\A_1$} \hfil   \smallskip

The next lemma describes how $E_x$ and $E_y$ act on various elements of $\A_1$.  

\begin{lemma}\label{lem:ee}  Assume $\chara(\FF) = p >0$.  When $g \in \FF[x]$, let $g^{(k)} = \left(\frac{d}{dx}\right)^{k}(g)$,
and when $g \in \FF[y]$, let $g^{(k)} = \left(\frac{d}{dy}\right)^{k}(g)$.  Assume $\varphi$ is the anti-automorphism in \eqref{eq:psidef},    
and let \ $\parp: \FF[x]  \rightarrow \FF[x]$ \ be the $\FF$-linear map defined by
\begin{equation}\label{eq:projdef}
\parp\left(\sum_{i=0}^{p-1}r_i x^i \right)=\sum_{i=0}^{p-1}\frac{d}{d(x^p)}\big(r_i\big)\,x^i,\quad\ \  \mbox{for \  $r_i\in\FF[x^p]$.}
\end{equation}  \noindent Then the following hold in $\A_1$:

\begin{eqnarray*} 
&&\hspace{-1.25cm}\hbox{\rm(a)\ \ } E_x(x^n) = \sum_{k=1}^{p} {n \choose k} x^{n-k} (y^{p-1})^{(k-1)} \quad \quad \hbox{\rm for} \ \, n \geq 1; \\
&&\hspace{-1.25cm}\hbox{\rm(b)\ \ }E_x(g) = \sum_{k=1}^{p-1} \frac{(-1)^{k-1}}{k} g^{(k)} y^{p-k} -\parp(g)  \quad \hbox{\rm for all} \ \, g \in \FF[x]; \\
&&\hspace{-1.25cm}\hbox{\rm(c)\ \ }E_x  = - \frac{d}{d(x^p)}\ \ \hbox{\rm on} \ \, \FF[x^p] \ \ \hbox{\rm  and}  \ \  E_x(g^{p}) = -(g')^p  \ \ \hbox{\rm for all}
 \ \, g \in \FF[x]; \\
&&\hspace{-1.25cm}\hbox{\rm(d)\ \ }E_y(g) =  \sum_{k=1}^{p-1} \frac{(-1)^{k-1}}{k} x^{p-k}g^{(k)} -\varphi \parp (g(x)) \ \ \hbox{\rm for all} \ \, g \in \FF[y]; \\
&&\hspace{-1.25cm}\hbox{\rm(e)\ \ }E_y(\hat y) = E_y(y)h = x^{p-1} h; \\
&&\hspace{-1.25cm}\hbox{\rm(f)\ \ }E_x(\hat y) = h' y^p  + \sum_{k=1}^{p-2} \frac{(-1)^{k-1}}{(k+1)k} h^{(k+1)}y^{p-k}   - \parp(h) \, y  - \parp(h').
\end{eqnarray*}
\end{lemma}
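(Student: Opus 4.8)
The plan is to establish the six identities in order, using (a) and (b) as the computational core and then deducing (c)--(f) from them together with the anti-automorphism $\varphi$ and the commutation formula \eqref{eq:A1com}. First I would prove (a) by induction on $n$. Since $E_x$ is a derivation with $E_x(x)=y^{p-1}$ and $E_x(y)=0$, the Leibniz rule gives $E_x(x^n)=E_x(x)x^{n-1}+x\,E_x(x^{n-1})=y^{p-1}x^{n-1}+x\,E_x(x^{n-1})$. To put the first term in normal order I would use the identity $f(y)x^m=\sum_{k\ge0}\binom{m}{k}x^{m-k}f^{(k)}(y)$ (with $f^{(k)}$ the $k$-th $y$-derivative), which follows from the commutation relation exactly as in Lemma~\ref{lem:identity_y^nf}, apply the inductive hypothesis to $E_x(x^{n-1})$, and collapse the two contributions using Pascal's rule $\binom{n-1}{k-1}+\binom{n-1}{k}=\binom{n}{k}$. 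The sum truncates at $k=p$ because $(y^{p-1})^{(k-1)}=0$ once $k-1\ge p$.

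Next, (b) follows from (a) by $\FF$-linearity once the scalars are simplified. Writing $\binom{n}{k}x^{n-k}=\frac{1}{k!}g^{(k)}$ for $g=x^n$ and $(y^{p-1})^{(k-1)}=\frac{(p-1)!}{(p-k)!}\,y^{p-k}$, the $k$-th term carries the coefficient $\frac{(p-1)!}{k!(p-k)!}=\frac1p\binom{p}{k}$. For $1\le k\le p-1$ I would use $\binom{p-1}{k-1}\equiv(-1)^{k-1}\pmod p$ to obtain $\frac1p\binom{p}{k}\equiv\frac{(-1)^{k-1}}{k}$, producing the displayed sum. The delicate point is the $k=p$ term: there Wilson's congruence $(p-1)!\equiv-1$ and Lucas' congruence $\binom{n}{p}\equiv\lfloor n/p\rfloor$ show it equals $-\lfloor n/p\rfloor\,x^{n-p}$, while a direct reading of \eqref{eq:projdef} gives $\parp(x^n)=\lfloor n/p\rfloor\,x^{n-p}$, so this term is precisely $-\parp(g)$. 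I expect this mod-$p$ bookkeeping to be the main obstacle of the entire lemma.

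With (a) and (b) in hand the remaining parts are short. For (c), if $g\in\FF[x^p]$ then $g^{(k)}=0$ for $k\ge1$, so (b) collapses to $E_x(g)=-\parp(g)$, and $\parp$ restricts to $\frac{d}{d(x^p)}$ on $\FF[x^p]$ by \eqref{eq:projdef}; applying this to $g^p\in\FF[x^p]$ (Frobenius) and using $i^p\equiv i$ gives $E_x(g^p)=-(g')^p$. For (d) I would invoke $E_y=\varphi E_x\varphi$ from \eqref{eq:psiEpsiinv}: since $\varphi(g)=g(x)\in\FF[x]$ by \eqref{eq:psidef}, I apply (b) to $g(x)$ and then apply the anti-automorphism $\varphi$ termwise, so each $y^{p-k}$ becomes $x^{p-k}$ on the left and the $x$-derivatives become $y$-derivatives, yielding the stated formula. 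Identity (e) is immediate: $E_y$ annihilates $\FF[x]$ and $h\in\FF[x]$, so $E_y(\hat y)=E_y(yh)=E_y(y)h=x^{p-1}h$ by \eqref{eq:ExEy} and Conventions~\ref{con:gens}.

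Finally, (f) starts from $E_x(\hat y)=E_x(yh)=y\,E_x(h)$, since $E_x(y)=0$. Substituting (b) and commuting $y$ past each $\FF[x]$-coefficient via $[y,f]=f'$ from \eqref{eq:A1com} splits the expression into two sums plus the terms coming from $y\,\parp(h)$. Here I would use that $\parp$ commutes with $\frac{d}{dx}$ (checked on the basis $\{r_ix^i : r_i\in\FF[x^p],\ 0\le i<p\}$ of \eqref{eq:projdef}), so that $y\,\parp(h)=\parp(h)\,y+\parp(h')$, and that $h^{(p)}=0$ because the $p$-th derivative of each monomial carries a product of $p$ consecutive integers. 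Recombining the two surviving sums via $\frac1m-\frac1{m+1}=\frac{1}{m(m+1)}$ then produces the coefficients $\frac{(-1)^{m-1}}{(m+1)m}$, the leading term $h'y^p$ (from $k=1$), and the remaining terms $-\parp(h)\,y-\parp(h')$, as claimed.
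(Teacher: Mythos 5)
Your proof is correct and follows essentially the same route as the paper: induction for (a), the mod-$p$ identities $\binom{p-1}{k-1}\equiv(-1)^{k-1}$ and $(p-1)!\equiv-1$ together with $\parp(x^n)=\binom{n}{p}x^{n-p}$ for (b), specialization to $\FF[x^p]$ for (c), the anti-automorphism $\varphi$ for (d), and commuting $y$ past the coefficients via $[y,\parp(f)]=\parp(f')$ for (f). The only cosmetic differences are that you split the Leibniz rule the other way in the induction for (a) and phrase the coefficient in (b) as $\tfrac{1}{p}\binom{p}{k}$ before reducing it to $\tfrac{1}{k}\binom{p-1}{k-1}$.
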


\noindent{Proof.} \  Part (a) can be shown using induction on $n$ (the case $n=1$ saying $E_x(x) = y^{p-1}$).  
Assume $E_x(x^n) = \sum_{k=1}^n {n \choose k} x^{n-k} (y^{p-1})^{(k-1)}$, and 
substitute that expression into $E_x(x^{n+1}) = E_x(x^n)x +  x^n E_x(x)$.
Applying  the fact that $fx = xf + \frac{d}{dy}(f)$ for all $f \in \FF[y]$  to the first summand and simplifying gives  the desired expression for the  $n+1$ case.   Since 
$(y^{p-1})^{(k-1)} = 0$ for all $k > p$,  the index of summation need only go up to $p$.   

For (b),  we have using  ${{p-1}\choose {k-1}} = (-1)^{k-1}$ and  $(p-1)! = -1$  that 
\begin{align*}E_x(x^n) &=  \sum_{k=1}^{p} {n \choose k} x^{n-k} (y^{p-1})^{(k-1)} \\
&=  \sum_{k=1}^{p-1}\frac{(x^n)^{(k)}}{k!} {{p-1}\choose {k-1}} (k-1)! y^{p-k} - {n \choose p} x^{n-p}\\ 
&=  \sum_{k=1}^{p-1} \frac{(-1)^{k-1}}{k}(x^n)^{(k)}y^{p-k}  - {n \choose p} x^{n-p}. 
 \end{align*}
 
\noindent Now if $n=jp+\ell$  with $0\leq \ell <p$,  then $x^{n}= (x^{p})^j x^\ell$ and ${n \choose p} = j$, so $\parp (x^n)={n \choose p} x^{n-p}$.  
Thus, 
$$E_x(x^n) =  \sum_{k=1}^{p-1} \frac{(-1)^{k-1}}{k} (x^n)^{(k)}y^{p-k} -\parp(x^n),$$  
where $\parp$ is as in \eqref{eq:projdef}.   This, together with the linearity of derivations,  implies (b).  
 
As a special case of  (b),  we have  $E_x(x^{jp}) =  -j x^{(j-1)p}$ for all $j \geq 1$ so  that 
$E_x = - \frac{d}{d(x^p)}$ on $\FF[x^p]$.   
In particular, if  $g(x) =  \sum_{j\geq 0} \gamma_j x^j$,  then,  as claimed in (c),   
$$E_x(g^p) = \sum_{j\geq 0} \gamma_j^p E_x(x^{jp}) = -   \sum_{j\geq 1}  j\gamma_j^p  x^{(j-1)p}
= - \sum_{j\geq 1} j^p \gamma_j^p  x^{(j-1)p}
= -(g')^p.$$   

For (d), applying  the anti-automorphism $\varphi$ in \eqref{eq:psidef}   which interchanges $x$ and $y$, and using  
\eqref{eq:psiEpsiinv}, we have  $E_y(g(y)) =  \varphi E_x  \varphi^{-1}(g(y)) = \varphi (E_x(g(x)))$
for $g(y) \in \FF[y]$, and so (d) now follows from applying $\varphi$ to (b). 

Part (e) is apparent, and  (f) can be derived from the following calculation which uses the relation  $[y, \parp(f)]=\parp(f')$, for $f\in\DD$:
\begin{eqnarray*} \  \  E_x(\hat y) &=& E_x(yh) = yE_x(h) =  y \, \sum_{k=1}^{p-1} \frac{(-1)^{k-1}}{k} h^{(k)}y^{p-k} - y\parp(h)\\
&=&\sum_{k=1}^{p-1} \frac{(-1)^{k-1}}{k} \left(h^{(k)}y +h^{(k+1)}\right)y^{p-k} - \parp(h) \, y  - \parp(h') \\ 
 &=& h' y^p  + \sum_{k=1}^{p-2} \frac{(-1)^{k-1}}{(k+1)k} h^{(k+1)}y^{p-k} - \parp(h) \, y  - \parp(h'). \hspace{1.8cm} \square \end{eqnarray*}

We have the following consequence of this result.

\begin{thm}\label{thm:derA1}  Assume $\A_1$ is the Weyl algebra over $\FF$, where $\chara(\FF) = p > 0$. 
Then 
\begin{itemize}
\item[{\rm (a)}]  $\der(\A_1) =\cent1 E_x \oplus \cent1 E_y \oplus  \inder(\A_1),$ 
where $E_x,E_y \in \der(\A_1)$ are given by $E_x(x) = y^{p-1},\,E_x(y) = 0,\,
E_y(x) = 0, \, E_y(y) = x^{p-1}$.
\item[{\rm (b)}]  $\hoch(\A_1) =  \der(\A_1)/\inder(\A_1) \cong \der(\FF[t_1, t_2])$ as Lie algebras, 
where $t_1 = x^p$, $t_2 = y^p$.  \end{itemize}  \end{thm}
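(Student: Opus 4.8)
The plan is to take the sum decomposition already furnished by Lemma~\ref{lem:dersum}, namely $\der(\A_1) = \cent1 E_x + \cent1 E_y + \inder(\A_1)$, and upgrade it to a \emph{direct} sum by restricting derivations to the center. First I would record the standard fact that every $D \in \der(\A_1)$ maps $\cent1$ into itself: for $z \in \cent1$ and $a \in \A_1$ we have $[D(z),a] = D([z,a]) - [z,D(a)] = 0$, so $D(z) \in \cent1$. This produces a restriction map $\Res \colon \der(\A_1) \to \der(\cent1)$, and it is a homomorphism of Lie algebras because each derivation preserves $\cent1$ (so restriction of a commutator is the commutator of restrictions). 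Identifying $\cent1 = \FF[x^p,y^p]$ with the polynomial ring $\FF[t_1,t_2]$ via $t_1 = x^p$, $t_2 = y^p$, the module $\der(\cent1)$ is free over $\cent1$ on the coordinate partial derivatives $\partial_{t_1}, \partial_{t_2}$.

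Next I would compute the images of the three building blocks under $\Res$. Inner derivations act trivially on $\cent1$, so $\Res(\ad_u) = 0$ for all $u \in \A_1$. By Lemma~\ref{lem:ee}(c), $E_x$ acts on $\FF[x^p]$ as $-\tfrac{d}{d(x^p)}$, while $E_x(y^p) = 0$ because $E_x(y) = 0$; hence $\Res(E_x) = -\partial_{t_1}$, and applying the anti-automorphism $\varphi$ symmetrically gives $\Res(E_y) = -\partial_{t_2}$. Since each $z \in \cent1$ is central, $(zE_x)|_{\cent1} = z\cdot(E_x|_{\cent1})$, so $\Res(z_1 E_x + z_2 E_y) = -z_1\partial_{t_1} - z_2\partial_{t_2}$ for $z_1,z_2 \in \cent1$. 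To prove (a), suppose $z_1 E_x + z_2 E_y + \ad_u = 0$ with $z_1, z_2 \in \cent1$ and $u \in \A_1$. Applying $\Res$ and using $\Res(\ad_u) = 0$ yields $z_1\partial_{t_1} + z_2\partial_{t_2} = 0$ in $\der(\cent1)$; freeness then forces $z_1 = z_2 = 0$, whence $\ad_u = 0$ as well. Thus the three summands meet only in $0$ and the sum is direct, establishing~(a).

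For (b), the same computation pins down the kernel and image of $\Res$. The directness argument shows $\ker\Res = \inder(\A_1)$: the inclusion $\inder(\A_1) \subseteq \ker\Res$ is the vanishing of $\Res$ on inner derivations, while $\Res(D) = 0$ together with the decomposition in~(a) forces the $\cent1 E_x$- and $\cent1 E_y$-components of $D$ to vanish, so $D \in \inder(\A_1)$. Surjectivity is immediate, since every derivation of $\FF[t_1,t_2]$ has the form $-z_1\partial_{t_1} - z_2\partial_{t_2} = \Res(z_1 E_x + z_2 E_y)$. The first isomorphism theorem for Lie algebras then gives $\hoch(\A_1) = \der(\A_1)/\inder(\A_1) \cong \im\Res = \der(\FF[t_1,t_2])$, an isomorphism of Lie algebras because $\Res$ is a Lie homomorphism.

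The conceptual heart of the argument, and the step I expect to demand the most care, is the exact identification of $\Res(E_x)$ and $\Res(E_y)$ with the coordinate partials of $\cent1 \cong \FF[t_1,t_2]$; this is precisely where Lemma~\ref{lem:ee}(c) is indispensable, and it is worth verifying the signs (e.g.\ $E_x(x^p) = -1$) so that the correspondences $E_x \leftrightarrow -\partial_{t_1}$ and $E_y \leftrightarrow -\partial_{t_2}$ hold on the nose. Once $\Res$ is correctly determined, both the directness in~(a) and the cohomology computation in~(b) follow formally from the freeness of $\der(\FF[t_1,t_2])$ over $\FF[t_1,t_2]$.
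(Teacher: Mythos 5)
Your proposal is correct and follows essentially the same route as the paper: both rely on Lemma~\ref{lem:dersum} for the sum decomposition and on Lemma~\ref{lem:ee}(c) to identify $\Res(E_x)=-\frac{d}{dt_1}$, $\Res(E_y)=-\frac{d}{dt_2}$, with the directness and the isomorphism in (b) then read off from the restriction map to the center. The only cosmetic difference is that the paper checks directness by applying $D$ directly to $x^p$ and $y^p$ rather than invoking freeness of $\der(\FF[t_1,t_2])$, which amounts to the same computation.
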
 

\begin{proof} In Lemma \ref{lem:dersum}, we  have established that $\der (\A_1)$ is the sum of the terms on the right side of (a).   Suppose $D = wE_x + zE_y + \ad_a = 0$ for some $a \in \A_1$ and $z,w \in \cent1$.   Applying $D$ to $x^p$ and using
the fact that $x^p$ is central, we have from Lemma \ref{lem:ee}\,(c) that
$0 = D(x^p) = -w$.  Similarly, applying $D$ to $y^p$ gives $z = 0$.   Hence $\ad_a = 0$ also, and the
sum in (a) is direct.  

The map  $\mathsf{Res}: \der(\A_1) \rightarrow \der(\cent1)$  given by restricting 
a derivation of $\A_1$  to the center $\cent1 = \FF[t_1, t_2]$, where $t_1 = x^p, t_2 = y^p$, is clearly a morphism
of Lie algebras.   It follows from Lemma \ref{lem:ee} that $\mathsf{Res}(E_x)  = -\frac{d}{dt_1}$ and 
$ \mathsf{Res}(E_y) = - \frac{d}{dt_2}$.   
Hence $w E_x  + zE_y + \ad_a  \mapsto  -w \frac{d}{dt_1} - z\frac{d}{dt_2}$ for all $w,z\in\cent1$, which
shows the map is onto.   Now $\inder(\A_1)$ is in the kernel.   But since every
$D \in \der(\A_1)$ has the form  $D =  w E_x  + z E_y + \ad_a$, we see the kernel is
exactly $\inder(\A_1)$.  \end{proof}  \smallskip

\begin{remark}\label{rem:dt}  It is well known that $\der( \FF[t_1,t_2])$ is a free $\FF[t_1,t_2]$-module
of rank $2$ with basis $\frac{d}{dt_1}$, $\frac{d}{dt_2}$.  This Lie algebra is often referred to as
the  \emph{Witt algebra} in 2 variables.  A.\ Solotar and M.\ Su\'arez-\'Alvarez have pointed out to us one could alternately use the fact that  $\A_1$ is Azumaya over its center, combined with a result on the homology of Azumaya algebras  in \cite{CW}  and  the Van den Bergh duality between homology and cohomology (see \cite{rB09}),  to conclude that $\hoch(\A_1)$  is free of rank 2 over the center $\cent1$ when
$\chara(\FF) = p > 0$.  Theorem \ref{thm:derA1},  which also establishes this result,  identifies  explicit generators $E_x$ and $E_y$ for $\hoch(\A_1)$  over $\cent1$.    \end{remark}

\subsubsection{Lie brackets in $\der(\A_1)$ when  $\chara(\FF) = p > 0$}   \hfil    \smallskip

Next we determine the multiplication in $\der(\A_1)$.  
\begin{lemma}\label{lem:varpi} Assume $\chara(\FF) = p > 0$.  Then $[E_x,E_y]= \ad_\varpi$  where 
\begin{equation}\label{eq:varpi} \varpi = \sum_{n=1}^{p-1} \frac {(p-1-n)! }{n} x^n y^n.\end{equation}
\end{lemma}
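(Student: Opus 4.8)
The plan is to verify the claimed identity on the generators $x$ and $y$ of $\A_1$, since both $[E_x,E_y]$ and $\ad_\varpi$ are derivations and a derivation of $\A_1$ is determined by its values on $x$ and $y$. Moreover, the $\varphi$-symmetry recorded in \eqref{eq:psiEpsiinv} lets me cut the work in half. Because $\varphi$ is an anti-automorphism with $\varphi^{-1}=\varphi$, one has $[\varphi D_1\varphi,\varphi D_2\varphi]=\varphi[D_1,D_2]\varphi$ for all derivations $D_1,D_2$, whence $\varphi[E_x,E_y]\varphi=[E_y,E_x]=-[E_x,E_y]$; a direct check gives likewise $\varphi\,\ad_u\,\varphi=-\ad_{\varphi(u)}$ for all $u$. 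Since $\varphi$ fixes each monomial $x^ny^n$, it fixes $\varpi$, so $\varphi\,\ad_\varpi\,\varphi=-\ad_\varpi$. Evaluating these two relations at $x$ and using $\varphi(x)=y$ gives $[E_x,E_y](y)=-\varphi\big([E_x,E_y](x)\big)$ and $\ad_\varpi(y)=-\varphi\big(\ad_\varpi(x)\big)$, so agreement on $x$ forces agreement on $y$. Thus it suffices to check the identity on $x$.

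On generators, $E_x(y)=E_y(x)=0$ gives $[E_x,E_y](x)=-E_y(y^{p-1})$. I would compute the right-hand side from Lemma \ref{lem:ee}(d) applied to $g=y^{p-1}$; the term $\varphi\parp(x^{p-1})$ vanishes because $\parp(x^{p-1})=0$, and inserting $(y^{p-1})^{(k)}=\tfrac{(p-1)!}{(p-1-k)!}\,y^{p-1-k}$ leaves
$$-E_y(y^{p-1})=-\sum_{k=1}^{p-1}\frac{(-1)^{k-1}}{k}\,\frac{(p-1)!}{(p-1-k)!}\,x^{p-k}y^{p-1-k}.$$
On the other hand, using \eqref{eq:A1com} in the form $[y^n,x]=n\,y^{n-1}$, I get
$$\ad_\varpi(x)=[\varpi,x]=\sum_{n=1}^{p-1}\frac{(p-1-n)!}{n}\,x^n[y^n,x]=\sum_{n=1}^{p-1}(p-1-n)!\,x^ny^{n-1}.$$

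To finish, I would reindex the first sum by $n=p-k$, turning it into $\sum_{n=1}^{p-1}c_n\,x^ny^{n-1}$ with $c_n=\tfrac{(-1)^{p-n-1}(p-1)!}{(p-n)(n-1)!}$, and then match $c_n$ against $(p-1-n)!$. This reduces everything to two elementary congruences modulo $p$: Wilson's theorem $(p-1)!\equiv-1$, and the reciprocity $(p-m)!\equiv(-1)^m/(m-1)!$ (which follows from Wilson upon reducing each factor $p-j\equiv-j$); together these give $(p-1-n)!=(p-n)!/(p-n)=(-1)^n/\big((p-n)(n-1)!\big)=c_n$, using $(-1)^{p-n-1}=(-1)^n$ for odd $p$. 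I expect this coefficient bookkeeping — keeping the factorials, the alternating signs, and the reindexing straight modulo $p$ — to be the only real obstacle. Once $c_n=(p-1-n)!$ is confirmed, the two derivations agree on $x$, and the symmetry reduction of the first paragraph completes the proof.
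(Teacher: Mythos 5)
Your proposal is correct and follows essentially the same route as the paper's proof: both verify the identity on the generators, use the $\varphi$-symmetry of \eqref{eq:psiEpsiinv} to transfer the computation between $x$ and $y$, and reduce the coefficient match to Wilson's theorem via the formulas of Lemma \ref{lem:ee}. One small transcription slip: after reindexing, the coefficient should be $c_n=\frac{(-1)^{p-n}(p-1)!}{(p-n)(n-1)!}$ (you dropped the overall minus sign standing in front of the sum), but your final chain $(p-1-n)!=(-1)^n/\bigl((p-n)(n-1)!\bigr)=c_n$ is exactly right for this corrected value, so the argument closes as claimed.
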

\begin{proof}   It suffices to compute the action of $[E_x,E_y]$ on $x$ and $y$.  Using (a) of Lemma \ref{lem:ee} and the fact that ${{p-1} \choose k} = (-1)^k$ for $0 \leq k \leq p-1$, we have
\begin{align*}[E_x,E_y](y) &= E_x(x^{p-1}) = \sum_{k=1}^{p-1}{{p-1}\choose k} x^{p-1-k}(y^{p-1})^{(k-1)} \\
&= -\sum_{k=1}^{p-1}(k-1)!\,x^{p-1-k}y^{p-k} = - \sum_{n=1}^{p-1} (p-1-n)! \,x^{n-1} y^{n}.
\end{align*} 
Then 
$$ [E_x,E_y](x) = -E_y(y^{p-1}) =  \sum_{n=1}^{p-1}  (p-1-n)!\, x^{n} y^{n-1}$$
upon applying $\varphi$ to the relation above.   However, if $\varpi$ is as in \eqref{eq:varpi}, then   
$$\ad_\varpi (x) = \sum_{n=1} ^{p-1} (p-1-n)! \, x^n y^{n-1} \quad \hbox{\rm and} \quad
\ad_\varpi(y)  =  -\sum_{n=1}^{p-1} (p-1-n)! \, x^{n-1} y^n.$$
Thus, $[E_x,E_y] = \ad_\varpi$,  as desired.   \end{proof} 
\smallskip

Products  in $\der(\A_1)$  can now be described using this result. 

\begin{lemma}\label{lem:derA1prod} Assume $\chara(\FF) = p > 0$.  For all $D,E \in \der(\A_1)$, $a \in \A_1$, $w,z \in \cent1$, we have 
\begin{itemize}
\item  $[D, \ad_a] = \ad_{D(a)}$, 
\item   $z \ad_a = \ad_{za}$, 
\item $ [wD, zE]  =  wD(z)E - zE(w)D  + wz[D,E]$,
\item  $[wE_x, zE_y]  = wE_x(z)E_y - zE_y(w)E_x + wz\,\ad_{\varpi}$, with $\varpi$ as in \eqref{eq:varpi}.
\end{itemize}
\end{lemma}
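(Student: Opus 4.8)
The plan is to verify the four identities in order as equalities of $\FF$-linear endomorphisms of $\A_1$, checking each by direct computation on an arbitrary element and letting the later identities feed on the earlier ones; there is no deep obstacle here, only bookkeeping, so the emphasis will be on organizing the computation cleanly and on the careful use of centrality.

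First I would dispatch the two ``scalar'' identities. For $[D,\ad_a]=\ad_{D(a)}$ I apply both sides to an arbitrary $v\in\A_1$ and expand using the Leibniz rule for $D$: one gets $[D,\ad_a](v)=D([a,v])-[a,D(v)]=D(a)v+aD(v)-D(v)a-vD(a)-aD(v)+D(v)a$, and the four middle terms cancel in pairs, leaving $[D(a),v]=\ad_{D(a)}(v)$. For $z\ad_a=\ad_{za}$ the only input is the centrality of $z\in\cent1$: since $z$ commutes with $v$, we have $z[a,v]=zav-zva=zav-vza=[za,v]$.

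Next comes the product rule $[wD,zE]=wD(z)E-zE(w)D+wz[D,E]$, which is the technical heart. I would first record that $wD$ and $zE$ are themselves derivations, because a central multiple of a derivation is again a derivation; this is where $w,z\in\cent1$ is used essentially, and it guarantees the left-hand bracket is a well-defined derivation. Then I apply the bracket to an arbitrary $v$ and expand via the Leibniz rule: $(wD)(zE(v))=wD(z)E(v)+wzD(E(v))$ and symmetrically $(zE)(wD(v))=zE(w)D(v)+zwE(D(v))$. Subtracting and using $wz=zw$ collects the two ``undifferentiated'' cross terms $wD(z)E(v)-zE(w)D(v)$ together with $wz\big(D(E(v))-E(D(v))\big)=wz[D,E](v)$, which is exactly the claimed formula. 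The one subtlety to flag is the reading of the correction terms $wD(z)E$ and $zE(w)D$ as composite operators (first apply $E$, respectively $D$, then left-multiply by the element $wD(z)$, respectively $zE(w)$); individually these need not be derivations, but their combination is, being the difference of the derivation $[wD,zE]$ and the derivation $wz[D,E]$. This interpretation, and the systematic tracking of centrality so that $wz=zw$ and $zv=vz$ may be invoked, is the main place demanding care.

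Finally, the last identity is the specialization of the product rule to $D=E_x$, $E=E_y$: substituting gives $[wE_x,zE_y]=wE_x(z)E_y-zE_y(w)E_x+wz[E_x,E_y]$, and replacing $[E_x,E_y]$ by $\ad_\varpi$ via Lemma \ref{lem:varpi} yields the assertion. Here I would note that, because $E_x$ and $E_y$ preserve $\cent1=\FF[x^p,y^p]$ (as recorded in Lemma \ref{lem:ee}(c) and in the computation of $\mathsf{Res}(E_x)$ and $\mathsf{Res}(E_y)$ in Theorem \ref{thm:derA1}), the scalars $E_x(z)$ and $E_y(w)$ again lie in $\cent1$, so in this special case the first two terms on the right are genuine central multiples of $E_y$ and $E_x$, respectively.
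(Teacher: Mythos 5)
Your proof is correct: the paper states this lemma without proof, treating it as a routine consequence of the Leibniz rule, the centrality of $w,z$, and Lemma \ref{lem:varpi}, and your direct verification (including the observation that $wD$ is again a derivation for central $w$, and that $E_x(z), E_y(w)\in\cent1$) is exactly the intended argument. Nothing is missing.
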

   
 %%%%%%%%%%%%Sec.4%%%%%%%%%%%%%%%%%%%%%
\begin{section} {Generalities on Derivations of $\A_h$} \end{section} 
We turn our attention now to the Lie algebra $\der(\A_h)$ of 
$\FF$-linear derivations of $\A_h$  for arbitrary $0 \neq h\in \DD =\FF[x]$ and arbitrary $\FF$. 
Throughout, we view $\A_h$ as a subalgebra of $\A_1$ as in Conventions \ref{con:gens},  and  apply  the results we have just established in Sections 3.1 and 3.2 on $\der(\A_1)$ to derive information about $\der(\A_h)$.

We begin by determining when a derivation of $\A_h$ extends to one of $\A_1$. We then define the derivations $D_e$,  $e \in \mathsf{C}_{\A_h}(x)$, and introduce the element $a_0$,  which belongs to a localization of $\A_1$ and is a natural extension of the elements $a_n = \pi_h h^{n-1} \in \norm$ for $n \geq 1$.  The main results of this section are  Theorem \ref{thm:derdecomp}, which describes  a decomposition of $\der(\A_h)$ into a sum of Lie subalgebras  for arbitrary $\FF$, and Theorem \ref{L:brackets},  which gives expressions for various products involving the derivations $D_g$, $g \in \DD$,  and $\ad_{ra_n}$ for $n \geq 0$ and $r \in \DD$.    This sets the stage for Section \ref{sec:char0Der}, where we show that these derivations along with the inner derivations generate $\der(\A_h)$ when $\chara(\FF) = 0$.   
\medskip
 
\subsection{Extensions of  derivations} \hfil
\smallskip

To determine a necessary and sufficient condition for
a derivation of $\A_h$ to extend to a derivation of $\A_1$, 
we require a basic result about derivations of $\A_h$,  which can be shown using
\cite[Exer.~2ZC]{GW04}.  \smallskip 

\begin{lemma}\label{lem:derext}    Fix  $u,v \in \A_h$.     Let
$d: \FF[x] \rightarrow \A_h$ be the unique derivation such that $d(x) = u$.      There is 
a derivation $D \in \der(\A_h)$ such that $D(x) = d(x) = u$  and $D(\hat y) = v$ if and only
if $[v,x] + [\hat y, u] = d(h)$.   If such a derivation exists, it is unique.  \end{lemma}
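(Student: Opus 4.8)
The plan is to realize $\A_h$ as a quotient of a free associative algebra and then reduce the existence of the desired derivation to the single requirement that the induced derivation of the free algebra annihilate the one defining relator. Concretely, write $\A_h = \FF\langle X, Y\rangle / I$, where $\FF\langle X, Y\rangle$ is the free algebra on two generators mapping to $x$ and $\hat y$, and $I$ is the two-sided ideal generated by the relator $R = YX - XY - h(X)$, whose image in $\A_h$ is $\hat y x - x \hat y - h = 0$. The principle I would invoke (this is exactly the content of \cite[Exer.~2ZC]{GW04}) is that an assignment of the generators to elements of $\A_h$ extends to a derivation of $\A_h$ if and only if the unique derivation $\tilde D$ of the free algebra determined by that assignment sends $R$ to $0$ in $\A_h$, and that when such a derivation exists it is automatically unique, being determined on a generating set by $\FF$-linearity and the Leibniz rule.

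First I would record that $d$ is well defined so that $d(h)$ makes sense. Regarding $\A_h$ as an $\FF[x]$-bimodule via the inclusion $\FF[x]\hookrightarrow\A_h$, a derivation $\FF[x]\to\A_h$ is freely and uniquely determined by the image of $x$: setting $d(x)=u$ and $d(x^n)=\sum_{k=0}^{n-1} x^k u\, x^{n-1-k}$ yields a consistent derivation, because, $\FF[x]$ being commutative, the two Leibniz expansions of $x^{m+n}$ coincide. Since $\FF[x]$ sits inside $\A_h$ as a genuine polynomial subalgebra, the restriction of $\tilde D$ to it agrees with $d$, and in particular $\tilde D(h(X)) = d(h)$.

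The heart of the argument is then a single Leibniz computation of $\tilde D(R)$ in $\A_h$:
\begin{equation*}
\tilde D(R) = (vx + \hat y u) - (u\hat y + xv) - d(h) = [v,x] + [\hat y, u] - d(h),
\end{equation*}
using $\tilde D(YX) = vx + \hat y u$, $\tilde D(XY) = u\hat y + xv$, and $\tilde D(h(X)) = d(h)$. Hence $\tilde D(R)=0$ precisely when $[v,x]+[\hat y,u]=d(h)$, which is the stated criterion; by the principle quoted above this is exactly the condition for the assignment to define a derivation $D\in\der(\A_h)$ with $D(x)=u$ and $D(\hat y)=v$, and such a $D$ is then unique.

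The only genuine subtlety — and the step I would treat most carefully — is the justification that annihilating the single relator $R$ forces $\tilde D$ to annihilate all of $I$, which is what allows $\tilde D$ to descend to the quotient. This follows from the Leibniz rule applied to a general element $\sum_k a_k R\, b_k$ of $I$: each term expands as $\tilde D(a_k)\,\bar R\,\bar b_k + \bar a_k\,\tilde D(R)\,\bar b_k + \bar a_k\,\bar R\,\tilde D(b_k)$, where bars denote images in $\A_h$. Since $\bar R = 0$ in $\A_h$, the outer terms vanish, leaving an $\A_h$-combination of $\tilde D(R)$; thus $\tilde D(I)=0$ as soon as $\tilde D(R)=0$. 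This is precisely the mechanism encapsulated in \cite[Exer.~2ZC]{GW04}, so in the write-up I would either cite it directly or include this brief verification.
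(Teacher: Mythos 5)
Your argument is correct and is essentially the paper's intended proof: the paper gives no written argument for this lemma, instead citing \cite[Exer.~2ZC]{GW04}, and your write-up is a faithful unpacking of exactly that mechanism (lift to the free algebra, check that the induced derivation kills the single relator $R=YX-XY-h(X)$, and observe that this forces it to kill the whole ideal by the Leibniz rule). The computation $\tilde D(R)=[v,x]+[\hat y,u]-d(h)$ and the uniqueness claim are both right, so nothing further is needed.
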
 

In the next result, we will use the fact that $D(h) \in \A_h h = h \A_h$ for every $D \in \der(\A_h)$.  This follows from the
computation $D(h) = [D(\hat y),x] + [\hat y,D(x)]$ and the fact \cite[Lem.\ 6.1]{BLO1} that $[\A_h,\A_h] \subseteq h\A_h$.  
 \smallskip 
 
\begin{thm}\label{thm:Dext}   Regard $\A_h \subseteq \A_1$ as in Conventions \ref{con:gens}.  
\begin{enumerate}
\item[{\rm (i)}]  A derivation $D \in \der(\A_h)$  extends to a derivation $\widetilde D$ of $\A_1$ if and only if $D(\hat y) \in \A_1 h$.
In particular, if $D(\hat y) = ah$ and $D(h) = bh$ for $a \in \A_1$ and $b \in \A_h$,
then $\widetilde D$ is determined by 
$$\widetilde D(x) = D(x),  \qquad  \widetilde D(y) = a - yb.$$
\item[{\rm (ii)}]  Suppose that $D, E \in \der (\A_1)$ restrict to derivations of $\A_h$ and $D = E$ as derivations of $\A_h$.  Then $D = E$ as derivations of $\A_1$.
\end{enumerate}
\end{thm}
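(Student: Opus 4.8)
The plan is to handle (ii) first, since it is immediate, and then to obtain (i) by passing to a localization in which $\A_h$ and $\A_1$ become the same algebra, which makes the extension and its uniqueness transparent.

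For (ii), both $D$ and $E$ agree on $\A_h$, hence on $x$ and on $\hat y = yh$. In particular $D(x) = E(x)$, and since $D,E \in \der(\A_1)$ are determined on $\FF[x]$ by their common value on $x$, also $D(h) = E(h)$. Expanding $D(\hat y) = D(y)\,h + y\,D(h)$ and the analogous expression for $E$ and subtracting gives $\bigl(D(y) - E(y)\bigr)h = 0$; as $\A_1$ is a domain and $h \neq 0$, this forces $D(y) = E(y)$. Thus $D$ and $E$ agree on the generators $x,y$ of $\A_1$, so $D = E$.

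For the forward direction of (i), write $D(h) = bh$ with $b \in \A_h$, which is possible because $D(h) \in h\A_h = \A_h h$, as recorded just before the statement. If $D$ extends to $\widetilde D \in \der(\A_1)$, then $\widetilde D$ restricts to $D$ on $\FF[x] \subseteq \A_h$, so $\widetilde D(h) = D(h) = bh$, and hence
\[
D(\hat y) = \widetilde D(yh) = \widetilde D(y)\,h + y\,\widetilde D(h) = \bigl(\widetilde D(y) + yb\bigr)h \in \A_1 h,
\]
which also shows that if $D(\hat y) = ah$ then $\widetilde D(y) = a - yb$, matching the asserted formula. For the converse and the construction I would localize: let $S = \FF[x]\setminus\{0\}$, a two-sided Ore set in $\A_1$ with localization $Q = \A_1 S^{-1} = \FF(x)[y;\tfrac{d}{dx}]$. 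Since $h \in S$ is invertible in $Q$ and $y = \hat y\,h^{-1}$, the subalgebra $\A_h$ generates $\A_1$ after inverting $S$, so $\A_h S^{-1} = Q$ as well. Any $D \in \der(\A_h)$ extends uniquely to a derivation $\widetilde D$ of $Q$ via $\widetilde D(s^{-1}) = -s^{-1}D(s)s^{-1}$, and because $\A_1$ is generated by $x,y$ with $\widetilde D(x) = D(x) \in \A_h \subseteq \A_1$, the derivation $D$ extends to $\A_1$ exactly when $\widetilde D(y) \in \A_1$. Using $D(h) = bh$ I would then compute
\[
\widetilde D(y) = \widetilde D(\hat y\,h^{-1}) = D(\hat y)\,h^{-1} + \hat y\bigl(-h^{-1}D(h)h^{-1}\bigr) = D(\hat y)\,h^{-1} - yb,
\]
since $-h^{-1}D(h)h^{-1} = -h^{-1}b$ and $\hat y\,h^{-1} = y$. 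As $yb \in \A_1$, this lies in $\A_1$ precisely when $D(\hat y)\,h^{-1} \in \A_1$, i.e.\ when $D(\hat y) \in \A_1 h$, and then it equals $a - yb$. Restricting $\widetilde D$ to $\A_1$ gives the desired extension, and it restricts to $D$ on $\A_h$ because $\widetilde D|_{\A_h} = D$ by construction.

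The only real subtlety in this route is the localization setup, namely that $S$ is an Ore set and that $\A_h S^{-1} = \A_1 S^{-1}$; both are standard, and the localization is the same object the paper uses for the element $a_0$. If one prefers to avoid localization, the converse can instead be proved by defining $\widetilde D$ directly on generators through $\widetilde D(x) = D(x) =: u$ and $\widetilde D(y) = a - yb$; then the sole thing to verify is compatibility with $yx - xy = 1$, i.e.\ $[a - yb, x] + [y, u] = 0$. Comparing this with the identity $[ah, x] + [\hat y, u] = D(h) = bh$ that expresses $D$ being a derivation of $\A_h$, and cancelling the common right factor $h$ (valid as $\A_1$ is a domain), one finds that everything reduces to the single commutator identity $[u, h] = [b, x]\,h$. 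I expect this identity to be the main obstacle on the elementary route: it holds because $u = D(x) \in \A_h$ and $D(h) = bh$, but confirming it requires unwinding the commutation formula \eqref{eq:A1com}, whereas the localization argument sidesteps it entirely.
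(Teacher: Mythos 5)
Your proof is correct, and for the substantive direction of (i) it takes a genuinely different route from the paper. Part (ii) and the forward implication of (i) coincide with the paper's argument. For the converse of (i), the paper invokes Lemma \ref{lem:derext} to reduce the existence of $\widetilde D$ to the single compatibility condition $[a-yb,x]+[y,D(x)]=0$, which it verifies by multiplying by $h$ and performing the commutator computation directly (the identity $[D(h),x]+[h,D(x)]=D([h,x])=0$ being the engine, exactly as in your closing aside -- note that this step needs no unwinding of \eqref{eq:A1com}, so the ``elementary route'' is less of an obstacle than you suggest). Your localization argument instead makes the extension and the formula $\widetilde D(y)=D(\hat y)h^{-1}-yb$ fall out of the Leibniz rule in an overring where $\A_h$ and $\A_1$ coincide; this is cleaner and essentially computation-free, at the cost of importing the Ore-localization machinery. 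One small adjustment I would make: you do not need the full set $S=\FF[x]\setminus\{0\}$ (whose Ore property in $\A_h$ you assert but do not check, and whose localization $\FF(x)[y;\frac{d}{dx}]$ is strictly larger than the one the paper uses); inverting only $\Sigma=\{h^m\mid m\geq 0\}$ already makes $y=\hat y h^{-1}$ available, and the paper quotes from \cite[Cor.\ 4.3]{BLO1} precisely that $\Sigma$ is Ore in both algebras with $\A_h\Sigma^{-1}=\A_1\Sigma^{-1}$, together with the unique extension of derivations to this localization (Lemma \ref{lem:extloc}). With that substitution every ingredient of your argument is already established in the paper, and the proof is complete.
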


\begin{proof} (i)  Assume  $D \in \der(\A_h).$    If $D$ extends to a derivation $\widetilde D$ of $\A_1$, then
$$D(\hat y) = \widetilde D(\hat y) =  \widetilde D(yh) =  \widetilde D(y)h + yD(h) \in \A_1 h.$$
Conversely,    suppose $D(\hat y) = a h$ where $a \in \A_1$.   We may assume
$D(h) = b h$ where $b \in \A_h$.    By Lemma \ref{lem:derext}  applied to $\A_1$ 
(and so with $ \widetilde D$ replacing $D$ and  $y$ replacing $\hat y$ in quoting that result)  there
is a unique derivation $\widetilde D$ of $\A_1$ with 
$$ \widetilde D(x) = D(x),  \qquad   \widetilde D(y) = a - yb$$
if and only if  $[a-yb, x] + [y, D(x)] = D(1) = 0$.    Since $\A_1$ is a domain,
it suffices to show that 
$\big([a-yb,x] + [y,D(x)] \big)h = 0$.     For this, we have 
\begin{align*}  [a-yb,x] h +& [y, D(x)] h = [ah,x] - [ybh,x] +   [y, D(x)] h \\
&=   [D(\hat y), x] -[yD(h),x]  + [\hat y,D(x)]  -  y[h, D(x)] \\
&=  [D(\hat y), x] + [\hat y, D(x)] - [y,x]D(h)   - y[D(h),x] - y[h,D(x)]  \\ 
&=  D([\hat y,x]) - D(h) - yD([h,x])   
=  0. \end{align*} 
Note that $\widetilde D$ thus defined restricts  to $D$ on $\A_h$.

(ii) Now assume that $D, E \in \der (\A_1)$ both restrict to derivations of $\A_h$ and  $D =E$ as derivations of $\A_h$.  The assumptions imply that 
$D(r) = E(r)$ for all $r \in \DD$, and $D(yh) = D(\hat y) = E(\hat y) = E(yh)$.  Therefore, 
$$D(y)h + yD(h) = E(y)h + yE(h),$$
and so $D(y) h = E(y) h$.  Since $h \neq 0$, we have $D(y) = E(y)$.  \end{proof} 
\medskip

For any $a\in \norm$, $\ad_a$ is a derivation of $\A_h$,  and if  $a$ happens
to belong to $\A_h$,  then 
 $[D,\ad_a] = 
\ad_{D(a)}$ for any derivation $D \in \der(\A_h)$.   However, if   $a \in \norm \setminus \A_h$,   then $D(a)$ may not be defined.  
This can be remedied in the following way.      

Recall from  \cite[Cor.\ 4.3]{BLO1}  that  
\begin{equation} \Sigma = \{h^m \mid m \geq 0\}  \end{equation}  is a left and a right Ore set in 
both $\A_1$ and $\A_h \subseteq \A_1$, and the corresponding localizations
$\A_1 \Sigma^{-1} = \A_h \Sigma^{-1}$   are equal.   It is well known that derivations extend under localization.
In particular,  if $D \in \der(\A_h)$, then $D$ extends uniquely to a derivation $\widetilde D$ of 
$\A_h \Sigma^{-1} = \A_1 \Sigma^{-1}$,  with $\widetilde D(h^{-1}) = -h^{-1}D(h)h^{-1}$.
\smallskip

\begin{lemma}\label{lem:extloc}  Suppose $D \in \der(\A_h)$, and let $\widetilde D$ be the extension of 
$D$ to a derivation  of $\A_1 \Sigma^{-1}$.    Then 
$[D, \ad_a] = \ad_{\widetilde D(a)}$ for all $a \in \mathsf{N}_{\A_1 \Sigma^{-1}}(\A_h)$,
and $\widetilde D(a) \in  \mathsf{N}_{\A_1 \Sigma^{-1}}(\A_h)$.   In particular,
$\widetilde D(a) \in  \mathsf{N}_{\A_1 \Sigma^{-1}}(\A_h)$ for all $a \in \norm$. 
\end{lemma}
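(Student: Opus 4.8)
The plan is to prove the identity $[D, \ad_a] = \ad_{\widetilde D(a)}$ first in the generality it is stated, namely for all $a \in \mathsf{N}_{\A_1\Sigma^{-1}}(\A_h)$, and only afterward specialize to $a \in \norm$. The crucial point is that $\widetilde D$ is already an honest derivation of the localization $\A_1\Sigma^{-1} = \A_h\Sigma^{-1}$, and $a$ is an element of that localized algebra, so $\widetilde D(a)$ is unambiguously defined; this is precisely the technical remedy flagged in the paragraph preceding the statement. The identity itself is then a formal consequence of the Leibniz rule, but one must be careful because $\ad_a$ is a derivation of $\A_h$ whereas $D$ maps $\A_h$ to $\A_h$, so the bracket $[D,\ad_a]$ makes sense as a map $\A_h \to \A_h$ and I must check that $\ad_{\widetilde D(a)}$ agrees with it there.

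First I would fix $v \in \A_h$ and compute $[D,\ad_a](v) = D(\ad_a(v)) - \ad_a(D(v))$. Using that $\widetilde D$ restricts to $D$ on $\A_h$ and extends $D$ as a derivation of the localization, I can replace $D$ by $\widetilde D$ throughout and write $\ad_a(v) = av - va$ (a computation taking place in $\A_1\Sigma^{-1}$, where it lands in $\A_h$ by hypothesis on $a$). Applying the derivation property of $\widetilde D$ gives
\begin{align*}
\widetilde D(av - va) &= \widetilde D(a)v + a\widetilde D(v) - \widetilde D(v)a - v\widetilde D(a) \\
&= \big(\widetilde D(a)v - v\widetilde D(a)\big) + \big(a\,D(v) - D(v)\,a\big),
\end{align*}
where in the second line I used $\widetilde D(v) = D(v)$ for $v \in \A_h$. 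The first parenthesized term is exactly $\ad_{\widetilde D(a)}(v)$, and the second is $\ad_a(D(v))$, which cancels against the $-\ad_a(D(v))$ in the bracket. Hence $[D,\ad_a](v) = \ad_{\widetilde D(a)}(v)$, proving the identity.

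Next I would verify the claim $\widetilde D(a) \in \mathsf{N}_{\A_1\Sigma^{-1}}(\A_h)$, i.e. that $\ad_{\widetilde D(a)}$ sends $\A_h$ into $\A_h$. But the computation just performed shows $\ad_{\widetilde D(a)}(v) = [D,\ad_a](v)$, and the right-hand side is visibly in $\A_h$: $\ad_a$ maps $\A_h$ into $\A_h$ since $a$ normalizes $\A_h$, $D$ preserves $\A_h$, and so does the composite in either order. Thus $\widetilde D(a)$ normalizes $\A_h$, establishing membership. The final sentence of the lemma then follows immediately, because $\norm \subseteq \mathsf{N}_{\A_1\Sigma^{-1}}(\A_h)$: every $a \in \norm$ sits inside $\A_1 \subseteq \A_1\Sigma^{-1}$ and satisfies $[a,\A_h]\subseteq \A_h \subseteq \A_h\Sigma^{-1}$, so it normalizes $\A_h$ inside the localization as well.

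The only genuine subtlety—and the step I would treat most carefully—is the passage between the two normalizers and the assertion that $\widetilde D$ is well defined and restricts correctly. All of this rests on the fact, recalled just before the statement, that $\Sigma = \{h^m \mid m \geq 0\}$ is a two-sided Ore set with $\A_1\Sigma^{-1} = \A_h\Sigma^{-1}$, so that $\widetilde D$ extends $D$ uniquely with $\widetilde D(h^{-1}) = -h^{-1}D(h)h^{-1}$; there is no computational obstacle beyond invoking this and the Leibniz rule, so the proof is short once the framework is in place.
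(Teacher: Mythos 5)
Your proof is correct and follows essentially the same route as the paper: expand $\widetilde D([a,v])$ by the Leibniz rule, cancel the $\ad_a(D(v))$ term to get $[D,\ad_a](v)=[\widetilde D(a),v]$, and read off the normalizer membership from the fact that this lies in $\A_h$. The only difference is cosmetic — you spell out the four-term Leibniz expansion that the paper compresses into one line.
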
 

\begin{proof}  Assume $b \in \A_h \subseteq \A_1$ and $a  \in
 \mathsf{N}_{\A_1\Sigma^{-1}}(\A_h)$.  Then $[a,b] \in \A_h$
and $D([a,b])  = \widetilde D([a,b]) = [\widetilde D(a), b] + [a,D(b)]$ so that 
\begin{equation}\label{eq:Dad} 
[D, \ad_a ] (b) =  D([a,b]) - [a,D(b)] = [ \widetilde D (a), b] = \ad_{\widetilde D(a)}(b).  
\end{equation}
Since $ [ \widetilde D(a), b] =  [D, \ad_a ] (b) \in \A_h$, it is clear that $\widetilde D(a) \in \mathsf{N}_{\A_1 \Sigma^{-1}}(\A_h)$.
\end{proof} 
\smallskip

\subsection{The derivations\, $D_e$}\label{sec:Dders}  \hfil
\smallskip

Lemma \ref{lem:derext}  implies that for each $e \in  \mathsf{C}_{\A_h}(x)$ 
there is a unique derivation $D_e$ of $\A_h$
with $D_e(x) = 0$ and $D_e(\hat y) = e$.  
Such a derivation satisfies $D_e(f) \in \mathsf{C}_{\A_h}(x)$ for all $f \in \mathsf{C}_{\A_h}(x)$,
since $0 = D_e([x,f]) = [x,D_e(f)]$.   These derivations play a prominent role in our  investigations and also can be used to construct automorphisms of $\A_h$.  \medskip
 
\begin{prop}\label{prop:autg}  Assume $e,f  \in  \mathsf{C}_{\A_h}(x) = \centh \DD$.  Then 
\begin{itemize}   
\item [{\rm (i)}]  $[D_e,D_f] = D_c \,$, where $c = D_e(f) - D_f(e) \in \mathsf{C}_{\A_h}(x)$, so that
$\mathcal D_{\mathsf C} = \{ D_e \mid e \in \mathsf{C}_{\A_h}(x)\}$ is a Lie subalgebra of $\der(\A_h)$. 
\item[{\rm(ii)}]  $D_{\delta(g)} = - \ad_g$ for all $g \in \DD$, where $\delta(g) = g'h$.  In particular, $D_h = - \ad_x$. 
\item[{\rm (iii)}]  When $\chara(\FF) = 0$, then   $\mathcal D_{\mathsf C}  =  \{D_g \mid g \in \DD \}$.  Moreover,      
\begin{enumerate} \item[{\rm (a)}]  $\mathcal D_{\mathsf C}$ is abelian, and $D_g$ is locally nilpotent for all $g \in \DD$.  
\item[{\rm (b)}]   For any $g \in \DD$,   $\phi_g =  \exp(D_g) = \displaystyle{\sum_{n=0}^\infty \frac{(D_g)^n}{n!}}$  
is an automorphism of $\A_h$ with inverse $\phi_{-g} = \exp(-D_g)$,
and $\{ \phi_g  \mid  g \in \DD\}$ is an abelian subgroup
of $\mathsf{Aut}_\FF(\A_h)$ isomorphic to $(\DD,+)$. 
\end{enumerate} 
\end{itemize}  \end{prop}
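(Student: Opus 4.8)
The plan is to lean on the uniqueness clause of Lemma \ref{lem:derext}: a derivation of $\A_h$ is completely determined by its values on the two generators $x$ and $\hat y$, so every identity between derivations can be verified by evaluating both sides on $x$ and on $\hat y$. For part (i), I first note that $D_e(f), D_f(e) \in \mathsf{C}_{\A_h}(x)$ whenever $e,f \in \mathsf{C}_{\A_h}(x)$, by the observation recorded just before the proposition, so $c = D_e(f) - D_f(e)$ lies in $\mathsf{C}_{\A_h}(x)$ and $D_c$ is defined. Evaluating the commutator on the generators, $[D_e,D_f](x) = 0 = D_c(x)$ since $D_e(x) = D_f(x) = 0$, while $[D_e,D_f](\hat y) = D_e(f) - D_f(e) = c = D_c(\hat y)$. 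Uniqueness then forces $[D_e,D_f] = D_c$, and this closure under the bracket makes $\mathcal D_{\mathsf C}$ a Lie subalgebra of $\der(\A_h)$.

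For part (ii), I compare $-\ad_g$ with $D_{\delta(g)}$ on generators, observing first that $\delta(g) = g'h \in \DD \subseteq \mathsf{C}_{\A_h}(x)$, so $D_{\delta(g)}$ is defined. Since $g \in \FF[x]$ commutes with $x$, one has $-\ad_g(x) = 0 = D_{\delta(g)}(x)$; and $-\ad_g(\hat y) = [\hat y, g] = \delta(g) = D_{\delta(g)}(\hat y)$, where the middle equality is the $n=1$ instance of \eqref{eq:Ahcom}. Uniqueness yields $D_{\delta(g)} = -\ad_g$, and the specialization $g = x$ (so $\delta(x) = h$) gives $D_h = -\ad_x$.

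Part (iii) begins by identifying the index set: when $\chara(\FF) = 0$, Theorem \ref{L:center}(1) gives $\centh = \FF 1$, so Lemma \ref{L:clizer} yields $\mathsf{C}_{\A_h}(x) = \centh\DD = \DD$, whence $\mathcal D_{\mathsf C} = \{D_g \mid g \in \DD\}$. For (a), each $f \in \DD$ satisfies $D_f(g) = 0$ for all $g \in \FF[x]$ (as $D_f$ kills $x$ and hence all of $\FF[x]$), so the bracket formula from (i) gives $[D_g,D_f] = D_0 = 0$ and $\mathcal D_{\mathsf C}$ is abelian. To prove $D_g$ is locally nilpotent I would filter $\A_h$ by degree in $\hat y$, using $\A_h = \bigoplus_{i,j \ge 0} \FF x^i \hat y^j$ from Theorem \ref{L:center}(3); this filtration is multiplicative because \eqref{eq:Ahcom} shows that commuting $\hat y^j$ past a polynomial in $x$ never raises the $\hat y$-degree. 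Since $D_g(x) = 0$ and $D_g(\hat y) = g \in \FF[x]$ has $\hat y$-degree $0$, the Leibniz rule and an induction on word length in the generators show $D_g$ strictly lowers $\hat y$-degree, hence is nilpotent on each filtration piece and therefore locally nilpotent.

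Finally, for (b), local nilpotence makes $\phi_g = \exp(D_g)$ well-defined term by term, and the standard characteristic-$0$ argument for locally nilpotent derivations — via the generalized Leibniz expansion $D_g^n(ab) = \sum_k \binom{n}{k} D_g^k(a)\,D_g^{n-k}(b)$ — shows $\phi_g$ is an algebra endomorphism, while $\phi_g\phi_{-g} = \exp(0) = \mathrm{id}$ identifies $\phi_{-g}$ as its inverse. Because $e \mapsto D_e$ is $\FF$-linear and the $D_g$ pairwise commute by (a), the commuting-exponentials identity gives $\phi_g\phi_f = \exp(D_g + D_f) = \exp(D_{g+f}) = \phi_{g+f}$, so $g \mapsto \phi_g$ is a homomorphism out of $(\DD,+)$; it is injective because $\phi_g(\hat y) = \hat y + D_g(\hat y) = \hat y + g$ (the higher terms vanish since $D_g^2(\hat y) = D_g(g) = 0$), which equals $\hat y$ only for $g = 0$. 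I expect the only genuine work to be the filtration bookkeeping establishing local nilpotence; everything else is either a generator check settled by uniqueness or the standard exponential formalism.
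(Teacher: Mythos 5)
Your proof is correct. The paper states Proposition \ref{prop:autg} without proof (only Remark \ref{rem:autphi} follows, deferring part of (iii)(b) to \cite[Thm.~8.3\,(iv)]{BLO1}), and your argument supplies exactly the intended routine verification: parts (i) and (ii) by evaluating on the generators $x,\hat y$ and invoking the uniqueness clause of Lemma \ref{lem:derext}, local nilpotence via the multiplicative $\hat y$-degree filtration (which is legitimate since \eqref{eq:Ahcom} shows commuting $\hat y^j$ past $\FF[x]$ only lowers $\hat y$-degree, so $D_g$ maps the $n$-th filtration piece into the $(n-1)$-st), and the standard exponential formalism for commuting locally nilpotent derivations in characteristic $0$, with injectivity of $g\mapsto\phi_g$ read off from $\phi_g(\hat y)=\hat y+g$.
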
 

\begin{remark}\label{rem:autphi}  
The automorphism $\phi_g$ satisfies  $\phi_g(x) = x$
and $\phi_g(\hat y) = \hat y + g$, and $\phi_f \circ \phi_g = \phi_{f+g}$ holds for all $f,g \in \DD$. 
 In  \cite[Thm.~8.3\,(iv)]{BLO1}\, it is shown that if $\phi_g$ is
 defined by these expressions for the algebra $\A_h$ over any field,
 then  $\{\phi_g \mid g\in \DD\}$ forms a normal subgroup
 of $\mathsf{Aut}_\FF(\A_h)$ isomorphic to $(\DD,+)$.   
\end{remark}   
 
Every  derivation $\ad_c$,  with $c \in \norm_{\equiv 0}$  as  in  \eqref{eq:normsplit}, 
can be realized as a derivation in $\mathcal D_{\mathsf C}$ as follows. 
 
\begin{lemma}\label{L:adc=Df} Assume $\chara(\FF) = p > 0$  and  $c\in\norm_{\equiv 0}$.  Then there is $f \in 
\mathsf{C}_{\A_h}(x)$ such that $\ad_c = D_f$.  
\end{lemma}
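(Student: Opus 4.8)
The plan is to exhibit the element $f$ explicitly as $f = \ad_c(\hat y) = [c,\hat y]$ and then verify that $\ad_c$ and $D_f$ agree as derivations of $\A_h$. Since $c \in \norm$, the map $\ad_c$ restricts to a derivation of $\A_h$ by \eqref{eq:norm2}; and by Lemma \ref{lem:derext} any derivation of $\A_h$ is uniquely determined by its values on the generators $x$ and $\hat y$. Hence, once I know that $f \in \mathsf{C}_{\A_h}(x)$ so that $D_f$ is defined (as in Section~\ref{sec:Dders}), it will suffice to check that $\ad_c$ and $D_f$ take the same values on $x$ and on $\hat y$.

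First I would dispatch the action on $x$. Because $c \in \norm_{\equiv 0} = \norm \cap \mathsf{C}_{\A_1}(x)$ by \eqref{eq:normsplit}, we have $[c,x] = 0$, so $\ad_c(x) = 0 = D_f(x)$. The value on $\hat y$ is $\ad_c(\hat y) = [c,\hat y] = f$ by the definition of $f$, which matches $D_f(\hat y) = f$ as soon as $D_f$ is well defined.

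The one substantive point, which I view as the crux of the argument, is to verify that $f = [c,\hat y]$ actually lies in $\mathsf{C}_{\A_h}(x)$. That $f \in \A_h$ is immediate: $c \in \norm$ and $\hat y \in \A_h$, so $\ad_c(\hat y) \in \A_h$. It then remains to show $[f,x] = 0$. Applying the Jacobi identity to $c,\hat y,x$ in the form $[[c,\hat y],x] = [c,[\hat y,x]] - [\hat y,[c,x]]$, and using $[\hat y,x] = h$ together with $[c,x] = 0$, gives
\[
[f,x] = [[c,\hat y],x] = [c,h].
\]
Finally, $[c,h] = 0$ because $h \in \FF[x]$ is a polynomial in $x$ and $c$ commutes with $x$, hence with every element of $\FF[x]$. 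Thus $[f,x] = 0$ and $f \in \mathsf{C}_{\A_h}(x)$, so $D_f$ is defined.

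Putting these together, $\ad_c$ and $D_f$ are derivations of $\A_h$ agreeing on the generators $x$ and $\hat y$, whence $\ad_c = D_f$ by the uniqueness in Lemma \ref{lem:derext}; this realizes $\ad_c$ as an element of $\mathcal D_{\mathsf C}$, as required. The only genuine obstacle is the containment $f \in \mathsf{C}_{\A_h}(x)$, and I expect no difficulty there: the proof uses only that $c$ commutes with $x$ (so with $h$), not the finer description $\mathsf{C}_{\A_1}(x) = \FF[x,y^p]$ from Lemma \ref{L:clizer}, so the characteristic $p$ hypothesis enters only through the availability of nonzero $c \in \norm_{\equiv 0}$.
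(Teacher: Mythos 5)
Your proof is correct and follows essentially the same route as the paper: set $f=\ad_c(\hat y)$, note $f\in\A_h$ since $c\in\norm$, deduce $[f,x]=\ad_c([\hat y,x])=[c,h]=0$ from $[c,x]=0$, and conclude by matching values on the generators. The Jacobi-identity step you spell out is exactly the derivation-property computation the paper uses implicitly.
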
  

\begin{proof}  
Set $f=\ad_c(\hat y)$. Then $f\in\A_h$ because $c\in\norm$. Moreover, as $c\in\mathsf{C}_{\A_1}(x)$, it follows that $[f, x]=[\ad_c(\hat y), x]=\ad_c([\hat y, x])=0$, so $f\in\mathsf{C}_{\A_h}(x)$. This implies $\ad_c = D_f$, as required.\end{proof}  

The derivations $D_g$ with $g \in \DD$ can be used to give a decomposition of $\der(\A_h)$,  as the next result shows.  
\smallskip

\begin{thm}\label{thm:derdecomp}   Assume $\FF$ is arbitrary, and regard $\A_h \subseteq \A_1$.  Then
\begin{equation}\label{eq:DRE}  \mathcal D_{\DD}  = \{ D_g \mid g \in \DD\} \quad \hbox{\rm and} \quad \mathcal E  = \{ F \in \der(\A_1) \mid F(\A_h) \subseteq \A_h \} \end{equation}  are Lie subalgebras of $\der (\A_h)$,  $\mathcal D_{\DD}$ is abelian,  and $\der (\A_h) = \mathcal D_{\DD}  + \mathcal E$.
  \end{thm}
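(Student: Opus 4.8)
The plan is to dispatch the two subalgebra claims quickly and then concentrate on the sum decomposition, which is the real content. For $\mathcal D_\DD$ I would invoke Proposition \ref{prop:autg}(i): since $\DD \subseteq \mathsf{C}_{\A_h}(x) = \centh\DD$, it gives $[D_g, D_{g'}] = D_c$ with $c = D_g(g') - D_{g'}(g)$ for $g, g' \in \DD$; because $D_g$ and $D_{g'}$ kill $x$, they annihilate all of $\DD = \FF[x]$, so $c = 0$ and the bracket vanishes. Hence $\mathcal D_\DD$ is abelian (and a subspace, since $g \mapsto D_g$ is $\FF$-linear). For $\mathcal E$ I would note that if $F, G \in \der(\A_1)$ both preserve $\A_h$, then so does $[F,G]$, as $[F,G](\A_h) \subseteq F(\A_h) + G(\A_h) \subseteq \A_h$; thus $\mathcal E$ is a Lie subalgebra of $\der(\A_1)$, and Theorem \ref{thm:Dext}(ii) guarantees that the restriction $F \mapsto F|_{\A_h}$ embeds it, injectively and compatibly with brackets, into $\der(\A_h)$.

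For the decomposition, the strategy is to reduce, via Theorem \ref{thm:Dext}(i), to a statement about $D(\hat y)$. Given $D \in \der(\A_h)$, the derivation $D - D_g$ extends to $\A_1$ exactly when $(D - D_g)(\hat y) = D(\hat y) - g$ lies in $\A_1 h$, and any such extension restricts to $D - D_g$ and hence belongs to $\mathcal E$. So everything comes down to finding $g \in \DD$ with $D(\hat y) - g \in \A_1 h$; then $D = D_g + \widetilde{D-D_g}\,\big|_{\A_h} \in \mathcal D_\DD + \mathcal E$.

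The crux, which I would isolate as a lemma, is the inclusion $\A_h \subseteq \DD + \A_1 h$: since $D(\hat y) \in \A_h$, this immediately yields the required $g$. To prove it I would use the basis $\A_h = \bigoplus_{i \ge 0} \DD h^i y^i$ from Lemma \ref{L:embed}(c) and induct on the $y$-degree $i$, the case $i = 0$ being trivial. For $i \geq 1$, formula \eqref{eq:A1com} rearranges to $h^i y^i = h^{i-1} y^i h - \sum_{k=1}^i \binom{i}{k} h^{i-1} h^{(k)} y^{i-k}$; the first term is in $\A_1 h$, while each summand can be rewritten as $\big(h^{k-1} h^{(k)}\big)\, h^{i-k} y^{i-k} \in \DD h^{i-k} y^{i-k} \subseteq \A_h$ of $y$-degree $i-k < i$. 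Left-multiplying by an arbitrary $c \in \DD$ and using that $\A_1 h$ is a left ideal, I would conclude $c h^i y^i \in \A_1 h + \A_h^{<i}$, where $\A_h^{<i}$ denotes the part of $\A_h$ of $y$-degree $< i$; the inductive hypothesis then places this in $\DD + \A_1 h$.

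I expect this induction to be the main obstacle, with the remaining assembly being routine once Theorem \ref{thm:Dext} is in hand. The subtle point to respect is that $\A_1 h$ is only a left ideal of $\A_1$ — it is neither two-sided nor stable under right multiplication by $\A_h$ — so the reduction cannot be packaged as a single quotient-algebra computation and must instead be run term-by-term in strictly decreasing $y$-degree, which is exactly what the induction accomplishes.
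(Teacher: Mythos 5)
Your proof is correct, and its skeleton coincides with the paper's: both reduce the decomposition to finding $g \in \DD$ with $D(\hat y) - g \in \A_1 h$ and then invoke Theorem \ref{thm:Dext}\,(i) to land the remainder in $\mathcal E$, with the subalgebra and abelian claims handled exactly as you do (via Proposition \ref{prop:autg}\,(i) and Theorem \ref{thm:Dext}\,(ii)). The one place you diverge is the step you identify as the crux: your inclusion $\A_h \subseteq \DD + \A_1 h$, proved by induction on $y$-degree in the basis $\bigoplus_i \DD h^i y^i$ using \eqref{eq:A1com}. That induction is valid (the rewriting $h^{i-1}h^{(k)}y^{i-k} = (h^{k-1}h^{(k)})\,h^{i-k}y^{i-k}$ and the left-ideal bookkeeping both check out), but the paper makes the whole step a one-liner by expanding $D(\hat y) = \sum_{j\ge 0} r_j \hat y^{\,j}$ in the other standard basis of $\A_h$ and observing that $\hat y^{\,j} = \hat y^{\,j-1} y h \in \A_1 h$ for $j \ge 1$, so $g = r_0$ works immediately. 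In short: what you treat as the main obstacle dissolves if you extract the ``constant term'' of $D(\hat y)$ with respect to powers of $\hat y$ rather than of $y$; your route costs an induction but proves the slightly more general structural fact $\A_h \subseteq \DD + \A_1 h$ along the way.
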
 

\begin{proof}   
It is clear that $\mathcal D_\DD$ and $\mathcal E$ are Lie subalgebras of $\der (\A_h)$,  and 
 $\mathcal D_\DD$ is abelian (compare Proposition \ref{prop:autg} (i)). 
Assume  $D \in \der(\A_h)$.   Then $D(\hat y) = \sum_{j \geq 0}  r_j \hat y^j$, where
$r_j \in \DD$ for each $j$.  Now 
$D-D_{r_0} \in \der(\A_h)$,  and 
$$(D-D_{r_0})(\hat y) = \sum_{j \geq 1} r_j \hat y^j = \sum_{j \geq 1} r_j \hat y^{j-1}yh \in \A_1 h.$$
 Thus by Theorem \ref{thm:Dext}, the derivation $D-D_{r_0} \in \der(\A_h)$  extends to a derivation $E  \in \der(\A_1)$ such that  $D = D_{r_0}+E$, where $E$ belongs to $\mathcal E$.       \end{proof}     
 
 The derivations $D_g$ extend to derivations of  $\A_1 \Sigma^{-1}$, as the next result shows.

\begin{lemma}\label{lem:extendD_g}
For $g \in \DD$, the derivation $D_g \in \der (\A_h)$ extends uniquely to a derivation $\widetilde D_g$ of $\A_1 \Sigma^{-1}$ with $\widetilde D_g(\DD\Sigma^{-1}) = 0$,   $\widetilde D_g (y) = gh^{-1}$,  and $[D_g, \ad_a] = \ad_{\widetilde  D_g(a)}$,  for
all $a \in \norm$,  where 
$\widetilde D_{g}(a) \in \mathsf{N}_{\A_1 \Sigma^{-1}}(\A_h)$.  
\end{lemma}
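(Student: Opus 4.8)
The plan is to verify the three claimed properties of the extension $\widetilde D_g$ in turn: existence and uniqueness on the localization, the explicit formula $\widetilde D_g(y) = gh^{-1}$, and the bracket relation $[D_g, \ad_a] = \ad_{\widetilde D_g(a)}$ with the membership statement $\widetilde D_g(a) \in \mathsf{N}_{\A_1\Sigma^{-1}}(\A_h)$.

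First I would address existence and uniqueness. The derivation $D_g \in \der(\A_h)$ satisfies $D_g(x) = 0$ and $D_g(\hat y) = g$ by the defining property of the derivations $D_e$. Since $\Sigma = \{h^m \mid m \geq 0\}$ is an Ore set in both $\A_1$ and $\A_h$ with $\A_1\Sigma^{-1} = \A_h\Sigma^{-1}$, and since derivations extend uniquely under localization, $D_g$ extends uniquely to a derivation $\widetilde D_g$ of $\A_1\Sigma^{-1}$. This gives existence and uniqueness immediately, and I would invoke the earlier discussion of localization (preceding Lemma~\ref{lem:extloc}) rather than reprove it.

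Next I would pin down the action of $\widetilde D_g$ on $\DD\Sigma^{-1}$ and on $y$. Since $D_g(x) = 0$ and $\widetilde D_g$ is a derivation, $\widetilde D_g$ annihilates all of $\DD = \FF[x]$; applying the quotient rule $\widetilde D_g(h^{-1}) = -h^{-1}\widetilde D_g(h)h^{-1} = 0$ then gives $\widetilde D_g(\DD\Sigma^{-1}) = 0$. For the value on $y$, I would use $\hat y = yh$, so that $g = D_g(\hat y) = \widetilde D_g(yh) = \widetilde D_g(y)h + y\widetilde D_g(h) = \widetilde D_g(y)h$ since $\widetilde D_g(h) = 0$. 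Because $\A_1\Sigma^{-1}$ is a domain and $h$ is invertible there, I conclude $\widetilde D_g(y) = gh^{-1}$, exactly as claimed.

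Finally I would establish the bracket relation. This is where I expect to apply Lemma~\ref{lem:extloc} most directly: that lemma states precisely that for $D \in \der(\A_h)$ with localized extension $\widetilde D$, one has $[D, \ad_a] = \ad_{\widetilde D(a)}$ for all $a \in \mathsf{N}_{\A_1\Sigma^{-1}}(\A_h)$, together with $\widetilde D(a) \in \mathsf{N}_{\A_1\Sigma^{-1}}(\A_h)$; specializing $D = D_g$ gives the assertion. Since $\norm \subseteq \mathsf{N}_{\A_1\Sigma^{-1}}(\A_h)$ (every element of $\A_1$ normalizing $\A_h$ a fortiori normalizes $\A_h$ inside the localization), the relation holds in particular for all $a \in \norm$, and the membership $\widetilde D_g(a) \in \mathsf{N}_{\A_1\Sigma^{-1}}(\A_h)$ follows from the same lemma. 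The only mild subtlety, and the step I would treat most carefully, is confirming that the general extension furnished by localization coincides with the specific $\widetilde D_g$ here so that Lemma~\ref{lem:extloc} applies verbatim; this is immediate from the uniqueness of the localized extension established in the first step.
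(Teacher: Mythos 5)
Your proposal is correct and follows essentially the same route as the paper: uniqueness of the localized extension, annihilation of $\DD\Sigma^{-1}$ via $D_g(x)=0$ and the quotient rule, the formula $\widetilde D_g(y)=gh^{-1}$ from $\hat y = yh$ (the paper computes $\widetilde D_g(\hat y h^{-1})$ directly, which is the same calculation rearranged), and an appeal to Lemma~\ref{lem:extloc} for the bracket relation. No gaps.
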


\begin{proof}
It is clear that $D_g$ extends uniquely to a derivation $\widetilde D_g$ of $\A_1 \Sigma^{-1}$, and $\widetilde D_g(h^{-1})=-h^{-1}D_{g}(h)h^{-1}=0$.  Then it follows that  
\begin{equation}\label{eq:Dgy} \widetilde D_g (y) = \widetilde D_g (\hat y h^{-1}) = \widetilde D_g (\hat y)h^{-1} = D_g (\hat y)h^{-1} = gh^{-1}. \end{equation}
The final assertion is a direct 
consequence of Lemma \ref{lem:extloc}. 
\end{proof} 

\subsection{The element $a_0 = \pi_h h^{-1}$  in $\mathsf{N}_{\A_1 \Sigma^{-1}}(\A_h)$} \hfil 
\smallskip
 
Let  $\widetilde D_1$ be the extension of  the derivation $D_1$ to $\A_1 \Sigma^{-1}$,  and let 
$a_0 = \widetilde D_1(a_1)  = \pi_h h^{-1}  \in\mathsf{N}_{\A_1 \Sigma^{-1}}(\A_h).$    This definition fits naturally  
with the definition of the elements  $a_n = \pi_h h^{n-1} y^n \in \norm$  for   $n \geq 1$.  
Observe that in general  $\ad_{ra_0} \notin\mathcal E = \{F \in \der(\A_1) \mid
F(\A_h) \subseteq \A_h\}$.   Now since $\delta(r) = r' h$ for all $r \in \DD$,  the derivation $\delta$ extends to a derivation (again denoted
by $\delta$)  on $\DD \Sigma^{-1}$ with $\delta (h^{-1}) = -h' h^{-1}$.    
The linear transformation   given by
\begin{equation}\label{eq:del0}\delta_0: \DD \rightarrow \DD, \quad   r \mapsto \delta(r a_0)  =  (ra_0)' h  =   (r \pi_h h^{-1})'h  =  (r\pi_h)' - r \frac{\pi_h h'}{h} \end{equation}

\noindent will play a  special  role in what follows.    Since $h$ divides $\pi_h h'$ by  Lemma \ref{L:defpi},  it is evident that $\delta_0(\DD) = \delta(\DD a_0) \subseteq \DD$.   
\medskip
\begin{lemma}\label{L:del0}     For all $r \in \DD$, let   
$\delta_0(r)= \delta(r a_0)$ as in \eqref{eq:del0},  where $a_{0} = \pi_h h^{-1}  \in\mathsf{N}_{\A_1 \Sigma^{-1}}(\A_h).$ 
\begin{itemize} 
\item[{\rm (a)}] Then $\ad_{ra_{0}}=-D_{\delta(ra_0)} = -D_{\delta_0(r)} \in \mathcal D_{\DD}$  for all $r \in \DD$.  In particular,
$\ad_{a_0} =  -D_{\delta(a_0)}= -D_{\delta_0(1)}$  and $\degg(\delta(a_0)) < \degg h$.
\item[{\rm (b)}]  $\delta_0(rs) = \delta(rs a_0) = r \delta_0(s)  + r's \pi_h$.  In particular, $\delta_0(r) = r \delta_0(1) + r' \pi_h$,
where $\delta_0(1)  = \pi_h' - \frac{\pi_h h'}{h}.$
\end{itemize}
\end{lemma}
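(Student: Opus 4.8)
The plan is to prove Lemma~\ref{L:del0} by direct computation, leveraging the extension $\widetilde D_1$ of $D_1$ to the localization $\A_1\Sigma^{-1}$ and the characterization $\ad_{ra_0}=-D_{\delta(ra_0)}$ that ties elements of $\mathcal D_\DD$ to the map $\delta_0$. For part~(a), the key identity is $[D_1,\ad_{a_1}]=\ad_{\widetilde D_1(a_1)}=\ad_{a_0}$, which follows from Lemma~\ref{lem:extendD_g} since $a_1=\pi_h y\in\norm$ and $\widetilde D_1(a_1)=\pi_h h^{-1}=a_0$ by definition. First I would verify that $\ad_{ra_0}$ lands in $\mathcal D_\DD$ by checking it kills $x$ and sends $\hat y$ into $\mathsf C_{\A_h}(x)$; concretely, $\ad_{ra_0}(x)=[ra_0,x]=0$ because $a_0=\pi_h h^{-1}\in\mathsf C_{\A_1\Sigma^{-1}}(x)$ (it is a polynomial in $x$ times $h^{-1}\in\DD\Sigma^{-1}$, hence commutes with $x$), and $\ad_{ra_0}(\hat y)=[ra_0,\hat y]$. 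The crucial step is to compute this last bracket and identify it with $-\delta_0(r)$, so that by the uniqueness clause of Lemma~\ref{lem:derext} we get $\ad_{ra_0}=D_{-\delta_0(r)}=-D_{\delta_0(r)}$.

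To pin down $[ra_0,\hat y]$, I would use that $\ad_{ra_0}=[D_1,\ad_{ra_1}]$-type reasoning, or more directly that $D_{\delta(g)}=-\ad_g$ from Proposition~\ref{prop:autg}(ii): since $D_e(\hat y)=e$ by definition of $D_e$, the element $\delta(ra_0)\in\DD$ satisfies $D_{\delta(ra_0)}=-\ad_{ra_0}$ precisely when $[ra_0,\hat y]=-\delta(ra_0)$. Thus the content reduces to verifying $[ra_0,\hat y]=-(ra_0)'h=-\delta(ra_0)$, which is the standard commutation rule $[f,\hat y]=-f'h$ for $f\in\DD\Sigma^{-1}$ extended from Corollary~\ref{C:yhatprod} (the $n=0$ case, extended to the localization). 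Since $\delta(ra_0)=\delta_0(r)$ by the definition~\eqref{eq:del0}, this gives $\ad_{ra_0}=-D_{\delta_0(r)}$, and specializing $r=1$ yields $\ad_{a_0}=-D_{\delta_0(1)}$. The degree bound $\degg(\delta(a_0))<\degg h$ then follows from the explicit formula $\delta(a_0)=\delta_0(1)=\pi_h'-\frac{\pi_h h'}{h}$ together with $\deg\pi_h\le\deg h$ and $\deg(\pi_h h'/h)\le\deg\pi_h-1<\deg h$, so the leading terms must be compared carefully.

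For part~(b), I would expand $\delta_0(rs)=\delta(rsa_0)$ directly using the product/Leibniz rule for the derivation $\delta$ on $\DD\Sigma^{-1}$: writing $\delta(rsa_0)=\delta(r)sa_0+r\delta(sa_0)$ and using $\delta(r)=r'h$ gives $\delta(rsa_0)=r'hsa_0+r\delta_0(s)=r's\pi_h+r\delta_0(s)$, where the last equality uses $ha_0=h\cdot\pi_h h^{-1}=\pi_h$. Setting $s=1$ (so $\delta_0(1)=\delta(a_0)=\pi_h'-\frac{\pi_h h'}{h}$, obtained by differentiating $\pi_h h^{-1}$ and multiplying by $h$) recovers $\delta_0(r)=r\delta_0(1)+r'\pi_h$.

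The main obstacle is \emph{not} the algebra—each bracket is routine—but ensuring that all the identities, which a priori live in the localization $\A_1\Sigma^{-1}$ where $h^{-1}$ makes sense, actually produce honest elements of $\DD$ and of $\der(\A_h)$. Specifically, one must justify that $\delta_0(r)=(r\pi_h)'-r\frac{\pi_h h'}{h}\in\DD$ (not merely in $\DD\Sigma^{-1}$), which hinges on Lemma~\ref{L:defpi} guaranteeing $h\mid\pi_h h'$; this is exactly the point already flagged in the discussion preceding the lemma. Once that integrality is secured, the uniqueness in Lemma~\ref{lem:derext} legitimizes writing $\ad_{ra_0}=-D_{\delta_0(r)}$ as a genuine derivation of $\A_h$ rather than of the localization, completing both the membership claim $\ad_{ra_0}\in\mathcal D_\DD$ and the identification.
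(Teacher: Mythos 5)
Your proposal is correct and follows essentially the same route as the paper: compute $\ad_{ra_0}(x)=0$ and $\ad_{ra_0}(\hat y)=-(ra_0)'h=-\delta_0(r)\in\DD$ (integrality via $h\mid\pi_h h'$ from Lemma~\ref{L:defpi}), conclude by agreement on generators, and get (b) from the Leibniz rule. The only cosmetic difference is your detour through $[D_1,\ad_{a_1}]$, which you do not actually need, and the degree bound requires no comparison of leading terms since each summand of $\pi_h'-\frac{\pi_h h'}{h}$ already has degree strictly less than $\degg\pi_h$.
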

 
\begin{proof} For any $r \in \DD$, \ $\ad_{ra_{0}}(x)=0$ and 
\begin{equation*}
\ad_{ra_{0}}(\hat y)=[ra_0,y] h = -(ra_0)'h = -\delta(ra_0) = -\delta_0(r)  \in\DD.
\end{equation*}
Thus, $\ad_{ra_{0}}=-D_{\delta(ra_0)}=-D_{\delta_0(r)}  \in \mathcal D_\DD$, as these two derivations agree on a generating set of $\A_{h}$.  It can be seen from \eqref{eq:del0} that $\degg(\delta(a_0)) = \degg (\delta_0(1)) < \degg \pi_h\leq \degg h$.  Part\,(b) follows directly from the definitions.   \end{proof} 
\smallskip 

\subsection{Main result on products} \hfil
\smallskip

We can now state our main result on the Lie brackets  in $\hoch (\A_h)$.   Since $\mathsf{C}_{\A_h}(x) = \centh \DD$,  and $D_{z g}  =  z D_g$ for $z \in \centh, g \in \DD$,   we will  focus on products involving the derivations $D_g$ for $g \in \DD$.  This suffices when $\chara(\FF) = 0$,  since  $\centh = \FF1$ in that case.  When $\chara(\FF) = p > 0$, more general products will be considered in Section \ref{sec:prodsDerChar-p}.

\begin{thm}\label{L:brackets} Set $a_{-1} = 0$ and let  $a_{0}=\pi_{h}h^{-1}$.  For all $r \in \DD$, let   
$\delta_0(r)= \delta(r a_0) = (r\pi_h h^{-1})' h$ as in \eqref{eq:del0}. \begin{itemize}
\item[{\rm (a)}] For all $g,r \in\DD$ and $n\geq 0$,  we have  $[D_{g}, \ad_{ra_{n}}]=n\ad_{gra_{n-1}}
  = n \ad_{c a_{n-1}}$ in $\hoch(\A_h) =\der (\A_h) / \inder (\A_h)$, where $c$ is the remainder  of the 
division in $\DD$ of  $gr$ by $\frac{h}{\pi_h}$.  \item[{\rm (b)}]  For all $r, s\in\DD$ and all $m, n\geq 0$,  
$[ \ad_{ra_{m}}, \ad_{sa_{n}} ] = \ad_{qa_{m+n-1}} =  \ad_{da_{m+n-1}}$ in $\hoch(\A_h)$, where  
$q = mr \delta_0(s) -ns\delta_0(r)$,  and  $d$ is  the remainder
of the division in $\DD$ of $q$ by $\frac{h}{\pi_h}$.   \end{itemize}
\end{thm}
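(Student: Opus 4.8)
The plan is to prove both bracket formulas by reducing everything to computations in the localization $\A_1\Sigma^{-1}$ where the elements $a_n$ live uniformly, then translating the result back modulo inner derivations. The key observation is that all the derivations in play---$D_g$ and $\ad_{ra_n}$---have been extended to $\A_1\Sigma^{-1}$ in Lemmas \ref{lem:extendD_g} and \ref{L:del0}, and that the Jacobi-type identities $[D,\ad_a]=\ad_{\widetilde D(a)}$ hold there. So I would first fix notation: recall $a_n=\pi_h h^{n-1}y^n$ for $n\geq 1$, $a_0=\pi_h h^{-1}$, $a_{-1}=0$, and that $\widetilde D_1(y)=h^{-1}$ so that $\widetilde D_1(a_n)=a_{n-1}$ for all $n\geq 0$ (this is the unifying relation that makes the index bookkeeping work, and it should be verified as a preliminary computation using $\widetilde D_1(\pi_h h^{n-1})=0$ together with $\widetilde D_1(y^n)=nh^{-1}y^{n-1}$).

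\textbf{Part (a).} By Lemma \ref{lem:extendD_g}, $[D_g,\ad_{ra_n}]=\ad_{\widetilde D_g(ra_n)}$. Since $\widetilde D_g$ kills $\DD\Sigma^{-1}$ and satisfies $\widetilde D_g(y)=gh^{-1}$, a direct computation gives $\widetilde D_g(ra_n)=\widetilde D_g(r\pi_h h^{n-1}y^n)=r\pi_h h^{n-1}\cdot n y^{n-1}\cdot gh^{-1}=n\,gr\,\pi_h h^{n-2}y^{n-1}=n\,gr\,a_{n-1}$. This establishes $[D_g,\ad_{ra_n}]=n\ad_{gra_{n-1}}$ on the nose in $\der(\A_h)$. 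The passage to $\hoch(\A_h)$ and the replacement of $gr$ by its remainder $c$ modulo $\frac{h}{\pi_h}$ is the final step: I must show that if $gr\equiv c\pmod{\frac{h}{\pi_h}}$, then $\ad_{gra_{n-1}}$ and $\ad_{ca_{n-1}}$ differ by an inner derivation of $\A_h$, i.e. that $\ad_{(gr-c)a_{n-1}}\in\inder(\A_h)$. Writing $gr-c=\frac{h}{\pi_h}\cdot t$ for $t\in\DD$, one has $(gr-c)a_{n-1}=\frac{h}{\pi_h}\cdot t\cdot\pi_h h^{n-2}y^{n-1}=t\,h^{n-1}y^{n-1}$, which lies in $\A_h$ by \eqref{eq:Ahexpress}; hence $\ad_{(gr-c)a_{n-1}}$ is genuinely inner in $\A_h$, and the two sides agree in $\hoch(\A_h)$.

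\textbf{Part (b).} The strategy is identical but now symmetric. Using Lemma \ref{L:del0} and the extension machinery, I compute $[\ad_{ra_m},\ad_{sa_n}]=\ad_{[ra_m,sa_n]}$, and the bracket $[ra_m,sa_n]$ must be evaluated in $\A_1\Sigma^{-1}$. The cleanest route is to recognize $\ad_{ra_m}$ as (up to the $m=0$ correction already handled by Lemma \ref{L:del0}(a)) behaving like a first-order operator, and to apply the derivation rule: $[\ad_{ra_m},\ad_{sa_n}]=\ad_{\widetilde\partial(sa_n)}$ where $\widetilde\partial$ is the extension of $\ad_{ra_m}$. Carrying out the product $[ra_m,sa_n]$ and collecting terms---the key identity being that $\widetilde D_1$-type differentiation produces one factor $a_{m+n-1}$ with coefficient $mr\delta_0(s)-ns\delta_0(r)$, using $\delta_0(s)=\delta(sa_0)$ as the natural ``derivative'' attached to the $a_0$ shift---yields $[\ad_{ra_m},\ad_{sa_n}]=\ad_{qa_{m+n-1}}$ with $q=mr\delta_0(s)-ns\delta_0(r)$. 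The reduction $q\mapsto d$ modulo $\frac{h}{\pi_h}$ in $\hoch(\A_h)$ is then identical to the argument in part (a): $(q-d)a_{m+n-1}\in\A_h$, so $\ad_{(q-d)a_{m+n-1}}$ is inner.

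\textbf{Main obstacle.} The hardest part will be the explicit coefficient bookkeeping in part (b): verifying that the commutator $[ra_m,sa_n]$ in the localization collapses to a single $a_{m+n-1}$ term with exactly the antisymmetric coefficient $mr\delta_0(s)-ns\delta_0(r)$, rather than a sum of several $y$-powers. The danger is cross terms arising from $[\pi_h h^{m-1}y^m,\pi_h h^{n-1}y^n]$ via \eqref{eq:A1com}, which a priori generate lower-order contributions $y^{m+n-1-j}$ for $j\geq 2$; I expect these to land inside $\A_h$ (hence vanish in $\hoch$) precisely because of the divisibility $\pi_h\mid h$ and $h\mid\pi_h h'$ from Lemma \ref{L:defpi}, but confirming this cancellation---and correctly packaging the surviving $j=1$ term through the definition of $\delta_0$ and its product rule $\delta_0(rs)=r\delta_0(s)+r's\pi_h$ from Lemma \ref{L:del0}(b)---is where the real care is needed.
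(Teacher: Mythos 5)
Your overall strategy---pass to the localization $\A_1\Sigma^{-1}$, use $[D,\ad_a]=\ad_{\widetilde D(a)}$, compute the bracket there, and then reduce modulo $\frac{h}{\pi_h}$ using $\frac{h}{\pi_h}a_n=h^ny^n\in\A_h$---is exactly the paper's route, and your treatment of the remainder step is correct. But there is a genuine computational error in part (a). You write $\widetilde D_g(ra_n)=r\pi_h h^{n-1}\cdot ny^{n-1}\cdot gh^{-1}=n\,gr\,a_{n-1}$, which silently assumes that $y$ commutes with $gh^{-1}$. It does not: the Leibniz expansion $\widetilde D_g(y^n)=\sum_{j=0}^{n-1}y^j(gh^{-1})y^{n-1-j}$ forces you to move each factor $gh^{-1}$ past powers of $y$, producing derivative corrections. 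The correct formula (Lemma \ref{lem:D_g(y^n)-local}) is $\widetilde D_g(y^n)=\sum_{k=1}^n\binom{n}{k}(gh^{-1})^{(k-1)}y^{n-k}$, so that
$$\widetilde D_g(ra_n)=r\pi_h(gh^{-1})^{(n-1)}h^{n-1}+\sum_{k=1}^{n-1}\binom{n}{k}(gh^{-1})^{(k-1)}h^k\,ra_{n-k},$$
of which only the $k=1$ term equals $ngra_{n-1}$. Consequently your claim that the identity holds \emph{on the nose} in $\der(\A_h)$ is false; it holds only modulo $\inder(\A_h)$, and the substantive work is showing that all the $k\geq 2$ terms and the leading term lie in $\A_h$. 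That is precisely what the paper's Lemma \ref{lem:kDeriv_gh-inverse} and Proposition \ref{P:Dgadrai} do, using $(gh^{-1})^{(k)}h^k=s_k+(-1)^kk!\,g(h')^kh^{-1}$ together with the divisibility $h\mid\pi_h h'$ from Lemma \ref{L:defpi}. The same error infects your preliminary claim $\widetilde D_1(a_n)=a_{n-1}$, which is only true modulo $\A_h$ for the same reason. You correctly anticipate this exact danger for part (b) (the ``main obstacle'' paragraph), but the identical issue already arises in part (a), where you assert it away.

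Two smaller gaps: the boundary cases are not addressed. For $n=0$ in part (a) one must show $[D_g,\ad_{ra_0}]=0$, which requires recognizing $\ad_{ra_0}=-D_{\delta_0(r)}\in\mathcal D_\DD$ (Lemma \ref{L:del0}\,(a)) and invoking that $\mathcal D_\DD$ is abelian; similarly, the cases $m=0$ or $n=0$ in part (b) do not follow from the generic commutator computation but must be reduced to part (a) via the same identification, since $a_0=\pi_hh^{-1}$ is not even in $\A_1$ and the formula $[\ad_{ra_m},\ad_{sa_n}]=\ad_{[ra_m,sa_n]}$ is not directly available there. With these repairs---chiefly, redoing the computation of $\widetilde D_g(ra_n)$ with the correct noncommutative Leibniz rule and proving the extra terms are inner---your argument becomes the paper's proof.
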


Our proof of this theorem, which we complete in Section \ref{sec:proofSec4MainThm}, will be the culmination of  a series of computational results. 

\subsection {The product $[D_g, \ad_a]$ for  $g \in \DD$ and $a \in \norm$}  \hfil 
\smallskip

\begin{lemma}\label{lem:D_g(y^n)-local}  Assume $D \in \der (\A_1 \Sigma^{-1})$ has the property that $D(x) = 0$ and $D( y) = f$, where $f \in \DD \Sigma^{-1}$.  Then 
$$D(y^n) = \sum_{k=1}^n {n \choose k} f^{(k-1)} y^{n-k}$$
for all $n \ge 1$, where $f^{(k-1)}$ denotes 
$(\frac{d}{dx})^{k-1}(f)$ and $f^{(0)} = f$.
\end{lemma}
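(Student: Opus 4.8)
The plan is to induct on $n$. The base case $n=1$ is immediate: the claimed formula reduces to $D(y) = \binom{1}{1} f^{(0)} y^{0} = f$, which is precisely the hypothesis on $D$. For the inductive step I would factor $y^{n+1} = y^n \cdot y$ and apply the derivation property of $D$, giving
$$D(y^{n+1}) = D(y^n)\,y + y^n\,D(y) = D(y^n)\,y + y^n f.$$
Substituting the inductive hypothesis into the first term yields $\sum_{k=1}^n \binom{n}{k} f^{(k-1)} y^{n+1-k}$. The second term $y^n f$ must then be rewritten by commuting the coefficient $f$ to the left of $y^n$, which is exactly where a commutator identity is needed.

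The main obstacle is that the identity \eqref{eq:A1com} of Lemma \ref{lem:identity_y^nf} was established for $f \in \DD = \FF[x]$, whereas here $f \in \DD\Sigma^{-1}$ lies in the localization. I would first check that $\frac{d}{dx}$ extends to $\DD\Sigma^{-1}$ via the quotient rule, with $(h^{-1})' = -h' h^{-2}$, and that the relation $[y,f] = f'$ persists on $\DD\Sigma^{-1}$: from $[y, h^{-1}h] = 0$ one computes $[y, h^{-1}] = -h^{-1} h' h^{-1} = -h' h^{-2} = (h^{-1})'$, and the Leibniz rule then propagates $[y,f]=f'$ to all of $\DD\Sigma^{-1}$. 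Granting this, the same induction that proves \eqref{eq:A1com} gives $[y^n, f] = \sum_{j=1}^n \binom{n}{j} f^{(j)} y^{n-j}$ for $f \in \DD\Sigma^{-1}$ as well, so that
$$y^n f = f y^n + \sum_{j=1}^n \binom{n}{j} f^{(j)} y^{n-j}.$$

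Finally I would assemble the three contributions — the shifted inductive term $\sum_{k=1}^n \binom{n}{k} f^{(k-1)} y^{n+1-k}$, the term $f y^n$, and the commutator sum — reindexing the last by $k = j+1$ to obtain $\sum_{k=2}^{n+1} \binom{n}{k-1} f^{(k-1)} y^{n+1-k}$. Collecting the coefficient of each $f^{(k-1)} y^{n+1-k}$ and invoking Pascal's rule $\binom{n}{k} + \binom{n}{k-1} = \binom{n+1}{k}$, while treating the boundary indices $k=1$ (where $f y^n$ supplies the missing $\binom{n}{0}$) and $k=n+1$ separately, produces exactly $\sum_{k=1}^{n+1} \binom{n+1}{k} f^{(k-1)} y^{n+1-k}$, closing the induction. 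I expect the only delicate point to be verifying that the commutator formula survives passage to the localization; the coefficient bookkeeping in the reindexing step is routine once that is in place.
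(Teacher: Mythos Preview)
Your proof is correct and follows essentially the same approach as the paper: induction on $n$ using the commutation relation between $y$ and elements of $\DD\Sigma^{-1}$. The paper's proof is terser and factors as $y^{n+1}=y\cdot y^n$ rather than $y^n\cdot y$, so that only the single relation $ys=sy+s'$ (for $s\in\DD\Sigma^{-1}$) is needed at each step instead of the full expansion of $[y^n,f]$, but the substance is identical.
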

\begin{proof} The assertion holds for $n = 1$ since $D(y) = f$.   
For larger $n$,  it follows by induction using the fact that
$ys=sy + s'$ for $s \in \DD \Sigma^{-1}$. 
\end{proof} 

Next we compute $\widetilde D_g$ on certain elements.  Ultimately, this will enable us to calculate
$[D_g, \ad_{ra_n}]$.
\begin{cor}\label{cor:D_g(a_n)}
Let $g,r \in \DD$ and  assume  $a_n = \pi_h h^{n-1} y^n$ for $n \ge 1$. Let $\widetilde D_g$ be the extension of $D_g$ to $\A_1 \Sigma^{-1}$ as in Lemma \ref{lem:extendD_g}.   Then 
\begin{itemize} 
\item[{\rm (a)}]  $\widetilde D_g(ry^n) = r \sum_{k=1}^n{n \choose k} (gh^{-1})^{(k-1)} y^{n-k}$.  
\item[{\rm (b)}]  $\widetilde D_g (r a_n) =    r\pi_h (gh^{-1})^{(n-1)}h^{n-1}   +  \sum_{k=1}^{n-1} {n \choose k} (gh^{-1})^{(k-1)} h^k r a_{n-k}.$
\item[{\rm (c)}]  Assume $\chara(\FF) = p > 0$. Then  $D_g(\ze) = \left(g h^{p-1}\right)^{(p-1)},$  where $\ze= h^p y^p \in \centh$.  
\end{itemize} 
\end{cor}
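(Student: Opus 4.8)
The plan is to prove all three parts of Corollary \ref{cor:D_g(a_n)} by exploiting the extension $\widetilde D_g$ of $D_g$ to the localization $\A_1 \Sigma^{-1}$, whose key properties are recorded in Lemma \ref{lem:extendD_g}: namely $\widetilde D_g$ annihilates $\DD \Sigma^{-1}$ and satisfies $\widetilde D_g(y) = g h^{-1}$. Since $\widetilde D_g(x) = 0$ and $\widetilde D_g(y) = gh^{-1} \in \DD \Sigma^{-1}$, the extension $\widetilde D_g$ meets the hypotheses of Lemma \ref{lem:D_g(y^n)-local} with $f = gh^{-1}$.

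For part (a), I would simply apply Lemma \ref{lem:D_g(y^n)-local} to compute $\widetilde D_g(y^n)$ and then use that $\widetilde D_g(r) = 0$ for $r \in \DD$ (as $\DD \subseteq \DD \Sigma^{-1}$) together with the Leibniz rule to pull the factor $r$ out front; this gives the stated formula directly. For part (b), I would start from $ra_n = r\pi_h h^{n-1} y^n$, noting that $r \pi_h h^{n-1} \in \DD$ is annihilated by $\widetilde D_g$, so $\widetilde D_g(ra_n) = r\pi_h h^{n-1} \, \widetilde D_g(y^n)$. Substituting the formula from part (a) (applied with the scalar absorbed) yields $r\pi_h h^{n-1} \sum_{k=1}^n \binom{n}{k}(gh^{-1})^{(k-1)} y^{n-k}$. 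The remaining work is bookkeeping: isolate the $k = n$ term, which contributes $r\pi_h h^{n-1}(gh^{-1})^{(n-1)}$ since $y^0 = 1$, and rewrite each term with $1 \le k \le n-1$ so that the trailing $\pi_h h^{n-1}y^{n-k}$ is reorganized into $h^k a_{n-k} = h^k \pi_h h^{n-k-1} y^{n-k}$, which matches the claimed coefficient $\binom{n}{k}(gh^{-1})^{(k-1)} h^k r a_{n-k}$.

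For part (c), where $\chara(\FF) = p > 0$ and $\ze = h^p y^p$, I would observe that $h^p \in \DD$ is killed by $\widetilde D_g$, so $D_g(\ze) = \widetilde D_g(h^p y^p) = h^p \widetilde D_g(y^p)$, and then invoke Lemma \ref{lem:D_g(y^n)-local} with $n = p$. In characteristic $p$ the binomial coefficients $\binom{p}{k}$ vanish for $1 \le k \le p-1$, so only the $k = p$ term survives, giving $\widetilde D_g(y^p) = (gh^{-1})^{(p-1)}$. Hence $D_g(\ze) = h^p (gh^{-1})^{(p-1)}$, and I would then need to verify the cleaner expression $\bigl(gh^{p-1}\bigr)^{(p-1)}$. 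To reconcile these, I would use that in characteristic $p$ the $(p-1)$st derivative satisfies a simplified Leibniz-type identity because $(h^p)' = 0$: writing $g h^{p-1} = (gh^{-1}) h^p$ and differentiating $p-1$ times, all terms involving a derivative of $h^p$ drop out, leaving $h^p (gh^{-1})^{(p-1)}$.

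The main obstacle I anticipate is the final identification in part (c), namely justifying that $h^p (gh^{-1})^{(p-1)}$ equals $\bigl(gh^{p-1}\bigr)^{(p-1)}$ as an element of $\DD$ rather than merely of $\DD\Sigma^{-1}$. The subtlety is that $(gh^{-1})^{(p-1)}$ is computed in the localization where $h^{-1}$ lives, and one must confirm both that the product clears the denominator to land back in $\FF[x]$ and that the characteristic-$p$ collapse of the Leibniz expansion is applied correctly. I would handle this by expanding $\bigl((gh^{-1})h^p\bigr)^{(p-1)}$ via the Leibniz rule, observing that every derivative $(h^p)^{(j)}$ with $j \ge 1$ vanishes since $h^p \in \FF[x^p] = \ker(d/dx)$, so only the single term $h^p (gh^{-1})^{(p-1)}$ remains; this simultaneously shows the expression lies in $\DD$ and establishes the claimed equality, completing the proof.
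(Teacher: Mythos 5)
Your argument is correct and follows the paper's own proof essentially verbatim: parts (a) and (b) come from Lemma \ref{lem:D_g(y^n)-local} applied with $f = gh^{-1}$, followed by isolating the $k=n$ term and regrouping the rest into $h^k a_{n-k}$, and part (c) follows because ${p \choose k}$ vanishes for $1 \le k \le p-1$. Your extra care in justifying $h^p(gh^{-1})^{(p-1)} = (gh^{p-1})^{(p-1)}$ via the Leibniz rule and the fact that $h^p \in \FF[x^p]$ has vanishing derivative correctly fills in a step the paper leaves implicit.
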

\noindent \emph{Proof.} \ Part (a) is immediate from  Lemma \ref{lem:D_g(y^n)-local},  since $\widetilde D_g(x) = 0$  and $\widetilde D_g(y) = gh^{-1}$ by \eqref{eq:Dgy}.    For (b),  we have from  part (a)

\begin{align*}
\widetilde D_g (r a_n) & =   r\pi_h h^{n-1} \sum_{k=1}^n {n \choose k} (gh^{-1})^{(k-1)} y^{n-k} \\ 
&=  r\pi_h (gh^{-1})^{(n-1)}  h^{n-1}   + \sum_{k=1}^{n-1} {n \choose k} (gh^{-1})^{(k-1)} h^k    r  \pi_h  h^{n-k-1} y^{n-k} \\
&=   r  \pi_h (gh^{-1})^{(n-1)} h^{n-1}  +  \sum_{k=1}^{n-1} {n \choose k} (gh^{-1})^{(k-1)} h^k r a_{n-k}.     
\end{align*}  
Item (c) is a consequence of the calculation
\begin{equation*} D_g(\ze) = h^p \sum_{k=1}^p {p \choose k} (gh^{-1})^{(k-1)} y^{p-k} 
= h^p(gh^{-1})^{(p-1)} = (gh^{p-1})^{(p-1)}. \hspace{.4cm} \square \end{equation*}
 
\begin{lemma}\label{lem:kDeriv_gh-inverse}
Let $g\in\DD$ and $k\geq 0$.   Then, there exist $r_{1}, \ldots, r_{k+1}\in\DD$ such that 
$\left( gh^{-1}\right)^{(k)}=\sum_{i=1}^{k+1}r_{i}h^{-i}$, with $r_{1}=g^{(k)}$ and $r_{k+1}=(-1)^{k}k!g(h')^{k}$. 
In particular, for every $k\geq 0$, there exists $s_{k}\in\DD$ such that 
\begin{equation}\label{E:xik}
\left( gh^{-1}\right)^{(k)}h^{k}=s_{k}+(-1)^{k}k!g(h')^{k}h^{-1}.
\end{equation}
\end{lemma}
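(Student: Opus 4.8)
The plan is to prove the first identity by induction on $k$ and then obtain the ``in particular'' statement as an immediate corollary by multiplying through by $h^k$. For the base case $k=0$ one has $(gh^{-1})^{(0)} = gh^{-1} = r_1 h^{-1}$ with $r_1 = g$, which simultaneously satisfies both boundary conditions: $r_1 = g^{(0)}$ and $r_{k+1} = r_1 = (-1)^0 0!\, g (h')^0 = g$.

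For the inductive step, I would assume $(gh^{-1})^{(k)} = \sum_{i=1}^{k+1} r_i h^{-i}$ with $r_1 = g^{(k)}$ and $r_{k+1} = (-1)^k k!\, g (h')^k$, and differentiate term by term. Using $\frac{d}{dx}(h^{-i}) = -i h' h^{-i-1}$, this gives
$$(gh^{-1})^{(k+1)} = \sum_{i=1}^{k+1} r_i' h^{-i} - \sum_{i=1}^{k+1} i r_i h' h^{-i-1}.$$
Reindexing the second sum by $j = i+1$ and collecting the coefficient of each power $h^{-j}$ for $1 \le j \le k+2$ (with the convention $r_0 = r_{k+2} = 0$) yields new coefficients $\tilde r_j = r_j' - (j-1) r_{j-1} h' \in \DD$.

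It then remains to verify the two boundary conditions for the $\tilde r_j$. For $j=1$ only the first contribution survives, so $\tilde r_1 = r_1' = (g^{(k)})' = g^{(k+1)}$, as needed. For $j = k+2$ only the reindexed contribution survives, giving $\tilde r_{k+2} = -(k+1) r_{k+1} h' = -(k+1)(-1)^k k!\, g (h')^k h' = (-1)^{k+1}(k+1)!\, g (h')^{k+1}$, which is exactly the prescribed trailing coefficient. This completes the induction. For the ``in particular'' part I would multiply the established identity by $h^k$ to get $(gh^{-1})^{(k)} h^k = \sum_{i=1}^{k+1} r_i h^{k-i}$; for $1 \le i \le k$ the exponent $k-i \ge 0$, so setting $s_k = \sum_{i=1}^{k} r_i h^{k-i} \in \DD$ and isolating the single term $i = k+1$ (which contributes $r_{k+1} h^{-1} = (-1)^k k!\, g (h')^k h^{-1}$) yields precisely the claimed decomposition.

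The computation is entirely routine; the only point requiring care is the bookkeeping in the inductive step, namely tracking how the leading coefficient $r_1$ and the trailing coefficient $r_{k+1}$ transform under the reindexing so that both boundary conditions are preserved. I do not expect any substantive obstacle beyond this.
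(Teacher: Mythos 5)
Your proof is correct: the base case, the recursion $\tilde r_j = r_j' - (j-1)r_{j-1}h'$ obtained by differentiating and reindexing, and the verification of the two boundary coefficients $\tilde r_1 = g^{(k+1)}$ and $\tilde r_{k+2} = (-1)^{k+1}(k+1)!\,g(h')^{k+1}$ all check out, and the passage to \eqref{E:xik} by multiplying by $h^k$ is exactly right. The paper's own proof is a one-line appeal to the Leibniz rule $\left( gh^{-1}\right)^{(k)}=\sum_{j=0}^{k}\binom{k}{j}g^{(k-j)}\left(h^{-1}\right)^{(j)}$, which isolates the extreme terms $j=0$ (giving $r_1=g^{(k)}$) and $j=k$ (giving the $(-1)^k k!\,g(h')^k h^{-(k+1)}$ term, since the lowest-order part of $(h^{-1})^{(k)}$ is $(-1)^k k!\,(h')^k h^{-k-1}$); that route still requires, implicitly, an induction to pin down the shape of $(h^{-1})^{(j)}$, which is precisely the bookkeeping your direct induction on the full expression $\left(gh^{-1}\right)^{(k)}$ makes explicit. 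The two arguments are therefore computationally equivalent; yours is more self-contained, the paper's is more compressed. No gaps.
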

\begin{proof}
This follows from the identity $(gh^{-1})^{(k)} = \sum_{j=0}^k {k \choose j} g^{(k-j)}(h^{-1})^{(j)}.$
\end{proof}

\begin{prop}\label{P:Dgadrai}
Assume $g,r \in \DD$.  Then for  $a_n=  \pi_h h^{n-1} y^n$ the following hold: 
\begin{itemize}
\item[{\rm (a)}]   If $n\geq 2$,  there exists $s\in\A_{h}$ so that $\widetilde D_g (r a_n)= s+ngra_{n-1}\in\mathsf{N}_{\A_1}(\A_h)$.   Thus, $[D_g, \ad_{r a_n}] = 
\ad_{\widetilde D_g(r a_n)} 
\in \{ \ad_b \mid b \in \norm\}$ and 
$$[D_g, \ad_{r a_n}]  =  n\ad_{gr a_{n-1}} \ \ \modu \, \inder(\A_h).$$
\item[{\rm (b)}] $[D_g, \ad_{r a_1}]=\ad_{gra_0}=-D_{\delta_0(gr)}$ where 
$\delta_0(gr)= \delta(gra_0) =  \big(gr\pi_hh^{-1}\big)' h.$ 
\item[{\rm (c)}]  $[D_g, \ad_r]=0$.
\end{itemize}
\end{prop}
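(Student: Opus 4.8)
The plan is to reduce all three brackets to computing $\widetilde D_g$ on the relevant element and then to invoke Lemma~\ref{lem:extendD_g}, which gives $[D_g, \ad_a] = \ad_{\widetilde D_g(a)}$ for every $a \in \norm$. The central computational input is the formula of Corollary~\ref{cor:D_g(a_n)}(b), and the main work lies in part (a), where I must peel off the expected term $ngra_{n-1}$ and show that the remainder lands in $\A_h$.

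For (a), I would start from
$$\widetilde D_g(ra_n) = r\pi_h(gh^{-1})^{(n-1)}h^{n-1} + \sum_{k=1}^{n-1} {n \choose k}(gh^{-1})^{(k-1)}h^k\, r a_{n-k}.$$
The $k=1$ summand is exactly $ngra_{n-1}$, since $(gh^{-1})^{(0)}h = g$, so I set this aside as the main term. For the remaining pieces I would use \eqref{E:xik} to write $(gh^{-1})^{(k-1)}h^k = h s_{k-1} + (-1)^{k-1}(k-1)!\,g(h')^{k-1}$, a genuine polynomial $P_k \in \DD$. Each intermediate summand ($2 \le k \le n-1$) then has the form ${n \choose k} P_k r\pi_h h^{n-k-1} y^{n-k}$, which lies in $\A_h$ by \eqref{eq:Ahexpress} provided $h \mid P_k \pi_h$; this divisibility holds because $h \mid \pi_h h'$ by Lemma~\ref{L:defpi} and the factor $(h')^{k-1}$ supplies at least one $h'$ when $k \ge 2$. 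The same reasoning, now using $n-1 \ge 1$, shows the leading term $r\pi_h(gh^{-1})^{(n-1)}h^{n-1}$ lies in $\DD \subseteq \A_h$. Collecting these, I obtain $\widetilde D_g(ra_n) = s + ngra_{n-1}$ with $s \in \A_h$; since $\DD a_{n-1} \subseteq \norm$ by Theorem~\ref{T:norm}(b), this sum lies in $\norm$, and Lemma~\ref{lem:extendD_g} then yields the bracket, with $\ad_s \in \inder(\A_h)$ dropping out modulo inner derivations.

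Part (b) is the degenerate case $n=1$ of the same formula: the sum in Corollary~\ref{cor:D_g(a_n)}(b) is empty, leaving $\widetilde D_g(ra_1) = gr\pi_h h^{-1} = gra_0$, so Lemma~\ref{lem:extendD_g} gives $[D_g, \ad_{ra_1}] = \ad_{gra_0}$, and Lemma~\ref{L:del0}(a), applied with $gr$ in place of $r$, rewrites this as $-D_{\delta_0(gr)}$. Part (c) is immediate: since $r \in \DD \subseteq \A_h$ and $D_g$ annihilates all of $\DD$ (because $D_g(x)=0$ forces $D_g(f)=f'D_g(x)=0$ for every $f \in \DD$), we have $[D_g, \ad_r] = \ad_{D_g(r)} = 0$.

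The main obstacle is the divisibility bookkeeping in part (a): verifying that both the leading term and every intermediate term of $\widetilde D_g(ra_n)$ actually land in $\A_h$. This is precisely where the hypothesis $n \ge 2$ is essential, since it guarantees that each relevant summand carries a factor $(h')^{k-1}$ with $k-1 \ge 1$ that can be paired against $\pi_h$ through the relation $h \mid \pi_h h'$; the case $n=1$ lacks such a factor and is therefore handled separately in (b).
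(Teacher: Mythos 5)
Your proposal is correct and follows essentially the same route as the paper's proof: expand $\widetilde D_g(ra_n)$ via Corollary~\ref{cor:D_g(a_n)}\,(b), isolate the $k=1$ term $ngra_{n-1}$, and use \eqref{E:xik} together with $h \mid \pi_h h'$ to place the leading and intermediate terms in $\A_h$, finishing with Theorem~\ref{T:norm}\,(b), Lemma~\ref{lem:extendD_g}, and Lemma~\ref{L:del0}\,(a). The divisibility bookkeeping you carry out is exactly the content of the paper's displays \eqref{eqn:quotientDeriv1} and \eqref{eqn:quotientDeriv2}.
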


\begin{proof}
For every $k\geq 0$, let $s_k \in\DD$ be given by~(\ref{E:xik}).  Assume $k, n\geq 2$. Then
\begin{align}
(gh^{-1})^{(n-1)}h^{n-1}r \pi_h&= s_{n-1}r\pi_{h}+(-1)^{n-1}(n-1)! g(h')^{n-1}h^{-1} r\pi_{h},\label{eqn:quotientDeriv1} \\
(gh^{-1})^{(k-1)} r h^k&=s_{k-1}rh+(-1)^{k-1}(k-1)!g(h')^{k-1}r \label{eqn:quotientDeriv2}.
\end{align}
The expression in \eqref{eqn:quotientDeriv1} is in  $\DD$  since $h$ divides $\pi_h h'$.  Now
if (\ref{eqn:quotientDeriv2}) is multiplied by $a_{n-k}$ (where $2 \le k \le n-1$), the right side is 
$$s_{k-1}rh a_{n-k} +  (-1)^{k-1}(k-1)! g (h')^{k-1}r \pi_h h^{n-k-1}y^{n-k},$$
 which is in $\A_h$ by (b) of Theorem \ref{T:norm}.  Hence, by Corollary \ref{cor:D_g(a_n)},  we have (a).  
Part\,(b) follows from Corollary~\ref{cor:D_g(a_n)} and Lemma \ref{L:del0}\,(a).  Part\,(c) is clear.
\end{proof}   
\smallskip

\subsection{The product $[\ad_{ra_m}, \ad_{sa_n}]$ for $r,s \in \DD$}  \hfil
\smallskip

Here we focus on the commutators
$[\ad_{ra_m}, \ad_{sa_n}]$.   As before,   $f^{(k)}$ denotes $\left(\frac{d}{dx} \right)^k(f)$ for any $f \in \DD$.  Our starting point is a fact about the terms $(r \pi_h h^\ell)^{(k)}$ for $r \in \DD$. 
\smallskip 

\begin{lemma}\label{lem:deriv(h^k pi)}
Fix $\ell \ge 0$ and let $r \in \DD$.  If $k \ge 2$, then 
\begin{equation}\label{eq:derhpi} (r \pi_h h^\ell)^{(k)} \in \DD h^{\ell+2-k} + \DD h^{\ell+1-k} h'.
\end{equation}  
\end{lemma}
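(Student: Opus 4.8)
The plan is to isolate the two phenomena at work: a generic \emph{descent} step that loses one power of $h$ per derivative, and the special role of $\pi_h$, which is felt only at the very start. Throughout I write $W_j = \DD h^j + \DD h^{j-1}h'$, so that the assertion to be proved is exactly $(r\pi_h h^\ell)^{(k)} \in W_{\ell+2-k}$. First I would record the \textbf{descent lemma}: for every $j \geq 2$ one has $\frac{d}{dx}(W_j) \subseteq W_{j-1}$. This needs no property of $\pi_h$ whatsoever and is a direct Leibniz computation. Indeed $(ah^j)' = a'h^j + jah^{j-1}h' \in W_j \subseteq W_{j-1}$, while $(bh^{j-1}h')' = b'h^{j-1}h' + (j-1)bh^{j-2}(h')^2 + bh^{j-1}h''$, whose three terms lie in $\DD h^{j-2}h'$, $\DD h^{j-2}h'$ and $\DD h^{j-1}$ respectively, all inside $W_{j-1}$.

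The crux is the \textbf{base case} $k=2$, namely $(r\pi_h h^\ell)'' \in W_\ell$, and this is where $\pi_h$ enters. By Lemma \ref{L:defpi} I may write $\pi_h h' = hu$ with $u \in \DD$; differentiating this identity gives $\pi_h h'' = (u-\pi_h')h' + u'h \in \DD h + \DD h'$. Expanding $(r\pi_h h^\ell)''$ by Leibniz produces four terms; the first two obviously lie in $W_\ell$, and the potentially dangerous ones are $\ell(\ell-1)r\pi_h h^{\ell-2}(h')^2$ and $\ell r\pi_h h^{\ell-1}h''$. Substituting $\pi_h h' = hu$ into the former rewrites it as $\ell(\ell-1)ru\,h^{\ell-1}h' \in \DD h^{\ell-1}h'$, and substituting $\pi_h h'' = (u-\pi_h')h' + u'h$ into the latter rewrites it as $\ell r(u-\pi_h')h^{\ell-1}h' + \ell ru'h^\ell$, again in $W_\ell$. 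Without the factor $\pi_h$ these two terms would only be guaranteed to lie in $\DD h^{\ell-2}$, which is strictly larger than $W_\ell$; recovering the two extra powers of $h$ is precisely the content of $h \mid \pi_h h'$, so I expect this step to be the main obstacle.

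Finally I would assemble the two pieces. For $2 \leq k \leq \ell+1$, writing $(r\pi_h h^\ell)^{(k)} = \big((r\pi_h h^\ell)''\big)^{(k-2)}$ and applying the descent lemma $k-2$ times to the base-case membership in $W_\ell$ yields $(r\pi_h h^\ell)^{(k)} \in W_{\ell+2-k}$, which is the claim. For $k \geq \ell+2$ the exponent $\ell+2-k$ is $\leq 0$, so $\DD h^{\ell+2-k} \supseteq \DD$ inside the localization $\DD\Sigma^{-1}$ and the inclusion holds trivially. This completes the argument.
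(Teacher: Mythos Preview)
Your proof is correct and follows essentially the same approach as the paper: establish the base case $k=2$ via the identity $\pi_h h' = hu$ (and its derivative $\pi_h h'' \in \DD h + \DD h'$), then descend one power of $h$ per derivative using only the Leibniz rule. Your introduction of the notation $W_j$ and the explicit separation of the descent step as a standalone sublemma is a cosmetic reorganization of the paper's inductive step, and your remark that the case $k \geq \ell+2$ is trivial in the localization makes explicit something the paper leaves implicit.
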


\noindent \emph{Proof.} \ Consider first  the case $k = 2$.    Then 
\begin{equation}\label{eq:sum}(r \pi_h h^\ell)^{(2)} = (r\pi_h)'' h^\ell + 2 \ell (r \pi_h)' h^{\ell-1} h' 
+ \ell(\ell-1)r\pi_h h^{\ell-2}( h')^2 + \ell r\pi_h h^{\ell-1} h''.\end{equation}
Since $h$ divides $\pi_h h'$, it follows that $\ell(\ell-1)r\pi_h h^{\ell-2}( h')^2  \in \DD h^{\ell-1}h'$.
 We may suppose $\pi_h h' = d h$ for  $d \in \DD$ and then take the derivative of both sides to get
$\pi_h h'' = d'h+d h'  - \pi_h' h'$.   From that we deduce $\ell r\pi_h h^{\ell-1} h''$ belongs
to $\DD h^{\ell} + \DD h^{\ell-1} h'$, which is the right-hand side of \eqref{eq:derhpi} when $k=2$.
The first two summands of  \eqref{eq:sum} also clearly belong to the right-hand side of \eqref{eq:derhpi}, so 
the result  holds when $k = 2$.  

The inductive step follows from the fact that for $r, s \in \DD$ 
\begin{gather*} ( r h^{\ell+2-k})'  \in \DD h^{\ell+2-(k+1)} \quad \hbox{\rm and} \\  
\hspace{2cm} (s h^{\ell+1-k} h')'  \in \DD h^{\ell+2-(k+1)}+\DD h^{\ell+1-(k+1)} h'. \hspace{2.9cm} \square \end{gather*}  

The proof of the next lemma will use the fact that $[ \DD, \DD] = 0$
and the relation $[y^m,  f]  = \sum_{k=1}^m  {m \choose k}  f^{(k)} y^{m-k}$ in $\A_1$ from Lemma \ref{lem:identity_y^nf}.  \smallskip 

\begin{lemma}\label{L:bracket-ra_m_sa_n}
Let $r, s \in \DD$, and let $m, n \ge 1$.  In the Lie algebra $\hoch (\A_h)$, 
$$[ \ad_{ra_m}, \ad_{sa_n} ] =  \ad_{[ra_m, sa_n]} = \ad_{qa_{m+n-1}}, \ \ \  \hbox{\rm where} 
\ \  q= mr \delta_0(s) -ns\delta_0(r).$$
\end{lemma}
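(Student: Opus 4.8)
The plan is to compute the commutator $[ra_m, sa_n]$ directly in the localized algebra $\A_1\Sigma^{-1}$, extract the leading structure, and then reduce modulo $\inder(\A_h)$. The final statement claims that this bracket of two inner derivations is again inner, arising from $qa_{m+n-1}$ with $q = mr\delta_0(s) - ns\delta_0(r)$. Since $[\ad_{ra_m},\ad_{sa_n}] = \ad_{[ra_m,sa_n]}$ holds whenever both elements lie in $\norm$ (by the Jacobi identity, as both $ra_m, sa_n \in \norm$ by Theorem \ref{T:norm}(b)), the heart of the matter is to show that $[ra_m, sa_n]$ equals $qa_{m+n-1}$ modulo an element of $\A_h$ — because adding an element of $\A_h$ to the argument of $\ad$ only changes the derivation by an inner derivation of $\A_h$, which is killed in $\hoch(\A_h)$.

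First I would write $a_m = \pi_h h^{m-1}y^m$ and $a_n = \pi_h h^{n-1}y^n$ and compute $[ra_m, sa_n]$ using the commutation rule $[y^k, f] = \sum_{j\ge 1}\binom{k}{j} f^{(j)} y^{k-j}$ from Lemma \ref{lem:identity_y^nf}, together with $[\DD,\DD]=0$. The bracket expands into two mirror-image sums coming from moving the $y$-powers past the polynomial coefficients. The crucial observation, which I expect to be the main technical step, is that all terms involving a derivative of order $j \ge 2$ land in $\A_h$: this is exactly the content of Lemma \ref{lem:deriv(h^k pi)}, which shows $(r\pi_h h^\ell)^{(k)} \in \DD h^{\ell+2-k} + \DD h^{\ell+1-k}h'$ for $k\ge 2$, and combining the resulting $h$-powers with the surviving $y$-power $y^{m+n-k}$ gives an element of $\bigoplus_i \DD h^i y^i = \A_h$ after using that $h \mid \pi_h h'$ (Lemma \ref{L:defpi}). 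Thus only the $j=1$ (first-derivative) terms survive modulo $\A_h$.

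After discarding the higher-order terms, what remains is the coefficient of $y^{m+n-1}$. Here I would carefully track the two first-derivative contributions — one from $ra_m$ acting on $sa_n$ and one (with opposite sign) from $sa_n$ acting on $ra_m$ — and identify them with $\delta_0$. Recalling from Lemma \ref{L:del0} that $\delta_0(r) = \delta(ra_0) = (r\pi_h h^{-1})'h$ encodes precisely the derivative operation weighted by $\pi_h/h$, the surviving coefficient should organize into $\bigl(mr\,\delta_0(s) - ns\,\delta_0(r)\bigr)\pi_h h^{m+n-2}$, which is exactly $q\,\pi_h h^{(m+n-1)-1}$, the coefficient matching $qa_{m+n-1} = q\pi_h h^{m+n-2}y^{m+n-1}$. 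The factors of $m$ and $n$ emerge from $\binom{m}{1}=m$ and $\binom{n}{1}=n$ in the commutator expansion.

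\textbf{The hardest part} will be the bookkeeping that confirms the surviving coefficient is genuinely $q\pi_h h^{m+n-2}$ and not merely congruent to it up to an element still producing a nontrivial $y^{m+n-1}$-coefficient outside $\A_h$; in particular one must verify that $\delta_0(r) = (r\pi_h h^{-1})'h$ is honestly in $\DD$ (guaranteed since $h\mid\pi_h h'$, as noted after \eqref{eq:del0}) so that $qa_{m+n-1}$ is a legitimate element of $\norm$. I expect the cleanest route is to compute $[ra_m, sa_n]$ inside $\A_1\Sigma^{-1}$, where $a_0 = \pi_h h^{-1}$ is available and $\delta_0$ has a transparent meaning, rather than juggling integer $h$-exponents throughout; the congruence modulo $\A_h$ then follows from Lemma \ref{lem:deriv(h^k pi)} as above, and the final identification with $\ad_{qa_{m+n-1}}$ in $\hoch(\A_h)$ is immediate.
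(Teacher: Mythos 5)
Your proposal is correct and follows essentially the same route as the paper: expand $[ra_m,sa_n]$ via Lemma \ref{lem:identity_y^nf}, discard all $k\ge 2$ derivative terms modulo $\A_h$ using Lemma \ref{lem:deriv(h^k pi)} together with $h \mid \pi_h h'$, and identify the surviving $k=1$ coefficient with $\bigl(mr\,\delta_0(s)-ns\,\delta_0(r)\bigr)\pi_h h^{m+n-2}$. The only (harmless) cosmetic difference is that the paper works in $\A_1$ and rewrites $(s\pi_h h^{n-1})'$ as $(s\pi_h h^{-1}h^n)'$ to surface $\delta_0$, rather than computing in the localization.
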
  

\begin{proof} We first compute $[ra_m, sa_n]$ in $\norm$ and then argue that certain elements are 0 in the factor Lie algebra $\norm / \A_h$.   For all $r,s \in \DD$,  
\begin{align*}
[ra_m, sa_n] &= r\pi_h h^{m-1} [y^m, s\pi_h h^{n-1}] y^n - s\pi_h h^{n-1} [ y^n, r\pi_h h^{m-1}] y^m \\
&= r\pi_h h^{m-1} \sum_{k=1}^m {m \choose k} (s\pi_hh^{n-1})^{(k)} y^{m+n-k}\\
& \qquad  - s\pi_h h^{n-1} \sum_{k=1}^n {n \choose k} (r\pi_h h^{m-1})^{(k)} y^{m+n-k}.
\end{align*}
For $k \geq 2$,  Lemma \ref{lem:deriv(h^k pi)} implies that  ${m \choose k} (s \pi_h  h^{n-1})^{(k)} = u h^{n-1+2-k} + v h^{n-1+1-k} h'$ for some $u, v \in \DD$ (which depend on  $k$ and $m$).  Observe that 
\begin{align*} r\pi_hh^{m-1} uh^{n+1-k} y^{m+n-k} &= ru \pi_h h^{m+n-k} y^{m+n-k} \in \A_h, \quad \hbox{\rm and also} \\
r \pi_h h^{m-1}vh^{n-k}h' y^{m+n-k}\, &= rv \pi_h h' h^{m+n-1-k} y^{m+n-k} \in \A_h \end{align*}
because  $\pi_h h'$ is divisible by $h$.
Similar reasoning applies to the terms in the second summation.  It follows that the terms coming from the above sums can be nonzero in $\norm / \A_h$ only when $k=1$.  Thus, modulo $\A_h$, 
\begin{eqnarray*} [ra_m, sa_n] &=& m r \pi_h h^{m-1}(s\pi_h h^{n-1})' y^{m+n-1} - n s \pi_h h^{n-1}(r \pi_h h^{m-1})' y^{m+n-1}
\\   &=&  \left( mrh^{m-1} (s\pi_h h^{-1} h^n )'  - n sh^{n-1} (r\pi_h h^{-1}h^m)' \right) \pi_h y^{m+n-1} \\
&=&\left(mr\delta_0(s)h^{m+n-2}  - ns \delta_0(r) h^{m+n-2} \right) \pi_h y^{m+n-1} \\
 &=&\left(mr \delta_0(s)-ns \delta_0(r) \right) a_{m+n-1},
\end{eqnarray*} 
where  $\delta_0 : \DD \to \DD$ is as in \eqref{eq:del0}.  Hence, in $\hoch(\A_h)$ we have  $[\ad_{ra_m}, \ad_{sa_n}] = \ad_{[ra_m, sa_n]} = \ad_{qa_{m+n-1}}$, where  $q = mr \delta_0(s)-ns \delta_0(r)$, as desired.  \end{proof}
 
\subsection{Proof of Theorem \ref{L:brackets}}\label{sec:proofSec4MainThm} \hfil

\noindent Take $g\in\DD$. By Proposition~\ref{P:Dgadrai}, we have the following products in $\hoch(\A_1)$: \  
 $[D_{g}, \ad_{ra_{n}}]=\ad_{\tilde D_{g}(ra_{n})}=n\ad_{gra_{n-1}}$ if $n \geq 2$,  and $[D_{g}, \ad_{ra_{1}}]= -D_{\delta_0(gr)} =\ad_{gra_{0}}$. By  Lemma \ref{L:del0}\,(a) and Theorem \ref{thm:derdecomp}, \, $[D_{g}, \ad_{ra_{0}}] \,= -[D_g,D_{\delta_0(r)}] = 0$,  which shows that (a) holds for $n = 0$ as well.  
 Since $\frac{h}{\pi_h}a_n \in \A_h$ for all $n$, the rest of part (a) follows
 from applying the division algorithm.     
 
For $m, n\geq 1$,  part (b) is a consequence of  Lemma \ref{L:bracket-ra_m_sa_n}.  Given the skew-symmetry of the formula in (b), it suffices to consider the case $m=0$.  By Lemma \ref{L:del0}\,(a) and Proposition \ref{P:Dgadrai}, we have in $\hoch (\A_h)$,  
\begin{equation*}
[ \ad_{ra_0}, \ad_{sa_{n}} ] = -[ D_{\delta_0(r)}, \ad_{sa_{n}} ]  =  -n\, \ad_{\delta_0(r)sa_{n-1}} = -\ad_{ns\delta_0(r) a_{n-1} },\end{equation*}
which implies (b).   \hspace{9.3cm} $\square$  

 \subsection{Properties of $\delta_0$} \label{subsec:delta0}  \hfil  \smallskip

We conclude this section with a few results  on  the map
$\delta_0$ that will be used in the next two sections. Their statements require the
element $\varrho_h$ in \eqref{eq:vrhodef}.

\begin{lemma}\label{lem:divisor-del0} 
Assume $\FF$ is arbitrary, and let $\delta_0: \DD \rightarrow \DD$, $\delta_0(r) = \delta(ra_0)$, be as in (\ref{eq:del0}).   For all $r \in \DD$, $\frac{h}{\pi_h \varrho_h}$ divides $\delta_0(r)$ if and only if $\frac{h}{\pi_h \varrho_h}$ divides $r$.
\end{lemma}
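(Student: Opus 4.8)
The plan is to reduce to a prime-by-prime computation in the unique factorization domain $\DD = \FF[x]$, based on the formula $\delta_0(r) = r\,\delta_0(1) + r'\pi_h$ from Lemma~\ref{L:del0}\,(b). For a monic prime $\pr$ of $\DD$, write $\nu_\pr$ for the associated $\pr$-adic valuation; then, with $w := \frac{h}{\pi_h\varrho_h}$, the divisibility $w \mid f$ holds if and only if $\nu_\pr(f) \geq \nu_\pr(w)$ for every prime $\pr$ dividing $w$. It therefore suffices to prove, for each such $\pr$, the local equivalence
\[
\nu_\pr(\delta_0(r)) \geq \nu_\pr(w) \iff \nu_\pr(r) \geq \nu_\pr(w),
\]
and then take the conjunction over all $\pr \mid w$. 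First I would identify these primes from the factorization $h = \lambda\,\pr_1^{\alpha_1}\cdots\pr_t^{\alpha_t}$ together with the descriptions of $\pi_h$ and $\varrho_h$ in Lemma~\ref{L:defpi} and Definition~\ref{D:varr}. A short bookkeeping computation shows that any prime $\pr$ dividing $w$ is one of the $\pr_i$ with $\pr_i' \neq 0$ (in positive characteristic these are exactly the factors $\pr_i \not\in \FF[x^p]$, i.e.\ $i \leq \ell$), and that its relevant data are $e := \nu_\pr(w) \geq 1$, with $e \leq p-2$ in positive characteristic, $\nu_\pr(\pi_h) = 1$, and $\alpha := \nu_\pr(h) \equiv e+1 \modd p$, where throughout I use the convention $p = 0$ in characteristic zero (so $\varrho_h = 1$ there). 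The upshot is the single congruence $1 - \alpha + m \equiv m - e \modd p$, valid for all integers $m$, which will drive the argument.

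Next I would establish two local inputs. The first is $\pr \nmid \delta_0(1)$: writing $\pi_h = \pr\,b$ with $\pr \nmid b$ and $h = \pr^{\alpha}c$ with $\pr \nmid c$, a reduction of $\delta_0(1) = \pi_h' - \frac{\pi_h h'}{h}$ modulo $\pr$ gives $\delta_0(1) \equiv (1-\alpha)\,b\,\pr' \modd \pr$, which is nonzero because $\pr \nmid b\pr'$ and $1 - \alpha \equiv -e \not\equiv 0 \modd p$ (using $e \geq 1$, and $e \leq p-2$ when $p > 0$). The second is a leading-term formula for $\delta_0(r)$: setting $m := \nu_\pr(r)$ and writing $r = \pr^m u$ with $\pr \nmid u$, I would substitute $r' = m\,\pr^{m-1}\pr'u + \pr^m u'$ and $\pi_h = \pr b$ into $\delta_0(r) = r\delta_0(1) + r'\pi_h$ and reduce modulo $\pr^{m+1}$, obtaining
\[
\delta_0(r) \equiv (1 - \alpha + m)\,u\,\pr'\,b\;\pr^{m} \pmod{\pr^{m+1}}.
\]
Since $u,\pr',b$ are coprime to $\pr$, this shows $\nu_\pr(\delta_0(r)) = m$ exactly when $1 - \alpha + m \equiv m - e \not\equiv 0 \modd p$, and $\nu_\pr(\delta_0(r)) > m$ otherwise.

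The equivalence then follows by comparing $m$ with $e$. If $m \geq e$, then $\nu_\pr(r\delta_0(1)) = m \geq e$, while the elementary bound $\nu_\pr(r') \geq m - 1$ gives $\nu_\pr(r'\pi_h) \geq m \geq e$, so $\nu_\pr(\delta_0(r)) \geq e$; this yields the forward implication $w \mid r \Rightarrow w \mid \delta_0(r)$. If $m < e$, then $0 \leq m \leq e-1$ forces $m - e \not\equiv 0 \modd p$ (immediate in characteristic zero, and using $e \leq p-2$ in positive characteristic), so the displayed congruence gives $\nu_\pr(\delta_0(r)) = m < e$; this is the contrapositive of the converse. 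Hence $\nu_\pr(\delta_0(r)) \geq e \iff m \geq e$, completing the local step. I expect the main obstacle, and the only place where the precise form of $w$ is genuinely needed, to be the possible cancellation of the two summands $r\delta_0(1)$ and $r'\pi_h$ when they share the valuation $m$: ruling it out comes down exactly to $m - e \not\equiv 0 \modd p$, which is guaranteed precisely by the bound $\overbar{\alpha_i} \leq p-1$ built into the definition of $\varrho_h$. This makes the converse direction the delicate one, while the forward implication is the routine valuation estimate.
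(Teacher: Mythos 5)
Your argument is correct and takes essentially the same route as the paper's: a prime-by-prime valuation analysis whose crux is that the integer coefficient $1-\alpha+m$ (the paper's $k+1-\alpha$) is nonzero modulo $p$ precisely because the definition of $\varrho_h$ caps the residual exponent $\overbar{\alpha_i}$ at $p-1$. The only cosmetic differences are that the paper first reduces to the case $\varrho_h=1$ and extracts the leading term by differentiating $r\pi_h h^{-1}$ directly, rather than carrying $\varrho_h$ through the bookkeeping and using the identity $\delta_0(r)=r\delta_0(1)+r'\pi_h$ as you do.
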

\begin{proof}
Let $\hat h = \frac{h}{\varrho_h}$. Then $\pi_{\hat h}=\pi_{h}$ and $\varrho_{\hat h}=1$.  Let $\hat \delta(r) = r'\hat h$,  and let
$\hat a_0 = \pi_{\hat h} \hat h^{-1} =  \varrho_h a_0$.     Then  $\frac{h}{\pi_h \varrho_h}=\frac{\hat h}{\pi_{\hat h}}$ and
\begin{equation*}
\hat \delta (r\hat a_0) =  \left (r\hat a_0\right)' {\hat h}= \left (ra_0\right)'  \varrho_h{\hat h}=
\left (ra_0\right)' h=\delta(ra_0).
\end{equation*}
Thus, it is no loss of generality to assume that $\varrho_h=1$.

For $r \in \DD$, $\delta \left( r \frac{h}{\pi_h} a_0 \right) = \delta(r) = r' h$ is divisible by $h$, and therefore by $\frac{h}{\pi_h}$, and this establishes one of the implications.  For the direct implication, let $u$ be a prime divisor of $h$, and write $h = u^\alpha v$, where $\alpha\geq 1$ and $\gcd (u,v) = 1$. Since $\varrho_h = 1$, we may  also assume that $\alpha < p$ when $\chara(\FF) = p > 0$.  It follows that $\pi_h = u \pi_{v}$. Write $r = u^k s$, where $k\geq 0$ and 
$\gcd (u,s) = 1$. We will 
show that if $u^{\alpha-1}$ divides $\delta(ra_0)$, then $u^{\alpha-1}$ divides $r$.  Since $u$ is an arbitrary prime divisor of $h$, it will  follow from this that $\frac{h}{\pi_h}$ divides $r$, provided it divides $\delta(ra_0)$. 

With this notation,  we have  
\begin{align*} \delta_0(r) & = \delta(ra_0) =  \left(r \pi_hh^{-1} \right)' h= \left( u^{k+1-\alpha}s\pi_{v}v^{-1} \right)' u^\alpha v \\
& = (k+1-\alpha) u^k u's\pi_{v} + u^{k+1}v\, \left(s\pi_{v}v^{-1} \right)'. \end{align*}

Assume $u^{\alpha-1}$ divides $\delta_0(r)$. It is enough to argue that $k\geq \alpha-1$. Supposing the contrary, we have $k< \alpha-1$, so $k+1\leq \alpha-1$, which implies that $u^{k+1}$  divides $\delta_0(r)$. 
Now  $v  \left(s\pi_{v}v^{-1} \right)' \in \DD$, so  $u$ divides $(k+1-\alpha) u's\pi_{v}$.  Note that $u'\neq 0$, because we are assuming $\varrho_h=1$. As $u', s$, and $v$ are coprime to $u$, this implies $k = \alpha-1$ when $\chara(\FF) = 0$, which is a contradiction.     When $\chara(\FF) = p > 0$,  then  $k \equiv \alpha-1 \modd p$, but since  $1 \le \alpha < p$,  we again have the contradiction  $k = \alpha-1$.  Thus,  indeed $k\geq \alpha-1$.
\end{proof}
 
\begin{lemma}\label{L:del0ker}
Assume $\FF$ is arbitrary.   Then the following hold.
\begin{itemize}
\item[{\rm (a)}]  $\ker \delta_0 = \left(\DD \cap \centh\right)\frac{h}{\pi_h \varrho_h}$. 
\item[{\rm (b)}]  
$\mathsf{dim}\left\{ \delta_0(r)  \ \big | \  r \in \DD, \ \degg r  <\degg\frac{h}{\pi_h\varrho_h} \right\}  = 
\degg \frac{h}{\pi_h \varrho_h}$. 
\item[{\rm (c)}]  When $\chara(\FF) =0$,  then $\ker \delta_0 = \FF \frac{h}{\pi_h}$ and \\  
$\hspace{2.2cm} \mathsf{dim}\left\{ \delta_0(r)  \ \big | \  r \in \DD, \ \degg r  <\degg\frac{h}{\pi_h} \right\}= \degg  \frac{h}{\pi_h}$.
\item[{\rm (d)}] For $s \in \DD$, $\left( \frac{s}{h} \right)' = 0$ if and only if $s \in \left(\DD \cap \centh\right) \frac{h}{\varrho_h}$.
\end{itemize} 
\end{lemma}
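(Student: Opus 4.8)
The plan is to prove part (d) first, deduce part (a) from it by a substitution, and then obtain (b) and (c) as consequences. The unifying observation is that in both characteristics the subring $\DD\cap\centh$ coincides with the ring of constants $\mathcal{K}:=\{r\in\DD\mid r'=0\}$ of the derivation $\frac{d}{dx}$: indeed $\mathcal{K}=\FF=\DD\cap\centh$ when $\chara(\FF)=0$ and $\mathcal{K}=\FF[x^p]=\DD\cap\centh$ when $\chara(\FF)=p>0$, by Theorem \ref{L:center}. With this identification, $\varrho_h$ is precisely the monic divisor of $h$ of largest degree lying in $\mathcal{K}$, and I will use the stronger divisibility statement that every monic element of $\mathcal{K}$ dividing $h$ in fact divides $\varrho_h$; in characteristic $p$ this follows from the factorization in Definition \ref{D:varr} together with the remark that a prime $\pr$ with $\pr'\neq 0$ can occur in an element of $\FF[x^p]$ only to a power divisible by $p$ (seen by computing the $\pr$-adic valuation of the derivative). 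I will also record that $\frac{h}{\pi_h\varrho_h}\in\DD$, which is immediate from the explicit exponents in Definition \ref{D:varr}.

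For (d), the condition $\left(\frac{s}{h}\right)'=0$ says exactly that $\frac{s}{h}$ lies in the kernel of $\frac{d}{dx}$ on the fraction field, i.e.\ in $\mathrm{Frac}(\mathcal{K})$. The reverse implication is easy: if $s=z\frac{h}{\varrho_h}$ with $z\in\mathcal{K}$, then $\frac{s}{h}=\frac{z}{\varrho_h}$ with $z,\varrho_h\in\mathcal{K}$, so its derivative vanishes. For the forward implication I would write $\frac{s}{h}=\frac{\tilde s}{\tilde h}$ in lowest terms in $\DD=\FF[x]$; differentiating gives $\tilde s'\tilde h=\tilde s\tilde h'$, and coprimality of $\tilde s,\tilde h$ forces $\tilde h\mid\tilde h'$, whence $\tilde h'=0$ for degree reasons, so $\tilde h\in\mathcal{K}$ and then $\tilde s\in\mathcal{K}$ as well. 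Since $\tilde h$ is a $\mathcal{K}$-divisor of $h$, the divisibility-maximality of $\varrho_h$ yields $\tilde h\mid\varrho_h$, so $\frac{s\varrho_h}{h}=\tilde s\,\frac{\varrho_h}{\tilde h}\in\mathcal{K}$; setting $z=\frac{s\varrho_h}{h}$ gives $s=z\frac{h}{\varrho_h}\in\mathcal{K}\frac{h}{\varrho_h}$, as desired.

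Part (a) then follows by applying (d) to $s=r\pi_h$: since $\delta_0(r)=\big(r\pi_h h^{-1}\big)'h$ and $h\neq0$, we have $\delta_0(r)=0$ iff $\big(\frac{r\pi_h}{h}\big)'=0$ iff $r\pi_h\in\mathcal{K}\frac{h}{\varrho_h}$. Dividing through by $\pi_h$ --- legitimate because $\frac{h}{\pi_h\varrho_h}\in\DD$ --- converts this into $r\in\mathcal{K}\frac{h}{\pi_h\varrho_h}=(\DD\cap\centh)\frac{h}{\pi_h\varrho_h}$, which is (a). For (b), I would restrict $\delta_0$ to the $\degg\frac{h}{\pi_h\varrho_h}$-dimensional space $V=\{r\in\DD\mid\degg r<\degg\frac{h}{\pi_h\varrho_h}\}$; by (a) any nonzero element of $\ker\delta_0$ has the form $z\frac{h}{\pi_h\varrho_h}$ with $0\neq z\in\mathcal{K}$ and hence degree at least $\degg\frac{h}{\pi_h\varrho_h}$, so $V\cap\ker\delta_0=0$. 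Thus $\delta_0|_V$ is injective and its image $\{\delta_0(r)\mid r\in V\}$ has dimension $\degg\frac{h}{\pi_h\varrho_h}$. Finally, (c) is just the specialization of (a) and (b) to $\chara(\FF)=0$, where $\varrho_h=1$ and $\DD\cap\centh=\FF$.

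The main obstacle is the forward direction of (d) in characteristic $p$, and specifically the divisibility-maximality of $\varrho_h$: one must show that the reduced denominator $\tilde h$, a priori only known to lie in $\FF[x^p]$ and to divide $h$, actually divides $\varrho_h$ rather than merely having degree at most $\degg\varrho_h$. Everything else reduces to a routine derivative computation or a dimension count.
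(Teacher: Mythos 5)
Your proof is correct, but it inverts the paper's logical order. The paper proves (a) first --- the inclusion $\left(\DD\cap\centh\right)\frac{h}{\pi_h\varrho_h}\subseteq\ker\delta_0$ by the one-line computation $\delta_0\!\left(c\frac{h}{\pi_h\varrho_h}\right)=(c\varrho_h^{-1})'h=0$, and the reverse inclusion by invoking Lemma \ref{lem:divisor-del0} to write $r=\tilde r\frac{h}{\pi_h\varrho_h}$ and then the Leibniz identity of Lemma \ref{L:del0}\,(b) to force $\tilde r'=0$ --- and only afterwards deduces (d) from (a), via $(s/h)'=0\Rightarrow h\mid sh'\Rightarrow\pi_h\mid s$ and $\delta_0(s/\pi_h)=h(s/h)'=0$. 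You instead prove (d) directly by a reduced-fraction argument ($\tilde s'\tilde h=\tilde s\tilde h'$ plus coprimality forces $\tilde h'=0=\tilde s'$) and then specialize $s=r\pi_h$ to recover (a). Your route bypasses Lemma \ref{lem:divisor-del0} entirely, at the price of upgrading the degree-maximality of $\varrho_h$ stated after Definition \ref{D:varr} to divisibility-maximality (every monic divisor of $h$ lying in $\DD\cap\centh$ divides $\varrho_h$); your $\pr$-adic valuation justification of that upgrade is sound, and is essentially the same prime-by-prime computation the paper performs inside the proof of Lemma \ref{lem:divisor-del0}. Since that lemma is needed elsewhere anyway (e.g.\ in Theorem \ref{thm:hochdec}), the paper's ordering costs nothing in context, whereas your version is the more self-contained if one wants only Lemma \ref{L:del0ker}. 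Parts (b) and (c) --- the injectivity of $\delta_0$ on polynomials of degree below $\degg\frac{h}{\pi_h\varrho_h}$ and the characteristic-zero specialization --- are handled identically in both.
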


\begin{proof}  (a)  Let $c \in \DD \cap \centh$ and note that 
$$\delta_0\left(c \frac{h}{\pi_h \varrho_h} \right)  =   \left( c \frac{h}{\pi_h \varrho_h} \pi_h h^{-1} \right)' h=  ( c\varrho_h^{-1})' h= 0.$$  
Therefore, $\left(\DD \cap \centh\right)\frac{h}{\pi_h \varrho_h} \subseteq \ker \delta_0$.

For the other containment, suppose that $\delta_0(r) = 0$.  Then Lemma \ref{lem:divisor-del0} implies that we may write $r = \tilde r \frac{h}{\pi_h \varrho_h}$ for $\tilde r \in \DD$.  Then applying Lemma \ref{L:del0}\,(b) we have
$$0  = \delta_0 \left( \tilde r \frac{h}{\pi_h \varrho_h} \right) = \tilde r \delta_0 \left( \frac{h}{\pi_h \varrho_h} \right) + \tilde r ' \frac{h}{\pi_h \varrho_h} \pi_h =   \tilde r ' \frac{h}{\pi_h \varrho_h} \pi_h,$$
which forces $\tilde r' = 0$, and thus $r = \tilde r \frac{h}{\pi_h \varrho_h} \in \left(\DD \cap \centh\right) \frac{h}{\pi_h \varrho_h}$.

For (b),  every  $r \in \ker \delta_0= (\DD \cap \centh)\frac{h}{\pi_h\varrho_h}$ is divisible by $\frac{h}{\pi_h\varrho_h}$,  so $r$ must be  0 or have  degree greater than or equal to the degree of  $\frac{h}{\pi_h\varrho_h}$.  Thus, the linear map 
\begin{equation}\label{eq:map4varth}
\left\{ r\in \DD\ \big | \ \degg r <\textstyle{\degg\frac{h}{\pi_h\varrho_h}} \right\}\ \longrightarrow \ \left\{\delta_0(r)\ \big | \ \degg r <\degg\textstyle{\frac{h}{\pi_h\varrho_h}} \right\} 
\end{equation}  
is an isomorphism.  Part (c) is immediate from (b)  and the fact that $\centh = \FF1$
 and $\varrho_h = 1$  when $\chara(\FF) = 0$.

For (d), it is clear that $\left( \frac{s}{h} \right)' = 0$ if $s \in \left(\DD \cap \centh\right) \frac{h}{\varrho_h}$.  For the other direction, suppose that $\left( \frac{s}{h} \right)' = 0$. Then $s'h=sh'$, so $h$ divides $sh'$ and it follows that $\pi_{h}$ divides $s$. Moreover, 
$$
\delta_0 \left( \frac{s}{\pi_h} \right) =  h \left (\frac{s}{h}\right)' =0,
$$
and this implies that $\frac{s}{\pi_h}\in\ker \delta_0= \left(\DD \cap \centh\right)\frac{h}{\pi_h \varrho_h}$, thus establishing the claim that $s\in \left(\DD \cap \centh\right)\frac{h}{\varrho_h}$.
\end{proof}  
 
%%%%%%%%%%%%%%%Sec. 5%%%%%%%%%%%%%%%%%%%%%%%%%%%%%%%%%%%%%%%%
\begin{section}{$\der(\A_h)$  when $\chara(\FF) = 0$}\label{sec:char0Der}  \end{section}   
The one-variable \emph{Witt algebra}  (also
known as the centerless Virasoro algebra) is the derivation algebra  $\mathsf{W} =  \der(\FF[t]) = \spann_\FF\{w_n  = t^{n+1}\frac{d}{dt}  \mid n \geq -1\}$,  where  $[w_m, w_n] = (n-m)w_{m+n}$ for $m,n \geq -1$,   ($w_{-2} = 0$).   When $\FF$ is the complex field, $\mathsf{W}$ is the Lie algebra of vector fields
on the unit circle, so  it has played an important role in many areas of mathematics and physics.   Our aim in this section is to show the following result  about $\hoch(\A_h) = \der(\A_h)/\inder(\A_h)$ for fields of characteristic 0, which we prove in Section \ref{sec:proofSec5MainThm}.

\medskip

\begin{thm}\label{thm:idealJ}  
Let  $\chara(\FF) = 0$,  and assume $h \neq 0$ and  $a_n = \pi_h h^{n-1}$ for all $n \geq 0$.   Then  
$\hoch(\A_h) = \mathsf{Z}(\hoch(\A_h))  \oplus [\hoch(\A_h),\hoch(\A_h)]$;
\begin{equation}\label{eq:Jideal} 
\mathcal N=\spann_\FF\{\ad_{r a_n} \mid  r \in \DD \pie, \ n \geq 0\}
\end{equation}
is the unique maximal  nilpotent ideal  of 
$ [\hoch(\A_{h}), \hoch(\A_{h})]$;    and 
\begin{equation*} \hoch(\A_h)/\mathcal N =  \mathsf{Z}(\hoch(\A_h)) \oplus [\hoch(\A_{h}), \hoch(\A_{h})]/\mathcal N, \ \ \  \hbox{\rm where}  \end{equation*}
\begin{itemize}
\item[{\rm (i)}] $\mathsf{Z}(\hoch(\A_h)) \cong \big\{ D_{r\frac{h}{\pi_{h}}}\, \big | \, \degg r <\degg \pi_{h} \big\}.$
\item[{\rm (ii)}] $ [\hoch(\A_{h}), \hoch(\A_{h})]/\mathcal N \cong  \big( (\DD/\DD \pie) \otimes \mathsf{W} \big)$,  and  $\mathsf{W} = \mathsf{span}_\FF\{w_i \mid i \geq -1\}$ is  the Witt algebra. 
\item[{\rm (iii)}]  $\big(\DD/\DD \pie\big) \otimes \mathsf{W}\ \cong\ \big( (\DD/\DD \pr_{1}) \otimes \mathsf{W} \big)\oplus \cdots\oplus \big( (\DD/\DD \pr_{k}) \otimes \mathsf{W} \big),$   a direct sum of simple Lie algebras, where $\pr_1, \dots, \pr_k$  are the monic prime factors of $h$ with multiplicity $>1$,
and each summand is a field extension of $\mathsf{W}$. \end{itemize}    \end{thm}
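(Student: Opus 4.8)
The plan is to exhibit $\hoch(\A_h)$ as a graded Lie algebra and extract all three assertions from the bracket formulas of Theorem \ref{L:brackets}. First I would pin down the underlying vector space in characteristic $0$. Since $\der(\A_1)=\inder(\A_1)$ by Proposition \ref{P:Weyl0}, the subalgebra $\mathcal E$ of Theorem \ref{thm:derdecomp} is $\{\ad_a\mid a\in\norm\}$, and $\norm=\DD\oplus\bigoplus_{n\geq 1}\DD a_n$ by Theorem \ref{T:norm}(i). Feeding this into $\der(\A_h)=\mathcal D_{\DD}+\mathcal E$ and evaluating any relation on $x$ (the coefficients of the distinct powers $y^{n-1}$ must vanish), I get the internal direct sum $\der(\A_h)=\mathcal D_{\DD}\oplus\bigoplus_{n\geq 1}\ad_{\DD a_n}$. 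Giving $\mathcal D_{\DD}$ degree $-1$ and $\ad_{\DD a_n}$ degree $n-1$ makes this a $\ZZ_{\geq -1}$-grading; because every inner derivation is a sum of homogeneous pieces $\ad_{s_0}=D_{-s_0'h}$ (degree $-1$) and $\ad_{s\frac{h}{\pi_h}a_n}$ (degree $n-1$), the subspace $\inder(\A_h)$ is graded, so $\hoch(\A_h)$ inherits the grading. Here $\hoch(\A_h)_{-1}\cong\DD/h\DD$ via $D_g$, and $\hoch(\A_h)_d\cong\DD/\frac{h}{\pi_h}\DD$ for $d\geq 0$ via $\ad_{ra_{d+1}}$, where the kernels are computed from the facts that $\ad_{ra_n}$ is inner exactly when $\frac{h}{\pi_h}\mid r$ (for $n\geq 1$) and that $D_g$ is inner exactly when $h\mid g$.

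With the grading available, the center is graded, and I read it off from Theorem \ref{L:brackets}. The identities $[D_g,\ad_{ra_n}]=n\,\ad_{gra_{n-1}}$ and $[D_g,\ad_{ra_1}]=-D_{\delta_0(gr)}$ force a degree $-1$ class $D_g$ to be central precisely when $\frac{h}{\pi_h}\mid g$, which is the description of $\mathsf{Z}(\hoch(\A_h))$ in (i) as $\tfrac{h}{\pi_h}\DD/h\DD$ with representatives $r\tfrac{h}{\pi_h}$, $\degg r<\degg\pi_h$. In degree $d\geq 1$ the bracket with $D_1$ already forces $\tfrac{h}{\pi_h}\mid r$ (hence the class is $0$), and in degree $0$ the condition $\delta_0(gr)\in h\DD$ for all $g$ forces $\tfrac{h}{\pi_h}\mid r$ by the divisibility analysis of Lemma \ref{lem:divisor-del0}; so there are no central classes in degrees $\geq 0$. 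For the derived algebra, $[D_g,\ad_{ra_{d+2}}]=(d+2)\ad_{gra_{d+1}}$ with $d+2\neq 0$ shows $\hoch(\A_h)_d\subseteq[\hoch(\A_h),\hoch(\A_h)]$ for all $d\geq 0$, while the degree $-1$ part of the derived algebra is exactly the image of $\overline{\delta_0}$, the operator $r\mapsto D_{\delta_0(r)}$ on $\DD/h\DD$. Thus the decomposition $\hoch(\A_h)=\mathsf{Z}(\hoch(\A_h))\oplus[\hoch(\A_h),\hoch(\A_h)]$ reduces to the single identity $\DD/h\DD=\ker\overline{\delta_0}\oplus\im\overline{\delta_0}$, with $\ker\overline{\delta_0}=\tfrac{h}{\pi_h}\DD/h\DD$ the center.

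I expect this last identity to be the main obstacle: it is the assertion that $\overline{\delta_0}$ is \emph{semisimple at the eigenvalue} $0$. I would prove it locally. Writing $\delta_0=\pi_h\frac{d}{dx}+\delta_0(1)$, one checks that $\delta_0$ preserves each primary component $\pr_i^{\alpha_i}\DD$, so by the Chinese Remainder Theorem it suffices to work on a single $\DD/\pr^\alpha\DD$. On the $\pr$-adic associated graded, $\overline{\delta_0}$ acts on the slot $\pr^j\DD/\pr^{j+1}\DD$ as multiplication by $(j-\alpha+1)\,\pr'\tfrac{\pi_h}{\pr}$, which (since $\pr$ is separable in characteristic $0$) is a unit of $\DD/\pr\DD$ times the integer $j-\alpha+1$, hence invertible for every $j\neq\alpha-1$. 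Therefore the generalized $0$-eigenspace of $\overline{\delta_0}$ injects into the single graded slot $j=\alpha-1$, whose dimension equals $\degg\pr=\dim\ker\overline{\delta_0}$; so the generalized and honest $0$-eigenspaces coincide, giving semisimplicity. This establishes the first displayed decomposition together with (i).

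Finally I would treat $\mathcal N$ and the quotient. Using Lemma \ref{lem:divisor-del0} and $\pie\mid\pi_h$ (so $\pi_h\equiv 0\pmod\pie$), one verifies $\delta_0(\DD\pie^{k})\subseteq\DD\pie^{k}$, whence the bracket formulas send coefficients in $\DD\pie^{a}$ and $\DD\pie^{b}$ to coefficients in $\DD\pie^{a+b}$; this simultaneously shows $\mathcal N$ is an ideal and, because $\frac{h}{\pi_h}\mid\pie^{\,k}$ for large $k$, that $\mathcal N$ is nilpotent. For (ii) I would define $\Phi\colon[\hoch(\A_h),\hoch(\A_h)]/\mathcal N\to(\DD/\DD\pie)\otimes\mathsf{W}$ by sending the class of $\ad_{ra_{d+1}}$ to $-\,\overline{\delta_0(1)}\,\bar r\otimes w_d$ (reading $d=-1$ through $\ad_{ra_0}=-D_{\delta_0(r)}$). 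The crucial computation is that modulo $\pie$ the coefficient $q=(m{+}1)r\delta_0(s)-(n{+}1)s\delta_0(r)$ of Theorem \ref{L:brackets} collapses to $(m-n)\,\overline{\delta_0(1)}\,\overline{rs}$, since $\pi_h\equiv 0$ kills the $\delta_0(s)\equiv s\,\delta_0(1)$ correction; rescaling by the unit $\overline{\delta_0(1)}\in(\DD/\DD\pie)^\times$ (a unit because each $\alpha_i\geq 2$ and $\pr_i$ is separable) converts the structure constants into the Witt relations $[w_m,w_n]=(n-m)w_{m+n}$. Part (iii) then follows by applying the Chinese Remainder Theorem to the squarefree $\pie=\pr_1\cdots\pr_k$, yielding $(\DD/\DD\pie)\otimes\mathsf{W}\cong\bigoplus_i(\DD/\DD\pr_i)\otimes\mathsf{W}$, and by identifying each summand with the Witt algebra over the field $K_i=\DD/\DD\pr_i$, which is simple even as an $\FF$-Lie algebra (the eigenvalues of $\ad w_0$ are the distinct integers $n$, so every $\FF$-ideal is graded and then absorbs all of each $K_iw_j$). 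Semisimplicity of the quotient upgrades ``$\mathcal N$ is a nilpotent ideal'' to ``$\mathcal N$ is the unique maximal nilpotent ideal,'' completing the proof.
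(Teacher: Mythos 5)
Your proof is correct, and its overall skeleton --- the explicit spanning set $\{D_g,\ad_{ra_n}\}$, the bracket formulas of Theorem \ref{L:brackets}, the twist by the unit $\overline{\delta_0(1)}$ modulo $\pie$ to produce the Witt relations, the Chinese Remainder Theorem, and the nilpotency of $\mathcal N$ via powers of $\pie$ --- coincides with the paper's (Theorems \ref{T:hochstruct} and \ref{thm:hochdec}, Lemma \ref{lem:idealJ}, Corollary \ref{cor:structureL}). Two steps are handled genuinely differently. First, you organize $\hoch(\A_h)$ as a $\ZZ_{\geq -1}$-graded Lie algebra; the paper never makes this grading explicit and instead isolates the needed independence statements in Lemma \ref{lem:indepofads}. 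The grading makes the computation of the center and of the derived algebra essentially automatic (both are graded subspaces), which is a genuine organizational gain. Second, for the decomposition $\hoch(\A_h)=\mathsf{Z}(\hoch(\A_h))\oplus[\hoch(\A_h),\hoch(\A_h)]$, the paper proves the intersection is zero using Lemma \ref{lem:divisor-del0} and then gets the sum to be everything by the dimension count $\dimm \mathsf{Z}=\degg\pi_h=\dimm\big(\hoch(\A_h)/[\hoch(\A_h),\hoch(\A_h)]\big)$ via the map $\rho$ of \eqref{eq:rho}; you instead reduce everything to the single identity $\DD/h\DD=\ker\overline{\delta_0}\oplus\im\,\overline{\delta_0}$ and prove semisimplicity of $\overline{\delta_0}$ at the eigenvalue $0$ by a filtration argument on each primary component $\DD/\pr_i^{\alpha_i}\DD$. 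That argument is correct and self-contained, though it can be shortcut: if $\delta_0(q)\in\frac{h}{\pi_h}\DD$, then Lemma \ref{lem:divisor-del0} gives $\frac{h}{\pi_h}\mid q$, and writing $q=\frac{h}{\pi_h}q_1$ yields $\delta_0(q)=q_1'h\in h\DD$, so $\ker\overline{\delta_0}\cap\im\,\overline{\delta_0}=0$ in one line. The remaining well-definedness checks for your map $\Phi$ (that $\ad_{ra_{d+1}}$ inner forces $\pie\mid r$ because $\pie$ divides $\frac{h}{\pi_h}$, and that in degree $-1$ the representative $r$ of $\ad_{ra_0}=-D_{\delta_0(r)}$ is only determined modulo $\frac{h}{\pi_h}\DD\subseteq\DD\pie$) are routine and follow from what you have set up.
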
 

We start by describing the decomposition $\der(\A_h) = \mathcal D_\DD + \mathcal E$ in Theorem \ref{thm:derdecomp} more explicitly and prove Theorem \ref{thm:idealJ} 
in a series of results.  We conclude the section by interpreting Theorem  \ref{thm:idealJ}  in
some cases of special interest.  \smallskip

\begin{thm}\label{T:derdec0}   Assume $\chara(\FF) = 0$, and regard $\A_h \subseteq \A_1$.  
Then $\der(\A_h) = \mathcal D \oplus \mathcal E$ where $\mathcal D = \{D_g \mid g \in \DD, \ \degg g < \degg h\}$ and $\mathcal E = \{\ad_a \mid a \in \norm\}$.  \end{thm}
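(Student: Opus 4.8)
The plan is to build on the decomposition $\der(\A_h) = \mathcal D_\DD + \mathcal E$ established in Theorem \ref{thm:derdecomp}, where there $\mathcal D_\DD = \{D_g \mid g \in \DD\}$ (with no degree restriction) and $\mathcal E = \{F \in \der(\A_1) \mid F(\A_h) \subseteq \A_h\}$, and to refine it using that $\chara(\FF) = 0$. First I would reconcile the two descriptions of $\mathcal E$, namely the set $\{\ad_a \mid a \in \norm\}$ in the present statement versus the set $\{F \in \der(\A_1) \mid F(\A_h) \subseteq \A_h\}$ of Theorem \ref{thm:derdecomp}. One inclusion, $\{\ad_a \mid a \in \norm\} \subseteq \mathcal E$, is immediate from \eqref{eq:norm} and \eqref{eq:norm2}. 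For the reverse inclusion I would invoke Proposition \ref{P:Weyl0}: in characteristic $0$ every $F \in \der(\A_1)$ is inner, say $F = \ad_a$ with $a \in \A_1$, and the hypothesis $F(\A_h) \subseteq \A_h$ then forces $a \in \norm$ by \eqref{eq:norm2}. Thus the two forms of $\mathcal E$ coincide, and Theorem \ref{thm:Dext}(ii) guarantees that restriction identifies $\mathcal E$ faithfully inside $\der(\A_h)$.

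Second, I would cut $\mathcal D_\DD$ down to $\mathcal D$ modulo $\mathcal E$. The assignment $g \mapsto D_g$ is $\FF$-linear and injective by the uniqueness in Lemma \ref{lem:derext} together with $D_g(\hat y) = g$, so $\mathcal D$ is a subspace. Given arbitrary $g \in \DD$, the division algorithm lets me write $g = g_0 + fh$ with $\degg g_0 < \degg h$. Since $\chara(\FF) = 0$, every $f \in \DD$ is a derivative, $f = Q'$ for some $Q \in \DD$, so $fh = \delta(Q)$ and, by Proposition \ref{prop:autg}(ii), $D_{fh} = D_{\delta(Q)} = -\ad_Q \in \inder(\A_h) \subseteq \mathcal E$. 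Hence $D_g = D_{g_0} - \ad_Q \in \mathcal D + \mathcal E$, giving $\mathcal D_\DD \subseteq \mathcal D + \mathcal E$ and therefore $\der(\A_h) = \mathcal D + \mathcal E$.

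Finally, I would prove directness, $\mathcal D \cap \mathcal E = 0$. Suppose $D_g = \ad_a$ with $\degg g < \degg h$ and $a \in \norm$. Evaluating at $x$ gives $0 = D_g(x) = [a,x]$, so $a \in \mathsf{C}_{\A_1}(x) = \DD$ by Lemma \ref{L:clizer}. A short computation using \eqref{eq:A1com} then yields $\ad_a(\hat y) = [a, yh] = [a,y]h = -a'h = -\delta(a)$, so that $g = D_g(\hat y) = -a'h$ is divisible by $h$. As $\degg g < \degg h$, this forces $g = 0$, whence $D_g = 0$ and the sum is direct.

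I expect the only genuinely delicate point to be the identification of $\mathcal E$, which is where characteristic $0$ enters decisively through the innerness of all derivations of $\A_1$ (Proposition \ref{P:Weyl0}); the remaining two steps reduce to the division algorithm together with the integration of polynomials (again valid only in characteristic $0$) and a one-line degree count. All three of these inputs fail in positive characteristic, which is exactly why the analysis in that case must proceed along entirely different lines.
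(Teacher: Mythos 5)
Your proof is correct and follows essentially the same route as the paper: both start from the decomposition of Theorem \ref{thm:derdecomp}, identify $\mathcal E$ with $\{\ad_a \mid a \in \norm\}$ via Proposition \ref{P:Weyl0}, and reduce $D_f$ to some $D_g$ with $\degg g < \degg h$ by integrating the quotient from the division algorithm. The only cosmetic difference is in the directness step, where you pass through $a \in \mathsf{C}_{\A_1}(x) = \DD$ and compute $\ad_a(\hat y) = -a'h$, rather than arguing from $D(y)h = g$ as the paper does; both reduce to the same degree count.
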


\begin{proof}  We know from Theorem \ref{thm:derdecomp} that 
$\der(\A_h) = \mathcal D_\DD + \mathcal E$,  where $\mathcal D_\DD = \{D_g \mid g \in \DD \}$
and $\mathcal E = \{F \in \der(\A_1) \mid F(\A_h) \subseteq \A_h\}$.   Since every derivation of
$\A_1$ is inner (see Proposition \ref{P:Weyl0}),  $\mathcal E =  \{\ad_a \mid a \in \norm\}$.
Assume $D_f  \in \mathcal D_\DD$ and write  
$f = qh + g$,  where $\degg g< \degg h$.  
When  $\chara(\FF) = 0$, there exists $r \in \DD$ so that $r' = -q$.  Then
$(D_f - \ad_r)(x) = 0$,  and $(D_f - \ad_r)(\hat y) = 
f + [\hat y, r]  = f + r'h = f-qh = g$.     Therefore $D_f - \ad_r = D_g$  
and  $\der(\A_h) = \mathcal D + \mathcal E$,  where  $\mathcal E = \{\ad_a \mid a \in \norm\}$ 
and $\mathcal D = \{D_g \mid g \in \DD, \, \degg g < \degg h\}$.

Suppose now that  $D  \in \mathcal D  \cap \mathcal E$.   Then  $D(\DD) = 0$ and $D(\hat y) = g$ for some $g \in \DD$ with $\degg g < \degg h$  since $D \in \mathcal D$.  But then  $D(y)h = D(\hat y) =  g  \in \DD \subset \A_h$.  This implies $D(y) \in \DD$, and since  
 $\degg g < \degg h$, it must be that  $g = 0$,  and hence  $D = 0$.     \end{proof} 

\begin{exam}\label{ex:h=pih}  When $\chara(\FF) = 0$ and there are no repeated prime factors in $h$,  we have
$\frac{h}{\pi_h} \in \FF^*$.   In this situation,    $\norm = \A_h$  (compare Remark \ref{R:norm}).    Then 
$\mathcal E = \inder(\A_h)$,  and 
  $\hoch(\A_{h})\cong \mathcal D =\{ D_g \mid g \in \DD,  \degg g < \degg h\}$ is an abelian Lie algebra of dimension $\degg h$. 
\end{exam} 

In light of this result, it is tempting to think that the subalgebra  $\mathcal E$ might be an ideal of $\der(\A_h)$.  However,  that is not true in general as the next example illustrates.  

\begin{exam} Let  $\chara(\FF) = 0$ and $h = x^m$ for $m \geq 2$.  Then $\pi_h = x$,  and according to Proposition \ref{P:Dgadrai}\,(b),  
$[D_1, \ad_{a_1}]= \ad_{a_0} = -D_{\delta(a_0)},$ 
where  $\delta(a_0)= \left(\pi_h h^{-1}\right)' h= 1-m$.  Thus,  $[D_1, \ad_{a_1} ]   = (m-1) D_1  \notin \mathcal E$.
\end{exam} 
 
\begin{lemma}\label{lem:indepofads}  
Let  $\chara (\FF) = 0$  and $h \neq 0$ be arbitrary. Assume $g \in \DD$ with $\degg g < \degg h$,  and $r_n \in \DD$  with  $\degg r_n < \degg \frac{h}{\pi_h}$  for all $n \geq 0$. 
\begin{itemize}
\item[{\rm (i)}]  If  $D_g +\sum_{n\geq 1}\ad_{r_{n}a_{n}} \in \inder (\A_{h})$, then   $g=0 =r_{n}$ for all $n\geq 1$.
\item[{\rm(ii)}]  If $\sum_{n\geq 0}\ad_{r_{n}a_{n}} \in \inder (\A_{h})$, then  $r_n  = 0$ for all $n \geq 0$.
\end{itemize}
\end{lemma}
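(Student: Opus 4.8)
The plan is to establish (i) directly from the decomposition $\der(\A_h) = \mathcal{D} \oplus \mathcal{E}$ of Theorem~\ref{T:derdec0}, and then to reduce (ii) to (i) by absorbing the $n=0$ term into a single $D_g$.

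For the first reduction in (i), I would note that each $a_n \in \norm$, so $\sum_{n\geq 1}\ad_{r_n a_n} \in \mathcal{E} = \{\ad_a \mid a \in \norm\}$, and likewise $\inder(\A_h) \subseteq \mathcal{E}$. Hence
\[ D_g \;=\; \Big(D_g + \sum_{n\geq 1}\ad_{r_n a_n}\Big) - \sum_{n\geq 1}\ad_{r_n a_n} \;\in\; \mathcal{E}. \]
Since $\deg g < \deg h$ means $D_g \in \mathcal{D}$, the directness of the sum forces $D_g \in \mathcal{D}\cap\mathcal{E} = 0$, and as $D_g(\hat y) = g$ this gives $g = 0$. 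With $g=0$, write $\sum_{n\geq 1}\ad_{r_n a_n} = \ad_a$ for some $a \in \A_h$; then $a - \sum_{n\geq 1} r_n a_n$ centralizes $\A_h$ inside $\A_1$. The next step is to identify this centralizer: an element commuting with $x$ lies in $\mathsf{C}_{\A_1}(x)=\DD$ by Lemma~\ref{L:clizer}, and $f\in\DD$ commutes with $\hat y = yh$ only if $hf'=0$, so $f\in\FF$ because $\chara(\FF)=0$. Thus $\sum_{n\geq 1} r_n a_n = a-\lambda \in \A_h$ for some $\lambda\in\FF$.

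To finish (i), I would use the $y$-grading: since $a_n = \pi_h h^{n-1}y^n$ and $\A_1 = \bigoplus_n \DD y^n$ with $\A_h = \bigoplus_n \DD h^n y^n$ by \eqref{eq:Ahexpress}, membership of $\sum_{n\geq 1} r_n a_n$ in $\A_h$ is equivalent, term by term, to $h^n \mid r_n \pi_h h^{n-1}$, i.e.\ $h \mid r_n\pi_h$. As $\pi_h \mid h$ (Lemma~\ref{L:defpi}), this says $\frac{h}{\pi_h} \mid r_n$, which together with $\deg r_n < \deg\frac{h}{\pi_h}$ forces each $r_n = 0$.

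For (ii), the idea is that Lemma~\ref{L:del0}(a) rewrites the $n=0$ term as $\ad_{r_0 a_0} = -D_{\delta_0(r_0)}$, so $\sum_{n\geq 0}\ad_{r_n a_n} = D_{-\delta_0(r_0)} + \sum_{n\geq 1}\ad_{r_n a_n}$, which has exactly the shape treated in (i). To apply (i) I must verify $\deg\delta_0(r_0) < \deg h$: from Lemma~\ref{L:del0}(b), $\delta_0(r_0) = r_0\delta_0(1) + r_0'\pi_h$, and combining $\deg\delta_0(1) < \deg\pi_h$ (Lemma~\ref{L:del0}(a)) with $\deg r_0 < \deg\frac{h}{\pi_h} = \deg h - \deg\pi_h$ gives $\deg\delta_0(r_0) \le \deg h - 2$ by a short degree count. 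Part (i) then yields $\delta_0(r_0)=0$ and $r_n=0$ for all $n\geq 1$; finally $r_0 \in \ker\delta_0 = \FF\frac{h}{\pi_h}$ by Lemma~\ref{L:del0ker}(c), and the bound $\deg r_0 < \deg\frac{h}{\pi_h}$ forces $r_0=0$.

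I expect the only genuinely delicate points to be the centralizer computation (which is where $\chara(\FF)=0$ is essential, since in positive characteristic $\mathsf{C}_{\A_1}(x)$ is larger) and the degree bookkeeping for $\delta_0(r_0)$ that legitimizes the passage from (ii) to (i); the rest is a direct application of the grading of $\A_1$ and the earlier structural results.
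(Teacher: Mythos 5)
Your argument is correct and follows the paper's own proof essentially step for step: part (i) via $\mathcal D\cap\mathcal E=0$, the identification of the centralizer of $\A_h$ in $\A_1$ with $\FF$, and the grading of $\A_h$ from \eqref{eq:Ahexpress}; part (ii) by converting $\ad_{r_0a_0}$ to $-D_{\delta_0(r_0)}$ and invoking $\ker\delta_0=\FF\frac{h}{\pi_h}$. The only difference is that you spell out the degree estimate $\degg\delta_0(r_0)<\degg h$, which the paper asserts without computation.
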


\begin{proof}   (i)  Write $D_g+\sum_{n\geq 1}\ad_{r_{n}a_{n}}=\ad_{a}$  for some $a\in\A_{h}$. Then
\begin{equation*}
D_g=\ad_{a}-\sum_{n\geq 1}\ad_{r_{n}a_{n}}\in\mathcal D\cap\mathcal E=0,
\end{equation*}
by Theorem \ref{T:derdec0}. It follows that $g=0$ and $\ad_{b}=0$, where $b=a-\sum_{n\geq 1}r_{n}a_{n}$. Thus, $b\in\A_{1}$ centralizes $\A_{h}$.  By Lemma \ref{L:clizer}, $b \in \DD  \subset \A_h$, so in fact $b \in \FF$,  as it commutes with $\hat y$.   In particular, we have $\sum_{n\geq 1}r_{n}a_{n}\in\A_{h}$. Since $a_n =  \pi_h h^{n-1} y^n$, we conclude from part (c) of Lemma \ref{L:embed} that $h$ divides $r_{n}\pi_{h}$ for all $n\geq 1$; that is,  $r_n \in \DD \frac{h}{\pi_h}$
for all $n \geq 1$.  But  since $\degg r_n < \degg\frac{h}{\pi_h}$, it must be that $r_n = 0$ for all $n \geq 1$.  
 
(ii)  Assume $\sum_{n \geq 0}\ad_{r_{n}a_{n}} \in \inder(\A_h)$.    By Proposition \ref{L:del0}\,(a),  $\ad_{r_0 a_0} =  -D_{\delta_0(r_0)}$.   As $\degg r_0 <  \degg\frac{h}{\pi_h}$,  we have that  $\degg \delta_0(r_0) < \degg h$.  
Therefore,  by (i)  we know that $r_n=0$ for all $n \geq 1$,  and $\delta_0(r_0) = 0$.    This implies
$r_0 \in \ker \delta_0 = (\DD \cap \centh) \frac{h}{\pi_h} = \FF \frac{h}{\pi_h}$ by Lemma \ref{L:del0ker}.   But then
 $\degg r_0 < \degg \frac{h}{\pi_h}$ forces  $r_0 = 0$ to hold.    \end{proof} 
\subsection{The structure of $\mathcal E$}  \hfil
\smallskip

Recall from Theorem \ref{T:derdec0} that $\mathcal E = \{\ad_a \mid a \in \norm\}$ when 
$\chara(\FF) = 0$.   
The next theorem,  a key result in our paper,  clarifies the relationship between $\mathcal E$ and $\inder(\A_h)$
and provides more detailed information about $\der(\A_h)$ and $\hoch(\A_h) = \der(\A_h)/\inder(\A_h)$.  
\medskip

\begin{thm}\label{T:hochstruct}   Assume $\chara (\FF)=0$. Then as vector spaces over $\FF$,  
\begin{itemize}
\item[{\rm (i)}] $ \mathcal E= \spann_\FF\{\ad_{ra_n}  \mid  r \in \DD, \degg r < \deghpi, \ n \geq 1\}\oplus \inder(\A_h).$
\item[{\rm(ii)}] $\der(\A_h) = \mathcal D \oplus  \spann_\FF\{\ad_{r a_n} \mid r \in \DD,  \degg r < \deghpi, \ n \geq 1\}\oplus \inder(\A_h),$ 
where $\mathcal D = \{D_g \mid  g \in \DD, \  \degg g< \degg h\}.$   
 \item[{\rm(iii)}] ${\hoch(\A_{h}) \  \cong \  \mathcal D \oplus \spann_\FF\{\ad_{r a_n} \mid r \in \DD,  \degg r <\deghpi,  \ n \geq 1\}.}$ 
  \end{itemize}
\end{thm}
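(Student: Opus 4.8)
The plan is to establish (i) by a direct computation and then deduce (ii) and (iii) formally. The content is concentrated in (i); parts (ii) and (iii) will follow immediately from (i) combined with the decomposition $\der(\A_h) = \mathcal D \oplus \mathcal E$ of Theorem \ref{T:derdec0} and the linear independence recorded in Lemma \ref{lem:indepofads}.

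For (i), I would recall from Theorem \ref{T:derdec0} that $\mathcal E = \{\ad_a \mid a \in \norm\}$, which is legitimate in characteristic $0$ because every derivation of $\A_1$ is inner (Proposition \ref{P:Weyl0}). By Theorem \ref{T:norm}\,(a)(i), each $a \in \norm$ has a finite expansion $a = r_0 + \sum_{n \geq 1} r_n a_n$ with $r_0, r_n \in \DD$ and $a_n = \pi_h h^{n-1} y^n$. The idea is then to reduce each coefficient modulo $\frac{h}{\pi_h}$, writing $r_n = q_n \frac{h}{\pi_h} + c_n$ with $\degg c_n < \deghpi$. Using the relation $\frac{h}{\pi_h} a_n = h^n y^n$ from Theorem \ref{T:norm}\,(b), one gets $r_n a_n = q_n h^n y^n + c_n a_n$, where $q_n h^n y^n \in \DD h^n y^n \subseteq \A_h$ by Lemma \ref{L:embed}\,(c); since also $r_0 \in \DD \subseteq \A_h$, each of these pieces contributes an inner derivation. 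Hence $\ad_a \equiv \sum_{n \geq 1} \ad_{c_n a_n}$ modulo $\inder(\A_h)$ with every $\degg c_n < \deghpi$, proving that $\mathcal E$ is the sum of $\spann_\FF\{\ad_{ra_n} \mid \degg r < \deghpi,\ n \geq 1\}$ and $\inder(\A_h)$. That this sum is \emph{direct} is exactly Lemma \ref{lem:indepofads}\,(i) in the case $g = 0$: if such a span element lies in $\inder(\A_h)$, all its coefficients vanish.

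For (ii), I would substitute the decomposition of $\mathcal E$ from (i) into $\der(\A_h) = \mathcal D \oplus \mathcal E$ (Theorem \ref{T:derdec0}). The resulting three-term sum is direct precisely because the degree constraints $\degg g < \degg h$ and $\degg r < \deghpi$ are exactly those under which Lemma \ref{lem:indepofads}\,(i) forces $D_g + \sum_{n \geq 1} \ad_{r_n a_n} \in \inder(\A_h)$ to have $g = 0$ and all $r_n = 0$. Finally, (iii) is immediate from (ii): quotienting by the direct summand $\inder(\A_h)$ identifies $\hoch(\A_h) = \der(\A_h)/\inder(\A_h)$ with the complementary subspace $\mathcal D \oplus \spann_\FF\{\ad_{ra_n} \mid \degg r < \deghpi,\ n \geq 1\}$. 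The only genuinely delicate ingredient is the linear independence of Lemma \ref{lem:indepofads}, which is already in hand; everything else is the explicit reduction against the basis of $\norm$, so I do not expect a substantial obstacle beyond careful bookkeeping.
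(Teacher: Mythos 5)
Your proposal is correct and follows essentially the same route as the paper's proof: reduce the coefficients $r_n$ modulo $\frac{h}{\pi_h}$ via the division algorithm, absorb the quotient terms into $\A_h$ using $\frac{h}{\pi_h}a_n = h^ny^n$, and invoke Lemma \ref{lem:indepofads} for directness before combining with Theorem \ref{T:derdec0}. The only cosmetic difference is that you cite Lemma \ref{lem:indepofads}\,(i) with $g=0$ where the paper cites part (ii); both give exactly the needed independence for $n\geq 1$.
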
   

\begin{remark} In the statement of Theorem  \ref{T:hochstruct}\,(iii)  and in what follows, we identify the derivations $D_g$  $(\degg g< \degg h)$ and the derivations $\ad_{r a_n}$ $(\degg r < \deghpi,  \ n\geq 1)$  with  their image in $\hoch(\A_h)= \der(\A_h)/\inder(\A_h)$ and use
the same notation for both.  \end{remark} 

 \noindent \emph{Proof of Theorem \ref{T:hochstruct}}. \  Clearly   $\inder(\A_{h})\subseteq \mathcal E= \{\ad_a \mid a \in \norm\}$.   Moreover, the sum $\spann_\FF\{\ad_{r a_n} \mid r \in \DD,  \degg r < \deghpi, \ n \geq 1\}+\inder(\A_{h})$ is direct by Lemma~\ref{lem:indepofads}\,(ii).

To show $\mathcal E$ equals this direct sum,  assume  $b \in\mathsf{N}_{\A_1}(\A_h)$.  By 
Theorem \ref{T:norm}(a)(i),  we may suppose  $b=r_{0}+\sum_{n \geq 1}r_{n}a_{n}$, where  $r_{n}\in\DD$ for all $n$.  For $n\geq 1$, write $r_{n}=q_{n}\frac{h}{\pi_{h}}+\tilde r_{n}$, with $q_{n}, \tilde r_{n}\in\DD$ and $\degg \tilde r_{n}<\deghpi$. Then,
\begin{equation*}
b=r_{0}+\sum_{n\geq 1}q_{n}{\textstyle \frac{h}{\pi_{h}}}a_{n}+\sum_{n\geq 1}\tilde r_{n}a_{n} .
\end{equation*}
Since $\frac{h}{\pi_h} a_n = h^n y^n \in \A_h$ for all $n \geq 1$,  we have  $a=r_{0}+\sum_{n\geq 1}q_{n}\frac{h}{\pi_{h}}a_{n}\in\A_{h}$. Thus, 
$\ad_{b}=\sum_{n \geq 1}\ad_{\tilde r_{n}a_{n}} + \ad_a$  is an element of $\spann_{\FF}\{\ad_{ra_n}  \mid  r \in \DD, \degg r < \textstyle{\deghpi}, \ n \geq 1\}\oplus \inder(\A_h).$  Combining that  with  Theorem \ref{T:derdec0} gives (ii),  and hence (iii).   \qed 

\subsection{The commutator ideal  $[\hoch (\A_h), \hoch (\A_h)]$} \hfil
\smallskip

\begin{prop}\label{prop:derived}  Assume $\chara (\FF)=0$.  Then
\begin{equation}\label{eq:hochcomm} [\hoch (\A_h), \hoch (\A_h)]  =  \spann_\FF\{\ad_{ra_n}\,|\, r \in\DD, \, \degg r <\degg\textstyle{\frac{h}{\pi_h}}, \,  n \geq 0\}. \end{equation}
Moreover, $\hoch (\A_h)/[\hoch (\A_h), \hoch (\A_h)]$ is an abelian Lie algebra of dimension $\degg \pi_{h}$. 
\end{prop}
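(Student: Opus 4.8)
```latex
\textbf{Plan of proof.}
The plan is to prove the two assertions of Proposition~\ref{prop:derived} using the
explicit bracket formulas of Theorem~\ref{L:brackets} together with the vector-space
decomposition in Theorem~\ref{T:hochstruct}\,(iii). Throughout I work inside
$\hoch(\A_h)$, identifying the derivations $D_g$ ($\degg g<\degg h$) and
$\ad_{ra_n}$ ($\degg r<\deghpi$, $n\geq 1$) with their images, as in the remark
following Theorem~\ref{T:hochstruct}.

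\textbf{Step 1: the commutator space is contained in the claimed span.}
First I would compute all the brackets among the spanning elements
$D_g$ and $\ad_{ra_n}$ using Theorem~\ref{L:brackets}. Part~(a) gives
$[D_g,\ad_{ra_n}]=n\,\ad_{ca_{n-1}}$ with $c$ the remainder of $gr$ modulo
$\frac{h}{\pi_h}$ (so $\degg c<\deghpi$), and part~(b) gives
$[\ad_{ra_m},\ad_{sa_n}]=\ad_{da_{m+n-1}}$ with $\degg d<\deghpi$. In particular
$[D_g,D_{g'}]=0$ since $\mathcal D$ is abelian. Every bracket therefore lands in
$\spann_\FF\{\ad_{ra_n}\mid r\in\DD,\ \degg r<\deghpi,\ n\geq 0\}$, which is exactly
the right-hand side of \eqref{eq:hochcomm} (note that $n=0$ is now allowed, arising
from the $n=1$ case of part~(a) and the $m=n=1$ case of part~(b)). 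Hence
$[\hoch(\A_h),\hoch(\A_h)]$ is contained in that span.

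\textbf{Step 2: the reverse inclusion.}
For the reverse inclusion I must realize each generator $\ad_{ra_n}$
($\degg r<\deghpi$, $n\geq 0$) as a bracket. For $n\geq 1$, applying
Theorem~\ref{L:brackets}\,(a) with $g=1$ gives
$[D_1,\ad_{ra_{n+1}}]=(n+1)\ad_{ca_n}$, and since $\chara(\FF)=0$ the scalar
$n+1$ is invertible; choosing $r$ of degree $<\deghpi$ makes its remainder modulo
$\frac{h}{\pi_h}$ equal to itself, so every $\ad_{ra_n}$ with $n\geq 1$ is a bracket.
The case $n=0$ is the delicate one: I must produce $\ad_{ra_0}$ (equivalently
$-D_{\delta_0(r)}$ by Lemma~\ref{L:del0}\,(a)) as a commutator. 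Taking $m=n=1$ in
Theorem~\ref{L:brackets}\,(b) yields
$[\ad_{ra_1},\ad_{sa_1}]=\ad_{(r\delta_0(s)-s\delta_0(r))a_1}$, but that stays at
level $a_1$; instead the natural source of level-$0$ elements is
$[D_g,\ad_{ra_1}]=\ad_{gra_0}$ from part~(a) with $n=1$. So I would show that as
$g,r$ range over $\DD$ (with the final index reduced modulo $\frac{h}{\pi_h}$) the
elements $\ad_{ca_0}$ with $\degg c<\deghpi$ are all attained. Since $\delta_0$ is
surjective onto a space of the right dimension by Lemma~\ref{L:del0ker}\,(c)
(its kernel in characteristic $0$ is $\FF\frac{h}{\pi_h}$, so the image of
$\{r\mid\degg r<\deghpi\}$ has dimension $\deghpi$), every $\ad_{ca_0}$ in the span
arises. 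I expect this $n=0$ case to be the main obstacle, because one must track
carefully how $\delta_0$ and the division-remainder operation interact and confirm
no level-$0$ generator is missed.

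\textbf{Step 3: the abelian quotient and its dimension.}
Finally, combining \eqref{eq:hochcomm} with the direct-sum decomposition of
Theorem~\ref{T:hochstruct}\,(iii), the quotient
$\hoch(\A_h)/[\hoch(\A_h),\hoch(\A_h)]$ is represented by
$\mathcal D=\{D_g\mid \degg g<\degg h\}$ modulo the part of it lying in the
commutator space. The commutator space meets $\mathcal D$ exactly in the
$\ad_{ra_0}=-D_{\delta_0(r)}$ terms, i.e.\ in $D_{\im\delta_0}$, which by
Lemma~\ref{L:del0ker}\,(c) has dimension $\deghpi$. Since $\dim\mathcal D=\degg h$,
the quotient has dimension $\degg h-\deghpi=\degg\pi_h$. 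That it is abelian is
immediate because all brackets already lie in the commutator ideal. I would present
the dimension count as a clean subtraction using the exact value of
$\dim(\im\delta_0)$ rather than exhibiting an explicit complement.
```
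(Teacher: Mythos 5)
Your proposal is correct and follows essentially the same route as the paper: both inclusions of \eqref{eq:hochcomm} come from Theorem~\ref{L:brackets} combined with the decomposition of Theorem~\ref{T:hochstruct}\,(iii), and the dimension count reduces to $\degg h-\deghpi=\degg\pi_h$ via the kernel of $\delta_0$ and Lemma~\ref{L:del0ker}\,(c), exactly as in the paper's argument with the map $\rho$. The only remark worth making is that the $n=0$ case in your Step 2 is not actually delicate: the identity $\ad_{ra_{n}}=\frac{1}{n+1}[D_{1},\ad_{ra_{n+1}}]$ already covers $n=0$ uniformly (with remainder $c=r$ because $\degg r<\deghpi$), so the detour through the surjectivity of $\delta_0$ is unnecessary there.
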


\noindent \emph{Proof.} \ 
Assume  $r\in\DD$, $\degg r < \degg \frac{h}{\pi_h}$,  and $n\geq 0$. Then by Lemma \ref{L:brackets}\,(a), $$\ad_{ra_{n}}=\textstyle{\frac{1}{n+1}}[D_{1}, \ad_{ra_{n+1}}]$$ in $\hoch(\A_h)$, which proves the right side of 
\eqref{eq:hochcomm} is contained in the left.   The reverse containment follows from Theorem \ref{T:hochstruct}\,(iii),
Lemma \ref{L:brackets}, and the fact that $\mathcal D$ is abelian (Theorem \ref{thm:derdecomp}).

Consider the linear map 
\begin{equation}\label{eq:rho} \rho \, :\, \{ g\in\DD\mid \degg g<\degg h\}\longrightarrow \hoch (\A_h)/[\hoch (\A_h), \hoch (\A_h)],\end{equation} with $\rho (g)=D_{g}+[\hoch (\A_h), \hoch (\A_h)]$.   By  Theorem \ref{T:hochstruct}\,(iii)
and  \eqref{eq:hochcomm}, $\rho$ is surjective. 

Now suppose $g\in\DD$ with $\degg g<\degg h$, and $\rho(g)=0$. Then there exist $r_{n}\in\DD$ with $\degg r_n<\deghpi$, so that $D_{g}=\sum_{n\geq 0}\ad_{r_{n}a_{n}}=\ad_{r_{0}a_{0}}+\sum_{n\geq 1}\ad_{r_n a_n}$. 
Hence, by Lemma~ \ref{L:del0}\,(a),
$D_{g+\delta_0(r_0)}-\sum_{n\geq 1}\ad_{r_na_n}=0$. Thus, $g= -\delta_0(r_0)$ by Lemma~\ref{lem:indepofads}\,(i).  Conversely, if $g=-\delta_0(r_0)$ for some $r_{0}\in\DD$ with $\degg r_{0}<\deghpi$, then $\rho(g)=0$. Therefore, 
\begin{equation}\label{eq:kerrho}
\ker \rho =\left\{\delta_0(q)  \mid  \degg q<\textstyle{\mathsf{deg}\frac{h}{\pi_{h}}}\right\},
\end{equation}
and $\dimm \ker \rho=\deghpi$, by Lemma \ref{L:del0ker}\,(c). Consequently, $$\hspace{.9cm} \mathsf{dim}\left(\hoch (\A_h)/[\hoch (\A_h), \hoch (\A_h)]\right)=\degg h-\textstyle{\mathsf{deg}\frac{h}{\pi_{h}}}=\degg\pi_{h}. \hspace{.9cm} \square$$   
 
\subsection{The center of  $\hoch(\A_h)$} \hfil 
 
\begin{thm}\label{thm:hochdec}
Assume $\chara (\FF)=0$.  Then 
\begin{equation}\label{eq:hochdecomp} 
\hoch(\A_{h})=\mathsf{Z}(\hoch (\A_h))\oplus [\hoch (\A_h), \hoch (\A_h)], \ \ \hbox{\rm where}  \end{equation}
\begin{equation}\label{eq:hochcent} \mathsf{Z}(\hoch (\A_h))=\bigg\{ D_{r\frac{h}{\pi_h}}\,\bigg | \,\degg r <\degg \pi_{h} \bigg\}  \, \hbox{\rm  and} \,
\dimm \mathsf{Z}\left(\hoch (\A_h)\right) = \degg \pi_{h}.\end{equation}
\end{thm}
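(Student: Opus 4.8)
The plan is to build everything on the explicit direct-sum description of $\hoch(\A_h)$ from Theorem \ref{T:hochstruct}\,(iii), namely $\hoch(\A_h) = \mathcal D \oplus \spann_\FF\{\ad_{ra_n}\mid r\in\DD,\ \degg r<\deghpi,\ n\geq 1\}$ with $\mathcal D=\{D_g\mid \degg g<\degg h\}$, together with the bracket table of Theorem \ref{L:brackets}. Writing $L=\hoch(\A_h)$, I would establish the theorem in four steps: (1) the subspace $\{D_{r\frac{h}{\pi_h}}\mid \degg r<\degg\pi_h\}$ is central; (2) conversely every central element lies in it, so equality holds; (3) $\dimm\mathsf{Z}(L)=\degg\pi_h$; and (4) $\mathsf{Z}(L)\cap[L,L]=0$, which combined with Proposition \ref{prop:derived} forces the decomposition $L=\mathsf{Z}(L)\oplus[L,L]$.

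For step (1), I would note $[D_{r\frac{h}{\pi_h}},D_g]=0$ since $\mathcal D$ is abelian (Theorem \ref{thm:derdecomp}), while for $n\geq 1$ Theorem \ref{L:brackets}\,(a) gives $[D_{r\frac{h}{\pi_h}},\ad_{sa_n}]=n\ad_{ca_{n-1}}$ with $c$ the remainder of $(r\frac{h}{\pi_h})s$ modulo $\frac{h}{\pi_h}$; this remainder is $0$, so the bracket vanishes against a spanning set and $D_{r\frac{h}{\pi_h}}\in\mathsf{Z}(L)$. For step (2), given an arbitrary $X=D_g+\sum_{n\geq 1}\ad_{r_n a_n}$ I would compute $[X,D_1]$ using Theorem \ref{L:brackets}\,(a) and Lemma \ref{L:del0}\,(a) (which rewrites the $n=1$ contribution $\ad_{r_1a_0}$ as $-D_{\delta_0(r_1)}\in\mathcal D$); reading the result in the direct sum of Theorem \ref{T:hochstruct}\,(iii) and using that the integers $n$ are invertible in characteristic $0$, the equation $[X,D_1]=0$ forces $r_n=0$ for $n\geq 2$ and $\delta_0(r_1)=0$, whence $r_1\in\ker\delta_0=\FF\frac{h}{\pi_h}$ by Lemma \ref{L:del0ker}\,(c), so $r_1=0$ by degree. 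Thus $X=D_g$, and centrality also gives $[D_g,\ad_{a_1}]=-D_{\delta_0(c)}=0$ with $c$ the remainder of $g$ modulo $\frac{h}{\pi_h}$; hence $c\in\ker\delta_0$, so $c=0$ and $\frac{h}{\pi_h}\mid g$, i.e. $g=r\frac{h}{\pi_h}$ with $\degg r<\degg\pi_h$. Since $r\mapsto D_{r\frac{h}{\pi_h}}$ is injective on this range, step (3) follows with $\dimm\mathsf{Z}(L)=\degg\pi_h$.

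For step (4), Proposition \ref{prop:derived} already supplies $\dimm(L/[L,L])=\degg\pi_h=\dimm\mathsf{Z}(L)$, so it suffices to verify $\mathsf{Z}(L)\cap[L,L]=0$; then the composite $\mathsf{Z}(L)\hookrightarrow L\twoheadrightarrow L/[L,L]$ is injective between spaces of equal finite dimension, hence an isomorphism, and this yields $L=\mathsf{Z}(L)\oplus[L,L]$. To check the transversality I would use the map $\rho$ of \eqref{eq:rho}: if $D_{r\frac{h}{\pi_h}}\in[L,L]$ then $r\frac{h}{\pi_h}\in\ker\rho$, so by \eqref{eq:kerrho} there is $q$ with $\degg q<\deghpi$ and $r\frac{h}{\pi_h}=\delta_0(q)$; since $\varrho_h=1$ in characteristic $0$, Lemma \ref{lem:divisor-del0} turns $\frac{h}{\pi_h}\mid\delta_0(q)$ into $\frac{h}{\pi_h}\mid q$, forcing $q=0$ and hence $r=0$.

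I expect the genuine obstacle to be step (2): ruling out central elements with nonzero higher terms $\ad_{r_n a_n}$ requires tracking each homogeneous component across the direct sum of Theorem \ref{T:hochstruct}\,(iii) and exploiting both the invertibility of the integer coefficients in characteristic $0$ and the injectivity of $\delta_0$ on low-degree polynomials. By contrast, steps (1), (3), and (4) reduce to short divisibility and dimension-count arguments once the prior lemmas are in hand.
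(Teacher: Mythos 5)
Your proposal is correct and follows essentially the same route as the paper's proof: identifying $\mathsf{Z}(\hoch(\A_h))$ by bracketing a general element against $D_1$ and $\ad_{a_1}$ (with Lemma \ref{lem:indepofads} and $\ker\delta_0$ doing the work), verifying centrality of the $D_{r\frac{h}{\pi_h}}$ via Theorem \ref{L:brackets}\,(a), and obtaining the direct-sum decomposition from the transversality check with $\rho$, Lemma \ref{lem:divisor-del0}, and the dimension count against Proposition \ref{prop:derived}. The only cosmetic difference is that you unpack Lemma \ref{lem:indepofads}\,(ii) explicitly when reading off components in the direct sum of Theorem \ref{T:hochstruct}\,(iii), which the paper invokes as a black box.
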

\begin{proof} 
Let $z\in \mathsf{Z}(\hoch (\A_h))$.  By Theorem \ref{T:hochstruct}\,(iii), we may write  $z=D_g+\sum_{n=1}^{\ell}\ad_{r_{n}a_{n}}$, with $g, r_{n}\in \DD$, $\degg g <\degg h$ and $\degg r_{n} < \deghpi$ for all $n$. Then by Lemma~\ref{L:brackets}\,(a),\  
 $0=[D_{1}, z]=\sum_{n=1}^{\ell}n\,\ad_{r_{n}a_{n-1}}.$
By Lemma \ref{lem:indepofads}\,(ii), \  $r_{n}=0$ for all $1\leq n\leq \ell$ and $z=D_g$.  But then\
$0=[D_g, \ad_{a_{1}}]=\ad_{ga_{0}},$ \ so $\frac{h}{\pi_h}$ divides $g$. This proves one direction of the  inclusion in 
\eqref{eq:hochcent}.

Conversely, for all $g, r, s\in\DD$ and $n\geq 1$,  we have in $\hoch(\A_h)$,
\begin{center}
$ \left[D_{r\frac{h}{\pi_h}}, \ad_{sa_{n}} \right]=n\ad_{\frac{h}{\pi_h}rsa_{n-1}}=0= 
\left[ D_{r\frac{h}{\pi_h}}, D_g \right],$
\end{center}
showing that $D_{r\frac{h}{\pi_h}} \in \mathsf{Z}(\hoch (\A_h))$ and implying that 
\eqref{eq:hochcent}  holds.
 
To  verify  the sum in \eqref{eq:hochdecomp} is direct, suppose 
$$z\in \mathsf{Z}(\hoch (\A_h))\cap [\hoch (\A_h), \hoch (\A_h)].$$  By \eqref{eq:hochcent}, there  is a  $g\in \DD \frac{h}{\pi_{h}}$ with $\degg g <\degg h$ such that $z=D_{g}$.  But then $g \in \ker \rho$, where $\rho$  is as in \eqref{eq:rho},  and hence $g = \delta_0(q)$ for some $q$ with  $\degg q< \mathsf{deg}\frac{h}{\pi_{h}}$ by \eqref{eq:kerrho}.    Hence, $\frac{h}{\pi_{h}}$ divides $\delta_0(q)$.   But when $\chara(\FF) = 0$, Lemma \ref{lem:divisor-del0} implies that $\frac{h}{\pi_h}$ divides $q$.  Since $\degg q < \degg \frac{h}{\pi_h}$, it follows that $q = 0$, so that $z = 0$.

We know now that  the map $$\iota : \mathsf{Z}(\hoch (\A_h))\rightarrow \hoch (\A_h)/[\hoch (\A_h), \hoch (\A_h)],$$ given by restriction of the canonical epimorphism  is injective.  By  Proposition~\ref{prop:derived} and \eqref{eq:hochcent}, both algebras have dimension $\degg \pi_{h}$, so $\iota$ is in fact an isomorphism. In particular, 
\begin{equation*}
\hoch(\A_{h})=\mathsf{Z}(\hoch (\A_h)) + [\hoch (\A_h), \hoch (\A_h)],
\end{equation*}
which finishes the proof.
\end{proof} 
  
\subsection{The structure of $[\hoch(\A_{h}), \hoch(\A_{h})]$} \label{S:hochstr}\hfil
\smallskip   

Let $\chara(\FF) = 0$,  and assume as before 
$h= \lambda \pr_1^{\alpha_1} \cdots \pr_t^{\alpha_t},  \ \  \pi_h = \pr_1 \cdots \pr_t,$
where the $\pr_i$ are the distinct monic prime factors of $h$ and $\lambda \in \FF^*$.     Let
\begin{equation}\vth = \delta_0(1) = \pi_h' - \frac{\pi_h h'}{h}  =  \sum_{i=1}^t  (1-\alpha_i)  \pr_1 \cdots \widehat \pr_i \cdots \pr_t \pr_i'.\end{equation}
Observe that $\frac{h}{\pi_h} =  \lambda\displaystyle \prod_{i, \ \alpha_i \geq 2} \pr_i^{\alpha_i-1}$,  so that  \  $\pie= \prod_{i,\ \alpha_{i}\geq 2}\pr_{i}$ \ is the product of the
distinct prime factors of $h$ having multiplicity  $>1$,  and $\mathsf{gcd}(\vth, \pie) =1$.  

Recall from Proposition \ref{prop:derived} that  $$[\hoch (\A_h), \hoch (\A_h)]=  \spann_\FF\{\ad_{ra_n}\mid r \in \DD, \degg r < \degg \textstyle{\frac{h}{\pi_h}},   \ n \geq 0\},$$  where   $a_n =  \pi_h h^{n-1}y^n$ for all $n \geq 0$, and $a_n\in \norm$ for all $n\geq 1$.  
For $m,n\geq 0$ and $r, s\in\DD$, by Lemma \ref{L:brackets}\,(b)  we have $[\ad_{ra_{m}}, \ad_{sa_{n}}]=\ad_{qa_{m+n-1}}
 = \ad_{da_{m+n-1}}$ in $\hoch(\A_h)$, where   
$q =  mr\delta_0(s)-ns\delta_0(r)$ and $d$ is the remainder when $q$ is divided by $\frac{h}{\pi_h}$ in $\DD$.
 
Using  \eqref{eq:hochdecomp} and the fact that  $\delta_0(r) = r\delta_0(1) + r'\pi_h$  and $\pi_h$ is divisible by $\pie$, we have  that   
$$\mathcal N=\spann_\FF\{\ad_{r a_n}\mid r  \in \DD\pie, \ n \geq 0\}$$ 
is an ideal of $\hoch(\A_{h})$  contained in $[\hoch(\A_{h}), \hoch(\A_{h})].$    Our immediate goal is to demonstrate several important properties of the ideal $\mathcal N$ and to understand the Lie algebra  
$$\mathcal L = [\hoch(\A_{h}), \hoch(\A_{h})]/\mathcal N.$$

For $g \in \DD$ and $m \geq -1$,   set  \begin{equation}\label{eq:egmdef} e_{g,m}  = -\ad_{g a_{m+1}} + \mathcal N. \end{equation}   
Then for $r \in \DD$,  we have 
\begin{equation} \label{eq:coset} g  =  r \  \modd \DD\pie \ \   \Longrightarrow\ \  e_{g,m}  =  e_{r,m}. \end{equation}  

Theorem \ref{L:brackets}\,(b)  shows that the elements $\ad_{ra_m}$ have a multiplication very similar
to that of $\DD \otimes \mathsf{W}$, where $\mathsf{W}$ is the Witt algebra.   This motivates the next result. 

\begin{lemma}\label{lem:idealJ}  Assume $\chara(\FF) = 0$, and let $\mathsf{W} = \mathsf{span}_\FF\{w_n \mid n \geq -1\}$ be the Witt algebra  so that $[w_m,w_n] = (n-m)w_{m+n}$ for $m,n \ge -1$ \, ($w_{-2} = 0$).    Then   $\mathcal L = [\hoch(\A_{h}), \hoch(\A_{h})]/\mathcal N   \cong   \big(\DD/\DD\pie\big) \otimes \mathsf{W}$,  
and $\mathcal L$ is simple if $\pi_{(h/\pi_{h})}$ is a prime polynomial.   \end{lemma}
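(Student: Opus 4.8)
The plan is to transport the known bracket relations among the cosets $e_{g,m} = -\ad_{ga_{m+1}} + \mathcal N$ into the model $(\DD/\DD\pie)\otimes\mathsf{W}$ and then read off simplicity from the ring $\DD/\DD\pie$. First I would record the multiplication of the $e_{g,m}$ in $\mathcal L$. Applying Theorem \ref{L:brackets}\,(b) with $a_m\mapsto a_{m+1}$ and $a_n\mapsto a_{n+1}$ gives $[\ad_{ga_{m+1}},\ad_{ra_{n+1}}] = \ad_{Qa_{m+n+1}}$, where $Q = (m+1)g\delta_0(r) - (n+1)r\delta_0(g)$. Since $\delta_0(r) = r\vth + r'\pi_h$ by Lemma \ref{L:del0}\,(b) and $\pie$ divides $\pi_h$, one gets $Q \equiv (m-n)gr\vth \pmod{\DD\pie}$, and hence in $\mathcal L$
\[
[e_{g,m}, e_{r,n}] = (n-m)\,e_{gr\vth,\,m+n}.
\]

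The appearance of the fixed factor $\vth$ is the one structural wrinkle, and the key observation is that $\gcd(\vth,\pie)=1$, so $\vth$ is a unit in $R = \DD/\DD\pie$. This lets me twist it away: writing $\vth^{-1}$ for any polynomial representing the inverse of the class of $\vth$ in $R$, I would define $\Psi\colon R\otimes\mathsf{W} \to \mathcal L$ on the natural basis by $\Psi(\overline{g}\otimes w_m) = e_{g\vth^{-1},m}$ (well defined on $e_{\cdot,m}$ by \eqref{eq:coset}) and check directly from the displayed bracket that it is a Lie homomorphism: both $\Psi([\overline{g}\otimes w_m,\overline{r}\otimes w_n])$ and $[\Psi(\overline{g}\otimes w_m),\Psi(\overline{r}\otimes w_n)]$ reduce to $(n-m)\,e_{gr\vth^{-1},\,m+n}$. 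Surjectivity is immediate since $e_{g,m} = \Psi(\overline{g\vth}\otimes w_m)$, and the $e_{g,m}$ span $\mathcal L$ by Proposition \ref{prop:derived}.

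For injectivity I would use the natural grading, noting that $\Psi$ is graded, carrying $R\otimes\FF w_m$ into the image in $\mathcal L$ of $\spann_\FF\{\ad_{ra_{m+1}}\}$, so it suffices to check that each graded piece of $\mathcal L$ has dimension $\deg\pie = \dim_\FF R$. For $n\geq 1$ this is a direct count: the degree-$n$ part of $[\hoch,\hoch]$ is spanned by the $\ad_{ra_n}$ with $\deg r < \deg\frac{h}{\pi_h}$, of dimension $\deg\frac{h}{\pi_h}$ (linear independence by Theorem \ref{T:hochstruct}\,(iii)), while the corresponding part of $\mathcal N$ consists of those $\ad_{da_n}$ with $\pie\mid d$, of dimension $\deg\frac{h}{\pi_h}-\deg\pie$. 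The one case needing care is $n=0$ (the $w_{-1}$ component), where $\ad_{ra_0} = -D_{\delta_0(r)}$ lands in $\mathcal D$; here I would invoke Lemma \ref{L:del0ker}\,(c) to see that $\delta_0$ is injective on $\{r : \deg r < \deg\frac{h}{\pi_h}\}$ (its kernel $\FF\frac{h}{\pi_h}$ meets this space in $0$), which yields the same two dimensions. Thus every graded piece has dimension $\deg\pie$, so a graded surjection of equal finite dimensions is bijective and $\Psi$ is an isomorphism.

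Finally, for simplicity when $\pie$ is prime, $R = \DD/\DD\pie$ is a field $K$, so $\mathcal L \cong K\otimes_\FF\mathsf{W}$, which I would prove simple as an $\FF$-Lie algebra by an ideal-chase in the $\ZZ$-grading. Given a nonzero $\FF$-ideal $I$, applying the weight operator $\ad_{1\otimes w_0}$ and a Vandermonde argument (distinct integer weights, $\chara\FF = 0$) shows $I$ contains a nonzero homogeneous element $k\otimes w_{m_0}$ with $k\in K^*$; then, since $K$ is a field and the structure constants $(n-m)$ are nonzero for suitable $n$, bracketing repeatedly with elements $b\otimes w_{\pm 1}$ and $b\otimes w_0$ ($b\in K$) descends to $w_{-1}$ and ascends to every $w_j$, producing all of $K\otimes\FF w_j$ and forcing $I = \mathcal L$. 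I expect the main obstacle to be bookkeeping rather than conceptual, namely getting the $\vth$-twist and the exceptional $w_{-1}$ graded piece exactly right, and confirming that the descent/ascent in the simplicity argument reaches every degree $j\geq -1$.
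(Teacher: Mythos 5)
Your proposal is correct and follows essentially the same route as the paper: the same bracket computation $[e_{g,m},e_{r,n}]=(n-m)e_{gr\vth,m+n}$, the same twist by an inverse of $\vth$ modulo $\DD\pie$ to build the isomorphism with $(\DD/\DD\pie)\otimes\mathsf{W}$, and a graded ideal-chase for simplicity. The only cosmetic differences are that you verify injectivity by a dimension count on graded pieces (the paper exhibits the basis $\{e_{x^j,m}\}$ directly) and extract a homogeneous element via Vandermonde rather than a minimal-length argument; both variants rest on the same Lemma~\ref{lem:indepofads}\,(ii) and the same descent/ascent through $w_{-1}$.
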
 

\begin{proof}   In proving this lemma, we will  use $r$ to denote both an element of $\DD$ and the coset it determines in  $\DD/\DD \pie$, which is permissible to do by \eqref{eq:coset}.       

The elements $e_{x^{j},m}$, with  $0\leq j < \degg \pie$ and $m\geq -1$, generate $\mathcal L$ by \eqref{eq:coset}.  To show they form a basis of $\mathcal L$, suppose
$\sum_{j, m}\gamma_{j, m}e_{x^{j},m}=0$,  for scalars $\gamma_{j, m}$, $0\leq j < \degg \pie$ and $m\geq -1$. 
Let $r_{m}=\sum_{j}\gamma_{j, m}x^{j}$. Thus, $\sum_{m\geq -1}\ad_{r_{m}a_{m+1}}\in\mathcal N$, which by Lemma~\ref{lem:indepofads}\,(ii)  implies that $r_{m}\in \DD \pie$ for all $m\geq -1$,  since by construction, $\degg r_{m}<\degg \pie \leq \deghpi$. Hence, it must be that $r_{m}=0$ and $\gamma_{j, m}=0$, for all $0\leq j < \degg \pie$ and $m\geq -1$. 

Assume $\upsilon  \in \DD$ satisfies $\upsilon \vth = 1 \; \modd \DD \pie$, and 
consider the linear map
$$(\DD/\DD \pie) \otimes \mathsf{W}  \rightarrow \mathcal L,  \ \ \  r \otimes w_m \mapsto  e_{r\upsilon, m}.$$
Now 
$$[r \otimes w_m, s \otimes w_n] = (n-m)(rs \otimes w_{m+n}) \mapsto  (n-m) e_{rs\upsilon,  m+n}.$$
However,  in  $\mathcal L$  we have by Lemma \ref{L:brackets}\,(b) (as $\pie$ divides $\pi_{h}$)  that
\begin{eqnarray*}  [e_{r\upsilon,m},  e_{s\upsilon, n}] &=&   (m-n)\ad_{ rs\upsilon^2 \vth  a_{m+n+1} } + \mathcal N \\
& = & 
(m-n)\ad_{ rs\upsilon a_{m+n+1} } + \mathcal N   =  (n-m) e_{rs\upsilon, m+n}.  \end{eqnarray*} 
Thus, this map is a Lie homomorphism with inverse map given by  $e_{r,m}   \mapsto   r\vth \otimes w_m$  
for $r \in \DD$,   $\degg r < \degg \pie$, so that $\mathcal L \cong   \big(\DD/\DD\pie\big) \otimes \mathsf{W}$.

Suppose now that $\pie$ is a prime polynomial.   We argue  that $\mathbb K \otimes \mathsf W$ is simple, where $\mathbb K$ denotes the field $\DD/\DD \pie$.  Let $\Omega$ denote a nonzero ideal of $\mathbb K \otimes \mathsf W$, and let $0 \neq  \omega = \sum_{n=-1}^\ell  \xi_n \otimes w_n \in \Omega$,  where $\omega$ is chosen so that $\ell \geq -1$ is minimal. 
Then 
$$0 \neq [ 1 \otimes w_{-1},\omega] = \sum_{n=0}^\ell  [ 1 \otimes w_{-1},  \xi_n \otimes w_n ] =  \sum_{n=0}^\ell (n+1) \xi_n \otimes w_{n-1} \in \Omega.$$
This contradicts the minimality of $\ell$, unless $\ell= -1$.    Hence,  we may suppose $0 \neq \xi \otimes w_{-1} \in \Omega$ for some $0 \neq \xi \in \mathbb K$.  From this it follows that $\Omega$ contains 
$$[ \xi \otimes w_{-1},   \kappa \otimes w_{m+1}] = (m+2) \xi  \kappa \otimes w_{m}$$
for every $\kappa \in \mathbb K$ and $m \ge -1$, and consequently $\mathbb K \otimes \mathsf W \subseteq \Omega$.    \end{proof}

Assume there are  $k \geq 0$ distinct monic prime factors of $h$ with multiplicity  $>1$.  If $k=0$,  then $\frac{h}{\pi_h} \in \FF^*$ and $\pie=1$.  In this case,  $\DD/\DD \pie=0$  and  $\mathcal L = [\hoch(\A_{h}), \hoch(\A_{h})]/\mathcal N=0$. 
If  $k \geq 1$, then after possibly renumbering the factors,  we may suppose that $\pr_1, \ldots \pr_k$ are the distinct monic primes  occurring with multiplicity $>1$ in $h$.   In other words, $\pie= \pr_{1}\cdots \pr_{k}$.   Then 
\begin{equation}\label{eq:chinese}
\DD/\DD \pie=\DD/\DD \pr_{1}\cdots \pr_{k}\cong \DD/\DD \pr_{1}\oplus \cdots\oplus \DD/\DD \pr_{k},
\end{equation}
so it follows that 
\begin{equation}\label{eq:Lsemisimple}
\big(\DD/\DD \pie\big) \otimes \mathsf{W}\cong \left( (\DD/\DD \pr_{1}) \otimes \mathsf{W} \right)\oplus \cdots\oplus \left( (\DD/\DD \pr_{k}) \otimes \mathsf{W} \right).
\end{equation}
By Lemma~\ref{lem:idealJ}, each of the summands  $(\DD/\DD \pr_{i}) \otimes \mathsf{W}$ corresponds to a simple ideal 
of $\mathcal L$, so $\mathcal L$ is semisimple in this case.   

\begin{cor}\label{cor:structureL}
 Assume $\chara(\FF) = 0$ and $h= \lambda \pr_1^{\alpha_1} \cdots \pr_t^{\alpha_t}$,   where
$\lambda \in \FF^*$,   the  $\pr_i$ are the distinct monic prime factors of $h$,  and for $k \geq 0$, $\pr_1, \ldots, \pr_k$ are the ones which occur with multiplicity $>1$. (When $k = 0$, no factor has multiplicity $>1$.)   Let  $\mathcal N=   \spann_\FF\{ \ad_{ra_n} \mid  r\in \DD \pie, \ n \geq 0\}  \subseteq [\hoch(\A_{h}), \hoch(\A_{h})].$  Then the following hold:
\begin{itemize}
\item[{\rm (i)}]  $\mathcal N$ is the unique maximal nilpotent ideal of $[\hoch(\A_{h}), \hoch(\A_{h})]$ and the quotient   
$[\hoch(\A_{h}), \hoch(\A_{h})]/\mathcal N$ is the direct sum of $k$ simple Lie algebras  
\begin{equation}\label{eq:hochquo}\hspace{-.3cm}[\hoch(\A_{h}), \hoch(\A_{h})]/\mathcal N \  \cong \  \left( (\DD/\DD \pr_{1}) \otimes \mathsf{W} \right)\oplus \cdots\oplus \left( (\DD/\DD \pr_{k}) \otimes \mathsf{W} \right),\end{equation} where $\mathsf{W}$ is the Witt algebra. 
\item[{\rm (ii)}]  If $\alpha_{i}\leq 2$ for all $1\leq i\leq t$, then $\mathcal N=0$. 
\begin{enumerate}
\item[{\rm (a)}] If  $\alpha_{i}=1$ for all $i$, then $[\hoch(\A_{h}), \hoch(\A_{h})] = 0$.   
\item[{\rm (b)}] If some $\alpha_i = 2$,
then $[\hoch(\A_{h}), \hoch(\A_{h})] $ is the direct sum of simple Lie algebras (compare \eqref{eq:hochquo}). 
\end{enumerate}
\item[{\rm (iii)}] If there is $i$ such that $\alpha_{i}\geq 3$, then $\mathcal N\neq 0$,  and $[\hoch(\A_{h}), \hoch(\A_{h})]$ is neither nilpotent nor semisimple.
\end{itemize}
\end{cor}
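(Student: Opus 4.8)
Much of the statement is already in place: the isomorphism \eqref{eq:Lsemisimple} together with Lemma~\ref{lem:idealJ} (each $(\DD/\DD\pr_i)\otimes\mathsf{W}$ is simple because $\pr_i$ is prime) identifies the quotient $\mathcal L=[\hoch(\A_h),\hoch(\A_h)]/\mathcal N$ with a direct sum of $k$ simple Lie algebras. So for (i) it remains to show that $\mathcal N$ is nilpotent and that it contains every nilpotent ideal. The plan for nilpotency is to filter $\mathcal N$ by the $\pie$-order of the coefficients: for $j\geq 1$ set
\[
\mathcal N_j=\spann_\FF\bigl\{\ad_{ra_n}+\inder(\A_h)\mid r\in\DD\pie^{\,j},\ n\geq 0\bigr\}\subseteq\hoch(\A_h),
\]
so that $\mathcal N_1=\mathcal N$, and to prove this is a central filtration terminating at a finite stage.

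The heart of the argument is a divisibility estimate for the bracket. Since $\pie$ divides $\pi_h$ and $\delta_0(r)=r\vth+r'\pi_h$ by Lemma~\ref{L:del0}(b) (with $\vth=\delta_0(1)$), whenever $\pie^{\,j}\mid r$ one gets $\pie^{\,j}\mid\delta_0(r)$: the summand $r\vth$ is divisible by $\pie^{\,j}$, while $r'$ is divisible by $\pie^{\,j-1}$ and $\pi_h$ by $\pie$. Feeding this into the formula $q=mr\delta_0(s)-ns\delta_0(r)$ of Lemma~\ref{L:brackets}(b) shows that $r\in\DD\pie^{\,j}$ and $s\in\DD\pie^{\,\ell}$ force $q\in\DD\pie^{\,j+\ell}$, hence $[\mathcal N_j,\mathcal N_\ell]\subseteq\mathcal N_{j+\ell}$ and in particular $\mathcal N^{(j)}\subseteq\mathcal N_j$ for the lower central series. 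To see the filtration dies, set $M=\max_i\alpha_i-1$; as $\alpha_i-1\leq M$ for every $i$ with $\alpha_i\geq 2$, we have $\tfrac{h}{\pi_h}\mid\pie^{\,M}$, so any $r\in\DD\pie^{\,M}$ is a multiple of $\tfrac{h}{\pi_h}$, say $r=\tfrac{h}{\pi_h}t$, and then $ra_n=t\,h^ny^n\in\A_h$ makes $\ad_{ra_n}$ inner. Thus $\mathcal N_M=0$, so $\mathcal N^{(M)}=0$ and $\mathcal N$ is nilpotent. I expect this filtration step --- tracking how the bracket raises the $\pie$-order and how that order saturates against $\tfrac{h}{\pi_h}$ --- to be the main obstacle.

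For maximality I would use that $\mathcal L$, being a direct sum of simple (hence perfect) Lie algebras, has no nonzero solvable ideal: by the standard decomposition of a finite direct sum of simple ideals, every ideal of $\mathcal L$ is a sub-sum of its simple summands. Given any nilpotent ideal $\mathfrak n$ of $[\hoch(\A_h),\hoch(\A_h)]$, its image under the canonical projection onto $\mathcal L$ is a nilpotent, hence zero, ideal; therefore $\mathfrak n\subseteq\mathcal N$. Combined with the nilpotency just proved, this shows $\mathcal N$ is the unique maximal nilpotent ideal, finishing (i).

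Parts (ii) and (iii) then reduce to degree bookkeeping. If $\alpha_i\leq 2$ for all $i$, then $\tfrac{h}{\pi_h}=\pie$, so any $r\in\DD\pie$ with $\degg r<\degg\tfrac{h}{\pi_h}=\degg\pie$ must vanish and $\mathcal N=0$; here (a) is the subcase $\tfrac{h}{\pi_h}\in\FF^*$, where $[\hoch(\A_h),\hoch(\A_h)]=0$ as in Example~\ref{ex:h=pih}, and (b) follows from (i). If instead some $\alpha_i\geq 3$, then $\degg\tfrac{h}{\pi_h}-\degg\pie=\sum_{\alpha_j\geq 2}(\alpha_j-2)\degg\pr_j>0$, so $\ad_{\pie a_1}$ is a nonzero element of $\mathcal N$ by Lemma~\ref{lem:indepofads}(ii); hence $\mathcal N\neq 0$, and $[\hoch(\A_h),\hoch(\A_h)]$ is neither semisimple (it has the nonzero nilpotent ideal $\mathcal N$) nor nilpotent (its quotient $\mathcal L$ by the maximal nilpotent ideal is nonzero, as $k\geq 1$).
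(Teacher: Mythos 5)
Your proposal is correct and follows essentially the same route as the paper: the same filtration $\mathcal N_j$ by powers of $\pie$ (with the divisibility of $\delta_0(r)$ fed into the bracket formula of Lemma~\ref{L:brackets}), the same projection-onto-$\mathcal L$ argument for maximality, and the same degree bookkeeping for (ii) and (iii). The only differences are cosmetic — you make the terminating index $M=\max_i\alpha_i-1$ explicit and deduce non-nilpotency in (iii) from $\mathcal L\neq 0$ rather than from forcing $\pie=1$ — neither of which changes the substance.
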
 
\begin{proof}
By Lemma~\ref{lem:idealJ} and the above, $\mathcal L = [\hoch(\A_{h}), \hoch(\A_{h})]/\mathcal N$ is a direct sum of $k\geq 0$ simple Lie algebras of the form $\big(\DD/\DD \pr_{i}\big) \otimes \mathsf{W}$, where $i\leq k$ and $\mathsf{W}$ is the Witt algebra.

To show that $\mathcal N$ is nilpotent,  let  $\mathcal N_{j} \subseteq \mathcal N$  for $j \geq 1$ be defined by
\begin{equation}\label{eq:Jj}
\mathcal N_{j}= \spann_\FF\{\ad_{ra_n} \mid  r \in \DD \big(\pie\big)^{j}, \ n \geq 0\}.
\end{equation} 
Then it is easy to  see, using Lemma~\ref{L:brackets} and the fact that $\pi_{(h/\pi_{h})}$ divides $\pi_{h}$, that $\mathcal N_{j}$ is an ideal of $[\hoch(\A_{h}), \hoch(\A_{h})]$ and $[\mathcal N, \mathcal N_{j}]\subseteq \mathcal N_{j+1}$.  As $\frac{h}{\pi_h}$ divides  $(\pi_{(h/ \pi_h)})^n$ for some $n$,   it follows that $\mathcal N_n=0$ and $\mathcal N$ is nilpotent.  

For any nilpotent ideal  $\mathcal J$  of $[\hoch(\A_{h}), \hoch(\A_{h})]$, \  $(\mathcal J+\mathcal N)/\mathcal N$ is a nilpotent ideal of $\mathcal L$. Since $\mathcal L$ is either $0$ or a direct sum of simple ideals, it has no nonzero nilpotent ideals. Hence, $\mathcal J\subseteq \mathcal N$, which proves the claim that  $\mathcal N$ is the unique maximal nilpotent ideal of $[\hoch(\A_{h}), \hoch(\A_{h})]$.

If all prime factors of $h$ have multiplicity at most $2$, then $\pie=\frac{h}{\lambda\pi_{h}}$ and $\mathcal N=0$. Thus, $[\hoch (\A_h), \hoch (\A_h)]=\mathcal L$ and part {\rm (ii)} follows. If there is a prime factor of $h$ with multiplicity greater than $2$,  then $\frac{h}{\pi_h}$ does not divide $\pie$, so $\mathcal N\neq0$.   In particular, $[\hoch(\A_{h}), \hoch(\A_{h})]$ is not semisimple, as it has a nonzero nilpotent ideal. However, if $[\hoch(\A_{h}), \hoch(\A_{h})]$ were nilpotent, then $\mathcal N=[\hoch(\A_{h}), \hoch(\A_{h})]$ and thus $\pie=1$, so $\frac{h}{\pi_h}\in\FF^{*}$, which contradicts our hypothesis. Therefore, $[\hoch(\A_{h}), \hoch(\A_{h})]$ is not nilpotent either.
\end{proof}

We now have all the pieces to assemble the proof of Theorem \ref {thm:idealJ}.  

\subsection{Proof of Theorem \ref {thm:idealJ}.}\label{sec:proofSec5MainThm} \hfil   

\noindent  By Theorem \ref{thm:hochdec}, 
$\hoch(\A_{h})=\mathsf{Z}(\hoch (\A_h))\oplus \allowbreak [\hoch (\A_h), \hoch (\A_h)]$ if  $\chara(\FF) = 0$,  where  $\mathsf{Z}(\hoch (\A_h)) = \big\{ D_{r\frac{h}{\pi_h}}\,\big | \,\degg r <\degg \pi_{h} \big\}$  and $\dimm \mathsf{Z}\left(\hoch (\A_h)\right) = \degg \pi_{h}$.  Then Corollary \ref{cor:structureL} tells us that    
$ \mathcal N= \spann_\FF\{\ad_{r a_n} \, | \,  r \in \DD \pie, \ n \geq 0\}$ \
is the unique maximal nilpotent ideal of \  $[\hoch(\A_{h}), \hoch(\A_{h})]$ \  and  \newline 
$[\hoch(\A_{h}), \hoch(\A_{h})]/\mathcal N \, \cong \left( (\DD/\DD \pr_{1}) \otimes \mathsf{W} \right)\oplus \cdots\oplus \left( (\DD/\DD \pr_{k}) \otimes \mathsf{W} \right)$, a direct sum of simple Lie algebras,  where $\mathsf{W}$ is the Witt algebra;  $\pr_1, \dots, \pr_k$  are the monic prime factors of $h$ with multiplicity $>1$;  
and each summand is a field extension of $\mathsf{W}$.   This establishes all the assertions in Theorem \ref {thm:idealJ}  and concludes the proof. \hspace{.8cm}  $ \square$

 \begin{cor}\label{cor:HH1nilpotent} Assume $\chara(\FF) = 0$.  Then
\begin{itemize}
\item[{\rm (a)}] $\mathsf{Z}(\hoch(\A_h)) \oplus\mathcal N$ is the unique maximal nilpotent ideal of $\hoch(\A_{h})$.
\item[{\rm (b)}] $\hoch(\A_{h})$ is a nilpotent Lie algebra if and only if $\frac{h}{\pi_h} \in \FF^*$.
\item[{\rm (c)}] {\rm [Example \ref{ex:h=pih} revisited]}
If  $\frac{h}{\pi_h} \in \FF^*$, then $\pie=1$, which implies
$[\hoch (\A_h), \hoch (\A_h)] = 0 = \mathcal N$  and \begin{equation*}
\hoch(\A_{h})\cong  \{D_g \mid \degg g < \degg \pi_h =  \degg h\},   \end{equation*}
an abelian Lie algebra of dimension $\degg h$.
\end{itemize}  \end{cor}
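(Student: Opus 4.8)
The plan is to derive all three parts from the decomposition $\hoch(\A_h) = \mathsf{Z}(\hoch(\A_h)) \oplus [\hoch(\A_h),\hoch(\A_h)]$ of Theorem~\ref{thm:hochdec}, together with the structure of the commutator ideal recorded in Corollary~\ref{cor:structureL}. Throughout I would abbreviate $\mathsf{Z} = \mathsf{Z}(\hoch(\A_h))$ and $\mathcal L = [\hoch(\A_h),\hoch(\A_h)]/\mathcal N$, so that $\mathcal L$ is $0$ or a direct sum of simple Lie algebras.

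First I would check that $\mathsf{Z} \oplus \mathcal N$ is a nilpotent ideal of $\hoch(\A_h)$. It is an ideal because $\mathsf{Z}$ is central (hence automatically an ideal) while $\mathcal N$ is an ideal of $[\hoch(\A_h),\hoch(\A_h)]$ with $[\mathsf{Z},\mathcal N]=0$, so that $[\hoch(\A_h),\mathcal N] = [[\hoch(\A_h),\hoch(\A_h)],\mathcal N] \subseteq \mathcal N$. For nilpotency, since $\mathsf{Z}$ is central one has $[\mathsf{Z} \oplus \mathcal N,\mathsf{Z} \oplus \mathcal N] = [\mathcal N,\mathcal N]$, so from the first term onward the lower central series of $\mathsf{Z} \oplus \mathcal N$ coincides with that of $\mathcal N$, which terminates because $\mathcal N$ is nilpotent by Corollary~\ref{cor:structureL}.

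Next, for the maximality and uniqueness in (a), I would pass to the quotient. Since $\mathsf{Z} \cap [\hoch(\A_h),\hoch(\A_h)] = 0$ and $\mathcal N \subseteq [\hoch(\A_h),\hoch(\A_h)]$, the canonical map yields $\hoch(\A_h)/(\mathsf{Z}\oplus\mathcal N) \cong [\hoch(\A_h),\hoch(\A_h)]/\mathcal N = \mathcal L$. Given any nilpotent ideal $\mathcal J$ of $\hoch(\A_h)$, its image in $\mathcal L$ is a nilpotent ideal of $\mathcal L$; but $\mathcal L$ is $0$ or a direct sum of the simple Lie algebras of Lemma~\ref{lem:idealJ} and therefore has no nonzero nilpotent ideals, which forces the image to vanish and gives $\mathcal J \subseteq \mathsf{Z} \oplus \mathcal N$. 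Hence $\mathsf{Z} \oplus \mathcal N$ is the unique maximal nilpotent ideal, establishing (a).

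Parts (b) and (c) then follow quickly. If $\hoch(\A_h)$ is nilpotent it is its own maximal nilpotent ideal, so $\hoch(\A_h) = \mathsf{Z} \oplus \mathcal N$; comparing with $\hoch(\A_h) = \mathsf{Z}\oplus [\hoch(\A_h),\hoch(\A_h)]$ forces $[\hoch(\A_h),\hoch(\A_h)] = \mathcal N$, hence $\mathcal L = 0$, which by Corollary~\ref{cor:structureL} occurs exactly when $\pie = 1$, i.e. $\frac{h}{\pi_h} \in \FF^*$. Conversely, when $\frac{h}{\pi_h}\in\FF^*$ we have $\pie = 1$, so $\mathcal N = 0$ and $[\hoch(\A_h),\hoch(\A_h)] = \mathcal L = 0$; thus $\hoch(\A_h)=\mathsf{Z}$ is abelian, proving (b). In that case $\mathsf{Z} = \{D_{r\frac{h}{\pi_h}} \mid \degg r < \degg \pi_h\}$, and since $\frac{h}{\pi_h}$ is a nonzero scalar this is precisely $\{D_g \mid \degg g < \degg \pi_h = \degg h\}$, the abelian Lie algebra of dimension $\degg h$ from Example~\ref{ex:h=pih}, giving (c). The only genuinely substantive ingredient is one already supplied by the earlier development, namely the semisimplicity of $\mathcal L$ (the absence of nonzero nilpotent ideals in the simple summands $(\DD/\DD\pr_i)\otimes\mathsf{W}$); everything else is bookkeeping with the two direct-sum decompositions of $\hoch(\A_h)$.
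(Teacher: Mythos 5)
Your proposal is correct and follows exactly the route the paper intends: the corollary is stated without proof as an immediate consequence of Theorem \ref{thm:hochdec}, Corollary \ref{cor:structureL}, and Example \ref{ex:h=pih}, and your argument (passing to $\hoch(\A_h)/(\mathsf{Z}\oplus\mathcal N)\cong\mathcal L$ and using that a direct sum of the simple algebras $(\DD/\DD\pr_i)\otimes\mathsf{W}$ has no nonzero nilpotent ideals) is precisely the mechanism the paper already deploys in the proof of Corollary \ref{cor:structureL}. All the individual steps check out, so this is a faithful filling-in of the omitted details rather than a different proof.
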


It is a consequence of Theorem \ref{thm:idealJ}  that $\hoch(\A_h)$   modulo its unique maximal nilpotent ideal $\mathsf{Z}(\hoch(\A_h)) \oplus\mathcal N$ is either 0 or  a direct sum of  ideals that are simple Lie algebras of the form $\DD_f \otimes \mathsf{W}$,  where $f \in \DD = \FF[x]$, $\DD_f  = \DD/\DD f$,   and $\mathsf{W}$ is the Witt algebra.   Proposition \ref{P:centroid}  below gives  a criterion for two such algebras $\DD_f$ and $\DD_g$  to be isomorphic.   

Recall that the  {\it centroid} of an $\FF$-algebra $\mathcal A$  is 
\begin{equation}\label{eq:centroid1} \mathsf{Ctd}_\FF(\mathcal A)  =  \{\chi \in \mathsf{End}_\FF(\mathcal A) \mid
 a \chi(b) =\chi(ab) = \chi(a)b  \ \hbox{\rm for all}  \ a,b \in \mathcal A\}. \end{equation} 
 If two algebras $\mathcal A_1$ and $\mathcal A_2$ are isomorphic via an
 isomorphism $\eta$,   then $\mathsf{Ctd}_\FF(\mathcal A_1)$ is isomorphic
 to $\mathsf{Ctd}_\FF(\mathcal A_2)$ via the isomorphism
 $\chi \mapsto  \eta \chi \eta^{-1}$.  
  
Now it follows from \cite[Cor. 2.23]{BN06} that if $\mathcal A$ and $\mathcal B$ are algebras over a field $\FF$,  $\mathcal B$ is perfect and finitely generated as a module over its algebra of multiplication operators,  and $\mathcal A$ is unital,  then 
\begin{equation}\label{eq:centroid2} \mathsf{Ctd}_\FF(\mathcal A \otimes \mathcal B) \cong \mathsf{Ctd}_\FF(\mathcal A) \otimes \mathsf{Ctd}_\FF(\mathcal B).\end{equation} (The roles of $\mathcal A$ and $\mathcal B$ are reversed here from what is in \cite{BN06}  to make this compatible with our expressions.)  We will apply this result to compute the centroid of  the Lie algebra $\DD_f \otimes \mathsf{W}$, which we can do since $\mathsf{W}$ is perfect and generated by $w_{-1}, w_2$,  and then use this to show 
     
\begin{prop}\label{P:centroid}   $\DD_f \otimes \mathsf W   \cong \DD_g \otimes \mathsf W$ 
 if and only if  $\DD_f =  \DD/\DD f$ and $\DD_g = \DD/\DD g$ are isomorphic.   \end{prop}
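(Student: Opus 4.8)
The plan is to reconstruct the associative algebra $\DD_f$ from the Lie algebra $\DD_f \otimes \mathsf{W}$ as the underlying algebra of its centroid, and then to exploit the observation recorded after \eqref{eq:centroid1} that isomorphic algebras have isomorphic centroids (via conjugation by the isomorphism).

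The reverse implication is immediate and needs no centroid machinery: an isomorphism $\phi \colon \DD_f \to \DD_g$ of associative $\FF$-algebras induces a Lie algebra isomorphism $\phi \otimes \mathsf{id}_{\mathsf{W}}$, since by Lemma \ref{lem:idealJ} the bracket on $\DD_f \otimes \mathsf{W}$ has the form $[a \otimes u, b \otimes v] = ab \otimes [u,v]$, and $\phi$ respects the product $ab$. So the entire content lies in the forward direction.

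For the forward implication I would first compute the two relevant centroids. Because $\mathsf{W}$ is graded with $\ad_{w_0}$ acting as $w_n \mapsto n w_n$, and the scalars $n$ are pairwise distinct in characteristic $0$, any $\chi \in \mathsf{Ctd}_\FF(\mathsf{W})$ commutes with $\ad_{w_0}$ and hence preserves each line $\FF w_n$, say $\chi(w_n) = c_n w_n$. Comparing $\chi([w_m,w_n])$ with $[w_m,\chi(w_n)]$ gives $(n-m)c_{m+n} = (n-m)c_n$, so $c_{m+n} = c_n$ whenever $n \neq m$; taking $m = -1$ and letting $n$ range over $n \geq 0$ then forces all the $c_n$ to coincide, whence $\mathsf{Ctd}_\FF(\mathsf{W}) = \FF\,\mathsf{id}$. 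On the other side, for the commutative unital algebra $\DD_f$ the centroid relation $\chi(ab) = a\chi(b)$ evaluated at $b = 1$ forces $\chi$ to be multiplication by $\chi(1)$, so $\mathsf{Ctd}_\FF(\DD_f) \cong \DD_f$ as associative $\FF$-algebras.

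Since $\mathsf{W}$ is perfect and finitely generated over its multiplication algebra (being generated as a Lie algebra by $w_{-1}$ and $w_2$, as already noted), and $\DD_f$ is unital, the hypotheses of \eqref{eq:centroid2} are met, yielding $\mathsf{Ctd}_\FF(\DD_f \otimes \mathsf{W}) \cong \DD_f \otimes \FF \cong \DD_f$ as associative $\FF$-algebras. Now given a Lie isomorphism $\eta \colon \DD_f \otimes \mathsf{W} \to \DD_g \otimes \mathsf{W}$, the conjugation $\chi \mapsto \eta \chi \eta^{-1}$ is an isomorphism of their centroids as associative algebras, and chaining the identifications produces $\DD_f \cong \DD_g$. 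The one genuinely non-formal step is the determination $\mathsf{Ctd}_\FF(\mathsf{W}) = \FF$; everything else is either functoriality of $\otimes\,\mathsf{W}$ or a direct appeal to \eqref{eq:centroid2}, whose hypotheses only need to be verified for the pair $(\DD_f,\mathsf{W})$.
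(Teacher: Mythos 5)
Your proposal is correct and follows essentially the same route as the paper's proof: compute $\mathsf{Ctd}_\FF(\mathsf{W}) = \FF\,\mathsf{id}_{\mathsf W}$ via the $\ad_{w_0}$-eigenspace argument, identify $\mathsf{Ctd}_\FF(\DD_f) \cong \DD_f$, combine them through \eqref{eq:centroid2}, and transport the centroid isomorphism along a Lie isomorphism by conjugation, with the converse handled by $\psi \otimes \mathsf{id}_{\mathsf W}$. The only cosmetic difference is that you pin down the eigenvalues $c_n$ by bracketing with $w_{-1}$ rather than comparing $\chi([w_0,w_n])$ with $[\chi(w_0),w_n]$ as the paper does.
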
 

\begin{proof}   If $
\chi \in \mathsf{Ctd}_\FF(\mathsf W)$, then $n\chi(w_n) = \chi([w_0, w_n]) = [w_0, \chi(w_n)]$,
which implies that $\chi(w_n)$ lives in the eigenspace $\FF w_n$  of  $\ad_{w_0}$  corresponding to $n$. Thus,  $\chi(w_n) = \lambda_n w_n$ for some $\lambda_n \in \FF$.
But then the above calculation says:   $n \lambda_n w_n = \chi([w_0,w_n]) = [\chi(w_0), w_n] = n \lambda_0 w_n$, which forces $\lambda_n = \lambda_0$ for all $n$.    Hence,  $\chi = \lambda_0\, \mathsf{id}_{\mathsf W}$ and $\mathsf{Ctd}_\FF(\mathsf{W}) = \FF \mathsf{id}_{\mathsf W}$. (Compare
\cite[Ex.~2.25]{BN06}.) 
 
Any $\chi \in \mathsf{Ctd}_\FF(\DD_f)$ satisfies $\chi(r) = \chi(1)r$ for all $r$.  Thus, if
$s_\chi = \chi(1)$, we have  $\chi(r) = s_\chi r$, and the map $\chi \mapsto s_{\chi}$ shows 
that  $\mathsf{Ctd}_\FF(\DD_f) \cong \DD_f$.  
 
Now if  $\DD_f \otimes \mathsf W   \cong \DD_g \otimes \mathsf W$, then their centroids are isomorphic.
Hence,
 \begin{align*} \mathsf{Ctd}_\FF(\DD_f \otimes \mathsf W) \   \cong \  \mathsf{Ctd}_\FF(\DD_g \otimes \mathsf W) \ \ &\iff \\
\mathsf{Ctd}_\FF(\DD_f) \otimes   \mathsf{Ctd}_\FF(\mathsf W) \ \cong \ \mathsf{Ctd}_\FF(\DD_g) \otimes   \mathsf{Ctd}_\FF(\mathsf W)   
& \iff    \\
\DD_f \otimes  \FF \mathsf{id}_{\mathsf W}\ \cong \ \DD_g\otimes  \FF \mathsf{id}_{\mathsf W} \ \, &\iff  \ \,   
\DD_f  \cong  \DD_g. \end{align*} 
Conversely, if $\psi: \DD_{f}\rightarrow \DD_{g}$ is an isomorphism, then $\psi\otimes \mathsf{id}_{\mathsf W}: \DD_f \otimes \mathsf W\rightarrow \DD_g \otimes \mathsf W$ is an isomorphism, with inverse $\psi^{-1}\otimes \mathsf{id}_{\mathsf W}$.  \end{proof}
\smallskip

 \subsection{Special cases}  \hfil
 \smallskip
 
In this concluding  subsection, we summarize the derivation results for the well-known examples
$\A_1$ (Weyl algebra), $\A_x$ (universal enveloping algebra of the two-dimensional non-abelian Lie algebra), and $\A_{x^2}$ (Jordan plane).  As mentioned earlier, the result for the
Weyl algebra goes back to Sridaran \cite{Sr61} and can be found in \cite[Sec.~4.6]{Dix96}  (see also Proposition \ref{P:Weyl0} above).   In Theorem 4.6 ($\chara(\FF) = 0$), Theorem 4.10 ($\chara(\FF) = p>2$), and Theorem 4.16 ($\chara(\FF) = 2$) of \cite{shirikov05}, Shirikov has computed the derivations of the Jordan plane $\A_{x^2}$.  The results for $\A_{x^2}$ in \cite{shirikov05}  (see also \cite{shirikov07-2}) are stated in  a different form  from what is given in Theorem \ref{thm:exams} below and in the next section for prime characteristics.
The assertions about $\hoch(\A_h)$  in 
the next theorem follow from Section \ref{S:hochstr}.  
 \smallskip

\begin{thm}\label{thm:exams}  Assume $\chara(\FF) = 0$, and for $g \in \DD$, let $D_g$
denote the derivation of $\A_h$ with $D_g(x) = 0$ and $D_g(\hat y) = g$.   Then
\begin{itemize}
\item[{\rm (i)}]   For  $\A_1$,   $\der(\A_1) = \inder(\A_1)$, so $\hoch(\A_1) = 0$. 
\item[{\rm (ii)}]  For $\A_x$,    $\der(\A_x) = \FF D_1 \oplus \inder(\A_x)$, so 
$\hoch(\A_x)$ is a one-dimensional Lie algebra  with basis  $\{D_1\}$.
\item[{\rm (iii)}]  For $\A_{x^m}$ with $m \geq 2$,  $\pi_h = x$,  and    
\begin{align*} \hoch(\A_{x^m})/\mathcal N  & =    \mathsf{Z}(\hoch(\A_{x^m})) \oplus [\hoch(\A_{x^m}), \hoch(\A_{x^m})]/\mathcal N  \\
& =   \FF D_{x^{m-1}} \oplus [\hoch(\A_{x^m}), \hoch(\A_{x^m})]/\mathcal N \\ & \cong \ 
  \FF D_{x^{m-1}} \oplus \mathsf{W} \end{align*}
where $\mathsf{W} = \mathsf{span}_\FF\{w_i \mid i \geq -1\}$ is  the Witt algebra.  The ideal $\mathcal N$
is nilpotent of index $\leq m-1$. In particular, $\mathcal N = 0$ when $m = 2$. 
\end{itemize}
\end{thm}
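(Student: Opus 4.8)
The plan is to derive all three parts by specializing the general structure results of this section to the polynomials $h = 1$, $h = x$, and $h = x^m$, after recording in each case the invariants $\pi_h$, $\frac{h}{\pi_h}$, and $\pie = \pi_{(h/\pi_h)}$ that drive the statements of Theorem \ref{thm:idealJ}, Corollary \ref{cor:structureL}, and Example \ref{ex:h=pih}.

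Part (i) is immediate: for $h = 1$ the algebra $\A_1$ is the Weyl algebra, so $\der(\A_1) = \inder(\A_1)$ by Proposition \ref{P:Weyl0}, whence $\hoch(\A_1) = 0$. For part (ii), $h = x$ has the single prime factor $x$ with multiplicity $1$, so $\pi_h = x$ and $\frac{h}{\pi_h} = 1 \in \FF^*$. Example \ref{ex:h=pih} (equivalently Corollary \ref{cor:HH1nilpotent}\,(c)) then gives $\hoch(\A_x) \cong \mathcal D = \{ D_g \mid \degg g < \degg h = 1\} = \FF D_1$, and the decomposition $\der(\A_x) = \mathcal D \oplus \inder(\A_x) = \FF D_1 \oplus \inder(\A_x)$ follows from Theorem \ref{T:derdec0} together with $\norm = \A_h$.

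Part (iii) is the substantive case. First I would record that for $h = x^m$ with $m \geq 2$ the only prime factor is $x$, so $\pi_h = x$ and $\frac{h}{\pi_h} = x^{m-1}$; applying Lemma \ref{L:defpi} to $x^{m-1}$ then yields $\pie = \pi_{(x^{m-1})} = x$, which is prime. With these invariants, Theorem \ref{thm:idealJ}\,(i) identifies $\mathsf{Z}(\hoch(\A_{x^m})) \cong \{ D_{r x^{m-1}} \mid \degg r < \degg \pi_h = 1 \} = \FF D_{x^{m-1}}$, while part (ii) of that theorem gives $[\hoch(\A_{x^m}), \hoch(\A_{x^m})]/\mathcal N \cong (\DD/\DD\pie) \otimes \W = (\DD/\DD x) \otimes \W \cong \W$, using $\DD/\DD x \cong \FF$; that this quotient is the full simple Witt algebra is exactly the prime case of Lemma \ref{lem:idealJ}. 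Feeding these into $\hoch(\A_{x^m})/\mathcal N = \mathsf{Z}(\hoch(\A_{x^m})) \oplus [\hoch(\A_{x^m}), \hoch(\A_{x^m})]/\mathcal N$ produces the claimed $\FF D_{x^{m-1}} \oplus \W$.

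The final point, and the step I expect to need the most care, is the bound on the nilpotency index of $\mathcal N$. For this I would invoke the filtration $\mathcal N_{j} = \spann_\FF\{ \ad_{r a_n} \mid r \in \DD(\pie)^{j}, \ n \geq 0 \}$ from the proof of Corollary \ref{cor:structureL}, which satisfies $[\mathcal N, \mathcal N_j] \subseteq \mathcal N_{j+1}$ and hence $\mathcal N^{k} \subseteq \mathcal N_{k}$ for the lower central series (with $\mathcal N = \mathcal N_1$). Here $(\pie)^{j} = x^{j}$, and as soon as $x^{m-1} = \frac{h}{\pi_h}$ divides $x^{j}$, i.e. $j \geq m-1$, every generator $\ad_{r a_n}$ of $\mathcal N_{j}$ becomes inner, since $\frac{h}{\pi_h} a_n = h^{n} y^{n} \in \A_h$; thus $\mathcal N_{m-1} = 0$ in $\hoch(\A_{x^m})$. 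Consequently $\mathcal N^{m-1} \subseteq \mathcal N_{m-1} = 0$, so $\mathcal N$ is nilpotent of index at most $m-1$, and when $m = 2$ this reads $\mathcal N = \mathcal N^{1} = 0$, recovering the last assertion.
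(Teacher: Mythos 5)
Your proposal is correct and follows essentially the same route as the paper, which simply states that these assertions follow from the general results of Section 5 (Proposition \ref{P:Weyl0}, Example \ref{ex:h=pih}, Theorem \ref{thm:idealJ}, and Corollary \ref{cor:structureL}); you have merely made the specialization to $h=1$, $h=x$, $h=x^m$ explicit. Your computation of the invariants $\pi_h$, $\frac{h}{\pi_h}$, $\pie$ and your use of the filtration $\mathcal N_j$ to bound the nilpotency index by $m-1$ match the intended argument.
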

\medskip

%%%%%%%%%%%%%Section 6%%%%%%%%%%%%%%%%%%%%%%
\begin{section}{$\der(\A_h)$ when  $\chara(\FF) = p > 0$}\label{sec:derChar-p} \end{section} 
\smallskip
  
\noindent {\it Throughout we assume that the field $\FF$ has characteristic $p > 0$, $h\neq 0$, and $\varrho_h$ is as in  Definition \ref{D:varr}.  Our main results in this section are Theorem \ref{T:decomp2} and Corollary \ref {C:decomp2}, which give direct sum decompositions for $\der(\A_h)$ as a module over the center $\centh$ of $\A_h$,  and Theorem \ref{T:pfreeHH1}, which gives necessary and sufficient conditions for $\hoch(\A_h)$ to be a free $\centh$-module.  In the final subsection, we determine the Lie brackets  in $\der(\A_h)$.}   \medskip
 
\subsection{The derivations $D_g$ and the decomposition} \hfil  \smallskip

From Theorem \ref{thm:derdecomp}, we know that for every  $D \in \der(\A_h)$ there exist $E \in \mathcal E = \{F \in \der(\A_1) \mid F(\A_h) \subseteq \A_h\}$ and $g \in  \DD$  so that $D = D_g + E$,  where $D_g$ is the derivation of $\A_h$ given by  $D_g(x) = 0$ and $D_g(\hat y) = g$. The main problem is to determine conditions for $E \in \der(\A_1)$ to restrict to a derivation of $\A_h$. 
Theorem  \ref{thm:derA1} tells us that every derivation of $\A_1$ has  the form  $wE_x + zE_y + \ad_a$ where $w,z \in \cent1$, $a \in \A_1$ and $E_x$, $E_{y}$ are as in  \eqref{eq:ExEy}. However, it is not generally true that  $wE_x$ and $zE_{y}$ restrict to $\A_{h}$ for arbitrary elements $w,z$ of $\cent1 = \FF[x^p, y^p]$.   
 
\subsection{Derivations of the form $wE_x$}  \hfil  

\begin{lemma}\label{lem:wEx} Let $\chara(\FF) = p > 0$, and assume $E = wE_x + zE_y + \ad_a \in \der(\A_1)$ restricts to a derivation of $\A_h$, where $w,z\in \cent1$ and $a \in \A_1$.   Then $w \in \centh$.  \end{lemma}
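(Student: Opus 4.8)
The plan is to isolate $w$ by applying $E$ to a single well-chosen central element of $\A_h$, namely $x^p$. The guiding principle is that any derivation maps the center of an algebra into itself: if $D \in \der(\A_h)$ and $c \in \centh$, then for every $b \in \A_h$ we have $[D(c),b] = D([c,b]) - [c,D(b)] = 0$, so $D(c) \in \centh$. Since by hypothesis $E$ restricts to a derivation of $\A_h$, and $x^p \in \centh = \FF[x^p,\ze]$ by Theorem \ref{L:center}, this gives $E(x^p) \in \centh$. This is the only structural input; everything else is a direct computation.

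The second step is to evaluate $E(x^p)$ explicitly in $\A_1$. Using $E = wE_x + zE_y + \ad_a$ together with the fact that $w,z$ are central, we obtain $E(x^p) = w\,E_x(x^p) + z\,E_y(x^p) + [a,x^p]$. The last term vanishes because $x^p \in \cent1$; the middle term vanishes because $E_y(x) = 0$ forces $E_y(x^n) = 0$ for all $n$ by the Leibniz rule; and $E_x(x^p) = -1$ by Lemma \ref{lem:ee}(c), which identifies $E_x$ with $-\frac{d}{d(x^p)}$ on $\FF[x^p]$. Hence $E(x^p) = -w$, and combining this with the first step yields $w = -E(x^p) \in \centh$, as desired.

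I expect no genuine obstacle here; the only delicate point is the choice of test element. Evaluating on $x^p$ is precisely what simultaneously annihilates the $zE_y$ and $\ad_a$ contributions and, via Lemma \ref{lem:ee}(c), turns the $E_x$ contribution into the bare scalar $-1$ rather than some element of $\A_1\setminus\FF$, so that $w$ emerges cleanly. Note that one cannot argue more naively that ``$wE_x$ restricts to $\A_h$, hence $w\in\centh$,'' because, as observed just before the statement, the individual summands $wE_x$ and $zE_y$ need not restrict to $\A_h$; the central-element argument circumvents this difficulty entirely.
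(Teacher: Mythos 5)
Your proof is correct and follows essentially the same route as the paper: apply $E$ to the central element $x^p$, use Lemma \ref{lem:ee}(c) to get $E(x^p)=-w$, and conclude $w\in\centh$ (the paper phrases the last step as $-w\in\cent1\cap\A_h=\centh$ rather than via the derivations-preserve-the-center argument, but this is the same idea). No gaps.
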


\begin{proof}   Derivations map the center to itself,  so by Theorem \ref{L:center} and Lemma \ref{lem:ee} we know  that  $E(x^p) =  -w \in \cent1 \cap \A_h = \centh$.  \end{proof} 

We will provide necessary and sufficient conditions   on $w \in \mathsf Z(\A_h)$ for  $wE_x$ to restrict to a derivation of $\A_h$,  but this will require the next lemma.   \smallskip 

\begin{lemma}\label{lem:vh^{p-1}-in-F[x^p]}
Let $\varrho_h$ be as in \eqref{eq:vrhodef}, and assume $v \in \DD$.  Then $v h^{p-1} \in \FF[x^p]$ if and only if 
$v' h = v h'$ if and only if $v \in \FF[x^p] \frac{h}{\varrho_h}$. \end{lemma}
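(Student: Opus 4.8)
The plan is to prove the two equivalences separately, using a single derivative computation for the first and the already-established kernel description for the second. First I would observe that, since $\chara(\FF) = p$, the kernel of $\frac{d}{dx}$ on $\DD = \FF[x]$ is exactly $\FF[x^p]$, so the condition $v h^{p-1} \in \FF[x^p]$ is equivalent to $(v h^{p-1})' = 0$. Differentiating and using that the integer $p-1$ equals $-1$ in $\FF$, I obtain
\begin{equation*}
(v h^{p-1})' = v' h^{p-1} + v(p-1)h^{p-2}h' = v'h^{p-1} - v h^{p-2} h' = h^{p-2}\bigl(v'h - v h'\bigr).
\end{equation*}
Because $\DD$ is an integral domain and $h \neq 0$ (so that $h^{p-2} \neq 0$, the factor being $1$ when $p = 2$), this product vanishes if and only if $v'h = v h'$. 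This yields the first equivalence, $v h^{p-1} \in \FF[x^p] \iff v'h = v h'$.

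For the second equivalence I would rewrite $v'h = v h'$ as $v'h - vh' = 0$, which is precisely the statement that $\left(\frac{v}{h}\right)' = \frac{v'h - vh'}{h^2} = 0$ in the field of fractions of $\DD$. At this point I can quote Lemma \ref{L:del0ker}\,(d) with $s = v$: it gives $\left(\frac{v}{h}\right)' = 0$ if and only if $v \in \left(\DD \cap \centh\right)\frac{h}{\varrho_h}$. Finally, by Theorem \ref{L:center}\,(2) we have $\DD \cap \centh = \FF[x]\cap\centh = \FF[x^p]$, so $\left(\DD \cap \centh\right)\frac{h}{\varrho_h} = \FF[x^p]\frac{h}{\varrho_h}$, and the chain of equivalences is complete.

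The computation in the first step is entirely routine, and the structural content of the lemma---the passage to the explicit form $v \in \FF[x^p]\frac{h}{\varrho_h}$---is already carried out in Lemma \ref{L:del0ker}\,(d), which itself rests on the prime-by-prime divisibility analysis behind Lemma \ref{lem:divisor-del0} and the definition \eqref{eq:vrhodef} of $\varrho_h$. Thus the present lemma is essentially a dictionary between three equivalent ways of encoding the same divisibility condition, and I do not expect a serious obstacle once Lemma \ref{L:del0ker} is in hand. The only point demanding a little care is the degenerate exponent $h^{p-2}$ when $p = 2$, where it equals $1$; there one must confirm that the sign collapse in characteristic $2$ leaves the factored identity above intact.
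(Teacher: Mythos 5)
Your proposal is correct and follows essentially the same route as the paper: both reduce $vh^{p-1}\in\FF[x^p]$ to $(vh^{p-1})'=0$, observe this is equivalent to $v'h=vh'$, i.e. $\left(vh^{-1}\right)'=0$, and then invoke Lemma \ref{L:del0ker}\,(d) together with $\DD\cap\centh=\FF[x^p]$. Your explicit factorization $(vh^{p-1})'=h^{p-2}(v'h-vh')$ and the remark about $p=2$ just make the middle equivalence more transparent than the paper's one-line chain.
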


\noindent \emph{Proof.}  \begin{align*} vh^{p-1} \in \FF[x^p] &\iff  (vh^{p-1})' = 0 
\iff \, v'h = vh'   \,\iff \, \left (vh^{-1}\right)'  = 0 \\ 
&\iff v \in (\DD \cap \centh)\frac{h}{\varrho_h}   = \FF[x^p] \frac{h}{\varrho_h}  \, \, \hbox{\rm by
Lemma \ref{L:del0ker}\,(d)}.  \ \quad \square  \end{align*}

\begin{prop}\label{prop:wEx} 
Assume $\chara(\FF) = p > 0$ and let $w \in \centh$.   The following are equivalent. \begin{itemize}
\item[{\rm (i)}]   $w E_x$ restricts to a derivation of  $\A_h$;  
\item[{\rm (ii)}]  $w\in\centh  \frac{h^p}{\varrho_h}$;  
\item[{\rm (iii)}] $wE_x(x) \in \A_h$; 
\item[{\rm (iv)}] $wE_x \in \centh \be_x$, where $\be_x =  \frac{h^p}{\varrho_h} E_x$.
\end{itemize} 
 \end{prop}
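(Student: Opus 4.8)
The plan is to prove the four conditions equivalent by establishing the cycle $\mathrm{(i)}\Rightarrow\mathrm{(iii)}\Rightarrow\mathrm{(ii)}\Rightarrow\mathrm{(iv)}\Rightarrow\mathrm{(i)}$. Two of the links are essentially immediate: $\mathrm{(i)}\Rightarrow\mathrm{(iii)}$ holds because if $wE_x$ restricts to a derivation of $\A_h$ then in particular it carries $x\in\A_h$ into $\A_h$; and $\mathrm{(ii)}\Rightarrow\mathrm{(iv)}$ holds because $w=z\frac{h^p}{\varrho_h}$ with $z\in\centh$ gives $wE_x=z\be_x\in\centh\be_x$. Before starting, I would record the preliminary fact that $\frac{h^p}{\varrho_h}\in\FF[x^p]\subseteq\centh$: both $h^p$ and $\varrho_h$ lie in $\FF[x^p]$, and if $a,b\in\FF[x^p]$ with $b\mid a$ in $\DD$, then the quotient $q=a/b$ satisfies $q'b=a'-qb'=0$, so $q\in\FF[x^p]$. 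This makes $\be_x=\frac{h^p}{\varrho_h}E_x$ and the module $\centh\be_x$ meaningful.

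For $\mathrm{(iii)}\Rightarrow\mathrm{(ii)}$, I would expand a general $w\in\centh=\FF[x^p,\ze]$ as $w=\sum_j c_j(h^py^p)^j$ with $c_j\in\FF[x^p]$. Then $wE_x(x)=wy^{p-1}=\sum_j c_jh^{pj}y^{pj+p-1}$, and since the exponents $pj+p-1$ are pairwise distinct, the description $\A_h=\bigoplus_{i\ge0}\DD h^iy^i$ from \eqref{eq:Ahexpress} forces, for each $j$ separately, that $h^{pj+p-1}$ divide $c_jh^{pj}$, i.e. $h^{p-1}\mid c_j$. Writing $c_j=v_jh^{p-1}$ with $v_j\in\DD$, the condition $c_j\in\FF[x^p]$ reads $v_jh^{p-1}\in\FF[x^p]$, so Lemma \ref{lem:vh^{p-1}-in-F[x^p]} gives $v_j\in\FF[x^p]\frac{h}{\varrho_h}$, whence $c_j\in\FF[x^p]\frac{h^p}{\varrho_h}$. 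Factoring $\frac{h^p}{\varrho_h}$ out of every coefficient then yields $w\in\centh\frac{h^p}{\varrho_h}$, which is $\mathrm{(ii)}$.

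The crux is $\mathrm{(iv)}\Rightarrow\mathrm{(i)}$, for which it suffices to show that $\be_x$ itself restricts to a derivation of $\A_h$; multiplying by the central element $z$ then shows $wE_x=z\be_x$ restricts as well. Since $x$ and $\hat y$ generate $\A_h$, I must check $\be_x(x)\in\A_h$ and $\be_x(\hat y)\in\A_h$. The first is clear: $\be_x(x)=\frac{h^p}{\varrho_h}y^{p-1}=\frac{h}{\varrho_h}h^{p-1}y^{p-1}\in\A_h$. For the second I would substitute the explicit formula for $E_x(\hat y)$ from Lemma \ref{lem:ee}(f), multiply through by $\frac{h^p}{\varrho_h}$, and verify term by term that the coefficient of $y^i$ is divisible by $h^i$. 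The key divisibility inputs are $\varrho_h\mid h'$---which follows at once from $\varrho_h\in\FF[x^p]$, since then $h'=\big(\varrho_h\cdot\tfrac{h}{\varrho_h}\big)'=\varrho_h\big(\tfrac{h}{\varrho_h}\big)'$---and $\varrho_h\mid h^k$ for every $k\ge1$, which holds because $\varrho_h\mid h$. The $y^p$-term $\frac{h^p}{\varrho_h}h'y^p$ is handled by the former; the middle terms $\frac{h^p}{\varrho_h}h^{(k+1)}y^{p-k}$ (for $1\le k\le p-2$) and the $y$-term $\frac{h^p}{\varrho_h}\parp(h)y$ by the latter; and the constant term $\frac{h^p}{\varrho_h}\parp(h')$ lies in $\DD\subseteq\A_h$ automatically.

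I expect this last step, $\mathrm{(iv)}\Rightarrow\mathrm{(i)}$, to be the main obstacle, since it is the only place where one must establish membership in $\A_h$ for all of $\be_x(\hat y)$ rather than just for the value on $x$. The bookkeeping itself is routine once the two observations $\varrho_h\mid h'$ and $\varrho_h\mid h^k$ are in hand, but these are precisely what make the term-by-term divisibility check succeed, so I would isolate them explicitly rather than bury them in the computation.
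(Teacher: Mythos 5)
Your proposal is correct and follows essentially the same route as the paper: the key step $\mathrm{(iii)}\Rightarrow\mathrm{(ii)}$ via expanding $w$ over $\FF[x^p]$ and invoking Lemma \ref{lem:vh^{p-1}-in-F[x^p]}, and the verification that $\be_x(\hat y)\in\A_h$ via Lemma \ref{lem:ee}(f) together with $\varrho_h\mid h'$ and $\varrho_h\mid h$, are exactly the paper's arguments, merely reorganized into a single cycle and with the term-by-term divisibility check written out more explicitly.
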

 
 \begin{proof}
Since $w \in \centh$, we may assume  $w = \sum_{i \equiv 0 \modd p} s_i  h^{i} y^{i}$,  where
$s_i \in \FF[x^p]$ for all $i$.     Now
$w E_x(x) =  \textstyle{\sum_{i \equiv 0 \modd p}} \  s_i  h^{i} y^{i+p-1} \in \A_h  \iff   
h^{p-1}$ divides $s_i$ for each $i   \iff$
for each $ i, \ s_i  = w_i \frac{h}{\varrho_h}h^{p-1} = w_i \frac{h^p}{\varrho_h} \in \FF[x^p]$  for some $w_i\in \FF[x^p],$  
by Lemma \ref{lem:vh^{p-1}-in-F[x^p]}.  Therefore,  (ii) and (iii) are equivalent.

The implication (i) $\Longrightarrow$ (iii) is clear.  Now assume $wE_x(x) \in \A_h$.  Then by the equivalence of (ii) and (iii), we may suppose that $w =  u \frac{h^p}{\varrho_h}$ for some $u \in \centh$.   Now Lemma \ref{lem:ee}\,(f) implies that $E_x(\hat y) \in  h' y^p + \sum_{i=0}^{p-1} \DD y^i$, so $wE_x(\hat y) = u \frac{h^p}{\varrho_h} E_x (\hat y) \in u \frac{h^p}{\varrho_h} h' y^p + \sum_{i=0}^{p-1} \DD u \frac{h^p}{\varrho_h} y^i$, which belongs to $\A_h$ since $\varrho_h$ divides $h'$.  Thus, (ii) implies (i).

It is clear that  (ii)  and (iv) are equivalent, as $E_{x} \neq 0$ and $\A_{1}$ is a domain. \end{proof}

\begin{thm}\label{thm:z_1E_x-restricts}
Assume $\chara(\FF) = p > 0$, and let $E = wE_x + zE_y + \ad_a  \in \der(\A_1)$ with $w, z \in \cent1 = \FF[x^p,y^p]$,  and $a \in \A_1$.  If  $E \in \der (\A_h)$, then  $w E_x \in \der (\A_h)$ and $w\in\centh  \frac{h^p}{\varrho_h}$.
\end{thm}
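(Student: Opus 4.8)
The plan is to reduce the statement to condition (iii) of Proposition \ref{prop:wEx} by evaluating $E$ on the single generator $x$. First I would note that, since $E$ restricts to a derivation of $\A_h$, Lemma \ref{lem:wEx} gives $w \in \centh$, so that Proposition \ref{prop:wEx} becomes applicable. It then suffices to prove $wE_x(x) \in \A_h$, because this is exactly hypothesis (iii) of that proposition, and its equivalence there with (i) and (ii) yields both desired conclusions $wE_x \in \der(\A_h)$ and $w \in \centh \frac{h^p}{\varrho_h}$ simultaneously.

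To establish $wE_x(x) \in \A_h$, I would apply $E$ to $x$. Using \eqref{eq:ExEy}, namely $E_x(x) = y^{p-1}$ and $E_y(x) = 0$, the $zE_y$ term drops out and
$$E(x) = wE_x(x) + \ad_a(x) = w y^{p-1} + [a,x].$$
The decisive observation is that these two summands lie in complementary graded pieces of $\A_1$ for the $y$-grading $\A_1 = \bigoplus_{j \geq 0}\DD y^j$. Indeed, writing $a = \sum_i a_i y^i$ with $a_i \in \DD$ and using $[y^i,x] = i y^{i-1}$ from Lemma \ref{lem:identity_y^nf}, one gets $[a,x] = \sum_{i \not\equiv 0 \bmod p} i a_i y^{i-1}$, whose $y$-degrees are all $\not\equiv -1 \pmod p$. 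On the other hand, since $w \in \centh = \FF[x^p, h^p y^p]$ by Theorem \ref{L:center}, every monomial of $w$ has $y$-degree divisible by $p$, so every monomial of $w y^{p-1}$ has $y$-degree $\equiv -1 \pmod p$.

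Finally I would exploit that $\A_h = \bigoplus_{i \geq 0}\DD h^i y^i$ is a graded subspace of $\A_1 = \bigoplus_i \DD y^i$ by \eqref{eq:Ahexpress}: an element of $\A_1$ lies in $\A_h$ if and only if each of its $y$-homogeneous components does. Since $E(x) \in \A_h$, the subsum of its homogeneous components of $y$-degree $\equiv -1 \pmod p$ again lies in $\A_h$; by the previous paragraph this subsum is precisely $w y^{p-1} = wE_x(x)$. Hence $wE_x(x) \in \A_h$, and Proposition \ref{prop:wEx} completes the argument.

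I expect the main obstacle to be the bookkeeping that isolates $wE_x(x)$ from $E(x)$: the individual summands $wE_x$, $zE_y$, and $\ad_a$ of $E$ need not preserve $\A_h$, so one cannot reason termwise on $E$ directly. The essential point is the separation of $y$-degrees by their residue modulo $p$, which forces $w y^{p-1}$ and $[a,x]$ into disjoint graded components and lets the grading of $\A_h$ perform the extraction cleanly.
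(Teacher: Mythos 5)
Your proposal is correct and follows essentially the same route as the paper: both compute $E(x) = wy^{p-1} + [a,x]$, separate the two summands by the residue of the $y$-degree modulo $p$ to conclude $wy^{p-1} = wE_x(x) \in \A_h$, and then finish via Lemma \ref{lem:wEx} and Proposition \ref{prop:wEx}.
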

\begin{proof}
Since  $E(x) \in \A_h$, we have $wy^{p-1} +  [a, x] \in \A_h$.  Observe that 
$$w y^{p-1}  \in \bigoplus_{i \equiv -1 \modd p} \DD y^i  \qquad \hbox{\rm and}  \qquad
[a, x]   \in \bigoplus_{i \not \equiv -1 \modd p} \DD y^i.$$
Thus  $wy^{p-1} \in \A_h$ and $[a, x] \in \A_h$.  This implies that $wE_x (x) = wy^{p-1} \in \A_h$, and the result now follows from Lemma~\ref{lem:wEx} and Proposition \ref{prop:wEx}.
\end{proof}

\subsection{Derivations of the form $D = zE_y + \ad_a$}\label{subsec:zEy+ad_a} \hfil
\smallskip

In view of Theorems \ref{thm:derA1}, \ref{thm:derdecomp}, and  \ref{thm:z_1E_x-restricts},  we know that every derivation of $\A_h$ has the form $D_g + u\be_x + zE_y + \ad_a$,  where $g \in \DD$,  $D_g$ and $u\be_x$ are derivations of $\A_h$, $u \in \centh$, $z \in \cent1$, $a\in\A_{1}$,  and $\be_x = \frac{h^p}{\varrho_h}E_x$.  Moreover, every $D_g + u\be_x$ with $g \in \DD$ and $u \in \centh$ gives a derivation of $\A_h$.  For that reason,  we may assume that $D = zE_y + \ad_a$ is a derivation of $\A_1$ that restricts  to a derivation of $\A_h$. 
\smallskip  

\begin{lemma}\label{lem:reduceTo-a=a_p}
Let  $D = z E_y + \ad_a \in \der(\A_1)$ for some  $z \in \mathsf Z(\A_1)$ and $a \in \A_1$,  and suppose $D \in \der(\A_h)$. Then  $a = b+c$, where  $b \in \norm_{\not \equiv 0}$ and $c  \in \mathsf{C}_{\A_1}(x) = \FF[x,y^p]$ as in Remark \ref{R:norm}, 
and both $\ad_b$ and $zE_y + \ad_c$  are derivations of $\A_1$ that restrict to derivations of $\A_h$.   Moreover, if $a = \sum_{i \geq 0} r_i y^i$ and $z = \sum_{i \equiv 0 \modd p} c_i y^i$, where $r_i \in \DD$ and $c_i \in \FF[x^p]$ for all $i$, then $zE_y + \ad_a = D_f +  \tilde z E_y + \ad_{\tilde c} + \ad_b$, where $\tilde z = \sum_{i \equiv 0 \modd p, i > 0} c_i y^i$,  $\tilde c = \sum_{i \equiv 0 \modd p, i > 0} r_i y^i$ and $f = c_0hx^{p-1} - \delta(r_0)\in \DD$,   and  $\tilde z E_y + \ad_{\tilde c} \in \der(\A_h)$.
\end{lemma}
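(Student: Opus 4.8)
The plan is to separate $a$ according to its $y$-degree modulo $p$ and to show that the two resulting pieces of $D$ each restrict to $\A_h$. Writing $a = \sum_{i \geq 0} r_i y^i$ with $r_i \in \DD$, set $b = \sum_{i \not\equiv 0 \modd p} r_i y^i$ and $c = \sum_{i \equiv 0 \modd p} r_i y^i$, so $a = b + c$. Since $y^p$ is central, $c = \sum_j r_{pj}(y^p)^j \in \FF[x, y^p] = \mathsf{C}_{\A_1}(x)$ by Lemma \ref{L:clizer}, which gives the asserted form of $c$, while $b$ lies in $\bigoplus_{i \not\equiv 0} \DD y^i$ by construction. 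The whole statement then reduces to a single assertion: that $b \in \norm$, equivalently $b \in \norm_{\not\equiv 0}$ in the notation of \eqref{eq:normsplit}. Indeed, once $\ad_b \in \der(\A_h)$ is known, $zE_y + \ad_c = D - \ad_b$ restricts to $\A_h$ as a difference of derivations of $\A_h$, and everything else is bookkeeping.

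The heart of the argument, and the step I expect to be the main obstacle, is verifying the divisibility conditions of Theorem \ref{T:norm}(a)(ii) for $b$. As $b$ has no terms of degree $\equiv 0 \modd p$, only the first bullet is relevant, so I must show $\pi_h h^{i-1} \mid r_i$ for every $i \not\equiv 0 \modd p$, and I extract this in two stages from $D(x), D(\hat y) \in \A_h$. Since $E_y(x) = 0$ and $[c,x] = 0$, we have $D(x) = [b,x] = \sum_{i \not\equiv 0} i r_i y^{i-1}$, and \eqref{eq:ax} together with \eqref{eq:Ahexpress} forces $h^{i-1} \mid r_i$ for all $i \not\equiv 0$; write $r_i = s_i h^{i-1}$. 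For the sharper conclusion I turn to $D(\hat y)$. Using $E_y(\hat y) = x^{p-1}h$ from Lemma \ref{lem:ee}(e), the centrality of the $y^i$ in $z = \sum_{i \equiv 0} c_i y^i$, and the reduction $[a,\hat y] \equiv [a, hy] \modd \A_h$ (valid once $[a,x]\in\A_h$) carried out exactly as in the derivation of \eqref{eq:ahy}, I obtain
\[
D(\hat y) \equiv \sum_{i \not\equiv 0} s_i h^{i-1} h' y^i + \sum_{i \equiv 0}\big(c_i x^{p-1} - r_i'\big) h\, y^i \pmod{\A_h}.
\]
Membership in $\A_h$ requires $h^i$ to divide the coefficient of $y^i$; for $i \not\equiv 0$ this says $h \mid s_i h'$, which by Lemma \ref{L:defpi} is equivalent to $\pi_h \mid s_i$, hence $\pi_h h^{i-1} \mid r_i$ as needed. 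Thus $b \in \norm$, and so $b \in \norm_{\not\equiv 0}$.

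It remains to record the explicit formula. Regrouping $a = b + r_0 + \tilde c$ and $z = c_0 + \tilde z$ yields $zE_y + \ad_a = (c_0 E_y + \ad_{r_0}) + \tilde z E_y + \ad_{\tilde c} + \ad_b$, so the only thing left is to identify $c_0 E_y + \ad_{r_0}$ with $D_f$ for $f = c_0 h x^{p-1} - \delta(r_0)$. Since $f \in \DD \subseteq \mathsf{C}_{\A_h}(x)$, the derivation $D_f$ is defined; evaluating on generators gives $(c_0 E_y + \ad_{r_0})(x) = 0 = D_f(x)$ and, using $[r_0, \hat y] = -\delta(r_0)$, also $(c_0 E_y + \ad_{r_0})(\hat y) = c_0 x^{p-1} h - \delta(r_0) = f = D_f(\hat y)$. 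By the uniqueness in Lemma \ref{lem:derext} these two derivations agree on $\A_h$, so $c_0 E_y + \ad_{r_0} = D_f$ restricts to $\A_h$. Consequently $\tilde z E_y + \ad_{\tilde c} = (zE_y + \ad_c) - D_f$ is a difference of derivations of $\A_h$ and therefore lies in $\der(\A_h)$, which completes the plan.
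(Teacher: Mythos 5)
Your proposal is correct and follows essentially the same route as the paper: the same splitting of $a$ by $y$-degree modulo $p$, the same two-stage extraction of the divisibility condition $\pi_h h^{i-1}\mid r_i$ from $D(x)\in\A_h$ and $D(\hat y)\in\A_h$ via Lemma \ref{L:defpi} and Theorem \ref{T:norm}, and the same identification $c_0E_y+\ad_{r_0}=D_{c_0hx^{p-1}-\delta(r_0)}$ (which the paper gets by quoting $E_y=D_{x^{p-1}h}$ and Proposition \ref{prop:autg}\,(ii) rather than checking on generators, a negligible difference). No gaps.
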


\begin{proof}  
Let $a$ and $z$ be as in the statement of the lemma.  Since $zE_y(x) = 0$, we have  $D(x) \in \A_h$ if and only if $[a, x]  \in \A_h$.    
 As in \eqref{eq:ax},   $[a, x]  \in \A_h \iff r_i \in \DD h^{i-1}$ for all $i \not\equiv 0 \modd p$.  Thus, we write $r_i = s_i h^{i-1}$  for each such $i$,   where $s_i \in \DD$.    

Now $D(hy) = D(\hat y) - D(h') \in \A_h$,  and we reason as in \eqref{eq:ahy} that 
\begin{align} \label{eq:Dhy} \hspace{-.3cm} \ D(hy) \in \A_h  \hspace{-.1cm}
&\hspace{-.1cm}\iff  zhx^{p-1} + \sum_{i \not \equiv 0 \modd p} s_i h^{i-1}h' y^i 
-   \sum_{i \equiv 0 \modd p} r_i' h y^i \in \A_h  \nonumber  \\ 
 \hspace{-.55cm} \iff &\ \sum_{i \equiv 0 \modd p} \left(c_ix^{p-1}-r_i'\right)hy^i \in \A_h \ \hbox{\rm and}  \sum_{i \not \equiv 0 \modd p} s_i h^{i-1}h' y^i   \in \A_h
  \\
\hspace{-.55cm} \iff &\ h^{i-1}\,\vert\,(c_ix^{p-1}-r_i') \ \ \hbox{\rm for all} \ \ i \equiv 0 \modd p, \  i > 0,   \ \ \hbox{\rm and}  \nonumber \\
 &  \ h \,\vert\, s_i h'  \ \ \hbox{\rm for all} \ \ i \not \equiv 0 \modd p. \nonumber 
\end{align}  

Hence,  if $D \in \der(\A_h)$,  then  $h \,\vert \, s_i h'$  for all $i \not \equiv 0 \modd p$   by \eqref{eq:Dhy},   and  we know by Lemma \ref{L:defpi}  that  $\pi_h$ divides each such $s_i$.  Then there exist $b_i \in \FF[x]$ so that $r_i = b_i \pi_h h^{i-1}$ for each $i \not \equiv 0 \modd p$, and $b =\sum_{i \not\equiv 0 \modd p} b_i \pi_h h^{i-1} y^i \in\norm_{\not \equiv 0}$ by Theorem \ref{T:norm}\,(b).  Then  $\ad_b$ and $D$ belong to  $\mathcal E = \{F \in \der(\A_1) \mid F(\A_h) \subseteq \A_h\}$.  Setting $c = a-b  = \sum_{i \equiv 0 \modd p} r_i y^i  \in \mathsf{C}_{\A_1}(x)$, we have that $z E_y + \ad_c = D - \ad_b \in \mathcal E$.  Thus both $\ad_b$ and $zE_y + \ad_c$ are derivations of $\A_1$ that restrict to derivations of $\A_h$.

From  $E_y(x) = 0$ and $E_y(\hat y) = x^{p-1}h$ (Lemma \ref{lem:ee}\,(e)), we see that $E_y = D_{x^{p-1}h} \in \mathcal D_\DD \subseteq \der(\A_h)$.   Also, from Proposition \ref{prop:autg}\,(ii), we have $\ad_r = -D_{\delta(r) } \in \mathcal D_{\DD}$ for all $r \in \DD$.  As a result,  if $z,a,b,c$ are as above, then $zE_y + \ad_a = D_f +  \tilde z E_y + \ad_{\tilde c} + \ad_b$, where $\tilde z = \sum_{i \equiv 0 \modd p, i > 0} c_i y^i$,  $\tilde c = \sum_{i \equiv 0 \modd p, i > 0} r_i y^i$ and $f = c_0hx^{p-1} - \delta(r_0)\in \DD$,   and  $\tilde z E_y + \ad_{\tilde c} \in \der(\A_h)$. 
\end{proof}

\subsection{The restriction map $\mathsf{Res} : \der (\A_h) \to \der ( \centh)$}  \hfil \smallskip

When  $\chara (\FF) = p > 0$, \ $\centh = \FF[x^p, \ze ]$, where $\ze = h^p y^p = \hat y^p - \frac{\delta^p(x)}{h} \hat y$.   The map $\mathsf{Res} : \der (\A_h) \to \der ( \centh)$ given by restricting a derivation to  $\centh$ is a morphism of Lie algebras.  In this section, we investigate this map and  describe its  kernel and image.  This will enable
us to determine $\der(\A_h)$ in the next section.  The derivation $\delta^p$ plays a significant role.   As $\delta^p$ sends $x$ to $\delta^p(x)$,  then   $\delta^p = \delta^p(x) \frac{d}{dx}$ and 
\begin{equation}\label{eq:delp} \delta^p(r) = \delta^p(x) r' \qquad \hbox{\rm for all} \ \ 
r\in \DD.  \end{equation}   

\begin{lemma}\label{L:deltap} Let $\ze = h^p y^p \in \centh$, and write $h^{p-1}=\sum_{i=0}^{p-1} \overbar h_{i}x^{i}$  with $\overbar h_{i}\in\FF[x^{p}]$ for all $i$. 
\begin{itemize}
\item [{\rm (a)}] For any $r\in\DD$, $D_r(\ze)=\delta^{p-1}(r) - \frac{\delta^p(x)}{h} r = \left( rh^{p-1}\right)^{(p-1)}$.
\item [{\rm (b)}] $\delta^p(x)= -\left( h^{p-1}\right)^{(p-1)} h =   \overbar h_{p-1}h$  so that $\delta^p =  \overbar h_{p-1} \delta$  and $D_1 ( \ze) = - \overbar h_{p-1}$.
\end{itemize}
\end{lemma}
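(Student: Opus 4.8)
The plan is to prove part (a) first and then obtain part (b) by specializing to $r=1$; the guiding idea is that $D_r(\ze)$ can be evaluated in two independent ways, and matching the results yields all the claimed identities.

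First I would compute $D_r(\ze)$ from the central expression $\ze = \hat y^p - \frac{\delta^p(x)}{h}\hat y$ furnished by Theorem \ref{L:center}. Since $\frac{\delta^p(x)}{h}\in\FF[x^p]\subseteq\DD$ and $D_r$ annihilates $\DD$ (as $D_r(x)=0$ and $D_r$ is an $\FF$-linear derivation), this gives $D_r(\ze)=D_r(\hat y^p)-\frac{\delta^p(x)}{h}r$. The heart of the matter is then the identity $D_r(\hat y^p)=\delta^{p-1}(r)$. To prove it I would expand $D_r(\hat y^p)=\sum_{k=0}^{p-1}\hat y^k r\,\hat y^{p-1-k}$ by the Leibniz rule, move each $r$ to the left of $\hat y^k$ using the commutator formula \eqref{eq:Ahcom} in the form $\hat y^k r = r\hat y^k+\sum_{j=1}^k\binom{k}{j}\delta^j(r)\hat y^{k-j}$, and collect terms. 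The summand $r\hat y^{p-1}$ then occurs $p$ times and so vanishes in characteristic $p$, while the coefficient of $\delta^j(r)\hat y^{p-1-j}$ becomes $\sum_{k=j}^{p-1}\binom{k}{j}=\binom{p}{j+1}$ by the hockey-stick identity. In characteristic $p$ this binomial coefficient is $0$ for $1\le j\le p-2$ and equals $1$ only for $j=p-1$, leaving exactly $\delta^{p-1}(r)$. This establishes the first equality in (a); the second equality, $D_r(\ze)=(rh^{p-1})^{(p-1)}$, is already recorded in Corollary \ref{cor:D_g(a_n)}(c).

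For part (b), I would put $r=1$ in (a). As $\delta(1)=0$, the first expression collapses to $D_1(\ze)=-\frac{\delta^p(x)}{h}$, whereas Corollary \ref{cor:D_g(a_n)}(c) gives $D_1(\ze)=(h^{p-1})^{(p-1)}$; equating the two yields $\delta^p(x)=-(h^{p-1})^{(p-1)}h$. To evaluate $(h^{p-1})^{(p-1)}$ I would use the stated decomposition $h^{p-1}=\sum_{i=0}^{p-1}\overbar h_i x^i$ with $\overbar h_i\in\FF[x^p]$: since $\frac{d}{dx}$ annihilates $\FF[x^p]$, only the $i=p-1$ term survives $p-1$ differentiations, and $(x^{p-1})^{(p-1)}=(p-1)!=-1$ by Wilson's theorem, so $(h^{p-1})^{(p-1)}=-\overbar h_{p-1}$. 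This gives $\delta^p(x)=\overbar h_{p-1}h$ immediately, whence $\delta^p=\delta^p(x)\frac{d}{dx}=\overbar h_{p-1}h\frac{d}{dx}=\overbar h_{p-1}\delta$ using \eqref{eq:delp}, and finally $D_1(\ze)=(h^{p-1})^{(p-1)}=-\overbar h_{p-1}$.

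The only real obstacle lies in the noncommutative expansion of $D_r(\hat y^p)$ in part (a): one must track the commutators $[\hat y^k,r]$ carefully and recognize that the hockey-stick summation conspires with the characteristic-$p$ vanishing of $\binom{p}{j+1}$ to annihilate every intermediate term, isolating $\delta^{p-1}(r)$. Once that identity is in hand, the remaining steps are routine, and part (b) is purely a matter of combining (a) with Corollary \ref{cor:D_g(a_n)}(c) and the elementary evaluation of $(h^{p-1})^{(p-1)}$.
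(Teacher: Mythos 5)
Your proposal is correct and follows essentially the same route as the paper: the paper likewise expands $D_r(\hat y^p)=\sum_{n=0}^{p-1}\hat y^n r\,\hat y^{p-1-n}$, pushes $r$ left with \eqref{eq:Ahcom}, and collapses the resulting double sum via $\sum_{n=j}^{p-1}\binom{n}{j}=\binom{p}{j+1}\equiv 0$ for $0\le j\le p-2$, leaving $\delta^{p-1}(r)$, then cites Corollary \ref{cor:D_g(a_n)}(c) for the second equality and specializes to $r=1$ for part (b). No gaps.
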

\begin{proof} (a)   For any  $r \in \DD$, we have 
\begin{align*} D_r(\ze) &=  D_r(\hat y^p - \textstyle{\frac{ \delta^p(x)}{h}} \hat y) = \sum_{n=0}^{p-1} \hat y^n r  \hat y^{p-1-n} - \textstyle{\frac{ \delta^p(x)}{h}} r  \\
&=  \sum_{n=0}^{p-1}\sum_{j=0}^n {n \choose j}  \delta^j(r) \hat y^{p-1-j} -\textstyle{\frac{ \delta^p(x)}{h}}r  \\
&=  \sum_{j=0}^{p-1}\Bigg(\sum_{n=j}^{p-1} {n \choose j}\Bigg)  \delta^j(r) \hat y^{p-1-j}-\textstyle{\frac{ \delta^p(x)}{h}}r  
\ = \  \delta^{p-1}(r) - \textstyle{\frac{ \delta^p(x)}{h}}r. \end{align*} 
The fact that $D_r(\ze) = (r h^{p-1})^{(p-1)}$ comes from (c) of Corollary \ref{cor:D_g(a_n)}.

(b) Taking $r=1$ in part\,(a) yields $\left( h^{p-1}\right)^{(p-1)} = \delta^{p-1}(1) - \frac{\delta^p(x)}{h} = - \frac{\delta^p(x)}{h}$, and thus $\delta^p(x) = - \left( h^{p-1}\right)^{(p-1)}h$.   Since   $\left( x^{i}\right)^{(p-1)}=0$ for $0 \leq i<p-1$ and $\left( x^{p-1}\right)^{(p-1)}=-1$, it follows that $\left( h^{p-1}\right)^{(p-1)}=\left( \sum_{i=0}^{p-1} \overbar h_{i}x^{i}\right)^{(p-1)}= -\overbar h_{p-1}$.  Hence,  $\delta^p(x) = \overbar h_{p-1}h$, and $\delta^p = \delta^p(x) \frac{d}{dx} = \overbar h_{p-1} h \frac{d}{dx} = \overbar h_{p-1} \delta$ by \eqref{eq:delp}.
\end{proof}

\begin{prop}\label{prop:kerres} 
The kernel of the restriction map \ $\mathsf{Res} : \der (\A_h) \to \der ( \centh)$  is  
\vspace{-.35cm}
\begin{center}{$\mathsf{\ker\,Res} = \mathcal D_{\Theta} +  \{ \ad_a \mid a \in \norm\},$} \end{center} 
where $\mathcal D_{\Theta} = \{D_r \mid r \in \Theta\}$ and $\Theta = \left \{r \in \DD \mid  \delta^{p-1}(r) = \textstyle{ \frac{\delta^p(x)}{h}} r \right \}$.
\end{prop}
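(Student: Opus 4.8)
The plan is to prove the two inclusions separately, treating the containment $\mathcal D_{\Theta} + \{\ad_a \mid a \in \norm\} \subseteq \ker\mathsf{Res}$ as the routine direction and the reverse as the substantive one. Throughout I would use that, by Theorem \ref{L:center}, $\centh = \FF[x^p,\ze]$, so that $\mathsf{Res}(D) = 0$ is equivalent to the two conditions $D(x^p) = 0$ and $D(\ze) = 0$. For the easy inclusion: if $a \in \norm$, then $\ad_a$ annihilates every element of $\centh \subseteq \mathsf{Z}(\A_1)$, so $\ad_a \in \ker\mathsf{Res}$; and if $r \in \Theta$, then $D_r(x^p) = 0$ trivially, while Lemma \ref{L:deltap}\,(a) gives $D_r(\ze) = \delta^{p-1}(r) - \frac{\delta^p(x)}{h}r$, which vanishes precisely because $r \in \Theta$, so $D_r \in \ker\mathsf{Res}$.

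For the reverse inclusion, I would take $D \in \ker\mathsf{Res}$ and write $D = D_g + u\be_x + zE_y + \ad_a$ as in the opening of Subsection \ref{subsec:zEy+ad_a}, with $g \in \DD$, $u \in \centh$, $z \in \cent1$, and $a \in \A_1$. The first step is to evaluate at $x^p$. Since $D_g(x^p) = 0$, $E_y(x) = 0$ forces $zE_y(x^p) = 0$, and $\ad_a(x^p) = [a,x^p] = 0$ (as $x^p$ is central in $\A_1$), while $\be_x(x^p) = \frac{h^p}{\varrho_h}E_x(x^p) = -\frac{h^p}{\varrho_h}$ by Lemma \ref{lem:ee}\,(c), I obtain $0 = D(x^p) = -u\frac{h^p}{\varrho_h}$. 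Because $\A_1$ is a domain and $\frac{h^p}{\varrho_h} \neq 0$, this forces $u = 0$, so $D = D_g + zE_y + \ad_a$.

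The second step is to evaluate at $\ze = h^p y^p$. Here $\ad_a(\ze) = 0$, $D_g(\ze) = (gh^{p-1})^{(p-1)}$ by Lemma \ref{L:deltap}\,(a), and $E_y(\ze) = -h^p$ (since $E_y$ kills $\FF[x]$ and $E_y(y^p) = -1$ by the $\varphi$-symmetry \eqref{eq:psiEpsiinv} of Lemma \ref{lem:ee}\,(c)); thus $0 = D(\ze) = (gh^{p-1})^{(p-1)} - zh^p$. The first summand lies in $\DD$, so $zh^p \in \DD$, and expanding $z = \sum_{i \equiv 0 \modd p} c_i y^i$ with $c_i \in \FF[x^p]$ against the $\DD$-basis $\{y^i\}$ of $\A_1$ forces $c_i = 0$ for $i > 0$; hence $z = c_0 \in \FF[x^p] \subseteq \centh$. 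Since $E_y = D_{x^{p-1}h}$ by Lemma \ref{lem:ee}\,(e) and $z \in \centh$, we get $zE_y = D_{zx^{p-1}h}$, so that $D = D_{\tilde g} + \ad_a$ with $\tilde g = g + zx^{p-1}h \in \DD$. Now $D(\ze) = 0$ together with $\ad_a(\ze) = 0$ yields $(\tilde g h^{p-1})^{(p-1)} = D_{\tilde g}(\ze) = 0$, which by Lemma \ref{L:deltap}\,(a) means $\delta^{p-1}(\tilde g) = \frac{\delta^p(x)}{h}\tilde g$, i.e.\ $\tilde g \in \Theta$ and $D_{\tilde g} \in \mathcal D_\Theta$. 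Finally, $\ad_a = D - D_{\tilde g}$ maps $\A_h$ into $\A_h$, so $a \in \norm$ by \eqref{eq:norm2}, and therefore $D \in \mathcal D_\Theta + \{\ad_a \mid a \in \norm\}$.

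The main obstacle is organizing the reverse inclusion so that the two potentially troublesome summands $u\be_x$ and $zE_y$ are eliminated. The crux is recognizing that the $x^p$-condition kills $u\be_x$ outright, while the $\ze$-condition is exactly what forces $z$ into $\FF[x^p] \subseteq \centh$, allowing $zE_y$ to be rewritten as a genuine $D_{\tilde g}$ with $\tilde g \in \DD$ rather than merely a $D_e$ with $e \in \mathsf{C}_{\A_h}(x)$. This step relies on the precise action of $E_x$ and $E_y$ on the central generators via Lemma \ref{lem:ee} and on the $\DD$-freeness of $\A_1$; once it is in place, the identification of the remaining $D$-part with $\mathcal D_\Theta$ is immediate from Lemma \ref{L:deltap}.
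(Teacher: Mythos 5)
Your proof is correct, and its skeleton is the same as the paper's: both start from the decomposition $D = D_g + u\be_x + zE_y + \ad_a$ established at the opening of Subsection \ref{subsec:zEy+ad_a} and eliminate the unwanted summands by evaluating at the two generators $x^p$ and $\ze$ of $\centh$. The organizational difference is that the paper first invokes Lemma \ref{lem:reduceTo-a=a_p} to normalize $a$ into $b + \tilde c$ with $b \in \norm_{\not\equiv 0}$ and to strip the constant term out of $z$ before evaluating; you bypass that lemma entirely by observing that $\ad_a$ annihilates $\centh \subseteq \cent1$ for \emph{arbitrary} $a \in \A_1$, so the evaluations at $x^p$ and $\ze$ go through with no prior normalization of $a$. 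Two further small divergences: where the paper concludes $\tilde z = 0$, you retain the constant term $c_0$ of $z$ and absorb $c_0 E_y = D_{c_0 x^{p-1}h}$ into the $\mathcal D_\Theta$ summand, which is consistent since $(c_0 x^{p-1}h\cdot h^{p-1})^{(p-1)} = -c_0 h^p$ exactly cancels the defect left in $(gh^{p-1})^{(p-1)}$; and you obtain $a \in \norm$ a posteriori from $\ad_a = D - D_{\tilde g} \in \der(\A_h)$ and \eqref{eq:norm2}, rather than a priori. The net effect is a slightly leaner argument with the same computational content, at the mild cost of not exhibiting the normal form of the $\ad$ part that Lemma \ref{lem:reduceTo-a=a_p} provides and that the paper reuses later.
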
 
\begin{proof}The right side is contained $\mathsf{ker\,Res}$  by (a) of Lemma \ref{L:deltap}  
and the fact that $\centh \subseteq \cent1$.  For the other direction,  suppose that  $D  \in \mathsf{ker\,Res}$.   In view of Lemma \ref{lem:reduceTo-a=a_p}, we may suppose    $D = D_r + u \be_x + \tilde z E_y + \ad_b + \ad_{\tilde c}$   for some $r \in \DD$, $u \in \centh$, $\tilde z =  \sum_{i \equiv 0 \modd p, i>0} c_i y^i \in \mathsf{Z}(\A_1)$ with $c_i \in \FF[x^p]$, \ 
$b \in \norm_{\not \equiv 0}$, and $ \tilde c  \in  \sum_{i \equiv 0 \modd p, i > 0} \DD y^i$.
Since $\ad_b \in  \mathsf{ker\,Res}$, we can assume that $E = D_r + u \be_x + \tilde z E_y +\ad_{\tilde c}  \in  \mathsf{ker\,Res}$.   Applying $E$ to $x^p$,  we see that $u = 0$.     Since $\ad_{\tilde c}(\ze) = 0$,  we have 
\begin{eqnarray*} 0 &=& \big (D_r + \tilde zE_y\big)(\ze) =  \delta^{p-1}(r) - \textstyle{\frac{ \delta^p(x)}{h}}r + 
 \tilde z E_y(h^p y^p) \\
&=&  \delta^{p-1}(r) -  \textstyle{\frac{ \delta^p(x)}{h}}r -  \tilde z h^p \\
&=& \delta^{p-1}(r) - \textstyle{\frac{ \delta^p(x)}{h}}r  - \sum_{i \equiv 0 \modd p, i>0} c_i h^p y^i. \end{eqnarray*}
From this we deduce  that $\tilde z = 0$  and $\delta^{p-1}(r) =  \frac{\delta^p(x)}{h}r$. 
Therefore, $\ad_{\tilde c} = E - D_r  \in \der(\A_h)$, \,   $r \in \Theta$,   and
$D \in  \mathcal D_\Theta +  \{ \ad_a \mid a \in \norm\}$. \end{proof}  

In light of Proposition \ref{prop:kerres}, we would like to determine more information about $\Theta$.  

\begin{prop}\label{P:alt-omega}  Let $h^{p-1}=\sum_{i=0}^{p-1} \overbar h_{i}x^{i}$, with $\overbar h_{i}\in\FF[x^{p}]$ for all $i$, as in Lemma \ref{L:deltap},  and let $\mathsf{Res} : \der (\A_h) \to \der ( \centh)$ be the restriction map.
 \begin{itemize}  
\item [{\rm (a)}]  Let $\vartheta: \DD \rightarrow \FF[x^p]$ be the $\FF[x^p]$-module map given by $\vartheta(r) = D_r(\ze).$
Then  \begin{eqnarray*}\Theta  &=&  \{ r \in \DD \mid   \delta^{p-1}(r) = \textstyle{\frac{\delta^p(x)}{h}} r \} 
= \{ r \in \DD \mid   \delta^{p-1}(r) = \overbar h_{p-1} r \} \\
 &=& \ker \vartheta = \{r \in \DD  \mid D_r \in \mathsf{\ker\,Res\,}\}   \\
 &=&   \{r \in \DD \mid (r h^{p-1})^{(p-1)} = 0\}  \\
&=& \left \{ r \in \DD \mid rh^{p-1} \in\im\, \textstyle{\frac{d}{dx}}\right \} \  = \ \{ r \in \DD \mid rh^{p} \in  \im \, \delta  \}.\end{eqnarray*}
In particular, $\Theta$ contains $\im \, \delta$.
\item [{\rm (b)}] $\Theta$ is a free $\FF[x^p]$-module of rank $p-1$ and $\delta^{p-1}\neq 0$. If $\delta^{p}=0$ then $\FF[x^{p}]\subseteq \Theta$; \ if $\delta^{p}\neq 0$ then $\FF[x^{p}]\cap \Theta= 0$.
\item [{\rm (c)}] $\im\, \vartheta = \{ D_{r}(\ze) \mid r\in\DD \}=\FF[x^{p}] \overbar h$, where $\overbar h$ is the greatest common divisor in $\FF[x^{p}]$ of $\{ \overbar h_{i} \mid 0\leq i<p\}$.  Hence, $\mathsf{Res}(\mathcal D_\DD) = \FF[x^p] \overbar h \frac{d}{d\ze}$.
 \item [{\rm (d)}]  Let $\ms_i \in \FF[x^p]$ be such that $\overbar h  = \sum_{i=0}^{p-1} \ms_i \overbar h_i$, and set $\ms =
-\sum_{i=0}^{p-1} \ms_i  x^{p-1-i}$.   Then $\mathsf{Res}(D_{\ms}) = \overbar h \frac{d}{d\ze}$ and
$\DD = \FF[x^p] \ms \,\oplus\, \Theta$.
\item [{\rm (e)}]  For all $f\in\DD$, $\left( f'f^{p-1}\right)^{(p-1)}=-(f')^{p}$. In particular, $D_{\frac{h'}{\varrho_{h}}}(\ze)=-\frac{(h')^{p}}{\varrho_{h}}$.
\end{itemize}
\end{prop}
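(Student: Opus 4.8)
The plan is to derive all five parts from the single identity $\vartheta(r)=D_r(\ze)=(rh^{p-1})^{(p-1)}$ of Lemma~\ref{L:deltap}(a), combined with the elementary behaviour of the $(p-1)$st derivative in characteristic $p$. First I will record the toolkit: writing $g=\sum_{\ell=0}^{p-1}g_\ell x^\ell$ with $g_\ell\in\FF[x^p]$, repeated differentiation kills every component with $\ell\le p-2$ and sends $g_{p-1}x^{p-1}$ to $(p-1)!\,g_{p-1}=-g_{p-1}$, so $g^{(p-1)}=-g_{p-1}$. Two consequences are used throughout: $g^{(p-1)}=0$ exactly when $g\in\bigoplus_{\ell\le p-2}\FF[x^p]x^\ell=\im\frac{d}{dx}$, and $(\,\cdot\,)^{(p-1)}$ is $\FF[x^p]$-linear.

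For (a) the equalities are bookkeeping. The first two descriptions agree because $\frac{\delta^p(x)}{h}=\overbar h_{p-1}$ by Lemma~\ref{L:deltap}(b), and both equal $\ker\vartheta$ because $\vartheta(r)=\delta^{p-1}(r)-\frac{\delta^p(x)}{h}r$. Since $D_r(x^p)=0$, the derivation $D_r$ lies in $\ker\mathsf{Res}$ iff $D_r(\ze)=0$, which identifies $\{r\mid D_r\in\ker\mathsf{Res}\}$ with $\ker\vartheta=\Theta$. The form $\{r\mid(rh^{p-1})^{(p-1)}=0\}$ is the defining formula for $\vartheta$; by the toolkit this is $\{r\mid rh^{p-1}\in\im\frac{d}{dx}\}$, and multiplying by the nonzerodivisor $h$ (so that $\im\delta=h\,\im\frac{d}{dx}$) gives the last form. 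Finally $\im\delta\subseteq\Theta$ is immediate from $D_{\delta(s)}=-\ad_s$ (Proposition~\ref{prop:autg}(ii)) and the centrality of $\ze$, since then $\vartheta(\delta(s))=-\ad_s(\ze)=0$.

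I will prove (c) next, as (b) and (d) fall out of it. Expanding $rh^{p-1}$ and retaining only the $x^{p-1}$-graded part (the sole in-range solution of $i+j\equiv p-1$ is $i+j=p-1$) yields $\vartheta(x^j)=-\overbar h_{p-1-j}$, hence $\im\vartheta=\sum_i\FF[x^p]\overbar h_i=\FF[x^p]\overbar h$, the last equality because $\FF[x^p]$ is a PID with $\overbar h=\gcd(\overbar h_i)$. As $\mathsf{Res}(D_g)$ is the derivation of $\centh=\FF[x^p,\ze]$ killing $x^p$ and sending $\ze\mapsto\vartheta(g)$, namely $\vartheta(g)\frac{d}{d\ze}$, this gives $\mathsf{Res}(\mathcal D_\DD)=\FF[x^p]\overbar h\frac{d}{d\ze}$. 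For (b): $h\ne0$ forces $\overbar h\ne0$, so $\FF[x^p]\overbar h$ is free of rank $1$, and the exact sequence $0\to\Theta\to\DD\xrightarrow{\vartheta}\FF[x^p]\overbar h\to0$ of $\FF[x^p]$-modules splits over the PID $\FF[x^p]$, making $\Theta$ free of rank $p-1$. If $\delta^{p-1}$ were $0$ then $\vartheta(r)=-\overbar h_{p-1}r$; requiring $\vartheta(x)\in\FF[x^p]$ forces $\overbar h_{p-1}=0$ and hence $\vartheta\equiv0$, contradicting $\im\vartheta=\FF[x^p]\overbar h\ne0$; so $\delta^{p-1}\ne0$. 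For $c\in\FF[x^p]$ one has $c\in\Theta$ iff $\overbar h_{p-1}c=0$, and since $\delta^p=\overbar h_{p-1}\delta$ with $\delta\ne0$, the vanishing $\overbar h_{p-1}=0$ is exactly $\delta^p=0$; this yields the dichotomy in (b). For (d), substituting $\ms$ gives $\vartheta(\ms)=\sum_i\ms_i\overbar h_i=\overbar h$, so $\mathsf{Res}(\dms)=\overbar h\frac{d}{d\ze}$; as $\overbar h$ generates $\im\vartheta$ and $\ms\ne0$, $\vartheta$ restricts to an isomorphism $\FF[x^p]\ms\xrightarrow{\sim}\FF[x^p]\overbar h$, which forces $\DD=\FF[x^p]\ms\oplus\Theta$.

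The hard part will be the identity in (e), $(f'f^{p-1})^{(p-1)}=-(f')^p$, which is the characteristic-$p$ shadow of the unavailable $\frac1p(f^p)'$. I plan to read off the $x^{p-1}$-component via the shift $x\mapsto x+T$: with $u=f(x+T)$ one has $(f'f^{p-1})^{(p-1)}=-[T^{p-1}]\big(u^{p-1}u_T\big)$, where $u_T=\partial u/\partial T$. The identity $[T^{p-1}]\big(u^{p-1}u_T\big)=[T^{p}](u^{p})$ holds universally: over $\mathbb{Q}$ it is $u^{p-1}u_T=\frac1p\partial_T(u^p)$ together with $[T^{p-1}]\partial_T(u^p)=p\,[T^p](u^p)$, and since both sides are integral polynomials in the coefficients of $f$, the identity descends to characteristic $p$. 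The Frobenius relation $u^p=f(x+T)^p=F(x^p+T^p)$ (where $f^p=F(x^p)$) then gives $[T^p](u^p)=F'(x^p)=(f')^p$, completing the identity. The concluding assertion follows by $\FF[x^p]$-linearity of $(\,\cdot\,)^{(p-1)}$: since $\varrho_h\in\FF[x^p]$ divides $h'$ (Proposition~\ref{prop:wEx}), $\varrho_h\big(\tfrac{h'}{\varrho_h}h^{p-1}\big)^{(p-1)}=(h'h^{p-1})^{(p-1)}=-(h')^p$, whence $D_{h'/\varrho_h}(\ze)=\big(\tfrac{h'}{\varrho_h}h^{p-1}\big)^{(p-1)}=-\tfrac{(h')^p}{\varrho_h}$.
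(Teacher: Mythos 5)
Your proof is correct. For parts (a)--(d) you follow essentially the same route as the paper: the identification $\vartheta(r)=D_r(\ze)=(rh^{p-1})^{(p-1)}$ from Lemma \ref{L:deltap}, the computation $\vartheta(x^j)=-\overbar h_{p-1-j}$ yielding $\im\,\vartheta=\FF[x^p]\overbar h$, the split short exact sequence over the PID $\FF[x^p]$ giving $\rank\,\Theta=p-1$, and the complement $\FF[x^p]\ms$ via $\vartheta(\ms)=\overbar h$. The only cosmetic differences are that you obtain $\im\,\delta\subseteq\Theta$ from $D_{\delta(s)}=-\ad_s$ and the centrality of $\ze$ (the paper instead notes $\delta(r)h^p=\delta(rh^p)$), and you derive $\delta^{p-1}\neq0$ from $\im\,\vartheta\neq0$ rather than from the rank count; also, the fact $\varrho_h\mid h'$ is not literally the statement of Proposition \ref{prop:wEx} but follows at once from $\varrho_h'=0$ and $h=\lambda\pr_1^{\overbar{\alpha_1}}\cdots\pr_\ell^{\overbar{\alpha_\ell}}\varrho_h$, and the paper likewise uses it without comment. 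The genuine divergence is in (e). The paper shows $f\mapsto(f'f^{p-1})^{(p-1)}$ is additive by expanding $(f+g)'(f+g)^{p-1}$ using ${{p-1}\choose k}\equiv(-1)^k$, observing the cross terms assemble into exact derivatives annihilated by $(\,\cdot\,)^{(p-1)}$, and then checking monomials $\gamma x^m$ directly. You instead encode the $(p-1)$st derivative as a Taylor coefficient, $g^{(p-1)}=-[T^{p-1}]g(x+T)$, lift the identity $[T^{p-1}](u^{p-1}u_T)=[T^p](u^p)$ to the integers via $u^{p-1}u_T=\tfrac1p\partial_T(u^p)$ in a torsion-free polynomial ring, and conclude with Frobenius, $f(x+T)^p=F(x^p+T^p)$ and $F'(x^p)=(f')^p$. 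Both arguments are complete; yours makes the heuristic ``$f'f^{p-1}=\tfrac1p(f^p)'$'' rigorous in one stroke and avoids the additivity-plus-monomial verification, at the cost of invoking reduction mod $p$ of a universal integral identity, while the paper's computation stays entirely inside $\FF[x]$.
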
 

\noindent \emph{Proof.} \  
(a)  \  Let $r\in\DD$. Then by Lemma \ref{L:deltap}\,(a), 
\begin{eqnarray*}
r\in\Theta & \iff & \left( rh^{p-1}\right)^{(p-1)}=0\\
& \iff & rh^{p-1} \in \sum_{i=0}^{p-2}\FF[x^{p}]x^{i} = \im\, \frac{d}{dx} \\
& \iff &   rh^{p} \in \im \, \delta.
\end{eqnarray*}
In particular, $\delta(r)h^{p}=\delta(rh^{p})\in \im \, \delta$ for all $r\in\DD$, so (a) holds. 

(b) and (c)  \   For the  $\FF[x^{p}]$-module map $\vartheta:\DD\rightarrow\FF[x^{p}]$ given by $\vartheta(r)=\left( rh^{p-1}\right)^{(p-1)}$,    $\im\, \vartheta$ is the ideal of $\FF[x^{p}]$ generated by $\{ \vartheta(x^{j}) \mid 0\leq j<p \}$. Note that $x^{j}h^{p-1}=\sum_{i=0}^{p-1}\overbar h_{i}x^{i+j}$, so $\vartheta(x^{j}) = -\overbar h_{p-1-j}$. Since $h\neq 0$, we cannot have $\overbar h_{i}=0$ for all $0\leq i<p$, thus $\im\, \vartheta= \FF[x^{p}]\,\overbar h$, where $0\neq \overbar h \in\FF[x^{p}]$ is the greatest common divisor of $\{ \overbar h_{i} \mid 0\leq i<p\}$. In particular, $\im\, \vartheta$ is a free $\FF[x^{p}]$-module of rank one, and it follows that $\Theta=\ker\, \vartheta$ is free of rank $p-1$. 

If  $\delta^{p-1} = 0$, then $\delta^{p}=0$ and $\Theta=\DD$, which is a contradiction, as $\DD$ has rank $p$ as an $\FF[x^{p}]$-module. Thus  $\delta^{p-1} \neq 0$. Suppose that $\delta^{p}=0$. Then $\Theta = \{ r \in \DD \mid   \delta^{p-1}(r) = 0 \}$,  and it is clear that 
$\FF[x^{p}]\subseteq \Theta$. Suppose now that $\delta^{p}\neq 0$. Then,  $\delta^{p}(x)\neq 0$.  If $r\in\FF[x^{p}]\cap \Theta$,  then $0=\delta^{p-1}(r) = \frac{\delta^p(x)}{h} r$, so $r=0$ and $\FF[x^{p}]\cap \Theta = 0$, as asserted in (b).

 (d)   As  $\vartheta(x^{p-1-i}) =  -\overbar h_i$, we have  $\mathsf{Res}(D_{x^{p-1-i}}) = -\overbar h_{i}
\frac{d}{d\ze}$ for $0 \leq i < p$.   Now if $\ms_i \in \FF[x^p]$, $0 \leq i < p$,  are taken so that $\overbar h  = \sum_{i=0}^{p-1} \ms_i \overbar h_i$, then for $\ms =
-\sum_{i=0}^{p-1} \ms_i  x^{p-1-i}$,  it follows that  $\dms = -\sum_{i=0}^{p-1} \ms_i D_{x^{p-1-i}}$  and $\mathsf{Res}(\dms)
= \left( \sum_{i=0}^{p-1} \ms_i \overbar h_{i}\right) \frac{d}{d\ze} = \overbar h \frac{d}{d\ze}$.  

Suppose $r\in \DD$.  Then by (c),  there exists $u \in \FF[x^p]$ such that   $\mathsf{Res}(D_r) =  u \mathsf{Res}(D_{\ms})$.
Hence, $\mathsf{Res}(D_{r-u\ms}) = 0$, \ $r -u \ms = t \in \Theta$,  and $r = u\ms + t$.    This shows that $\DD = \FF[x^p] \ms + \Theta$.   Since $\vartheta(u\ms) = u\overbar h \neq 0$ for all nonzero $u \in \FF[x^p]$,   it is apparent the sum is direct. 

It remains to prove part  (e). 
We assume the stated equality holds for $f, g\in\DD$ and show it for $f+g$.   Now
 \begin{align*}
(f+g)'(f+g)^{p-1}
&=   f'\sum_{k=0}^{p-1}(-1)^{k}f^{k}g^{p-1-k} + g'\sum_{k=0}^{p-1}(-1)^{k}f^{k}g^{p-1-k}\\
&\hspace{-1.3cm}= f'f^{p-1} + g'g^{p-1} + f'\sum_{k=0}^{p-2}(-1)^{k}f^{k}g^{p-1-k} + g'\sum_{k=1}^{p-1}(-1)^{k}f^{k}g^{p-1-k}\\
&\hspace{-1.3cm}=  f'f^{p-1} + g'g^{p-1} + \sum_{k=0}^{p-2}(-1)^{k}\left(f'f^{k}g^{p-1-k} -f^{k+1}g'g^{p-2-k}\right)\\
&\hspace{-1.3cm}=  f'f^{p-1} + g'g^{p-1} + \sum_{k=0}^{p-2}(-1)^{k}\frac{1}{k+1}\left(f^{k+1}g^{p-1-k}\right)'.
\end{align*}
Since $\left( \im \frac{d}{dx} \right)^{(p-1)} = 0$, we see that  $f \mapsto  (f' f^{p-1})^{(p-1)}$ is 
 an additive mapping on $\DD$.
Hence, it will be enough to show that $\left( f'f^{p-1}\right)^{(p-1)}=-(f')^{p}$ for $f=\gamma x^{m}$, with $m\geq 0$ and $\gamma\in\FF$. This is immediate from 
\begin{eqnarray*}
\left( f'f^{p-1}\right)^{(p-1)} &=& \left( \gamma^{p}mx^{mp-1}\right)^{(p-1)} =\gamma^{p}mx^{(m-1)p}\left( x^{p-1}\right)^{(p-1)}\\ &=& -\gamma^{p}mx^{(m-1)p} =-\left(\gamma mx^{m-1}  \right)^{p}\\ &=& -(f')^{p},
\end{eqnarray*}
so the equality in (e) holds for all $f\in\DD$. Taking $f=h$ gives
\begin{equation*}
 \ \  \qquad  D_{\frac{h'}{\varrho_{h}}}(\ze) = \left( \frac{h'}{\varrho_{h}}h^{p-1}\right)^{(p-1)}=\frac{1}{\varrho_{h}}\left( h'h^{p-1}\right)^{(p-1)}=-\frac{(h')^{p}}{\varrho_{h}}.  \hspace{1.9cm} \square
\end{equation*} 

\begin{remark}
The map $\vartheta :\DD\rightarrow\FF[x^p]$, $r\mapsto\left( rh^{p-1}\right)^{(p-1)}$,  can be thought of as an inner product with $-(\overbar h_{p-1}, \ldots, \overbar h_{0})$:  \  If we identify  $r=\sum_{k=0}^{p-1}r_{k}x^{k}\in \bigoplus_{k=0}^{p-1}\FF[x^{p}]x^{k}$ with the tuple $(r_{0}, \ldots, r_{p-1})$, we can view $\vartheta$ as the map \break  $(r_{0}, \ldots, r_{p-1})\mapsto  -\sum_{i=0}^{p-1} r_{i} \overbar h_{p-1-i}$. Then $\Theta$ is the orthogonal complement of the line generated by  $(\overbar h_{p-1}, \ldots, \overbar h_{0})$.
\end{remark}

\begin{exam}\label{Ex:Psi}   Assume  $h = g^m$, where $m \geq 0$ and  $g = x-\gamma$ for some  $\gamma \in \FF$.  Then $\DD = \bigoplus_{i \geq 0}  \FF g^i$,  and  
$$\im \, \delta  = \bigoplus_{i= 0}^{p-2} \FF[g^p] g^{m+i}  = \bigoplus_{{j \geq m} \atop {j \not \equiv m-1 \modd p}} \FF g^j.$$
Now for $r = \sum_{i \geq 0} r_i g^i$ with $r_i \in \FF$ for all $i$, 
\begin{align*} r \in \Theta  & \iff  r h^{p} = \sum_{i \geq 0} r_i  g^{i+mp} \in \im \, \delta = \bigoplus_{{j \geq m} \atop {j \not \equiv m-1 \modd p}} \FF g^j\\  & \iff  r_i = 0 \ \ \hbox{\rm for}\ \  i \equiv m-1 \ \modd p.\end{align*}
Hence, \begin{center} {$\Theta= \displaystyle{ \bigoplus_{{j\geq 0} \atop {j \not\equiv m-1\modd p} }\FF g^j.}$}\end{center}  
Recall $\delta_0(r) = \delta(r \pi_h h^{-1}) =  (r \pi_h h^{-1})' \, h$.   If $p \nmid m$,  then $\pi_h=g$ and from this we see 
$\delta_0(g^j) = \delta(g^{j+1-m}) = (j+1-m)g^j$, so that $g^j \in \im \,\delta_0$ exactly when $j \not \equiv m-1 \modd p$.     If $p \mid m$,  then $\pi_h=1$ and $\delta_0(g^j) = \delta(g^{j-m}) = jg^{j-1}=\frac{d}{dx} (g^j)$, so $\im \, \delta_0=\im\,\frac{d}{dx}$. In either event,  we have 
$$\Theta = \im \, \delta_0 = \bigoplus_{{j\geq 0} \atop {j \not\equiv m-1\modd p} }\FF g^j
=  \left(\bigoplus_{{0 \leq j < m} \atop  {j \not\equiv m-1\modd p} }\FF g^j\right) \oplus \im\,\delta.$$

Some cases of special interest  are
\begin{itemize}
\item for $h=1$, \  $\Theta = \im\, \delta =  \bigoplus_{j=0}^{p-2}\FF[x^{p}]x^{j}= \im\,\frac{d}{dx}$;
\item for $h= x$, \  $\Theta = \im \, \delta = \bigoplus_{j=1}^{p-1}\FF[x^{p}]x^{j}$;
\item for $h= x^n$ with $2 \leq n < p$, \ \ $\Theta = \Big(\bigoplus_{j=0}^{n-2} \FF x^j \Big)  \oplus  \im \, \delta$.  \end{itemize}  \end{exam}   

In view of Proposition \ref{prop:kerres},  we investigate the following. 

\begin{prop}\label{prop:inderres}  Suppose $D_r + \ad_a  \in \inder(\A_h)$ for some $r \in \DD$ and $a \in \norm$.  Then  
 $r \in \im \, \delta$,  $a \in \A_h + \cent1$,  and  $\ad_a,  D_r  \in \inder(\A_h)$.   Consequently, 
$$\mathcal D_\Theta \cap \{\ad_a \mid a \in \norm\} = \mathcal D_{\im\,\delta},$$
where  $\mathcal D_\Theta = \{D_r \mid r \in \Theta\}$ \ and \  $\mathcal D_{\im\,\delta} = \{D_r \mid r \in \im \, \delta\}.$  \end{prop}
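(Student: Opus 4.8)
The plan is to reduce the hypothesis to a single inner derivation and then extract everything from one commutator computation in $\A_1$. First I would use the definition of $\inder(\A_h)$ to write $D_r + \ad_a = \ad_b$ for some $b \in \A_h$. Setting $c = b - a$, we have $c \in \norm$ (since $\A_h \subseteq \norm$ and $a \in \norm$), and $D_r = \ad_b - \ad_a = \ad_c$. Because $D_r(x) = 0$, the element $c$ satisfies $[c,x] = 0$, so $c \in \mathsf{C}_{\A_1}(x) = \FF[x, y^p]$ by Lemma \ref{L:clizer}.

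The central computation comes next. Writing $c = \sum_{i \ge 0} d_i y^{pi}$ with $d_i \in \DD$, I would use that every element of $\FF[x, y^p]$ commutes with $\DD = \FF[x]$ (as $\DD \subseteq \FF[x,y^p]$ and $y^p$ is central) to get $[c, \hat y] = [c, yh] = [c,y]\,h = -h\sum_{i \ge 0} d_i' y^{pi}$. Since $r = D_r(\hat y) = [c, \hat y]$ lies in $\DD$, comparing coefficients in the decomposition of $\A_1$ as a free left $\DD$-module with basis $\{y^j\}$ (and using that $\A_1$ is a domain, $h \neq 0$) forces $d_i' = 0$ for all $i \ge 1$ and $r = -hd_0' = -\delta(d_0) \in \im\,\delta$. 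By Proposition \ref{prop:autg}\,(ii), $D_r = D_{\delta(-d_0)} = \ad_{d_0}$, which is inner since $d_0 \in \DD \subseteq \A_h$; consequently $\ad_a = \ad_b - D_r = \ad_{b - d_0}$ is inner as well, with $b-d_0 \in \A_h$.

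For the claim $a \in \A_h + \cent1$, I would note that $\ad_a = \ad_{a_1}$ for $a_1 = b - d_0 \in \A_h$, so $a - a_1$ centralizes $\A_h$ in $\A_1$. I then compute the full centralizer $\mathsf{C}_{\A_1}(\A_h) = \cent1$ by the same trick: any $w \in \mathsf{C}_{\A_1}(\A_h)$ centralizes $x$, hence $w = \sum_i w_i y^{pi} \in \FF[x,y^p]$, and the vanishing of $[w, \hat y] = -h\sum_i w_i' y^{pi}$ forces $w_i' = 0$, i.e.\ each $w_i \in \FF[x^p]$, so $w \in \FF[x^p, y^p] = \cent1$. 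Thus $a = a_1 + (a - a_1) \in \A_h + \cent1$, completing the first assertion.

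Finally, for the consequence I would prove both inclusions. For $\supseteq$: if $r \in \im\,\delta$, say $r = \delta(g)$, then $D_r = -\ad_g$ with $g \in \DD \subseteq \norm$, so $D_r \in \{\ad_a \mid a \in \norm\}$, while $\im\,\delta \subseteq \Theta$ by Proposition \ref{P:alt-omega}\,(a) gives $D_r \in \mathcal D_\Theta$; hence $\mathcal D_{\im\,\delta} \subseteq \mathcal D_\Theta \cap \{\ad_a \mid a \in \norm\}$. For $\subseteq$: an element of the intersection is $D_r = \ad_a$ with $r \in \Theta$ and $a \in \norm$, so $D_r + \ad_{-a} = 0 \in \inder(\A_h)$ with $-a \in \norm$; the first part of the proposition then yields $r \in \im\,\delta$, so the element lies in $\mathcal D_{\im\,\delta}$. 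I expect the main obstacle to be the commutator bookkeeping that simultaneously pins down $r \in \im\,\delta$ and the centralizer $\mathsf{C}_{\A_1}(\A_h) = \cent1$; both rest on the single identity $[w,\hat y] = -h\sum_i w_i' y^{pi}$ for $w = \sum_i w_i y^{pi} \in \FF[x,y^p]$, combined with the freeness of $\A_1$ over $\DD$ and the fact that $\A_1$ is a domain.
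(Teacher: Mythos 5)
Your proof is correct and follows essentially the same route as the paper's: reduce to $D_r = \ad_c$ with $c \in \mathsf{C}_{\A_1}(x) = \FF[x,y^p]$, expand $c$ in powers of $y^p$, and read off $r = -h d_0' \in \im\,\delta$ together with $d_i' = 0$ for $i \geq 1$ from the identity $[c,\hat y] = -h\sum_i d_i' y^{pi}$. The only cosmetic difference is that the paper obtains $a \in \A_h + \cent1$ directly from the decomposition $c = d_0 + \sum_{i\geq 1} d_i y^{pi}$ with $d_i \in \FF[x^p]$ for $i \geq 1$, whereas you take a short detour through the computation $\mathsf{C}_{\A_1}(\A_h) = \cent1$, which rests on the same commutator identity and is equally valid.
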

\begin{proof}  For the first statement,  suppose  that $D_r + \ad_a = \ad_v$ for some $v \in \A_h$.   Then it follows from
$D_r = \ad_{v-a}$ that $v-a \in \C_{\A_1}(x)$.   Writing $v-a = \sum_{i \equiv 0 \modd p} w_i y^i$, where $w_i \in \DD$
for all $i$,   we have
$r = D_r(\hat y) = [v-a,\hat y] = \sum_{i \equiv 0 \modd p} [w_i y^i, yh] =  - \sum_{i \equiv 0 \modd p}  w_i' h y^i.$
As a result,   $r = -w_0'h \in \im\,\delta$ and $w_i' = 0$ for all $i > 0$.     Hence, $w_i \in \FF[x^p]$ for all $i > 0$ 
and $w  = \sum_{i \equiv 0 \modd p, i > 0} w_i y^i \in \cent1$.  Now $a = (v - w_0) - w \in \A_h + \cent1$, which
implies that $\ad_a = \ad_{v-w_0}$ and $D_r$ are in $\inder(\A_h)$.  

The  assertion about $\mathcal D_{\Theta}$  follows from  what we have just shown  and the fact that $D_{\delta(g)} = -\ad_g$ for all $g \in \DD$  by  (ii) of Proposition \ref{prop:autg}. 
\end{proof}  

From  Proposition  \ref{prop:inderres},  we can conclude the following:
\begin{cor}\label{cor:induce} \, The kernel of the  induced map \,  $\overbar{\mathsf{Res}}: \hoch(\A_h)  \rightarrow \der(\centh)$ is 
\begin{eqnarray*}
\mathsf{ker\,\overbar{Res}}&=&
 \big(\mathcal D_\Theta + \{ \ad_a \mid a \in \norm\} \big)/\inder(\A_h) \\ 
&\cong& \big( \mathcal D_\Theta/ \mathcal D_{\im\,\delta}\big) \,   \oplus  \,  \big( \{ \ad_a \mid a \in \norm\}/\inder(\A_h) \big) \\
&\cong& \big(\Theta/ \im \, \delta\big) \, \oplus \,  \big(\norm /\big(\A_h+\cent1\big)\big),
\end{eqnarray*}
where the isomorphisms are as $\FF[x^{p}]$-modules. 
\end{cor}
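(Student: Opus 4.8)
The plan is to deduce everything from the exact description of $\ker\mathsf{Res}$ in Proposition \ref{prop:kerres} together with the collapsing behavior recorded in Proposition \ref{prop:inderres}. First I would note that $\inder(\A_h) \subseteq \ker\mathsf{Res}$, since an inner derivation $\ad_a$ annihilates every central element and so $\mathsf{Res}(\ad_a) = 0$; this is precisely what makes $\overbar{\mathsf{Res}}$ well defined on $\hoch(\A_h) = \der(\A_h)/\inder(\A_h)$. It gives at once
\[
\ker\overbar{\mathsf{Res}} = (\ker\mathsf{Res})/\inder(\A_h) = \big(\mathcal D_\Theta + \{\ad_a \mid a \in \norm\}\big)/\inder(\A_h),
\]
which is the first displayed equality, using Proposition \ref{prop:kerres}.

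For the direct sum, I would set $A = \mathcal D_\Theta$ and $B = \{\ad_a \mid a \in \norm\}$, both $\FF[x^p]$-submodules of $\ker\mathsf{Res}$ with $\inder(\A_h) \subseteq B$ (as $\A_h \subseteq \norm$), and write $\bar A, \bar B$ for their images in the quotient, so that $\ker\overbar{\mathsf{Res}} = \bar A + \bar B$. An element of $\bar A \cap \bar B$ is represented by some $a \in A$ with $a - b \in \inder(\A_h)$ for some $b \in B$; since $\inder(\A_h) \subseteq B$ this forces $a \in A \cap B$. By Proposition \ref{prop:inderres}, $A \cap B = \mathcal D_\Theta \cap \{\ad_a \mid a \in \norm\} = \mathcal D_{\im\,\delta}$, and by Proposition \ref{prop:autg}(ii) each $D_{\delta(g)} = -\ad_g$ is inner, so $\mathcal D_{\im\,\delta} \subseteq \inder(\A_h)$. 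Hence the representative already lies in $\inder(\A_h)$, giving $\bar A \cap \bar B = 0$ and $\ker\overbar{\mathsf{Res}} = \bar A \oplus \bar B$.

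It then remains to identify the two summands. For $\bar A$, the standard isomorphism $\bar A = (A + \inder(\A_h))/\inder(\A_h) \cong A/(A \cap \inder(\A_h))$ applies, and the same two inputs (Propositions \ref{prop:inderres} and \ref{prop:autg}(ii)) show $A \cap \inder(\A_h) = \mathcal D_{\im\,\delta}$; since $r \mapsto D_r$ is injective and $\FF[x^p]$-linear with image $\mathcal D_\Theta$, this yields $\bar A \cong \mathcal D_\Theta/\mathcal D_{\im\,\delta} \cong \Theta/\im\,\delta$. For $\bar B = B/\inder(\A_h)$ I would use the surjective $\FF[x^p]$-linear map $\norm \to \bar B$, $a \mapsto \ad_a + \inder(\A_h)$, whose kernel is $\{a \in \norm \mid \ad_a \in \inder(\A_h)\}$. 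The case $r = 0$ of Proposition \ref{prop:inderres} shows this kernel is contained in $\A_h + \cent1$, and the reverse inclusion is immediate because $\cent1$ is central (so $\ad_{v+z} = \ad_v$ for $v \in \A_h$, $z \in \cent1$). Thus $\bar B \cong \norm/(\A_h + \cent1)$, and the two stated isomorphisms follow.

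The computations are routine once Proposition \ref{prop:inderres} is in hand; the one step that needs genuine care is the directness of the sum, where I must verify simultaneously that the intersection $A \cap B$ equals $\mathcal D_{\im\,\delta}$ and that $\mathcal D_{\im\,\delta}$ already sits inside $\inder(\A_h)$, so that the overlap collapses to $0$ in the quotient rather than contributing a diagonal piece. Finally, keeping track that every map invoked is $\FF[x^p]$-linear guarantees, with no extra work, that the resulting isomorphisms hold as $\FF[x^p]$-modules, as claimed.
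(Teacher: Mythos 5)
Your argument is correct and follows exactly the route the paper intends: the paper states this corollary as an immediate consequence of Propositions \ref{prop:kerres} and \ref{prop:inderres} (together with Proposition \ref{prop:autg}\,(ii) for $\mathcal D_{\im\,\delta}\subseteq\inder(\A_h)$), and your write-up simply fills in the routine module-theoretic details of that deduction, including the key point that the overlap $\mathcal D_\Theta\cap\{\ad_a\mid a\in\norm\}=\mathcal D_{\im\,\delta}$ already lies in $\inder(\A_h)$ so the sum becomes direct in the quotient.
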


Next, we investigate the image of  the map $\mathsf{Res}$.  Recall from Proposition \ref{P:alt-omega}\,(c)  that 
$\mathsf{Res}(\mathcal{D}_{\DD})=\FF[x^{p}]\overbar h\frac{d}{d\ze} = \FF[x^p] \mathsf{Res}(D_{\ms})$, where $\ms$ is
as in (d) of that proposition.  
Now using Lemma \ref{lem:ee}\,(c)  and $\be_x (\ze)= \frac{1}{\varrho_h} E_x (h^{p})\ze=-\frac{(h')^{p}}{\varrho_h}\ze$, we have 
\begin{equation}\label{eqn:E_x-center}
\be_x(x^{jp})  = -\frac{h^p}{\varrho_h} j x^{(j-1)p}  \quad \text{and} \quad  \be_x( \ze^k) = - k \ze^{k}\frac{(h')^{p}}{\varrho_h},  
\end{equation}
and thus,
\begin{equation*}
\mathsf{Res}(\be_x)=- \frac{1}{\varrho_h}\left(h^p\frac{d}{d(x^{p})}+ (h')^{p}\ze\frac{d}{d\ze}\right).
\end{equation*}
In particular,  for  
\begin{equation}\label{eq:brf}   \brf \, = \, \ze D_{\frac{h'}{\varrho_{h}}}-\be_x,   \quad \hbox{\rm we have} \quad  \mathsf{Res}(\brf) =\frac{h^{p}}{\varrho_{h}}\frac{d}{d(x^{p})}   \end{equation} 
by Proposition \ref{P:alt-omega}\,(e).

\begin{thm}\label{T:imres} Assume $\chara(\FF) = p > 0$, and let  $\mathsf{Res}: \der(\A_h) \rightarrow \der(\centh)$ be the restriction map  and $\overbar{\mathsf{Res}}: \hoch(\A_h)  \rightarrow \der(\centh)$ be the induced map.   Then the following hold. 
\begin{itemize} 
\item[{\rm (a)}]  $\im\,\mathsf{Res}= \im\,\mathsf{\overbar{Res}}$ is a free $\centh$-submodule of $\der(\centh)$ of rank 2 generated over $\centh$  by $\frac{h^{p}}{\varrho_{h}}\frac{d}{d\left(x^{p}\right)}$ and $\overbar h \frac{d}{d\ze}$, where $\overbar h$ is as in Proposition~\ref{P:alt-omega}\,(c).  

\item[{\rm (b)}]  If  $t_1 = x^p$, $t_2 = \ze$,  and if  $\centh$ is identified  with $\FF[t_1,t_2]$, then $\im\, \mathsf{Res}$ is isomorphic to the subalgebra of  the Witt algebra $\der(\FF[t_1,t_2])$  generated over $\FF[t_1,t_2]$ by  $\mathsf{d}_1 = \frac{h^{p}}{\varrho_{h}} \frac{d}{dt_1}, \, \mathsf{d_2} =\overbar h \frac{d}{dt_2}$, where  
$$[\mathsf{d}_1, \mathsf{d}_2] =\frac{d}{dt_1}\big(\overbar h\big)\, \frac{h^p}{\varrho_h \overbar h} \, \mathsf{d}_2.$$  
\end{itemize}
\end{thm}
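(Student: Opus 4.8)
The plan is to exploit that $\mathsf{Res}$ is a morphism of Lie algebras which is moreover $\centh$-linear (for $u\in\centh$ the map $uD$ is again a derivation of $\A_h$ and $\mathsf{Res}(uD)=u\,\mathsf{Res}(D)$), and that $\der(\centh)=\der(\FF[t_1,t_2])$ is free of rank $2$ over $\centh$ with basis $\frac{d}{dt_1},\frac{d}{dt_2}$, where $t_1=x^p$, $t_2=\ze$. Since $\centh\subseteq\mathsf{Z}(\A_1)$, we have $\mathsf{Res}(\ad_b)=0$ for every $b\in\norm$; in particular $\inder(\A_h)\subseteq\ker\mathsf{Res}$, so $\mathsf{Res}$ factors through $\hoch(\A_h)$ and $\im\,\mathsf{Res}=\im\,\overbar{\mathsf{Res}}$. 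It therefore suffices to prove $\im\,\mathsf{Res}=\centh\mathsf{d}_1\oplus\centh\mathsf{d}_2$ and that $\mathsf{d}_1,\mathsf{d}_2$ are $\centh$-independent.

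For the inclusion $\im\,\mathsf{Res}\subseteq\centh\mathsf{d}_1+\centh\mathsf{d}_2$, I would first use Theorems \ref{thm:derA1}, \ref{thm:derdecomp}, \ref{thm:z_1E_x-restricts} together with Lemma \ref{lem:reduceTo-a=a_p} to write an arbitrary $D\in\der(\A_h)$ as $D=D_e+u\be_x+\ad_b$, where $e\in\mathsf{C}_{\A_h}(x)=\centh\DD$, $u\in\centh$, and $b\in\norm$. The key point is that after subtracting the $\be_x$- and $\norm$-contributions the remaining derivation annihilates $x$, hence equals $D_e$ for some $e$ centralizing $x$, and $\mathsf{C}_{\A_h}(x)=\centh\DD$ by Lemma \ref{L:clizer}. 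Now $\mathsf{Res}(\ad_b)=0$; writing $e=\sum_\alpha z_\alpha g_\alpha$ with $z_\alpha\in\centh$, $g_\alpha\in\DD$ and using $D_{zg}=zD_g$ and $D_g(\ze)=(gh^{p-1})^{(p-1)}$ (Corollary \ref{cor:D_g(a_n)}), the value $D_e(\ze)$ lies in $\centh\cdot\im\,\vartheta=\centh\overbar h$ by Proposition \ref{P:alt-omega}(c); since $D_e(x^p)=0$ this gives $\mathsf{Res}(D_e)\in\centh\overbar h\frac{d}{dt_2}=\centh\mathsf{d}_2$. Finally, the expression for $\mathsf{Res}(\be_x)$ recorded just before \eqref{eq:brf}, together with $\frac{(h')^p}{\varrho_h}\in\FF[x^p]\overbar h$ (Proposition \ref{P:alt-omega}(c),(e)), shows $\mathsf{Res}(\be_x)\in\centh\mathsf{d}_1+\centh\mathsf{d}_2$, and hence so is $\mathsf{Res}(u\be_x)=u\,\mathsf{Res}(\be_x)$.

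The reverse inclusion is immediate: by Proposition \ref{P:alt-omega}(d) and \eqref{eq:brf} we already have $\mathsf{d}_2=\mathsf{Res}(\dms)$ and $\mathsf{d}_1=\mathsf{Res}(\brf)$ in $\im\,\mathsf{Res}$, which is a $\centh$-submodule. Freeness of rank $2$ then follows because $\der(\centh)$ is free on $\frac{d}{dt_1},\frac{d}{dt_2}$ and $\centh$ is a domain: a relation $u\mathsf{d}_1+v\mathsf{d}_2=0$ forces $u\frac{h^p}{\varrho_h}=0=v\overbar h$, so $u=v=0$. This proves (a).

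For (b), once $\centh$ is identified with $\FF[t_1,t_2]$, part (a) realizes $\im\,\mathsf{Res}$ as the $\FF[t_1,t_2]$-span of $\mathsf{d}_1,\mathsf{d}_2$ inside $\der(\FF[t_1,t_2])$, which is a Lie subalgebra because $\mathsf{Res}$ is a morphism, so only the bracket remains to be recorded. As both coefficients $\frac{h^p}{\varrho_h}$ and $\overbar h$ lie in $\FF[t_1]=\FF[x^p]$, the standard computation in the two-variable Witt algebra gives $[\mathsf{d}_1,\mathsf{d}_2]=\frac{h^p}{\varrho_h}\frac{d\overbar h}{dt_1}\frac{d}{dt_2}=\frac{d}{dt_1}(\overbar h)\frac{h^p}{\varrho_h\overbar h}\mathsf{d}_2$, the $\frac{d}{dt_1}$-component vanishing since neither coefficient depends on $t_2$. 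I expect the main obstacle to be the inclusion $\im\,\mathsf{Res}\subseteq\centh\mathsf{d}_1+\centh\mathsf{d}_2$: controlling the $\frac{d}{d\ze}$-component of a general derivation hinges on the observation that the delicate $zE_y+\ad_a$ piece acts on $\A_h$ as a single $D_e$ with $e\in\mathsf{C}_{\A_h}(x)$, which is precisely what collapses the computation onto the previously determined image $\mathsf{Res}(\mathcal{D}_\DD)=\FF[x^p]\overbar h\frac{d}{d\ze}$.
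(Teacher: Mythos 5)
Your proposal is correct and follows essentially the same route as the paper: both reduce a general $D\in\der(\A_h)$ via Theorem \ref{thm:z_1E_x-restricts} and Lemma \ref{lem:reduceTo-a=a_p} to a sum of a $\be_x$-term, an inner term from $\norm$, and a piece annihilating $x$ whose value on $\hat y$ lies in $\mathsf{C}_{\A_h}(x)=\centh\DD$, then invoke Proposition \ref{P:alt-omega}\,(c),(d),(e) and the formula for $\mathsf{Res}(\be_x)$ to identify the image with $\centh\mathsf{d}_1\oplus\centh\mathsf{d}_2$. The freeness argument and the bracket computation in (b) also coincide with the paper's.
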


\begin{proof}  
By the above and  Proposition \ref{P:alt-omega}, for part (a) it suffices to show that $$\im\,\mathsf{Res}\subseteq \centh\mathsf{Res}(\mathcal D_\DD) + \centh\mathsf{Res}(\be_x).$$   Given $D\in\der(\A_h)$,  we have established that there exist $g\in\DD$, $u\in\centh$, $z\in\cent1$, $b\in\norm_{\not \equiv 0}$ and $c  \in \mathsf{C}_{\A_1}(x)$, as in Lemma \ref{lem:reduceTo-a=a_p}, such that $D=D_g+u\be_x+\ad_b+E$, where $E=zE_y + \ad_c$ and $D_g$, $u\be_x$, $\ad_b$, $E\in\der{\A_h}$. Clearly, $\mathsf{Res}(D_g)$, $\mathsf{Res}(u\be_x)$, and $\mathsf{Res}(\ad_b)=0$ belong to  $\centh\mathsf{Res}(\mathcal D_\DD) + \centh\mathsf{Res}(\be_x)$, so it remains to argue that the same holds for $\mathsf{Res}(E)$. Note that $E(x)=0$, so $[E(\hat y), x]=0$, showing that $E(\hat y)\in\mathsf{C}_{\A_h}(x)=\centh\DD$.   Thus, $E \in 
\centh \mathcal{D}_\DD$ and   $\mathsf{Res}(E) \in \centh \mathsf{Res}(\mathcal{D}_\DD)$.  

For part (b),  observe that 
\begin{eqnarray} \ \qquad  [\mathsf{Res}(\brf), \mathsf{Res}(\dms)] &=&   \left[\frac{h^p}{\varrho_h}\frac{d}{d(x^p)},\, \overbar h \frac{d}{d\ze}\right] \\
&=& \frac{d}{d(x^p)}\big(\overbar h\big)  \frac{h^p}{\varrho_h \overbar h} \,\, \overbar h \frac{d}{d\ze} =\frac{d}{d(x^p)}\big(\overbar h\big) \frac{h^p}{\varrho_h \overbar h}\,\mathsf{Res}(\dms). \nonumber  \end{eqnarray}
\noindent  The result is apparent from that, since $\mathsf{d}_1 =   \frac{h^p}{\varrho_h}\frac{d}{dt_1} = \mathsf{Res}(\brf)$  and 
$\mathsf{d}_2 = \overbar h \frac{d}{dt_2} = \mathsf{Res}(\dms)$, where $t_1 = x^p$, $t_2 = \ze$. 
\end{proof}

\begin{exam}\label{EX:t1t2}  
Assume $h=x^{m}$, with $m\geq 0$. Write $m=kp+n$ with $k\geq 0$ and $0\leq n<p$, and set  $t_{1}=x^{p}$ and $t_{2}=\ze$, so that $\centh=\FF[t_{1}, t_{2}]$. Then $\frac{h^{p}}{\varrho_{h}}=t_{1}^{m-k}$  and  
\begin{equation}\label{eq:tone} \overbar h = \begin{cases}  t_1^{m-k} & \qquad  \hbox{\rm if} \  n = 0\\
 t_1^{m-k-1}  & \qquad  \hbox{\rm if} \  n \neq  0. \end{cases}\end{equation} 
 Thus,  $\im\,\mathsf{Res}$ is the Lie subalgebra of $\der(\FF[t_{1}, t_{2}])$ generated over $\FF[t_{1}, t_{2}]$ by 
\begin{eqnarray*}  t_{1}^{m-k}\frac{d}{dt_{1}}\quad\mbox{and} \quad t_{1}^{m-k}\frac{d}{dt_{2}}  && \quad  \hbox{\rm if} \ \  n = 0\\
\ \ t_{1}^{m-k}\frac{d}{dt_{1}}\quad\mbox{and} \quad t_{1}^{m-k-1}\frac{d}{dt_{2}} && \quad  \hbox{\rm if} \ \  n \neq  0.
\end{eqnarray*}   
Special cases  of this result are displayed in the table below:
\begin{equation*} 
\begin{tabular}[t]{|c||c|c|c|c|}
\hline
        $h$ 
       & $m$ & $k$ & $n$ & generators  
	\\
	\hline  \hline
$1$ & 
$0$ & $0$ & $0$ &  $\frac{d}{dt_{1}}, \  \frac{d}{dt_{2}}$  
\\  \hline
$x$ & 
$1$ & $0$ & $1$  &  $t_1\frac{d}{dt_{1}}, \  \frac{d}{dt_{2}}$  
\\  \hline
$x^2$ ($p > 2$) & 
$2$ & $0$ & $2$ & $t_1^2\frac{d}{dt_{1}}, \  t_1\frac{d}{dt_{2}}$ \\  \hline
$x^2$ ($p = 2$) & 
$2$ & $1$ & $0$ & $t_1\frac{d}{dt_{1}}, \  t_1\frac{d}{dt_{2}}$   \\
\hline
\end{tabular} 
\end{equation*}
When  $h=1$,  then $\overbar{\mathsf{Res}}$ is surjective, and by Corollary~\ref{cor:induce} we also know $\overbar{\mathsf{Res}}$ is injective, as $\Theta= \im \, \delta$, so we retrieve a previously established result:  the induced map   $\overbar{\mathsf{Res}}: \hoch(\A_1)  \rightarrow \der(\cent1)$ is an isomorphism (see Theorem \ref{thm:derA1}\,(b)). 
\end{exam}  

\subsection {Main theorems about derivations} \hfil
\medskip

Assume $\overbar h \in \FF[x^p]$ and $\ms\in \DD$ are as in Proposition \ref{P:alt-omega}, so that under the restriction map, 
$\mathsf{Res}(D_{\ms}) = \overbar h \frac{d}{d\ze}$.    
Recall from \eqref{eq:brf} that  the derivation  $\brf = \ze D_{\frac{h'}{\varrho_h}} - \be_x \in \der(\A_h)$  has the property that
$\mathsf{Res}(\brf) =  \frac{h^p}{\varrho_h} \frac{d}{d(x^p)}$.   Then $\mathsf{Res}$ maps
$\centh \dms \oplus \centh \brf$ isomorphically onto  $\im\,\mathsf{Res}$ as $\centh$-modules by Theorem \ref{T:imres}, 
which leads to  our main result on derivations.
 \medskip 

\begin{thm}\label{T:decomp2}  Assume $\chara(\FF) = p > 0$.  Then as 
a  $\centh$-module,
\begin{equation}\label{eq:decomp2}  \der(\A_h) = \centh \dms \oplus \centh \brf \oplus \Big(  \mathcal D_\Theta  +  \{ \ad_a \mid  a \in \norm \} \Big), \end{equation} where 
\begin{itemize}
\item[{\rm (i)}]   $D_r(x) = 0, \  D_r(\hat y) = r,$ for all  $r \in \DD$;
\item[{\rm (ii)}]  $\mathcal D_{\Theta} = \{D_r \mid r \in \Theta\}$  and  $\Theta =  \{r \in \DD \mid \mathsf{Res}(D_r) = 0\}$  as in Proposition \ref{P:alt-omega}\,(a);
 \item[{\rm (iii)}] $\dms$ is as in Proposition  \ref{P:alt-omega}\,(d);   
 \item[{\rm (iv)}] $\brf = \ze D_{\frac{h'}{\varrho_h}} - \be_x   =   \ze D_{\frac{h'}{\varrho_h}}-\frac{h^p}{\varrho_h} E_x$.   Hence,  $\brf(x) = -\frac{h^p}{\varrho_h} y^{p-1}$, \ and  \\
$\displaystyle{\brf(\hat y) =  \frac{h^p}{\varrho_h}\sum_{k=1}^{p-2} {\frac{(-1)^{k}}{(k+1)k}} h^{(k+1)} y^{p-k}   + \frac{h^p}{\varrho_h}\left(\partial_p(h) y  + \parp(h')\right),}$ \\
where  $\parp$  is as in  \eqref{eq:projdef}. 
\end{itemize} 
\end{thm}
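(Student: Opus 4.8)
The plan is to read Theorem~\ref{T:decomp2} as the assertion that the restriction map $\mathsf{Res}\colon \der(\A_h)\to\der(\centh)$, regarded as a morphism of $\centh$-modules, is split, with the splitting realized by $\centh\dms\oplus\centh\brf$. Both halves of the data are already in hand: Proposition~\ref{prop:kerres} gives $\ker\,\mathsf{Res}=\mathcal D_\Theta+\{\ad_a\mid a\in\norm\}$, and Theorem~\ref{T:imres} shows that $\im\,\mathsf{Res}$ is free of rank $2$ over $\centh$ with basis $\mathsf{Res}(\brf)=\frac{h^p}{\varrho_h}\frac{d}{d(x^p)}$ and $\mathsf{Res}(\dms)=\overbar h\frac{d}{d\ze}$. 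So the proof is mostly a matter of splicing these two results together along the short exact sequence
\[
0\longrightarrow \ker\,\mathsf{Res}\longrightarrow \der(\A_h)\xrightarrow{\ \mathsf{Res}\ }\im\,\mathsf{Res}\longrightarrow 0.
\]

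First I would confirm that this is a sequence of $\centh$-modules. Since any derivation $D$ of $\A_h$ preserves the center (if $w\in\centh$, then $[D(w),a]=D([w,a])=0$ for all $a\in\A_h$, so $D(w)\in\centh$), the map $\mathsf{Res}$ is well defined, and for $z\in\centh$ one has $(zD)|_{\centh}=z\,(D|_{\centh})$, so $\mathsf{Res}$ is $\centh$-linear. I would also check that $\dms$ and $\brf$ genuinely lie in $\der(\A_h)$: here $\dms=D_{\ms}\in\mathcal D_\DD\subseteq\der(\A_h)$ since $\ms\in\DD$, while $\brf=\ze D_{h'/\varrho_h}-\be_x$ lies in $\centh\mathcal D_\DD+\centh\be_x\subseteq\der(\A_h)$, the term $\be_x=\frac{h^p}{\varrho_h}E_x$ restricting to $\A_h$ by Proposition~\ref{prop:wEx}.

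Next I would show that $\centh\dms+\centh\brf$ maps isomorphically onto $\im\,\mathsf{Res}$ and is a complement to $\ker\,\mathsf{Res}$. Because $\mathsf{Res}(\dms)$ and $\mathsf{Res}(\brf)$ form a \emph{free} $\centh$-basis of $\im\,\mathsf{Res}$, any relation $z_1\dms+z_2\brf\in\ker\,\mathsf{Res}$ gives $z_1\mathsf{Res}(\dms)+z_2\mathsf{Res}(\brf)=0$, forcing $z_1=z_2=0$; this shows simultaneously that the sum $\centh\dms\oplus\centh\brf$ is direct and that it meets $\ker\,\mathsf{Res}$ in $0$. Surjectivity then follows pointwise: given $D\in\der(\A_h)$, write $\mathsf{Res}(D)=z_1\mathsf{Res}(\brf)+z_2\mathsf{Res}(\dms)$ with $z_i\in\centh$ and set $D'=z_1\brf+z_2\dms$; then $D-D'\in\ker\,\mathsf{Res}$, giving $\der(\A_h)=(\centh\dms\oplus\centh\brf)+\ker\,\mathsf{Res}$. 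Together with the trivial intersection, this is precisely the direct sum \eqref{eq:decomp2}.

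Finally I would verify the explicit formulas in~(iv). From $D_{h'/\varrho_h}(x)=0$ and $E_x(x)=y^{p-1}$ one obtains $\brf(x)=-\frac{h^p}{\varrho_h}y^{p-1}$ at once. For $\brf(\hat y)$, use $D_{h'/\varrho_h}(\hat y)=\frac{h'}{\varrho_h}$, so the first summand of $\brf$ contributes $\frac{h^p h'}{\varrho_h}y^p$, and substitute the expression for $E_x(\hat y)$ from Lemma~\ref{lem:ee}\,(f); the two $y^p$-terms cancel, and the surviving terms, after rewriting $-\frac{(-1)^{k-1}}{(k+1)k}=\frac{(-1)^{k}}{(k+1)k}$, reproduce the claimed formula. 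The main point to be careful about is bookkeeping rather than any deep difficulty: essentially all of the real content has been front-loaded into Proposition~\ref{prop:kerres} and Theorem~\ref{T:imres}, so the only things that can go wrong are failing to ensure the chosen section $\centh\dms\oplus\centh\brf$ lands inside $\der(\A_h)$ (and not merely in the localization $\der(\A_1\Sigma^{-1})$) and failing to treat $\mathsf{Res}$ consistently as a $\centh$-module map throughout.
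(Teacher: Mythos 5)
Your proposal is correct and follows essentially the same route as the paper: the paper likewise writes $\mathsf{Res}(D)=u\,\mathsf{Res}(\dms)+v\,\mathsf{Res}(\brf)$ using Theorem~\ref{T:imres}, subtracts $u\dms+v\brf$ to land in $\ker\,\mathsf{Res}$ as described by Proposition~\ref{prop:kerres}, and reads off the formulas in~(iv) from Lemma~\ref{lem:ee}. Your extra checks (that $\mathsf{Res}$ is $\centh$-linear, that $\brf$ actually restricts to $\A_h$ via Proposition~\ref{prop:wEx}, and that freeness of the image forces the directness) are all sound and simply make explicit what the paper leaves implicit.
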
 

\begin{proof}   Suppose $D \in \der(\A_h)$.  Then there exist $u,v \in \centh$ such that 
$\mathsf{Res}(D) = u  \overbar h \frac{d}{d\ze}  + v  \frac{h^p}{\varrho_h} \frac{d}{d(x^p)} = u\mathsf{Res}(\dms) + v\mathsf{Res}(\brf) = \mathsf{Res}(u \dms+ v \brf)$.  Consequently,  $D-u \dms - v \brf$ belongs to $ \ker\, \mathsf{Res}$,  which is  $\mathcal D_{\Theta} +  \{ \ad_a \mid  a \in \norm\}$  by Proposition  \ref{prop:kerres}. This implies that $D$ belongs to the right-hand side of \eqref{eq:decomp2}.    But since the right-hand side is clearly
contained in $\der(\A_h)$, we have the result.  The action of $\brf$ on $x$ and $\hat y$ is
a consequence of  Lemma \ref{lem:ee}. \end{proof}

\begin{cor}\label{C:decomp2} There exists  a finite-dimensional subspace $\mathsf{S}$ of $\DD$   such that  $\Theta =\mathsf{S}  \oplus \im \, \delta$  and 
$$\der(\A_h)  = \centh \dms \oplus \centh \brf  \oplus \Big( \mathcal{D}_{\mathsf S} \oplus   \{\ad_a \mid a \in \norm\}\Big)$$
as a $\centh$-module, where $\mathcal{D}_{\mathsf{S}} = \{D_s \mid s \in \mathsf{S}\}$ and $\mathsf{S} = 0$ if $\Theta = \im\, \delta$.  \end{cor}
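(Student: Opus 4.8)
The plan is to start from the decomposition already obtained in Theorem \ref{T:decomp2},
$$\der(\A_h) = \centh \dms \oplus \centh \brf \oplus \big(\mathcal D_\Theta + \{\ad_a \mid a \in \norm\}\big),$$
and to refine only its third summand, which is exactly $\ker\,\mathsf{Res}$. The first two summands are left untouched, and since $\mathsf{Res}$ is $\centh$-linear and carries $\centh\dms \oplus \centh\brf$ isomorphically onto $\im\,\mathsf{Res}$ (Theorem \ref{T:imres}), it suffices to prove the purely $\FF$-linear statement that $\mathcal D_\Theta + \{\ad_a \mid a \in \norm\} = \mathcal D_{\mathsf S} \oplus \{\ad_a \mid a \in \norm\}$ for a suitable finite-dimensional $\mathsf S \subseteq \DD$ with $\Theta = \mathsf S \oplus \im\,\delta$.

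First I would produce $\mathsf S$. By Proposition \ref{P:alt-omega}\,(a) we have $\im\,\delta \subseteq \Theta$, and by Proposition \ref{P:alt-omega}\,(b) the module $\Theta$ is free of rank $p-1$ over $\FF[x^p]$. On the other hand $\im\,\delta = h\,\im\,\frac{d}{dx} = h\bigoplus_{j=0}^{p-2}\FF[x^p]x^j$ is also free of rank $p-1$ over the principal ideal domain $\FF[x^p]$, since multiplication by $h \neq 0$ is injective. A rank-$(p-1)$ free submodule of a rank-$(p-1)$ free module over a PID has torsion quotient, so $\Theta/\im\,\delta$ is a finitely generated torsion $\FF[x^p]$-module and hence finite-dimensional over $\FF$. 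Choosing any $\FF$-linear complement produces a finite-dimensional $\mathsf S \subseteq \Theta$ with $\Theta = \mathsf S \oplus \im\,\delta$, and $\mathsf S = 0$ precisely when $\Theta = \im\,\delta$.

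Next I would transfer this splitting to the derivations. Because $r \mapsto D_r$ is $\FF$-linear and injective (as $D_r(\hat y) = r$), the decomposition $\Theta = \mathsf S \oplus \im\,\delta$ gives $\mathcal D_\Theta = \mathcal D_{\mathsf S} \oplus \mathcal D_{\im\,\delta}$ as $\FF$-spaces. Now $\mathcal D_{\im\,\delta} = \{D_{\delta(g)} \mid g \in \DD\} = \{-\ad_g \mid g \in \DD\}$ by Proposition \ref{prop:autg}\,(ii), and since $\DD \subseteq \A_h$ these are inner derivations, so $\mathcal D_{\im\,\delta} \subseteq \inder(\A_h) \subseteq \{\ad_a \mid a \in \norm\}$. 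Absorbing $\mathcal D_{\im\,\delta}$ into the inner part yields $\mathcal D_\Theta + \{\ad_a \mid a \in \norm\} = \mathcal D_{\mathsf S} + \{\ad_a \mid a \in \norm\}$. Directness then follows from Proposition \ref{prop:inderres}: if $D_s \in \{\ad_a \mid a \in \norm\}$ with $s \in \mathsf S \subseteq \Theta$, then $D_s \in \mathcal D_\Theta \cap \{\ad_a \mid a \in \norm\} = \mathcal D_{\im\,\delta}$, whence $s \in \mathsf S \cap \im\,\delta = 0$ by injectivity, so $D_s = 0$. Substituting back into Theorem \ref{T:decomp2} gives the claimed decomposition.

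The main obstacle is the finite-codimension claim of the second step; once one recognizes that $\Theta$ and $\im\,\delta$ are both free $\FF[x^p]$-modules of the same rank $p-1$, finiteness of $\dim_\FF(\Theta/\im\,\delta)$ is immediate over the PID $\FF[x^p]$, and everything else is bookkeeping assembled from Propositions \ref{P:alt-omega}, \ref{prop:autg}\,(ii), and \ref{prop:inderres}. A point to state carefully is that $\mathcal D_{\mathsf S}$ is only an $\FF$-subspace and not a $\centh$-submodule, so the symbol $\oplus$ in $\mathcal D_{\mathsf S} \oplus \{\ad_a \mid a \in \norm\}$ must be read as an internal $\FF$-space direct sum living inside the $\centh$-module $\ker\,\mathsf{Res}$.
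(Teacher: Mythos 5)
Your proposal is correct and is exactly the argument the corollary is meant to encode: the paper states it without proof as an immediate consequence of Theorem \ref{T:decomp2}, Proposition \ref{prop:autg}\,(ii) (which absorbs $\mathcal D_{\im\,\delta}$ into $\inder(\A_h)\subseteq\{\ad_a\mid a\in\norm\}$), and Proposition \ref{prop:inderres} (which gives the directness via $\mathcal D_\Theta\cap\{\ad_a\mid a\in\norm\}=\mathcal D_{\im\,\delta}$). Your rank count over the PID $\FF[x^p]$ showing $\dim_\FF(\Theta/\im\,\delta)<\infty$, and your caveat that $\mathcal D_{\mathsf S}$ is only an $\FF$-subspace inside the $\centh$-module $\ker\,\mathsf{Res}$, are both accurate fillings-in of details the paper leaves implicit.
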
   \smallskip

The  information in Examples \ref{Ex:Psi} and \ref{EX:t1t2},  coupled with Theorem \ref{T:decomp2},   enables us to
determine  $\der(\A_h)$ explicitly  for any $h = x^m$. 

\begin{cor}\label{E:small h} Let  $h = x^m$, where $m = kp +n$,  $k \geq 0$, and $0 \leq n < p$.   
Then
 \begin{itemize}
\item[{\rm (i)}] $ \der(\A_h) = \centh D_{x^{p-1}} \oplus\centh x^{m(p-1)}E_x 
\oplus \mathcal{D}_{\mathsf{S}}\oplus \{\ad_a\,|\, a \in \norm\}$  if $n = 0$, and 
\item[{\rm (ii)}] $\der(\A_h) = \centh D_{x^{n-1}}  \oplus    \centh x^{(m-k)p}E_x   \oplus \mathcal{D}_{\mathsf{S}}  \oplus   \{\ad_a\,|\, a \in \norm\}$ 
if  $1 \leq n < p,$
\end{itemize}
where $\mathsf{S} = \spann_\FF\{x^i  \mid  0 \leq i < m, \  i \not \equiv n-1 \modd p\}$ in {\rm (i)} and {\rm (ii)}.   \end{cor}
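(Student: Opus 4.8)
The plan is to specialize the $\centh$-module decomposition of Corollary \ref{C:decomp2} to $h = x^m$, reading off the two ``non-kernel'' generators $\dms$ and $\brf$, together with the space $\mathsf{S}$, from the explicit data already computed in Examples \ref{Ex:Psi} and \ref{EX:t1t2}. Corollary \ref{C:decomp2} gives $\der(\A_h) = \centh \dms \oplus \centh \brf \oplus \big(\mathcal{D}_{\mathsf S} \oplus \{\ad_a \mid a \in \norm\}\big)$, where $\Theta = \mathsf{S} \oplus \im\,\delta$. For $h = x^m$, Example \ref{Ex:Psi} (taking $\gamma = 0$, $g = x$) identifies $\Theta = \im\,\delta_0$, and comparing with $\Theta = \mathsf{S} \oplus \im\,\delta$ yields $\mathsf{S} = \spann_\FF\{x^j \mid 0 \le j < m,\ j \not\equiv m-1 \modd p\}$; since $m \equiv n \modd p$, this is exactly $\spann_\FF\{x^i \mid 0 \le i < m,\ i \not\equiv n-1 \modd p\}$, the space appearing in both parts of the statement.

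First I would record the elementary quantities for $h = x^m$: from Definition \ref{D:varr}, $\varrho_h = x^{kp}$, so $\frac{h^p}{\varrho_h} = x^{(m-k)p}$ and $\be_x = \frac{h^p}{\varrho_h}E_x = x^{(m-k)p}E_x$, which is a derivation of $\A_h$ by Proposition \ref{prop:wEx}. The two cases are separated by whether $h' = m x^{m-1} = n x^{m-1}$ vanishes in characteristic $p$, i.e.\ by whether $n = 0$ or $1 \le n < p$; in the latter case $\frac{h'}{\varrho_h} = n x^{n-1}$. Next I would pin down $\dms$ through Proposition \ref{P:alt-omega}: writing $h^{p-1} = x^{m(p-1)} = \sum_{i=0}^{p-1}\overbar h_i x^i$, the single nonzero coefficient is $\overbar h_0 = x^{m(p-1)}$ when $n=0$, and $\overbar h_{p-n} = x^{(m-k-1)p}$ when $n \ne 0$. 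Hence the construction of $\ms$ in Proposition \ref{P:alt-omega}\,(d) gives $\ms = -x^{p-1}$ (case $n=0$) and $\ms = -x^{n-1}$ (case $n\ne 0$), so that $\dms = -D_{x^{p-1}}$, respectively $\dms = -D_{x^{n-1}}$, on the nose. In particular $\centh \dms = \centh D_{x^{p-1}}$, respectively $\centh \dms = \centh D_{x^{n-1}}$.

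Finally I would assemble the decomposition. When $n = 0$ we have $h' = 0$, so $\brf = \ze D_{h'/\varrho_h} - \be_x = -\be_x = -x^{m(p-1)}E_x$, and the decomposition of Corollary \ref{C:decomp2} reads off directly as in (i), using $\frac{h^p}{\varrho_h} = x^{(m-k)p} = x^{m(p-1)}$. When $1 \le n < p$ we instead have $\brf = n\,\ze\,D_{x^{n-1}} - \be_x = -n\,\ze\,\dms - \be_x$, which still involves $\be_x$ but carries an extra term lying in $\centh D_{x^{n-1}} = \centh \dms$. The key point is then a unimodular change of $\centh$-basis: the pair $(D_{x^{n-1}}, \be_x)$ is obtained from $(\dms, \brf)$ via $D_{x^{n-1}} = -\dms$ and $\be_x = -n\,\ze\,\dms - \brf$, i.e.\ by the matrix $\left(\begin{smallmatrix} -1 & 0 \\ -n\ze & -1\end{smallmatrix}\right)$, whose determinant is $1$ and which is therefore invertible over $\centh$. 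Consequently $\centh \dms \oplus \centh \brf = \centh D_{x^{n-1}} \oplus \centh \be_x$ as free $\centh$-modules, and substituting this into Corollary \ref{C:decomp2} gives (ii).

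The main obstacle I expect is purely bookkeeping rather than conceptual: correctly evaluating $\varrho_h$ and $\frac{h^p}{\varrho_h}$, extracting the coefficients $\overbar h_i$ of $h^{p-1}$ so as to identify the abstract generator $\dms$ of Theorem \ref{T:decomp2} with the concrete $D_{x^{p-1}}$ or $D_{x^{n-1}}$, and then checking that the case-(ii) change of generators is genuinely unimodular so that directness of the sum is preserved. Each of these is a short computation given Examples \ref{Ex:Psi} and \ref{EX:t1t2}, and none should present real difficulty.
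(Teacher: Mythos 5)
Your proposal is correct and follows essentially the same route as the paper: evaluate $\varrho_h$, $\overbar h$, $\ms$, and $\brf$ for $h=x^m$, read off $\mathsf{S}$ from Example \ref{Ex:Psi}, and substitute into Corollary \ref{C:decomp2}. The only difference is that you make explicit the unimodular change of $\centh$-basis from $(\dms,\brf)$ to $(D_{x^{n-1}},\be_x)$ in case (ii), a step the paper leaves implicit; this is a welcome clarification but not a different argument.
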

\begin{proof}  (i) If $n = 0$, then as in \eqref{eq:tone}  we have  \  $\overbar h =  (x^p)^{m-k} = h^{p-1}$,  and so   $\ms = -x^{p-1}$.   
Since $h' = 0$,   $\brf = - \frac{h^p}{\varrho_h} E_x  = -x^{m(p-1)}E_x$.

(ii) If  $n \neq 0$,  $h^{p-1} = (x^p)^{m-k-1}\cdot x^{p-n}$,   $\overbar h =  (x^p)^{m-k-1}$, and $\ms = -x^{n-1}$. 
Since $h' = n x^{m-1}$ and $\varrho_h = x^{kp}$,  we have $\brf = \ze D_{\frac{h'}{\varrho_h}}-\be_x  = n\ze D_{x^{n-1}}-x^{(m-k)p} E_x$.

In both (i) and (ii),  the subspace $\mathsf{S}$ can be determined 
from Example \ref{Ex:Psi}.  \end{proof}

Here are a few particular instances of these results.
\vspace{.05cm}

 \begin{exam}\label{Ex:powerx}{\quad}   
 \begin{itemize}
\item When $h = 1,$  then  $\ms = -x^{p-1}$, \  $D_{\ms} = -E_y$,  and  $\brf = -E_x$,  so that   
\begin{center}{$ \der(\A_1) = \cent1 E_x \oplus \cent1 E_y \oplus \inder(\A_1)$   \qquad (Theorem \ref{thm:derA1}).}\end{center}  
 \item When $h = x,$  then   $\ms = -1$,  \ $D_{\ms} = -D_1$,  $\brf = \ze D_{1} - x^p E_x$, and  
\begin{equation*} \der(\A_x) \, = \,  \mathsf{Z}(\A_x) D_1 \oplus \mathsf{Z}(\A_x)x^pE_x \oplus
\inder(\A_x).\end{equation*}
(That $\{\ad_a \mid a \in \mathsf{N}_{\A_1}(\A_x)\} = \inder(\A_x)$ follows from Theorem \ref{T:pfreeHH1} below, or this could be deduced from Theorem \ref{T:norm}.)
\item When $h = x^n, \ 2 \leq n < p$,  then $\mathsf{S} = \spann_\FF\{x^i \mid 0 \leq i \leq n-2\}$ and  \end{itemize} 
\hspace{.5cm} $\der(\A_{x^n}) \,=\, \mathsf{Z}(\A_{x^n}) D_{x^{n-1}} \oplus \mathsf{Z}(\A_{x^n})x^{np}E_x \oplus
\mathcal D_{\mathsf{S}} \oplus \{\ad_a \mid a \in \mathsf{N}_{\A_1}(\A_{x^n})\}.$  
\end{exam}\smallskip

The next example generalizes the $n = 0$ case above.
 \begin{exam}\label{Ex:h=rho}   Assume $h \in \FF[x^p]$.   Then $\overbar h = h^{p-1}$;  \
 $\ms = -x^{p-1}$;  $\Theta = \{r \in \DD \mid  r h^{p-1} \in 
\im \, \frac{d}{dx}\} = \im \, \frac{d}{dx}$ as $h^{p-1}\in\FF[x^p]$ and $r'h^{p-1}=(rh^{p-1})'$.
Since $\delta_0(r) = (r h^{-1})' h = r' \in \im\, \frac{d}{dx}$, we have $\im \, \delta_0 = \im \,\frac{d}{dx} = \Theta$.
Now $\brf = \ze D_{\frac{h'}{\varrho_h}} - \be_x = -\lambda h^{p-1}E_x$, where $\lambda$ is the leading coefficient of $h$.  Thus,  
$$\der(\A_h) = \centh  D_{x^{p-1}}  \oplus \centh h^{p-1} E_x  \oplus \mathcal D_{\mathsf S} 
 \oplus   \{\ad_a \mid a \in \norm\},$$
 where $\mathsf S = \spann_\FF\{ x^i \mid  0 \leq i < \degg h, \  i \not \equiv -1 \modd p\}.$   \end{exam}

\begin{prop}\label{C:inner}   Suppose $D = u D_{\ms} + v\brf + D_r + \ad_a \in \inder(\A_h)$, where $u,v \in \centh, \,  r \in \Theta$,
and $a \in \norm$.  Then $u = 0 = v$,  $r \in \im\,\delta$ and $a \in \A_h + \cent1$.
Thus,   $\hoch(\A_h)  = \der(\A_h)/\inder(\A_h) \cong  \centh \dms \oplus \centh \brf  \oplus \mathcal H$,   where 
\begin{eqnarray*}  \mathcal H &=& \mathsf{ker\,\overbar{Res}} =  \Big(\mathcal D_\Theta + \{ \ad_a \mid  a \in \norm \} \Big)/ 
\Big(\mathcal D_{ \im\,\delta} + \{\ad_a \mid a \in \A_h\}\Big),\\
&\cong& \big( \Theta/ \im \, \delta\big) \ \oplus \ \big( \norm/ \big(\A_h+\cent1\big)\big), \end{eqnarray*} 
\noindent and this decomposition of $\mathcal H$ is as an $\FF[x^{p}]$-module.\end{prop}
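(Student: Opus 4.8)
The plan is to prove Proposition~\ref{C:inner} in two stages: first the claim that a derivation of the stated form is inner only when $u=v=0$, $r\in\im\,\delta$, and $a\in\A_h+\cent1$; and then deduce the decomposition of $\hoch(\A_h)$ as a consequence. The organizing principle is that the restriction map $\mathsf{Res}$ and the induced map $\overbar{\mathsf{Res}}$ translate everything into the Witt algebra $\der(\centh)$, where $\im\,\mathsf{Res}$ is free of rank $2$ on the generators $\mathsf{d}_1=\frac{h^p}{\varrho_h}\frac{d}{dt_1}$ and $\mathsf{d}_2=\overbar h\frac{d}{dt_2}$ by Theorem~\ref{T:imres}(a), and where inner derivations lie in the kernel.

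\medskip
\noindent\textbf{Stage 1 (the inner criterion).} Suppose $D=uD_{\ms}+v\brf+D_r+\ad_a\in\inder(\A_h)$. Since every inner derivation restricts to $0$ on the center $\centh$, applying $\mathsf{Res}$ gives $\mathsf{Res}(D)=0$. Now $\mathsf{Res}(D_\ms)=\overbar h\frac{d}{d\ze}$ and $\mathsf{Res}(\brf)=\frac{h^p}{\varrho_h}\frac{d}{d(x^p)}$ by Proposition~\ref{P:alt-omega}(d) and \eqref{eq:brf}, while $\mathsf{Res}(D_r)=0$ for $r\in\Theta$ and $\mathsf{Res}(\ad_a)=0$. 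Hence
\begin{equation*}
0=\mathsf{Res}(D)=u\,\overbar h\,\frac{d}{d\ze}+v\,\frac{h^p}{\varrho_h}\,\frac{d}{d(x^p)}.
\end{equation*}
Because these two elements form a free $\centh$-basis of $\im\,\mathsf{Res}$ (Theorem~\ref{T:imres}(a)), freeness forces $u=0=v$. This reduces $D$ to $D_r+\ad_a$ with $r\in\Theta\subseteq\DD$ and $a\in\norm$, and Proposition~\ref{prop:inderres} applies directly: it yields $r\in\im\,\delta$ and $a\in\A_h+\cent1$, with both $D_r$ and $\ad_a$ separately inner. This completes the inner criterion.

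\medskip
\noindent\textbf{Stage 2 (the decomposition).} With Stage~1 in hand, the quotient $\hoch(\A_h)=\der(\A_h)/\inder(\A_h)$ is computed by passing the $\centh$-module decomposition of $\der(\A_h)$ in Theorem~\ref{T:decomp2} to the quotient. The summands $\centh\dms$ and $\centh\brf$ survive intact in $\hoch(\A_h)$: Stage~1 shows no nonzero element $u\dms+v\brf$ becomes inner, so these two free summands inject into $\hoch(\A_h)$. The remaining summand $\mathcal D_\Theta+\{\ad_a\mid a\in\norm\}$ is exactly $\ker\,\mathsf{Res}$ by Proposition~\ref{prop:kerres}, and its image in $\hoch(\A_h)$ is $\mathsf{ker\,\overbar{Res}}$. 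Factoring out the inner derivations identified in Stage~1, namely $\mathcal D_{\im\,\delta}+\{\ad_a\mid a\in\A_h\}$, gives
\begin{equation*}
\mathcal H=\mathsf{ker\,\overbar{Res}}=\Big(\mathcal D_\Theta+\{\ad_a\mid a\in\norm\}\Big)\Big/\Big(\mathcal D_{\im\,\delta}+\{\ad_a\mid a\in\A_h\}\Big).
\end{equation*}
The final isomorphism $\mathcal H\cong(\Theta/\im\,\delta)\oplus(\norm/(\A_h+\cent1))$ as $\FF[x^p]$-modules is precisely the content already recorded in Corollary~\ref{cor:induce}; I would either cite it directly or re-derive the splitting by noting that the $\mathcal D_\Theta$ part and the $\ad_a$ part interact only through the common inner piece, which is separated in Stage~1.

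\medskip
\noindent\textbf{Main obstacle.} The delicate point is verifying that the quotient splits as a direct sum of the $\dms$, $\brf$, and $\mathcal H$ pieces, i.e.\ that no $\centh$-linear combination $u\dms+v\brf$ with $(u,v)\neq(0,0)$ lands in $\mathsf{ker\,\overbar{Res}}+\inder(\A_h)$. This is exactly what the freeness of $\im\,\mathsf{Res}$ over $\centh$ in Theorem~\ref{T:imres}(a) guarantees, so the real work is simply to invoke that freeness cleanly rather than re-proving it; the potential pitfall is conflating the $\centh$-module structure with the $\FF[x^p]$-module structure used in the statement of $\mathcal H$, so I would be careful to state which ring acts where and to rely on Corollary~\ref{cor:induce} for the $\FF[x^p]$-module identification of $\mathcal H$.
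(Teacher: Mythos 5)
Your proposal is correct and follows essentially the same route as the paper: the paper's proof also obtains $u=0=v$ by applying $D$ to the center (equivalently, using that $\mathsf{Res}$ kills inner derivations together with the freeness of $\im\,\mathsf{Res}$ from Theorem~\ref{T:imres}), and then derives the remaining assertions directly from Proposition~\ref{prop:inderres} and Corollary~\ref{cor:induce}. Your version merely spells out the freeness argument and the passage to the quotient in more detail.
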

\begin{proof} Applying $D$ to $\centh$ shows that $u = 0 = v$.  The remaining
assertions come directly from Proposition \ref{prop:inderres}. \end{proof} 

\subsection{$\hoch(\A_h)$ as a $\centh$-module} \hfil \smallskip
 
Proposition \ref{C:inner} gives a $\centh$-module decomposition of $\hoch(\A_h)$, since $\mathsf{\overbar{Res}}$ is a $\centh$-module map.  The main result of this section is Theorem \ref{T:pfreeHH1}, which provides necessary and sufficient conditions for $\hoch(\A_h)$ to be a free $\centh$-module.   Our proof of this result uses the map $\delta_0: \DD \rightarrow \DD$ with $\delta_0(r) =   \delta(r a_0)$, where $a_0 = \pi_h h^{-1}$, along with the properties in Section \ref{subsec:delta0} that $\delta_0$  satisfies.

 \begin{lemma}\label{L:del0again}   Let  $\Theta = \{r \in \DD \mid  \mathsf{Res}(D_r)  = 0\}$ as in Proposition
\ref{P:alt-omega}\,(a).   Then
\begin{itemize}
\item [{\rm (i)}]  $ \im\,\delta \, \subseteq \, \im\,\delta_0 \subseteq \Theta$;
\item [{\rm (ii)}] $\delta_0(1) = 0$ if and only if $\frac{h}{\pi_h \varrho_h}  \in \FF^*$;
\item [{\rm (iii)}]   $\im\,\delta_0$ is a free $\FF[x^p]$-submodule of $\DD$ of rank $p-1$;
\item [{\rm (iv)}]  If $\frac{h}{\pi_h \varrho_h} \in \FF^*$,  then $\im \, \delta_0 = \Theta$, and
$\DD = \FF[x^p] \ms \oplus \Theta = \FF[x^p] \ms \oplus \im \, \delta_0$, where $\ms$ is as in 
(d)  of Proposition  \ref{P:alt-omega}. 
\end{itemize}
\end{lemma}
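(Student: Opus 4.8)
The plan is to take the four parts in order, the first three being short consequences of the $\FF[x^p]$-linearity of $\delta_0$ together with the kernel computation already recorded in Lemma~\ref{L:del0ker}\,(a), and the fourth carrying the real content. For (i), I would obtain $\im\,\delta\subseteq\im\,\delta_0$ from the identity $\delta_0\left(\tfrac{h}{\pi_h}r\right)=\delta\left(\tfrac{h}{\pi_h}r\cdot\pi_h h^{-1}\right)=\delta(r)$, valid for every $r\in\DD$, so that each $\delta(r)=r'h$ lies in $\im\,\delta_0$. For $\im\,\delta_0\subseteq\Theta$ I would use the description $\Theta=\{s\in\DD\mid sh^{p-1}\in\im\,\tfrac{d}{dx}\}$ of Proposition~\ref{P:alt-omega}\,(a): since $(h^p)'=0$ in characteristic $p$, one computes $\delta_0(r)\,h^{p-1}=(r\pi_h h^{-1})'h^p=(r\pi_h h^{p-1})'\in\im\,\tfrac{d}{dx}$, whence $\delta_0(r)\in\Theta$.

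The maps $\delta_0$ and $g\mapsto g^{(p-1)}$ are both $\FF[x^p]$-linear (the former by Lemma~\ref{L:del0}\,(b), since $\delta_0(fs)=f\delta_0(s)$ when $f\in\FF[x^p]$), which settles (ii) and (iii) cheaply. For (iii), $\im\,\delta_0$ is a submodule of the free rank-$p$ module $\DD$ over the principal ideal domain $\FF[x^p]$, hence free; and from the exact sequence $0\to\kerr\delta_0\to\DD\to\im\,\delta_0\to 0$, together with $\kerr\delta_0=(\DD\cap\centh)\tfrac{h}{\pi_h\varrho_h}=\FF[x^p]\tfrac{h}{\pi_h\varrho_h}$, which is free of rank $1$ by Lemma~\ref{L:del0ker}\,(a), additivity of rank forces $\rank\,\im\,\delta_0=p-1$. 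For (ii), $\delta_0(1)=0$ means exactly $1\in\kerr\delta_0=\FF[x^p]\tfrac{h}{\pi_h\varrho_h}$; as $\tfrac{h}{\pi_h\varrho_h}$ is a polynomial dividing $h$, this can occur only if $\tfrac{h}{\pi_h\varrho_h}\in\FF^*$, and the converse is clear.

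Part (iv) is the crux. Under the hypothesis, (ii) gives $\delta_0(1)=0$, so Lemma~\ref{L:del0}\,(b) collapses $\delta_0$ to $\delta_0(r)=r'\pi_h$, whence $\im\,\delta_0=\pi_h\,\im\,\tfrac{d}{dx}$, which lies inside $\Theta$ by (i). Both $\Theta$ (Proposition~\ref{P:alt-omega}\,(b)) and $\im\,\delta_0$ (by (iii)) are free $\FF[x^p]$-modules of rank $p-1$, so the whole point is the reverse inclusion $\Theta\subseteq\im\,\delta_0$. First I would reduce the defining condition: writing $h=\lambda\pi_h\varrho_h$ (the hypothesis) with $\varrho_h^{p-1}\in\FF[x^p]$, and using that $g\mapsto g^{(p-1)}$ is $\FF[x^p]$-linear with kernel $\im\,\tfrac{d}{dx}$ while $\DD$ is a domain, the condition $r\in\Theta$ (that is, $rh^{p-1}\in\im\,\tfrac{d}{dx}$) becomes $r\pi_h^{p-1}\in\im\,\tfrac{d}{dx}$. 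Granting $\pi_h\mid r$, I then set $r=\pi_h w$ and, since $\pi_h^p\in\FF[x^p]$, obtain $0=(r\pi_h^{p-1})^{(p-1)}=\pi_h^p\,w^{(p-1)}$, so $w^{(p-1)}=0$, i.e.\ $w\in\im\,\tfrac{d}{dx}$ and $r=\pi_h w\in\im\,\delta_0$.

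Thus everything rests on the divisibility $r\pi_h^{p-1}\in\im\,\tfrac{d}{dx}\Rightarrow\pi_h\mid r$, which I expect to be the main obstacle. Since $\pi_h$ is squarefree with every prime factor $\pr$ satisfying $\pr'\neq 0$ (Lemma~\ref{L:defpi}), it suffices to show $\mathrm{ord}_\pr(r)\geq 1$ for each such $\pr$; writing $r\pi_h^{p-1}=f'$ and noting $\mathrm{ord}_\pr(r\pi_h^{p-1})=\mathrm{ord}_\pr(r)+(p-1)$, this reduces to the local fact that a derivative $f'$ never has $\pr$-adic order exactly $p-1$ at a separable prime. The latter I would prove on the $\pr$-adic associated graded of $\FF[x]$: differentiation carries the order-$j$ layer into the order-$(j-1)$ layer by multiplication by $j\pr'$, so the transition into the order-$(p-1)$ layer (which could only come from order $p$) is multiplication by $p\pr'=0$, and hence no derivative acquires an order-$(p-1)$ leading term. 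This forces $\mathrm{ord}_\pr(r)\neq 0$, so $\pi_h\mid r$, completing $\Theta=\im\,\delta_0$. The final decomposition is then immediate from Proposition~\ref{P:alt-omega}\,(d): substituting $\Theta=\im\,\delta_0$ into $\DD=\FF[x^p]\ms\oplus\Theta$ gives $\DD=\FF[x^p]\ms\oplus\im\,\delta_0$ as well.
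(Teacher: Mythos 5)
Your parts (i)--(iii) are correct and essentially coincide with the paper's proof (the only cosmetic difference is that you verify $\im\,\delta_0 \subseteq \Theta$ by computing $\delta_0(r)h^{p-1}=(r\pi_h h^{p-1})'$ rather than via $D_{\delta_0(r)}=-\ad_{ra_0}$). Your route through (iv) is also a legitimate variant: where the paper proves that $\DD/\im\,\delta_0$ is a torsion-free $\FF[x^p]$-module and splits off a rank-one complement, you prove $\Theta\subseteq\im\,\delta_0$ head-on, and your reductions (to $r\pi_h^{p-1}\in\im\,\frac{d}{dx}$, then to $\pi_h\mid r$, then to the statement that $f'$ never has $\pr$-adic order exactly $p-1$ at a separable prime $\pr$) are all valid; you were also right not to conclude $\Theta=\im\,\delta_0$ merely from both being free of rank $p-1$. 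The gap is in your proof of the key local fact. The associated-graded argument only controls the leading term of $f'$ via the induced maps $\mathrm{gr}_j\to\mathrm{gr}_{j-1}$ (multiplication by $j\pr'$): it shows $\mathrm{ord}_\pr(f')=m-1$ when $p\nmid m=\mathrm{ord}_\pr(f)$, and $\mathrm{ord}_\pr(f')\geq m$ when $p\mid m$ --- and nothing more. In the case $m=0$, which certainly occurs ($f$ is just some antiderivative of $r\pi_h^{p-1}$ and need not be divisible by $\pr$), the graded map gives no information, and for $\degg \pr\geq 2$ the derivative does not act ``purely'' on the $\pr$-adic digits: a digit $c_j$ contributes $c_j'$ at level $j$ as well as $jc_j\pr'$ at level $j-1$, so the order of $f'$ is governed by cancellations the graded picture does not see. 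Hence ``the transition $\mathrm{gr}_p\to\mathrm{gr}_{p-1}$ is multiplication by $p\pr'=0$'' does not yield ``no derivative has order exactly $p-1$''.

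The statement is nonetheless true, and the paper's Steps 1--3 supply exactly the missing argument: since $\pr$ is separable (it has $\pr'\neq 0$, being a prime factor of $\pi_h$), it splits into distinct linear factors over $\overline{\FF}$, and $\pr^{k}\mid f'$ in $\FF[x]$ iff $(x-\xi)^{k}\mid f'$ for each root $\xi$; after translating $\xi$ to $0$ the digits become constants, the derivative genuinely acts by $a_j\mapsto ja_j$ with a shift, and the coefficient of $x^{p-1}$ in $f'$ is $pa_p=0$, so the order at each root is never exactly $p-1$. With that substitution your argument for (iv) closes, and the final decomposition does follow from Proposition \ref{P:alt-omega}\,(d) as you say.
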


\begin{proof} (i) Recall from (a) of Lemma \ref{L:del0} that $D_{\delta_0(r)} = -\ad_{ra_0}$ for $r \in \DD$.  This implies that  $\mathsf{Res}(D_{\delta_0(r)}) = 0$, where $\mathsf{Res}$ is the restriction to $\centh$,  and hence that  $\im \, \delta_0 \subseteq \Theta$.   That $\im \, \delta \subseteq \im \, \delta_0$
follows easily from  the fact  $\delta (r) = \delta ( r \frac{h}{\pi_h} \frac{\pi_h}{h}) = \delta_0 (r \frac{h}{\pi_h})$  for all  $r \in \DD$. 

(ii)  By Lemma \ref{L:del0ker}\,(a),  \, $\delta_0(1) = 0$ if and only if  $1 \in
\ker\delta_0 =  (\DD \cap \centh)\frac{h}{\pi_h \varrho_h}
= \FF[x^p] \frac{h}{\pi_h \varrho_h}$;    whence $\delta_0(1) = 0$ if and only if $\frac{h}{\pi_h \varrho_h}  \in \FF^*$. 

(iii) The identity $\delta_0(rs) = r \delta_0(s) + r' s \pi_h = r \delta_0(s)$, which holds for all $r \in \FF[x^p]$
by (b) of Lemma \ref{L:del0}, implies that $\im\,\delta_0$ is an $\FF[x^p]$-submodule of the free $\FF[x^p]$-module $\DD$.    
As $\FF[x^p]$ is a Dedekind domain, it is hereditary, so $\im\, \delta_0$ is free,  and the  short exact sequence
$$ 0 \rightarrow \ker \delta_0 \rightarrow \DD \xrightarrow{\delta_0} \im\, \delta_0 \rightarrow 0$$
splits.    Since $\ker \delta_0 = \FF[x^p] \frac{h}{\pi_h \varrho_h}$ has rank $1$,  it follows that 
$\im\,\delta_0$ has rank $p-1$.   

(iv)  Assume $\frac{h}{\pi_h \varrho_h} \in \FF^*$. 
Let us first dispose of the case that $h \in \FF[x^p]$.  Then $\pi_h = 1$,   $\frac{h}{\varrho_h} \in \FF^*$, and $\delta_0 = \frac{d}{dx}$,   so that $\im\, \delta_0 
=  \im \,\frac{d}{dx}$.  From 
Example \ref{Ex:h=rho},  we have  $\ms =-x^{p-1}$, \  $\Theta = \im \, \frac{d}{dx}$,  and  
$\DD = \FF[x^p]\ms \oplus \im \,\frac{d}{dx} = \FF[x^p] \ms \oplus \im\, \delta_0$. 

Henceforth, we  assume $h \not \in \FF[x^p]$.  Suppose we can show that in this case there exists $\kappa \in \DD$ such that $\DD = \FF[x^p] \kappa \oplus
\im \, \delta_0$.   Then since  $\im \, \delta_0 \subseteq \Theta$ by (i),   and $\DD\neq \Theta$ by  Proposition \ref{P:alt-omega},  it follows that $\kappa \not \in \Theta$.   Any $r \in \Theta$ must
have trivial projection onto $\FF[x^p] \kappa$, as $\mathsf{Res}(D_r) = 0$.  Hence,  $\Theta \subseteq \im\,\delta_0$, 
equality  would  hold,  and (iv)  would follow from Proposition \ref{P:alt-omega}.  

By (iii), it will be enough to show that the $\FF[x^{p}]$-module $\DD/\im\, \delta_0$ is torsion free, as this will imply it is free, so that the natural epimorphism $\DD\rightarrow \DD/\im\, \delta_0$ will yield the decomposition $\DD=\mathsf{K} \oplus\im\, \delta_0$, for some rank-one free $\FF[x^{p}]$-submodule $\mathsf{K}=\FF[x^{p}]\kappa$.
\medskip 

\textit{Claim:} {\it The $\FF[x^{p}]$-module $\DD/\im\, \delta_0$ is torsion free.}

\smallskip
\textit{Proof of the claim:}  We will show that whenever  $s\in\DD$, $0\neq  w \in\FF[x^{p}]$, and  $w s \in\im\, \delta_0$, then $s\in\im\, \delta_0$. We can assume  $ w\notin\FF$.

First notice that $\DD=\FF[x^{p}]x^{p-1} \oplus \im \frac{d}{dx}$, so that $\DD/\im\frac{d}{dx}$ is a torsion-free $\FF[x^{p}]$-module. This means that if  $w \in\FF[x^{p}]$ divides $r'$, for some $r\in\DD$, then $r'=w \tilde r\,' $ for some $\tilde r\in\DD$. 

By assumption $\frac{h}{\pi_{h}\varrho_h}\in\FF^{*}$, so  we have that
 $\delta_0(r) = r\delta_0(1) + r' \pi_h = r' \pi_h$ by  (ii). Thus, we need to show that $w \mid r'\pi_{h}$ implies $w\mid r'$, for all $r\in\DD$.    Since we are in the case $h \not \in \FF[x^p]$, we can assume  
 $\pi_{h}=\pr_{1}\cdots \pr_{\ell'}$, where the $\pr_{i}$  are distinct monic prime factors of $h$ in $\DD$ and  $\pr_{i}\not\in \FF[x^p]$ for all $i=1,\dots,\ell'$.    Suppose  that $w \mid r'\pi_{h}$  for some $r\in\DD$. Let $v$ be a prime factor of $w$ in $\DD$, and let  $\alpha\geq 1$  be the largest power of $v$ that divides $w$.  Since $w \in \FF[x^p]$,   
this implies that $v^{\alpha}\in\FF[x^{p}]$. The claim will be proved if we show that $v^{\alpha}$ divides $r'$. This is clear if $v$ and $\pr_{i}$ are coprime for all $i$, so we can assume, without loss of generality, that $v=\pr_{1}$. Since $\pr_{1}\not\in \FF[x^p]$, it follows that $p\mid\alpha$, say $\alpha=pn$ for some $n \geq 1$, and $\pr_{1}^{pn-1}$ divides $r'$. In particular, $\pr_{1}^{p(n-1)}\in\FF[x^{p}]$ divides $r'$, so by the above there exists $\tilde r\in\DD$ so that $r'=\pr_{1}^{p(n-1)} \tilde r\,'$. Moreover, $\pr_{1}^{p-1}$ divides $\tilde r\,'$.   We will finish the proof of the claim by showing that this implies that $\pr_{1}^{p}$ divides $\tilde r\,'$. This will be accomplished in three steps:
\smallskip

\textbf{Step 1:} Assume $\pr_{1}=x$. Then $t x^{p-1}=\tilde r\,'$, for some $t \in\DD$. In particular, $t x^{p-1}\in  \im \frac{d}{dx}  =\bigoplus_{i=0}^{p-2}\FF[x^{p}]x^{i}$, so $t \in\bigoplus_{i=1}^{p-1}\FF[x^{p}]x^{i}$. Hence $x$ divides $t$, and $\pr_{1}^{p}=x^{p}$ divides $\tilde r\,'$.

\smallskip  

\textbf{Step 2:} Assume $\degg \pr_{1}=1$. Then there is $\xi\in\FF$ so that $\pr_{1}=x-\xi$. Note that the automorphism $\sigma_{\xi}:\DD\rightarrow\DD$ given by $x\mapsto x+\xi$ commutes with the derivation  $\frac{d}{dx}$, as $(x+\xi)'=1$. Thus, if we apply $\sigma_{\xi}$ to the relation  $\tilde r\,'=\pr_{1}^{p-1}t$ we obtain 
$$\sigma_{\xi}(\tilde r)\,' = \sigma_{\xi}(\tilde r\,') = \sigma_\xi(\pr_1)^{p-1}\sigma_{\xi}(t) =
x^{p-1}\sigma_{\xi}(t).$$
Then by \textbf{Step 1} we have that  $\sigma_{\xi}(\tilde r\,')=x^{p}\tilde t$, for some $\tilde t\in\DD$. Applying $\sigma_{\xi}^{-1}=\sigma_{-\xi}$ 
to that relation, we obtain $\tilde r'=(x-\xi)^{p}\, \sigma_{-\xi}(\tilde t)$, so that $\pr_{1}^{p}$ divides $\tilde r\,'$.

\smallskip

\textbf{Step 3:} The general case. Consider the factorization $\mathsf{f}_{1}^{\beta_{1}}\cdots \mathsf{f}_{k}^{\beta_{k}}$ of $\pr_{1}$ into 
linear factors over the algebraic closure $\overline{\FF}$ of $\FF$. As $\pr_{1}\not\in \FF[x^p]$, we have that ${\pr_{1}}' \neq 0$, so $\pr_{1}$ and $\pr_{1}'$ are coprime. This implies that $\beta_{j}=1$ for all $j$,  and thus $\pr_{1}^{p-1}=\mathsf{f}_{1}^{p-1}\cdots \mathsf{f}_{k}^{p-1}$. Since $\degg \mathsf{f}_{j}=1$, we can apply \textbf{Step 2} to conclude that for all $j$, $\mathsf{f}_{j}^{p}$ divides $\tilde r\,'$ in $\overline{\FF}[x]$. Hence, $\pr_{1}^{p}$ divides $\tilde r\,'$, and this occurs in $\FF[x]$, as $\pr_{1}^{p}$ and $ \tilde r\,'$ are in $\FF[x]$.

Thus, the claim is established,  and there is $\kappa\in\DD$ so that $\DD=\FF[x^{p}]\kappa\oplus\im\, \delta_0$.
As we have argued earlier, this is sufficient to give the assertions in  (iv).    \end{proof}

\begin{thm}\label{T:pfreeHH1}
Assume $\chara(\FF) = p > 0$, and let $\dms$ and $\brf$ be as in  Theorem \ref{T:decomp2}. Then $\hoch(\A_h) = \der(\A_h)/\inder(\A_h)$ is a  free $\centh$-module if and only if $\frac{h}{\pi_{h}}\in\FF^{*}$. When $\frac{h}{\pi_{h}}\in\FF^{*}$, then  
\begin{equation*}
\der(\A_h)=\centh D_{\ms}\oplus\centh \brf \oplus \inder(\A_h),
\end{equation*}
so that  $\hoch(\A_h)$ is a free $\centh$-module of rank $2$ with $\centh$-basis $\{ D_{\ms}, \brf\}$.
\end{thm}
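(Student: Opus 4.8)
The plan is to read off the freeness from the decomposition in Proposition~\ref{C:inner}, where we already have the $\centh$-module isomorphism $\hoch(\A_h)\cong\centh\dms\oplus\centh\brf\oplus\mathcal H$. The first two summands form a free $\centh$-module of rank $2$, since $\mathsf{Res}$ maps $\centh\dms\oplus\centh\brf$ isomorphically onto the rank-$2$ free module $\im\,\mathsf{Res}$ by Theorem~\ref{T:imres}; and $\mathcal H=\ker\overline{\mathsf{Res}}$ is a $\centh$-submodule with $\mathcal H\cong(\Theta/\im\,\delta)\oplus(\norm/(\A_h+\cent1))$ as $\FF[x^p]$-modules. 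Because $\centh$ is an integral domain, every free $\centh$-module is torsion free; hence $\hoch(\A_h)$ is free of rank $2$ as soon as $\mathcal H=0$, while if $\mathcal H$ contains a nonzero element annihilated by a nonzero element of $\centh$, then $\hoch(\A_h)$ cannot be free. Thus the whole theorem reduces to showing that $\mathcal H=0$ when $\frac{h}{\pi_h}\in\FF^\ast$, and that $\mathcal H$ has nonzero torsion otherwise.

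For the ``only if'' direction I would show that $\frac{h}{\pi_h}\notin\FF^\ast$ forces $\im\,\delta\subsetneq\Theta$. Recall from Lemma~\ref{L:del0again}\,(i) that $\im\,\delta\subseteq\im\,\delta_0\subseteq\Theta$. If $\delta_0(1)\neq 0$, then $0\neq\delta_0(1)\in\im\,\delta_0\subseteq\Theta$, whereas $\delta_0(1)\notin\im\,\delta$, since $\im\,\delta\subseteq h\DD$ contains $0$ and polynomials of degree $\geq\degg h$ only, while $\degg\delta_0(1)<\degg h$ by Lemma~\ref{L:del0}\,(a). If instead $\delta_0(1)=0$, then $\frac{h}{\pi_h\varrho_h}\in\FF^\ast$ by Lemma~\ref{L:del0again}\,(ii), so $\Theta=\im\,\delta_0=\pi_h\im\,\frac{d}{dx}$ by Lemma~\ref{L:del0again}\,(iv) and Lemma~\ref{L:del0}\,(b); as $\frac{h}{\pi_h}=\varrho_h\cdot(\text{unit})$ lies in $\FF[x^p]$ and has positive degree, $\im\,\delta=h\im\,\frac{d}{dx}=\frac{h}{\pi_h}\,\Theta\subsetneq\Theta$. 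In either case $\Theta\supsetneq\im\,\delta$. Since $\Theta$ and $\im\,\delta$ are both free $\FF[x^p]$-modules of rank $p-1$ (Proposition~\ref{P:alt-omega}\,(b), together with $\im\,\delta\cong\DD/\FF[x^p]$), the quotient $\Theta/\im\,\delta$ is a nonzero, finitely generated torsion $\FF[x^p]$-module; any nonzero element of it yields an element of $\mathcal H$ killed by a nonzero element of $\FF[x^p]\subseteq\centh$, so $\hoch(\A_h)$ is not free.

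For the ``if'' direction, assume $\frac{h}{\pi_h}\in\FF^\ast$. Then $h$ is squarefree with separable prime factors, $\varrho_h=1$, and $a_0=\pi_h h^{-1}\in\FF^\ast$, so that $\delta_0=\lambda^{-1}\delta$ and $\im\,\delta_0=\im\,\delta$; Lemma~\ref{L:del0again}\,(iv) then gives $\Theta=\im\,\delta_0=\im\,\delta$, whence $\Theta/\im\,\delta=0$ and $\mathcal D_\Theta=\mathcal D_{\im\,\delta}\subseteq\inder(\A_h)$. It remains to prove $\norm=\A_h+\cent1$, and by Remark~\ref{R:norm} it suffices to treat $\norm_{\equiv 0}$. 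For $c=\sum_{k\geq 1}r_{kp}y^{kp}\in\norm_{\equiv 0}$ the defining condition is $h^{kp-1}\mid r_{kp}'$ (Theorem~\ref{T:norm}), and using $y^{kp}=\ze^k h^{-kp}$ one finds $\ad_c=D_f$ with $f=\sum_{k\geq 1}\ze^k g_k$, $g_k=-r_{kp}'\,h^{1-kp}\in\DD$. Since $D_f$ is inner exactly when $f\in\centh\,\im\,\delta$, i.e.\ when each $g_k\in\im\,\delta=h\im\,\frac{d}{dx}$, and since $h^{kp}\in\FF[x^p]$, this amounts to showing $h^{kp}\mid r_{kp}'$.

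The hard part will therefore be the divisibility ``gap'' statement: if $h$ is squarefree and separable and $N$ is a multiple of $p$, then $h^{N-1}\mid r'$ implies $h^{N}\mid r'$. I would prove this prime by prime, exactly in the spirit of Steps~1--3 in the proof of Lemma~\ref{L:del0again}\,(iv). The key observation is that for a linear prime $\pr=x-\xi$ the $\pr$-adic valuation of any derivative $r'$ is never congruent to $-1$ modulo $p$, because the monomial $(x-\xi)^{m-1}$ arising from $(x-\xi)^m$ survives in $r'$ only when $p\nmid m$; hence $v_{\pr}(r')\geq N-1\equiv -1\pmod p$ already forces $v_{\pr}(r')\geq N$. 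Passing to $\overline{\FF}$ and factoring the separable prime $\pr$ into distinct linear factors extends this to every $\pr\mid h$, giving $h^{kp}\mid r_{kp}'$ and thus $c\in\A_h+\cent1$. Consequently $\mathcal H=0$, and combined with Theorem~\ref{T:decomp2} this yields $\der(\A_h)=\centh\dms\oplus\centh\brf\oplus\inder(\A_h)$, so that $\hoch(\A_h)$ is free over $\centh$ of rank $2$ with basis $\{\dms,\brf\}$, as claimed.
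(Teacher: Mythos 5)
Your proof is correct, and in both directions it takes a genuinely different route from the paper's. For the ``only if'' direction the paper has a two-line argument: $h^{p}\ad_{a_{1}}=\ad_{h^{p}\pi_{h}y}$ is inner because $h^{p}\pi_{h}y\in\A_h$, so if $\hoch(\A_h)$ is free, hence torsion free over the domain $\centh$, then $\ad_{a_{1}}$ must already be inner, whence $\pi_{h}=\ad_{a_{1}}(x)\in[\A_{h},\A_{h}]\subseteq h\A_{h}$ and $h\mid\pi_{h}$. You instead exhibit torsion inside the summand $\Theta/\im\,\delta$ of $\ker\overbar{\mathsf{Res}}$; this is valid (your case split on $\delta_0(1)$ is handled correctly, and the injectivity of $\Theta/\im\,\delta\to\mathcal H$ is exactly Proposition \ref{prop:inderres}), and it pinpoints where the torsion lives, at the cost of the degree and rank analysis that the paper sidesteps entirely. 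For the ``if'' direction both proofs rely on Lemma \ref{L:del0again}\,(iv) to get $\Theta=\im\,\delta_0=\im\,\delta$; the difference is in disposing of $\norm_{\equiv 0}$. The paper writes $\ad_c=D_f$ with $f\in\mathsf{C}_{\A_h}(x)=\centh\DD$ via Lemma \ref{L:adc=Df}, expands $f$ along $\DD=\FF[x^p]\ms\oplus\im\,\delta$, and uses the directness of the decomposition in Theorem \ref{T:decomp2} to kill the $\ms$-component, so that $\ad_c\in\inder(\A_h)$ with no further computation. You instead prove the divisibility upgrade $h^{kp-1}\mid r_{kp}'\Rightarrow h^{kp}\mid r_{kp}'$ from scratch, using the observation that the multiplicity of a linear factor $x-\xi$ in a nonzero derivative $r'$ over $\overline{\FF}$ is never $\equiv -1 \modd p$. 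That argument is correct, but it essentially replays Steps 1--3 of the proof of Lemma \ref{L:del0again}\,(iv), which you have already invoked; the paper's arrangement performs that torsion-freeness work once and then harvests it both for the $\mathcal D_{\Theta}$ part and for the normalizer part.
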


\begin{proof}
Suppose first that $\hoch(\A_h)$ is a  free $\centh$-module.  As  $\centh$ is a domain, $\hoch(\A_h)$ is torsion free over $\centh$. Note that  $h^{p}\ad_{a_{1}}=\ad_{h^{p}a_{1}}=\ad_{h^p\pi_{h}y}\in\inder(\A_h)$,  so $\ad_{a_{1}}\in\inder(\A_h)$, because $h^{p}\in\centh$. This implies that $\pi_{h}=[\pi_{h}y, x]=\ad_{a_{1}}(x)\in[\A_{h}, \A_{h}]\subseteq h\A_{h}$, by \cite[Lem.\ 6.1]{BLO1}. Hence $h$ divides $\pi_{h}$ and $\frac{h}{\pi_{h}}\in\FF^{*}$.

Conversely, assume $\frac{h}{\pi_{h}} = \lambda \in\FF^{*}$.  Then $a_0 = \pi_h h^{-1} = \lambda^{-1}$,  
and $\delta_0(r) = \delta(\lambda^{-1} r)$ for all $r \in \DD$.     Therefore,  $\im\, \delta = \im \, \delta_0 = \Theta$, 
where the last equality follows from  (iv) of  Lemma~\ref{L:del0again}.    By (a) of Corollary  \ref{C:decomp2},  
$\der(\A_h) = \centh D_{\ms} \oplus \centh \brf \oplus \{\ad_a \mid a \in \norm\}$.   
Now suppose $a \in \norm$.   As in Remark \ref{R:norm},    $a = b+c$ where $b \in \norm_{\not \equiv 0}$,  and
$c \in \norm_{\equiv 0}$.     Because  $\frac{h}{\pi_{h}}\in\FF^{*}$,  we know  $b \in \A_h$. 
 By
Lemma \ref{L:adc=Df},   $\ad_c = D_f$ for some $f \in \mathsf{C}_{\A_h}(x) = \centh \DD$.   As
$\DD = \FF[x^p] \ms  \oplus \Theta = \FF[x^p] \ms  \oplus \im \, \delta$,  it follows that   
$\mathsf{C}_{\A_h}(x)  = \centh \ms \oplus \centh \im \, \delta$.   We may assume $f = u \ms +\sum_i v_i \delta(r_i)$
for some $u,v_i \in \centh$ and $r_i \in \DD$.   But then  $\ad_c = D_f = u \dms + \sum_i v_i D_{\delta(r_i)} =
 u \dms - \sum_i v_i \ad_{r_i}$ by (ii) of Proposition \ref{prop:autg}.    The directness of the decomposition in Theorem \ref{T:decomp2} forces 
 $u = 0$, and $\ad_c = - \sum_i v_i \ad_{r_i} = -\sum_i \ad_{v_i r_i} \in \inder(\A_h)$.   This shows that $\{\ad_a \mid a \in \norm\} = 
 \inder(\A_h)$ and completes the proof.   \end{proof}

\begin{remark} When  $h = x$,  then $\frac{h}{\pi_h} \in \FF^*$,  so Theorem  \ref{T:pfreeHH1}
gives the result $\{\ad_a \mid a \in \mathsf{N}_{\A_1}(\A_x)\} = \inder(\A_x)$  mentioned in Example \ref{Ex:powerx}.   \end {remark}  

\begin{remark} When $\frac{h}{\pi_h}\in\FF^*$,  it follows  from  Theorem \ref{T:pfreeHH1} and Proposition \ref{C:inner}  that $\mathcal H =\mathsf{ker\,\overbar{Res}}=0$.   Hence,  in this case,  $\hoch(\A_h)$ is isomorphic
via the map $\overbar{\mathsf{Res}}$  to the subalgebra  of the Witt algebra $\der(\FF[t_1,t_2])$ generated over $\FF[t_1,t_2]$
by the derivations   $\mathsf{d}_1 = h^p \frac{d}{dt_1}, \mathsf{d}_2 = \overbar h \frac{d}{dt_2}$, where $t_1 = x^p$ and $t_2 = \ze$, 
(see Theorem \ref{T:imres} for details).\end{remark}

\subsection{Products in $\der(\A_h)$}\label{sec:prodsDerChar-p} \hfil  \smallskip
 
Suppose $u,v \in \centh$ and $D,E \in \der(\A_h)$.    Then
 \begin{equation}\label{eq:genprod}  [uD, vE]  =  uD(v)E - vE(u)D + uv[D,E].  \end{equation} 
Equation \eqref{eq:genprod} tells us that  to compute products in $\der(\A_h)$, it suffices to know the action of  
the restriction $\mathsf{Res}(D)$ on $\centh = \FF[x^p,\ze]$ for all derivations  $D$ in $\mathcal B = \left \{ \dms,  \brf, D_r, \ad_a \mid r \in \Theta, a \in  \norm \right\}$, where $\dms$ and $\brf = \ze D_{\frac{h'}{\varrho_h}} - \be_x$ are
as in Theorem \ref{T:decomp2},   and the commutator $[D,E]$ for all pairs $D\neq E$ in $\mathcal B$. 
The first  part is easy, since
\begin{gather}\begin{split}\label{eq:restrict} \mathsf{Res}(\dms)= \overbar h \frac{d}{d\ze}, \qquad  \mathsf{Res}(\brf) = \frac{h^p}{\varrho_h} \frac{d}{d(x^p)}, \quad  \hbox{\rm and}  \\
\mathsf{Res}(D_r) = 0 = \mathsf{Res}(\ad_a) \ \  \forall \, r \in \Theta, \, a \in \norm. \end{split}  \end{gather} 

Now it follows from Theorem \ref{T:norm}
 that any   $a \in \norm$  has the form $a = b+c$, where $b \in \norm_{\not \equiv 0}$,
$c \in \norm_{\equiv 0}$, and $b$ is a sum of terms of the form $r  a_n$  with   
 $a_n = \pi_h h^{n-1}y^n$ for $n \geq 1$,  and $r \in \DD$.    Lemma \ref{L:adc=Df}  says that  $\ad_c  = D_f = \sum_i z_i D_{r_i}$ for some $f = \sum_i z_i r_i \in \mathsf{C}_{\A_h}(x)= \centh \DD$.   Hence, we are able  to reduce our considerations to products of the form  in (a)-(e) below, 
so that the commutator  of any pair of derivations in $\mathcal B$ can be
deduced from the next proposition. 
 \smallskip
 
 \begin{prop}\label{prop:bracketp} Let $a_n = \pi_h h^{n-1}y^n$ for all $n \geq 0$,   and assume  $a_{-1} = 0$.  
 The Lie brackets in $\der(\A_h)$ satisfy the following, where $\delta_0(r) = (r \pi_h h^{-1})'h$, as in (\ref{eq:del0}).
\begin{itemize}
\item[{\rm (a)}]  $[D_f, D_g]= 0$ \  for all $f,g \in \DD$.
\item[{\rm (b)}]  $[D_g, \ad_{r a_n}]  = n \ad_{gr a_{n-1}} =  n \ad_{c a_{n-1}}$ in $\hoch(\A_h)$, where $c$ is the remainder of  the division of $gr$
  by $\frac{h}{\pi_h}$ in $\DD$.
   \item[{\rm (c)}] $[ \ad_{ra_{m}}, \ad_{sa_{n}} ] =  \ad_{qa_{m+n-1}} = \ad_{da_{m+n-1}}$  in $\hoch(\A_h)$  for all $r, s\in\DD$ and all $m, n\geq 0$,  where $q= mr \delta_0(s) -ns\delta_0(r)$,
 and $d$ is the remainder of the division in $\DD$ of $q$ by $\frac{h}{\pi_h}$. 
\item[{\rm (d)}]  Assume  $r \in \DD$ and  $m = kp+n$, where $k \geq 0$ and $0 \leq n < p$.  \  Then in $\hoch(\A_h)$, 
\begin{equation}\label{eq:bexprod}  [\be_x, \ad_{ra_m}]  = \ze^k\, [\be_x, \ad_{ra_n}]  = \begin{cases} \ze^{k+1} \ad_{\zeta_n a_{n-1}} & \hbox{\rm if} \ 1 \leq n < p, \\
 \ze^k\, [D_{\delta_0(r)}, \be_x]  & \hbox{\rm if} \  n = 0,
 \end{cases}\end{equation}
 where $\zeta_n=\frac{h}{\pi_h\varrho_h}\delta_0(r)+nr\frac{h'}{\varrho_h}$, and the product $[D_{\delta_0(r)}, \be_x]$ can be computed using (e). 
\item[{\rm (e)}]  For $g \in \DD$, \, $[D_g,  \be_x]  = D_e + \ad_b$, where $b = b_1 + b_2$ with 
$$b_1 = \frac{g h^{p-1}}{\varrho_h} y^{p-1} \in \norm, \ \ \  b_2 = \sum_{k=2}^{p-1} (-1)^k \frac{(gh^{-1})^{(k-1)} h^p}{\varrho_h} \frac{y^{p-k}}{p-k} \in \A_h,$$
and $e = \left( [D_g, \be_x] - \ad_b \right) (\hat y) \in \mathsf C_{\A_h}(x)$. 
 \end{itemize}
 \end{prop}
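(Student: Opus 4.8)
The plan is to obtain (a)--(c) directly from earlier results and to concentrate on (d) and (e), the two products involving $\be_x$. Part (a) is the assertion that $\mathcal D_\DD = \{D_g \mid g \in \DD\}$ is abelian, which is part of Theorem \ref{thm:derdecomp}; one may also verify it by hand, since both $D_f$ and $D_g$ annihilate $x$ and map $\hat y$ into $\DD$, which $D_f, D_g$ in turn kill, so $[D_f, D_g]$ vanishes on the generators and hence is $0$ by the uniqueness in Lemma \ref{lem:derext}. Parts (b) and (c) are precisely the two formulas of Theorem \ref{L:brackets}, whose proof in Section \ref{sec:proofSec4MainThm} is valid over an arbitrary field and so applies unchanged here. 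Throughout (d) and (e) I would keep at hand the three divisibility facts $\varrho_h \mid h'$, $h \mid \pi_h h'$ (Lemma \ref{L:defpi}), and $\pi_h \varrho_h \mid h$ together with $\pi_h \mid \frac{h}{\varrho_h}$ (both read off from \eqref{eq:hdec}); these are the glue for every membership claim below. Note also that $\frac{h^p}{\varrho_h} \in \FF[x^p]$.

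For (e) I would compute $[D_g, \be_x]$ on the generators and reconstruct the derivation from Lemma \ref{lem:derext}. Since $\be_x(x) = \frac{h^p}{\varrho_h}y^{p-1}$ and the extension $\widetilde D_g$ of Lemma \ref{lem:extendD_g} satisfies $\widetilde D_g(y) = gh^{-1}$ and kills $\DD\Sigma^{-1}$, Lemma \ref{lem:D_g(y^n)-local} and $\binom{p-1}{k} = (-1)^k$ give $[D_g, \be_x](x) = \widetilde D_g(\be_x(x)) = \sum_{k=1}^{p-1}(-1)^k \frac{h^p}{\varrho_h}(gh^{-1})^{(k-1)}y^{p-1-k}$. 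Using $[ry^\ell, x] = \ell r y^{\ell-1}$, the $k = 1$ term is exactly $[b_1, x]$ and the terms $k \ge 2$ are exactly $[b_2, x]$, so $[D_g, \be_x](x) = [b, x]$. The two membership assertions follow from the divisibility facts: $b_1 = \frac{gh^{p-1}}{\varrho_h}y^{p-1} \in \norm$ because $\pi_h \mid \frac{h}{\varrho_h}$ forces $\pi_h h^{p-2} \mid \frac{gh^{p-1}}{\varrho_h}$ (Theorem \ref{T:norm}\,(a)(ii)); and $b_2 \in \A_h$ because \eqref{E:xik} writes $(gh^{-1})^{(k-1)}h^p$ as a sum in $\DD h^{p-k+1} + \DD h^{p-k}(h')^{k-1}$, whence $\frac{(gh^{-1})^{(k-1)}h^p}{\varrho_h} \in \DD h^{p-k}$ once $\frac{h}{\varrho_h}, \frac{h'}{\varrho_h} \in \DD$ are invoked. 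Having matched the action on $x$, set $e = ([D_g, \be_x] - \ad_b)(\hat y)$; the operator $F = [D_g, \be_x] - \ad_b$ is a derivation of $\A_h$ with $F(x) = 0$, hence $F(\DD) = 0$, and $[e, x] = F(h) = 0$ shows $e \in \mathsf{C}_{\A_h}(x)$. Then $F$ and $D_e$ agree on $x, \hat y$, so $[D_g, \be_x] = D_e + \ad_b$ by Lemma \ref{lem:derext}.

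For (d) the first step is the reduction to $0 \le n < p$. Since $y^p$ is central in $\A_1$, one has $\ze^k = h^{kp}y^{kp}$, hence $a_m = \ze^k a_n$ and $\ad_{ra_m} = \ze^k \ad_{ra_n}$. The product rule \eqref{eq:genprod} then gives $[\be_x, \ad_{ra_m}] = \be_x(\ze^k)\ad_{ra_n} + \ze^k[\be_x, \ad_{ra_n}]$, and \eqref{eqn:E_x-center} yields $\be_x(\ze^k) = -k\frac{(h')^p}{\varrho_h}\ze^k$, so the first summand equals $\ad_c$ with $c = -k\frac{(h')^p}{\varrho_h}ra_m$. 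Writing $\frac{(h')^p\pi_h}{\varrho_h} = \frac{(h')^{p-1}}{\varrho_h}\,h'\pi_h$ and using $\varrho_h \mid h'$ and $h \mid h'\pi_h$ shows $h \mid \frac{(h')^p\pi_h}{\varrho_h}$, so $c \in \A_h$ and $\be_x(\ze^k)\ad_{ra_n} = 0$ in $\hoch(\A_h)$; thus $[\be_x, \ad_{ra_m}] = \ze^k[\be_x, \ad_{ra_n}]$. When $n = 0$, Lemma \ref{L:del0}\,(a) gives $\ad_{ra_0} = -D_{\delta_0(r)}$, so $[\be_x, \ad_{ra_0}] = [D_{\delta_0(r)}, \be_x]$, computed by (e). When $1 \le n < p$, I would evaluate $\be_x(ra_n)$ modulo $\A_h$: as $E_x(y^n) = 0$, we have $\be_x(ra_n) = \frac{h^p}{\varrho_h}E_x(r\pi_h h^{n-1})y^n$, and Lemma \ref{lem:ee}\,(b) expands $E_x(r\pi_h h^{n-1})$. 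The $\parp$-summand lies in $\A_h$ because $\frac{h^{p-n}}{\varrho_h} \in \DD$, and by Lemma \ref{lem:deriv(h^k pi)} all $k \ge 2$ summands lie in $\A_h$ as well, leaving only the $k = 1$ term $\frac{h^p}{\varrho_h}(r\pi_h h^{n-1})'y^{p+n-1}$. Substituting $y^{p+n-1} = h^{-p}\ze\,y^{n-1}$ and expanding $(r\pi_h h^{n-1})' = h^{n-2}\big(\delta_0(r)h + nr\pi_h h'\big)$ from \eqref{eq:del0} collapses this to $\ze\,\zeta_n a_{n-1}$ with $\zeta_n = \frac{h}{\pi_h\varrho_h}\delta_0(r) + nr\frac{h'}{\varrho_h} \in \DD$. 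Lemma \ref{lem:extloc} then gives $[\be_x, \ad_{ra_n}] = \ad_{\be_x(ra_n)} = \ze\,\ad_{\zeta_n a_{n-1}}$ in $\hoch(\A_h)$, and multiplying by $\ze^k$ finishes (d).

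The main obstacle is the bookkeeping in (d): correctly sorting which terms of $E_x(r\pi_h h^{n-1})$ are absorbed into $\A_h$ and then repackaging the surviving $k=1$ term as the single element $\zeta_n a_{n-1}$, together with the divisibility check that makes $\be_x(\ze^k)\ad_{ra_n}$ inner. Since every such step reduces to the three divisibility facts noted above, I would record those first and then let the computations proceed mechanically.
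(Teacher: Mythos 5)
Your proposal is correct and follows essentially the same route as the paper: parts (a)--(c) are quoted from Theorem \ref{thm:derdecomp} and Theorem \ref{L:brackets}, part (e) builds the same element $b$ so that $\ad_b$ matches $[D_g,\be_x]$ on $x$ and then identifies the remainder as $D_e$ via $\mathsf{C}_{\A_h}(x)$, and part (d) uses the same reduction $a_m=\ze^k a_n$ with $\be_x(\ze^k)$ absorbed into $\inder(\A_h)$, followed by the same isolation of the $k=1$ term of $E_x(r\pi_h h^{n-1})$ via Lemmas \ref{lem:ee} and \ref{lem:deriv(h^k pi)}. The divisibility bookkeeping you flag ($\varrho_h\mid h'$, $h\mid \pi_h h'$, $\pi_h\varrho_h\mid h$) is exactly what the paper relies on, and all of your membership checks are sound.
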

  
\begin{proof}  Part\,(a) is clear, and parts\,(b) and (c) are immediate from Lemma \ref{L:brackets}.
For (d), we have $a_m = \ze^k a_n$ so that 
\begin{align}\begin{split}\label{eq:am}
[\be_x, \ad_{ra_m}]  &=  [\be_x, \ze^k\ad_{r a_n}]
=\be_x(\ze^k)\ad_{r a_n} + \ze^k\,[\be_x, \ad_{r a_n}]\\
&= - k \ze^{k}\,\ad_{r \frac{(h')^{p}}{\varrho_h}a_n} + \ze^k\,[\be_x, \ad_{r a_n}]  = 
  \ze^k\,[\be_x, \ad_{r a_n}]\end{split}
\end{align}
by \eqref{eqn:E_x-center},  where the last equality holds because $h'a_n\in\A_h$ (see Theorem \ref{T:norm}(b)).  
In particular, when $n = 0$,  then $[\be_x, \ad_{ra_m}]  = \ze^k\, [\be_x, \ad_{ra_0}] = \ze^k [D_{\delta_0(r)}, \be_x]$
as claimed in (d),  since $\ad_{ra_0} = -D_{\delta_0(r)}$.

Assume $1 \leq n < p$.   Then  the equalities 
$[\be_x, \ad_{ra_n}]  =\frac{h^p}{\varrho_h} \ad_{E_x(r \pi_h h^{n-1})y^n}$
and 
\begin{equation*}
\frac{h^p}{\varrho_h} E_x(r \pi_h h^{n-1})y^n = \frac{1}{\varrho_h}\sum_{k=1}^{p-1} \frac{(-1)^{k-1}}{k} (r \pi_h h^{n+p-1})^{(k)} y^{n+p-k} -\frac{h^p}{\varrho_h}\parp(r \pi_h h^{n-1})y^n
\end{equation*}
follow directly from Lemma \ref{lem:ee}.  By Lemma \ref{lem:deriv(h^k pi)}, we have that $(r \pi_h h^{n+p-1})^{(k)}\in \DD h^{n+p-k+1} + \DD h^{n+p-k} h'$ for all $k\geq 2$, so that 
$$\frac{1}{\varrho_h}\sum_{k=2}^{p-1} \frac{(-1)^{k-1}}{k} (r \pi_h h^{n+p-1})^{(k)} y^{n+p-k} \in\A_h,$$ as $\varrho_h$ divides both $h$ and $h'$.   
Since $n<p$, \  $\frac{h^p}{\varrho_h}\parp(r \pi_h h^{n-1})y^n\in\A_h.$  Thus, modulo $\A_h$ we have
$$\frac{h^p}{\varrho_h} E_x(r \pi_h h^{n-1})y^n =  \frac{1}{\varrho_h} (r \pi_h h^{n+p-1})' y^{n+p-1} 
= \ze\,\zeta_n a_{n-1},$$
where $\zeta_n = \frac{h}{\pi_h\varrho_h}\delta_0(r)+nr\frac{h'}{\varrho_h}$.  This combined with \eqref{eq:am} gives (d) for $n \neq 0$.

To compute $[D_g, \be_x]$ in part (e), note that since $D_g(x) = 0$,  Lemma \ref{lem:D_g(y^n)-local} implies
\begin{align*}
[D_g, \be_x]( x) &=  \frac{h^p}{\varrho_h} \sum_{k=1}^{p-1} {p-1 \choose k} (gh^{-1})^{(k-1)} y^{p-1-k} \\
&= \sum_{k=1}^{p-1} (-1)^k \frac{(gh^{-1})^{(k-1)}h^{p}}{\varrho_h}y^{p-1-k}. 
 \end{align*}  
Let   \begin{equation}\label{eq:aelt}  b =  \sum_{k=1}^{p-1} (-1)^k  \frac{(gh^{-1})^{(k-1)}h^{p}}{\varrho_h}\frac{y^{p-k}}{p-k} \in \A_1. \end{equation}
Observe that   $\ad_b(x) =  [D_g, \be_x](x) \in \A_h$,  and
\begin{equation}\label{eq:b1}b_1 = \frac{g h^{p-1}}{\varrho_h}y^{p-1} =   g \frac{h}{\pi_h\varrho_h} (\pi_h h^{p-2} y^{p-1}) 
=  g \frac{h}{\pi_h\varrho_h} a_{p-1} \in \norm. \end{equation} 
It  is easy to deduce from Lemma \ref{lem:kDeriv_gh-inverse} that $\frac{(gh^{-1})^{(k-1)} h^k}{\varrho_h} \in \DD$
 for all $k \geq 2$, and thus 
$$b_{2} =   \sum_{k=2}^{p-1} (-1)^k  \frac{(gh^{-1})^{(k-1)}h^{p}}{\varrho_h}\frac{y^{p-k}}{p-k}  \in \A_h.$$ 
As a result,  $b = b_1 + b_2 \in \norm$.  

Now  $G = [D_g, \be_x] - \ad_b \in \der (\A_h)$ satisfies $G(x) = 0$ so that 
$0 = [ G( \hat y) , x]$. This shows that  $e = G( \hat y)  \in \mathsf{C}_{\A_h}(x)$.  But then $(D_e-G)(x) = 0 = (D_e-G)(\hat y)$,
which implies  that $G = D_e$.   Consequently,  $[D_g, \be_x] = D_e +\ad_b$, as desired. 
\end{proof}   

It remains to determine the expression for $e = \big( [D_g, \be_x] - \ad_b \big) (\hat y)$ in part (e) of Proposition \ref{prop:bracketp}.  We do so by considering the terms of $[D_g, \be_x] (\hat y)$ that centralize $x$.  Define the projection map $\PP: \A_{1}\rightarrow \mathsf{C}_{\A_1}(x)$ by $\PP (ry^{k})=ry^{k}$ if $p\mid k$ and $\PP(ry^{k})=0$ otherwise. Note that $\PP(\A_{h})=\mathsf{C}_{\A_h}(x)$ and $\PP(ra)=r\PP(a)$ for all $r\in\DD$ and $a\in\A_{1}$.

\begin{lemma}\label{lem:projectCent(x)}
Let $g, r\in\DD$. Then 
\begin{itemize}
\item[{\rm (a)}] $\PP(D_{g}(h^{n}y^{n}))=h^{n}\left( gh^{-1}\right)^{(n-1)}$ for $1\leq n\leq p$;
\item[{\rm (b)}]  $\PP([ry^{n}, \hat y])=rh^{(n+1)}$ for $1\leq n< p$ and $\PP([r, \hat y])=-r'h$.
\end{itemize}
\end{lemma}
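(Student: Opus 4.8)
The plan is to prove both parts by direct computation, exploiting the structure of the projection $\PP$ onto the centralizer $\mathsf{C}_{\A_1}(x)$, namely that $\PP$ kills all terms $ry^k$ with $p \nmid k$ and commutes with left multiplication by elements of $\DD$. The key observation driving both parts is that $\PP$ selects exactly the terms $y^{p-k}$ with $p \mid (p-k)$, i.e. $k \equiv 0 \modd p$, from an expression expanded in powers of $y$.

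\textbf{Part (a).} First I would apply Lemma \ref{lem:D_g(y^n)-local} to the derivation $\widetilde D_g$ (which satisfies $\widetilde D_g(x)=0$ and $\widetilde D_g(y)=gh^{-1}$ by \eqref{eq:Dgy}), obtaining
\begin{equation*}
D_g(h^n y^n) = h^n D_g(y^n) = h^n \sum_{k=1}^n \binom{n}{k}(gh^{-1})^{(k-1)} y^{n-k}.
\end{equation*}
Applying $\PP$ and using $\PP(ry^{n-k}) = ry^{n-k}$ exactly when $p \mid (n-k)$, I would note that for $1 \le n \le p$ the only index $k$ in range $1 \le k \le n$ with $n-k \equiv 0 \modd p$ is $k=n$ (giving $y^0$), together with $k = n-p$ only when $n = p$ (giving $y^p$, which also centralizes $x$). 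The hard part will be handling the $n = p$ case carefully, since then both $k = n$ and the term $y^p \in \mathsf{C}_{\A_1}(x)$ survive; I expect that the binomial coefficient $\binom{p}{k}$ vanishes modulo $p$ for $1 \le k \le p-1$, and that $\binom{p}{0}$ is not in range, so in fact only the $k=n$ term contributes for every $n$ in the stated range. This yields $\PP(D_g(h^n y^n)) = h^n (gh^{-1})^{(n-1)}$, as claimed.

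\textbf{Part (b).} Here I would start from Corollary \ref{C:yhatprod}, which gives
\begin{equation*}
[ry^n, \hat y] = -(rh)' y^n + r\sum_{j=1}^{n+1}\binom{n+1}{j} h^{(j)} y^{n+1-j}
\end{equation*}
for $r \in \DD$ and $n \ge 0$. Applying $\PP$ and using $\PP(ra) = r\PP(a)$, I would determine which powers $y^{n+1-j}$ satisfy $p \mid (n+1-j)$ for $1 \le n < p$. The term $-(rh)'y^n$ survives $\PP$ only if $p \mid n$, which does not occur for $1 \le n < p$, so it is killed; within the sum, $n+1-j \equiv 0 \modd p$ with $1 \le j \le n+1$ forces $j = n+1$ (giving $y^0$), since $0 \le n+1-j \le n < p$ leaves $n+1-j = 0$ as the only multiple of $p$ in range. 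Thus $\PP([ry^n,\hat y]) = r\binom{n+1}{n+1}h^{(n+1)} = rh^{(n+1)}$. For the final claim $\PP([r,\hat y]) = -r'h$, I would simply observe that $[r,\hat y] = -[\hat y, r] = -r'h$ lies already in $\DD = \mathsf{C}_{\A_1}(x)$ (this is the $n=0$ case, where $\delta(r) = r'h$), so $\PP$ acts as the identity on it. The only subtlety to verify is the bookkeeping on the range of $j$ and the exclusion of the $-(rh)'y^n$ term, which is routine once the congruence conditions are pinned down.
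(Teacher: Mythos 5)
Your proof is correct and follows essentially the same route as the paper: part (a) from the formula for $\widetilde D_g(ry^n)$ in Corollary \ref{cor:D_g(a_n)}\,(a) (equivalently Lemma \ref{lem:D_g(y^n)-local}) and part (b) from Corollary \ref{C:yhatprod}, in each case applying $\PP$ and keeping only the exponents divisible by $p$. The brief worry about a surviving $y^{p}$ term when $n=p$ is moot, as you note, since the corresponding index $k=0$ lies outside the summation range.
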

\begin{proof}
Corollary \ref{cor:D_g(a_n)}\,(a) implies  
$D_{g}(h^{n}y^{n})=\sum_{k=1}^{n} {n\choose k}h^{n}\left(gh^{-1}\right)^{(k-1)}y^{n-k}$
for all  $1\leq n\leq p$, and (a) is a direct consequence of this.  
Now   \eqref{eq:yhatprod} says  
$[ry^n, \hat y] = -(rh)'y^n + \sum_{k=1}^{n+1} {{n+1} \choose k} rh^{(k)}y^{n+1-k}$.   
Applying the map $\PP$ to that yields (b). 
\end{proof}

\begin{prop}\label{prop:bracketp2}
For $g \in \DD$, write $[D_g,  \be_x]  = D_e + \ad_b$, with $e \in \mathsf C_{\A_h}(x)$ and $b \in \norm$ as in Proposition \ref{prop:bracketp}.   Assume $\parp$ is as in  \eqref{eq:projdef}.   Then  
\begin{equation}\label{E:e-term}
e=\frac{1}{\varrho_h}\left( \sum_{k=1}^{p-1} \frac{(-1)^{k-1}}{k} (gh^{p-1})^{(k)} h^{(p-k)} \right) 
 \ +\ \frac{h^{p-1}}{\varrho_h} \Big(h\parp(g)-g\parp(h)\Big) \  \in \DD.  \end{equation}
\end{prop}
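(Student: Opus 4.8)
The plan is to determine $e$ by exploiting that it is the unique element of $\mathsf{C}_{\A_h}(x)$ for which $[D_g,\be_x]=D_e+\ad_b$ with $b$ as in Proposition \ref{prop:bracketp}(e). Evaluating both sides on $\hat y$ gives $[D_g,\be_x](\hat y)=e+[b,\hat y]$, since $D_e(\hat y)=e$ and $\ad_b(\hat y)=[b,\hat y]$. Because $e\in\mathsf{C}_{\A_h}(x)\subseteq\mathsf{C}_{\A_1}(x)$ and the projection $\PP$ is the identity on $\mathsf{C}_{\A_1}(x)$, applying $\PP$ (which is $\FF$-linear and satisfies $\PP(rw)=r\PP(w)$ for $r\in\DD$) yields the working formula $e=\PP\big([D_g,\be_x](\hat y)\big)-\PP([b,\hat y])$. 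Everything then reduces to two projection computations.

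For the first term I would write $[D_g,\be_x](\hat y)=D_g(\be_x(\hat y))-\be_x(g)$ with $\be_x=\tfrac{h^p}{\varrho_h}E_x$. Since $\tfrac{h^p}{\varrho_h}\in\FF[x^p]\subseteq\centh$ is central and annihilated by $D_g$, I can pull it out and compute in $\A_1\Sigma^{-1}$ via the extension $\widetilde D_g$ of Lemma \ref{lem:extendD_g}, using $\widetilde D_g(y^n)=\sum_{k}\binom{n}{k}(gh^{-1})^{(k-1)}y^{n-k}$ from Lemma \ref{lem:D_g(y^n)-local} (equivalently Corollary \ref{cor:D_g(a_n)}(a)); the negative powers of $h$ are absorbed by the central factor, so the projected quantities land in $\DD$. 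Feeding in Lemma \ref{lem:ee}(b),(f) for $E_x(g)$ and $E_x(\hat y)$, the key observation is that $\PP$ keeps only $y^0$-components: in each of $\widetilde D_g(y^{p-k})$ and $\widetilde D_g(y^p)$ exactly one term survives (using $\binom{p}{j}\equiv0$ for $0<j<p$), namely the top derivative $(gh^{-1})^{(\ast)}$. This collapses $\PP(\be_x(g))$ and $\PP(D_g(\be_x(\hat y)))$ to explicit sums in $(gh^{-1})^{(\ast)}h^{(\ast)}$ together with $\parp$-corrections.

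For the second term, $b=b_1+b_2$ as in Proposition \ref{prop:bracketp}(e), and I would apply Lemma \ref{lem:projectCent(x)}(b) summand by summand: the $b_1$-contribution is $\tfrac{gh^{p-1}}{\varrho_h}h^{(p)}=0$ because the $p$-th derivative vanishes in characteristic $p$, while each summand of $b_2$ yields a term $r_k h^{(p-k+1)}$. After reindexing and using $(-1)^p=-1$ and $\tfrac{1}{p-k}=-\tfrac1k$ in characteristic $p$, the $b_2$-contribution acquires the same shape as the middle sum produced by $E_x(\hat y)$, so the two will combine.

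The final step is the bookkeeping, which I expect to be the main obstacle. Combining the middle sum of $\PP(D_g(\be_x(\hat y)))$ with $-\PP([b_2,\hat y])$ merges the coefficients $\tfrac{(-1)^{k-1}}{(k+1)k}$ and $\tfrac{(-1)^{k}}{k}$ into $\tfrac{(-1)^k}{k+1}$; shifting the index turns this into $\sum_{m}\tfrac{(-1)^{m-1}}{m}(gh^{-1})^{(p-m)}h^{(m)}$, into which the leftover $h'(gh^{-1})^{(p-1)}$ fits as the $m=1$ term. A further reindexing $k=p-m$, again via the characteristic-$p$ sign identities, rewrites the result as $\tfrac{h^p}{\varrho_h}\sum_{k=1}^{p-1}\tfrac{(-1)^{k-1}}{k}(gh^{-1})^{(k)}h^{(p-k)}$. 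At this point the one genuinely algebraic input is $(gh^{p-1})^{(k)}=h^p(gh^{-1})^{(k)}$ for $0\le k\le p-1$, which follows from the Leibniz rule and $(h^p)^{(j)}=0$ for $j\ge1$ (as $h^p\in\FF[x^p]$); this clears the $h^{-1}$ and produces the first sum of \eqref{E:e-term}. The surviving terms $\tfrac{h^p}{\varrho_h}\parp(g)-\tfrac{h^{p-1}}{\varrho_h}g\parp(h)$ are exactly $\tfrac{h^{p-1}}{\varrho_h}\big(h\parp(g)-g\parp(h)\big)$, completing the proof. All the care lies in tracking binomial and factorial coefficients through the several reindexings; no idea beyond the projection $\PP$ and the vanishing of $(h^p)'$ is required.
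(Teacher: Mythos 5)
Your proposal is correct and follows essentially the same route as the paper: both apply the projection $\PP$ to $(D_e+\ad_b)(\hat y)=[D_g,\be_x](\hat y)$, evaluate the two sides via Lemma \ref{lem:projectCent(x)} (together with Lemma \ref{lem:ee} and Corollary \ref{cor:D_g(a_n)}), and match coefficients after the characteristic-$p$ reindexings. The only differences are cosmetic --- you isolate $e$ as $\PP([D_g,\be_x](\hat y))-\PP([b,\hat y])$ rather than equating two expressions for $\PP\big((D_e+\ad_b)(\hat y)\big)$, and you merge the two sums in a slightly different order.
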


\noindent \emph{Proof.} \ 
Note that $\PP\left( (D_e + \ad_b)(\hat y) \right) = \PP(e + [b, \hat y]) = e+ \PP([b, \hat y])$, so by (\ref{eq:aelt}) and Lemma \ref{lem:projectCent(x)}, we have 
\begin{eqnarray*}
\PP\left( (D_e + \ad_b)(\hat y) \right) &=& e+ \frac{1}{\varrho_h}\sum_{k=1}^{p-1} \frac{(-1)^{k-1}}{k} \PP\left( \left[ (gh^{p-1})^{(k-1)}y^{p-k}, \hat y \right] \right) \\
&=& e+ \frac{1}{\varrho_h}\sum_{k=1}^{p-1} \frac{(-1)^{k-1}}{k} (gh^{p-1})^{(k-1)}h^{(p+1-k)}\\
&=& e+ \frac{1}{\varrho_h}\sum_{k=1}^{p-2} \frac{(-1)^{k}}{k+1} (gh^{p-1})^{(k)}h^{(p-k)}.
\end{eqnarray*}

\noindent On the other hand,
\begin{align*}
(D_e + \ad_b)(\hat y) &= [D_g, \be_x](\hat y) =  D_g (\be_x (\hat y))-\be_x(g)\\
&\hspace{-1.4cm} =  \frac{1}{\varrho_h}D_{g}(h' h^{p}y^p) +\frac{1}{\varrho_h}\sum_{k=1}^{p-2}\frac{(-1)^{k-1}}{(k+1)k} h^{(k+1)}h^{k}\, D_{g}(h^{p-k}y^{p-k}) \\ 
&\hspace{-1.35cm} \quad   -\frac{h^{ p-1}}{\varrho_h}\parp(h)\, D_g(hy)  -\frac{h^{p}}{\varrho_h}\sum_{k=0}^{p-2}\frac{(-1)^k}{k+1}\, g^{(k+1)} y^{p-1-k} 
+\frac{h^{p}}{\varrho_h} \parp(g).  
\end{align*}
Hence,   
\begin{eqnarray*}
\PP\left( (D_e + \ad_b)(\hat y) \right) &=& 
 \frac{1}{\varrho_h}\sum_{k=1}^{p-2}\frac{(-1)^{k-1}}{(k+1)k} h^{(k+1)}(gh^{p-1})^{(p-k-1)} \\
&& \ \  + \frac{1}{\varrho_h}h' (gh^{p-1})^{(p-1)} + \frac{h^{p-1}}{\varrho_h} \big(h\parp(g)-g \parp(h) \big).
\end{eqnarray*}
\indent  Equating both expressions for $\PP\left( (D_e + \ad_b)(\hat y) \right)$ gives 
\begin{align*}
\varrho_h e &=  h' (gh^{p-1})^{(p-1)}+ h^{p-1}\left(h\parp(g)-g\parp(h)\right) \\
&  \quad +\sum_{k=1}^{p-2}\frac{(-1)^{k-1}}{(k+1)k} h^{(p-k)}(gh^{p-1})^{(k)} 
+\sum_{k=1}^{p-2} \frac{(-1)^{k-1}}{k+1} (gh^{p-1})^{(k)}h^{(p-k)}\\
&= 
\sum_{k=1}^{p-1}\frac{(-1)^{k-1}}{k} (gh^{p-1})^{(k)}h^{(p-k)} 
 +h^{p-1}\big(h \parp(g) -g \parp(h) \big).  \hspace{1.6cm} \square 
\end{align*}

\end{document}